\pgfplotsset{compat=1.11}
\newcommand\numberthis{\addtocounter{equation}{1}\tag{\theequation}} % Nummerierung
\newcounter{statement} \numberwithin{statement}{section}
\newtheorem{Def}[statement]{Definition}
\newtheorem{thm}[statement]{Theorem}
\newtheorem{lemma}[statement]{Lemma}
\newtheorem{cor}[statement]{Corollary}
\newtheorem{prop}[statement]{Proposition}
\theoremstyle{remark}
\newtheorem{remark}[statement]{Remark}
\newcommand{\R}{\mathbb R} 				%real numbers
\newcommand{\N}{\mathbb N}
\newcommand{\ball}[2]{{B_{#2}\left(#1\right)}}				%ball of radius #2 around #1
\newcommand{\intd}{\, \mathrm{d}} 				%schönes d für Integrale
\newcommand{\warr}{\rightharpoonup}				%Pfeil für schwache Konvergenz
\newcommand{\Leb}{\mathcal L}				%Lebesgue measure
\newcommand{\Sph}{{\mathbb S}}				%Sphere in 2D
\newcommand{\Hd}{\mathcal H}				%Hausdorff measures
\newcommand{\supp}{\operatorname{supp}}
\newcommand{\dist}{\operatorname{dist}}
\newcommand{\osc}{\operatorname{osc}}
\newcommand{\tr}{\operatorname{tr}}
\newcommand{\overbar}[1]{\mkern 1.5mu\overline{\mkern-1.5mu#1\mkern-1.5mu}\mkern 1.5mu} %Bar for closures
\newcommand{\stcomp}[1]{{#1}^{\mathsf{c}}}		%complements
\newcommand{\K}{\mathcal{K}}				%Differential Inclusion Set
\newcommand{\eps}{\varepsilon}
\newcommand{\interior}[1]{{\kern0pt#1}^{\mathrm{o}}	%Definition of interior set
}
\newcommand{\translatepoint}[1]%			For translation of coordinate system in 3D tikzpictures
{   \coordinate (middlepoint) at (#1);
}
    \newcommand{\epsi}{\varepsilon}
    \DeclareMathOperator*{\esssup}{ess\,sup}
    \DeclareMathOperator*{\essinf}{ess\,inf}
\def\Xint#1{\mathchoice
{\XXint\displaystyle\textstyle{#1}}%
{\XXint\textstyle\scriptstyle{#1}}%
{\XXint\scriptstyle\scriptscriptstyle{#1}}%
{\XXint\scriptscriptstyle\scriptscriptstyle{#1}}%
\!\int}
\def\XXint#1#2#3{{\setbox0=\hbox{$#1{#2#3}{\int}$ }
\vcenter{\hbox{$#2#3$ }}\kern-.58\wd0}}
\def\dashint{\Xint-}
\title{Rigidity of branching microstructures in shape memory alloys}
\author{Thilo M. Simon\footnote{Research carried out at: Max-Planck-Institut f\"ur Mathematik in den Naturwissenschaften, Inselstra{\ss}e 22, 04103 Leipzig, Germany. Now at:  New Jersey Institute of Technology, University Heights Newark, New Jersey 07102, USA.  Please use {thilo.m.simon@njit.edu} for correspondence.}}
\begin{document}
\maketitle
\begin{abstract}
 We analyze generic sequences for which the geometrically linear energy
 \[E_\eta(u,\chi):= \eta^{-\frac{2}{3}}\int_\ball{0}{1} \left| e(u)- \sum_{i=1}^3 \chi_ie_i\right|^2 \intd x+\eta^\frac{1}{3} \sum_{i=1}^3 |D\chi_i|(\ball{0}{1})\]
remains bounded in the limit $\eta \to 0$.
Here $ e(u) :=1/2(Du + Du^T)$ is the (linearized) strain of the displacement $u$, the strains $e_i$ correspond to the martensite strains of a shape memory alloy undergoing cubic-to-tetragonal transformations and $\chi_i:\ball{0}{1} \to \{0,1\}$ is the partition into phases.
In this regime it is known that in addition to simple laminates also branched structures are possible, which if austenite was present would enable the alloy to form habit planes.

In an ansatz-free manner we prove that the alignment of macroscopic interfaces between martensite twins is as predicted by well-known rank-one conditions.
Our proof proceeds via the non-convex, non-discrete-valued differential inclusion
\[e(u)  \in \bigcup_{1\leq i\neq j\leq 3} \operatorname{conv} \{e_i,e_j\}\]
satisfied by the weak limits of bounded energy sequences and of which we classify all solutions.
In particular, there exist no convex integration solutions of the inclusion with complicated geometric structures.
% The key ingredient is that ``discontinuity'' of $e(u)$ and the differential inclusion complement each other.

    \medskip

    \noindent \textbf{Keywords:} shape memory alloys, linearized elasticity, non-convex differential inclusion

    \medskip

    \noindent \textbf{Mathematical Subject Classification:} 74N15, 35A15, 74G55, 74N10 
\end{abstract}

\section{Introduction}
Due to the many possible applications of the eponymous shape memory effect, shape memory alloys have attracted a lot of attention of the engineering, materials science and mathematical communities.
Their remarkable properties are due to certain diffusionless solid-solid phase transitions in the crystal lattice of the alloy, enabling the material to form microstructures.
More specifically, the lattice transitions between the cubic \emph{austenite} phase and multiple lower-symmetry \emph{martensite} phases, triggered by crossing a critical temperature or applying stresses.
% 
% \begin{figure}
%  \centering
%  \includegraphics[height= 3cm]{tetra2.png}
%  \caption{A sketch of the cubic-to-tetragonal transformation. The left hand side represents austenite, while the right hand side represents the martensite variants.} \label{fig:cubic-to-tetragonal}
% %  \vspace{-10pt}
% \end{figure}

\begin{figure}
 \centering
 \begin{tikzpicture}
 	\draw (0,0,0) -- (1,0,0) -- (1,1,0) -- (0,1,0) -- cycle;
 	\draw (0,1,0) -- (0,1,-1) -- (1,1,-1) -- (1,1,0);
 	\draw (1,1,-1) -- (1,0,-1) -- (1,0,0);
 	
	\draw[->]  (1.75, 1,0) -- (3.5, 2.2); 	
 	
 	\begin{scope}[shift={(4,2,0)}]
 		\draw (0,0,0) -- (1.25,0,0) -- (1.25,.75,0) -- (0,.75,0) -- cycle;
	 	\draw (0,.75,0) -- (0,.75,-.75) -- (1.25,.75,-.75) -- (1.25,.75,0);
 		\draw (1.25,.75,-.75) -- (1.25,0,-.75) -- (1.25,0,0);
 	\end{scope}
 	
 	\draw[->]  (1.75, .75,0) -- (3.5, .75,0);
 	
 	\begin{scope}[shift={(4,0,0)}]
 	 		\draw (0,0,0) -- (.75,0,0) -- (.75,1.25,0) -- (0,1.25,0) -- cycle;
	 	\draw (0,1.25,0) -- (0,1.25,-.75) -- (.75,1.25,-.75) -- (.75,1.25,0);
 		\draw (.75,1.25,-.75) -- (.75,0,-.75) -- (.75,0,0);
 	\end{scope}
 	
 	\draw[->]  (1.75, .5,0) -- (3.5, -.75,0);
 	
 	\begin{scope}[shift={(4,-1.5,0)}]
 		\draw (0,0,0) -- (.75,0,0) -- (.75,.75,0) -- (0,.75,0) -- cycle;
	 	\draw (0,.75,0) -- (0,.75,-1.25) -- (.75,.75,-1.25) -- (.75,.75,0);
 		\draw (.75,.75,-1.25) -- (.75,0,-1.25) -- (.75,0,0);
 	\end{scope}
 \end{tikzpicture}
 \caption{A sketch of the cubic-to-tetragonal transformation. The left-hand side represents the cubic austenite phase, while the right-hand side represents the martensite variants that are elongated in the direction of one of the axes of the cube and shortened in the other two. Adapted from \cite[Figure 4.5]{Bhattacharya_microstructure}.} \label{fig:cubic-to-tetragonal}
%  \vspace{-10pt}
\end{figure}
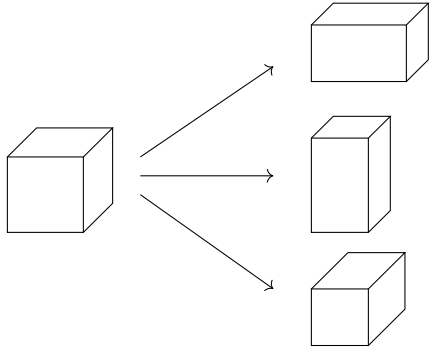

In shape memory alloys undergoing cubic-to-tetragonal transformations, see \ref{fig:cubic-to-tetragonal}, one frequently observes the following types of microstructures:
  \begin{enumerate}
   \item \emph{Twins:} Fine-scale laminates of martensite variants, see Figure \ref{fig:micro_a}
   and both sides of the interface at the center of Figure \ref{fig:micro_b}. 
   \item \emph{Habit planes:} Almost sharp interfaces between austenite, and a twin of martensites, where the twin refines as it approaches the interface, see Figure \ref{fig:micro_a}.
   \item \emph{Second-order laminates, or twins within a twin:} Essentially sharp interfaces between two different refining twins, see Figure \ref{fig:micro_b}.
   \item \emph{Crossing second-order laminates:} Two crossing interfaces between twins and pure phases, see for example \cite[Figure 17]{basinski1954experiments}.
%    Figure \ref{fig:crossing}.
   \item \emph{Wedges:} Materials whose lattice parameters satisfy a certain relation can form a wedge of two martensite twins in austenite, see \cite[Chapter 7.3.1]{Bhattacharya_microstructure} and Figure \ref{fig:wedge}.
  \end{enumerate}
Furthermore, at least in Microstructures 1, 2 and 5, all observed interfaces form parallel to finitely many different hyperplanes relative to the crystal orientation.

  \begin{figure}[htb]
    \begin{subfigure}[t]{0.475\linewidth}
     \centering
     \includegraphics[height=4cm]{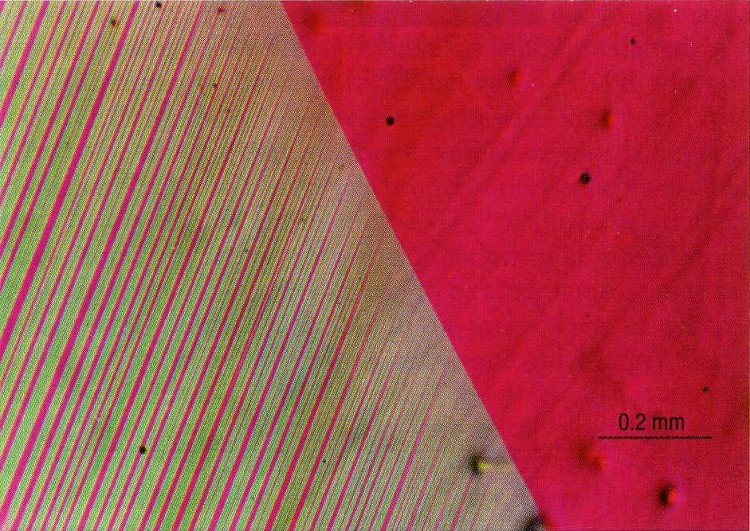}
     \caption{}
     \label{fig:micro_a}
    \end{subfigure}    
    \begin{subfigure}[t]{0.475\linewidth}
     \centering
     \includegraphics[height=4cm]{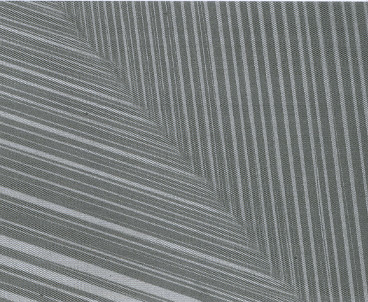}
      \caption{}
      \label{fig:micro_b}
    \end{subfigure}
    
%    \begin{subfigure}[t]{0.475\linewidth}
%     \centering
%     \includegraphics[height=4cm]{crossing_comp.jpeg}
%     \caption{}
%     \label{fig:crossing}
%    \end{subfigure}
	\centering 
    \begin{subfigure}[t]{0.475\linewidth}
     \centering
     \includegraphics[height=4cm]{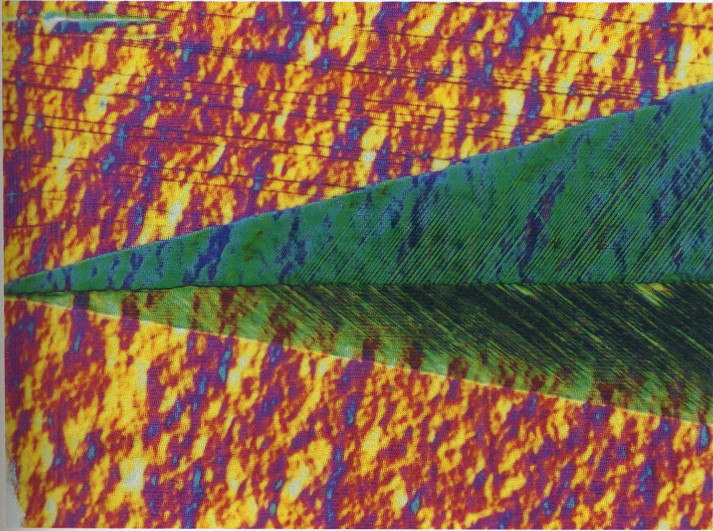}
     \caption{}
     \label{fig:wedge}
    \end{subfigure}
    \caption{a) Optical micrograph of a habit plane with austenite on the right-hand side and twinned martensite on the left-hand side in a Cu-Al-Ni alloy undergoing cubic-to-orthorhombic transformations,
%      \cite[Fig.\ 9.1]{planes2005magnetism}
     by courtesy of C.\ Chu and R.D.\ James.
     b) Optical micrograph of a second-order laminate in a Cu-Al-Ni alloy,
%       \cite[Fig.\ 7.9]{Bhattacharya_microstructure}
     by courtesy of C.\ Chu and R.D.\ James.
%     c) Optical micrograph of two crossing second-order laminates in an Indium-Thallium crystal. The bottom region is in the austenite phase. All other regions show twinned martensite variants with the twinning in the left-hand side one being almost parallel to the surface of the sample. Reprinted from \cite{basinski1954experiments}, with permission from Elsevier.
     d) Optical micrograph of a wedge in a Cu-Al-Ni alloy,
%      \cite[Plate 1]{Bhattacharya_microstructure}.
      by courtesy of C.\ Chu and R.D.\ James.}
    \label{fig:twins_and_interfaces}
  \end{figure}

%   \begin{figure}[htb]
%     \begin{subfigure}[t]{0.475\linewidth}
%      \centering
%      \includegraphics[height=4cm]{branching.jpg}
%      \caption{Optical micrograph of a habit plane with austenite on the right-hand side and twinned martensite on the left-hand side in a Cu-Al-Ni alloy undergoing cubic-to-orthorombic transformations, \cite[Fig.\ 9.1]{planes2005magnetism}.}\label{fig:micro_a}
%     \end{subfigure}    
%     \begin{subfigure}[t]{0.475\linewidth}
%      \centering
%      \includegraphics[height=4cm]{twins_in_twins_crop.jpg}
%      \caption{Optical micrograph of a second-order laminate in a Cu-Al-Ni alloy, \cite[Fig.\ 7.9]{Bhattacharya_microstructure}.} \label{fig:micro_b}
%     \end{subfigure}
%     \begin{subfigure}{0.475\linewidth}
%      \centering
%      \includegraphics[height=4cm]{crossing.jpeg}
%      \caption{Optical micrograph of two crossing second-order laminates in an Indium-Thallium crystal, \cite[Fig.\ 17]{basinski1954experiments}. The bottom region is in the austenite phase. All other regions show twinned martensite variants with the twinning in the left-hand side one being almost parallel to the surface of the sample.}\label{fig:crossing}
%     \end{subfigure}\quad\quad
%     \begin{subfigure}[t]{0.475\linewidth}
%      \centering
%      \includegraphics[height=4cm]{wedge.pdf}
%      \caption{Optical micrograph of a wedge in a Cu-Al-Ni alloy, \cite[Plate 1]{Bhattacharya_microstructure}.} \label{fig:wedge}
%     \end{subfigure}
%     \caption{\label{fig:twins_and_interfaces}}
%   \end{figure}
 
\subsection{Contributions of the mathematical community}\label{subsec:SMA_literature}
\subsubsection{Modeling}
 
The first use of energy minimization in the modeling of martensitic phase transformations has been made by Khatchaturyan, Roitburd and Shatalov \cite{khachaturyan1967some, khachaturyan1983theory, khachaturyan1969theory, roitburd1969domain, roitburd1978martensitic} on the basis of linearized elasticity.
This allowed to predict certain large scale features of the microstructure such as the orientation of interfaces between phases.

Variational models based on nonlinear elasticity go back to Ball and James \cite{BallJames87minimizers, BallJames92Youngmeasure}.
They formulated a model in which the microstructures correspond to minimizing sequences of energy functionals vanishing on \[K = \bigcup_{i} SO(3)U_i\]
for finitely many suitable symmetric matrices $U_i$.
In their theory, the orientation of interfaces arise from a kinematic compatibility condition known as rank-one connectedness, see \cite[Chapter 2.5]{Bhattacharya_microstructure}.
For cubic-to-tetragonal transformations Ball and James prove in an ansatz-free way that the fineness of the martensite twins in a habit plane is due only certain mixtures of martensite variants being compatible with austenite.
Their approach is closely related to the phenomenological (or crystallographic) theory of martensite independently introduced by Wechsler, Lieberman and Read \cite{WechslerLiebermanRead} and Bowles and MacKenzie \cite{bowles1954crystallography, mackenzie1954crystallography}.
In fact, the variational model can be used to deduce the phenomenological theory. 

A comparison of the nonlinear and the geometrically linear theories can be found in an article by Bhattacharya \cite{Bhattacharya93comparison}.
Formal derivations of the geometrically linear theory from the nonlinear one have been given by Kohn \cite{Kohn1991relaxation} and Ball and James \cite{BallJames92Youngmeasure}.
A rigorous derivation via $\Gamma$-convergence has been given by Schmidt \cite{schmidt2008linear} with the limiting energy in general taking a more complicated form than the usually used piecewise quadratic energy densities.

\subsubsection{Rigidity of differential inclusions}
The interpretation of microstructure as minimizing sequences naturally leads to analyzing the differential inclusions
\[Du \in K = \bigcup_{i=1}^m SO(3)U_i,\]
sometimes called the $m$-well problem, or variants thereof such as looking for sequences $u_k$ such that $\dist(Du_k,K) \to 0$ in measure.
In fact, the statements of Ball and James are phrased in this way \cite{BallJames87minimizers, BallJames92Youngmeasure}.
A detailed discussion of these problems which includes the theory of Young measures has been provided by M\"uller \cite{muller_microstructure}.

%As in the case of cubic-to-tetragonal transformations the martensite variants are pairwise compatible, the material can form the fine twins discussed above.
%% shown in Figure \ref{fig:twins_and_interfaces}. 
%In addition, Ball and James prove, already in an ansatz-free way, that the fineness of the martensite twins in a habit plane is due to martensite variants only being compatible with austenite on average.
%Their ideas turned out to be very fruitful for analyzing related microstructures also in other types of transformations, for an overview see \cite[Chapters 4 and 7]{Bhattacharya_microstructure}.

However, differential inclusions in themselves are not accurate models:
M\"uller and \v{S}ver{\'a}k \cite{MS99convexintegration} constructed solutions with a complex arrangement of phases of the differential inclusion $Du \in SO(2) A \cup SO(2) B$ with $\operatorname{det}(A) = \operatorname{det} B = 1$, for which one would naively only expect laminar solutions, in two space dimensions using convex integration.
Later, Conti, Dolzmann and Kirchheim \cite{CDK07existence} extended their result to three dimensions and the case of cubic-to-tetragonal transformations.

But Dolzmann and M\"uller \cite{DM95} also noted that if the inclusion $Du \in SO(2) A \cup SO(2) B$ is augmented with the information that the set $\{Du \in SO(2)A\}$ has finite perimeter, then $Du$ is in fact laminar.
Also this result holds in the case of cubic-to-tetragonal transformations as shown by Kirchheim \cite{Kirchheim98rigidity}.
There has been a series of generalizations including stresses \cite{lorent2005two, conti2006rigidity, LorentLp}, culminating in the papers by Conti and Chermisi \cite{chermisi2010multiwell} and Jerrard and Lorent \cite{jerrard2013multiwell}.
However, these are more in the spirit of the geometric rigidity theorem due to Friesecke, James and M\"uller \cite{friesecke2002theorem} since they rely on the perimeter being too small for lamination and as such do not give insight into the rigidity of twins.

In contrast, the differential inclusion arising from the geometrically linear setting
\[\frac{1}{2}(Du + Du^T) \in \{e_1,e_2,e_3\},\]
where $e_i$ for $i=1,2,3$ are the linearized strains corresponding to the cubic-to-tetragonal transformation, see \eqref{strains}, is rigid in the sense that all solutions are laminates even without further regularizations as proven by Dolzmann and M\"uller \cite{DM95}.
Quantifying this result Capella and Otto \cite{CO12,CO09} proved that laminates are stable in the sense that if the energy \eqref{ENERGIE!} (including an interfacial penalization) is small then the geometric structure of the configuration is close to a laminate.
Additionally, there is either only austenite or only mixtures of martensite present.
Capella and Otto also noted that for sequences with bounded energy such a result cannot hold due to a well-known branching construction of habit planes (Figure \ref{fig:micro_a}) given by Kohn and M\"uller \cite{KM92, KM94branching}.

Therein, Kohn and M\"uller used a simplified scalar version of the geometrically linear model with surface energy to demonstrate that compatibility of austenite with a mixture of martensites only requires a fine mixture close to the interface so that the interfacial energy coarsens the twins away from the interface.
Kohn and Müller also conjectured that the minimizers exhibit this so-called branching, which Conti \cite{Conti00branched} affirmatively answered by proving minimizers of the Kohn-M\"uller functional to be asymptotically self-similar.

In view of the results by Kohn and M\"uller, and Capella and Otto it is natural to consider sequences with bounded energy in order to analyze the rigidity of branching microstructures.

\subsubsection{Some related problems}
So far, we mostly discussed the literature describing the microstructure of single crystals undergoing cubic-to-tetragonal transformations.
However, the variational framework can be used to address related problems, for which we highlight a few contributions as an exhaustive overview is outside the scope of this introduction:

An overview of microstructures arising in other transformations can be found in the book by Bhattacharya \cite{Bhattacharya_microstructure}.
Rigorous results for cubic-to-orthorhombic transformations in the geometrically linear theory can be found in a number of works by R\"uland \cite{ruland2016cubic, ruland2016rigidity}.
For the much more complicated cubic-to-monoclinic-I transformations with its twelve martensite variants, Chenchiah and Schl\"omerkemper \cite{chenchiah2013non} proved the existence of certain non-laminate microstructures in the geometrically linear case without surface energy.

For an overview over the available literature on polycrystalline shape memory alloys we refer the reader once again to Bhattacharya's book \cite[Chapter 13]{Bhattacharya_microstructure} and an article by Bhattacharya and Kohn \cite{bhattacharya1997elastic}.

Another problem is determining the shape of energy-minimizing inclusions of martensite with given volume in a matrix of austenite, for which scaling laws have been obtained by Kohn, Kn\"upfer and Otto \cite{KKO13} for cubic-to-tetragonal transformations in the geometrically linear setting.
% Here, we point out some examples of works which 
% 
% although there are a number of works in the area, see for example \cite{ball1991dynamics, roubivcek2004models, ball2015quasistatic, blesgen2014allen, dondl2010sharp, knupfer2016sharp}.

\subsection{Definition of the energy}\label{subsec:SMA_model}
In order to analyze the rigidity properties of branched microstructures we choose the geometrically linear setting, since the quantitative rigidity of twins is well understood due to the results by Capella and Otto \cite{CO09, CO12}.
In fact, we continue to work with the same  already non-dimensionalized functional, namely
\begin{align}
  E_\eta(u,\chi)&:= E_{elast}(u,\chi)+ E_{inter,\eta}(u,\chi),\numberthis \label{ENERGIE!}\\
  \intertext{where}
  E_{elast,\eta}(u,\chi)&:=\eta^{-\frac{2}{3}}\int_\Omega \left| e(u) -\sum_{i=1}^3\chi_i e_i\right|^2 \intd \Leb^3,\\
  E_{inter,\eta}(u,\chi)&:=\eta^{\frac{1}{3}} \sum_{i=1}^3|D \chi_i|(\Omega).
\end{align}
Here $\Omega\subset \R^3$ is a bounded Lipschitz domain, $u:\Omega \to \mathbb{R}^3$ is the displacement and $e(u) =\frac{1}{2}\left(Du + Du^T\right)$ denotes the strain.
Furthermore, the partition into the phases is given by $\chi_i:\Omega \to \{1,1\}$ for $i=1,\ldots,3$ with $ \sum_{i=1}^3\chi_i = 1$ and the strains associated to the phases are given by
\[
    e_0 := 0,
    e_1:=\begin{pmatrix}
             -2 & 0 & 0 \\
             0 & 1 & 0\\
             0 & 0 & 1\\
	  \end{pmatrix},
    e_2:=\begin{pmatrix}
             1 & 0 & 0 \\
             0 & -2 & 0\\
             0 & 0 & 1\\
	\end{pmatrix},         
    e_3:=\begin{pmatrix}
             1 & 0 & 0 \\
             0 & 1 & 0\\
             0 & 0 & -2\\
	\end{pmatrix}.\numberthis \label{strains}
\]
In particular, we assume the reference configuration to be in the austenite state, but that the transformation has occured throughout the sample, i.e., there is no austenite present.
This simplifying assumption does rule out habit planes, see Figure \ref{fig:micro_a}, but a look at Figure \ref{fig:micro_b} suggests that we can still hope for an interesting result.
Furthermore, the responsible mechanism for macroscopic rigidity is the rank-one connectedness of the average strains $e(u_\eta) \warr e(u)$ in $L^2$ (encoded in the decomposition provided by Lemma \ref{lemma: decomposition}), which cannot distinguish between pure phases and mixtures.

The condition of the material being a shape memory alloy is encoded in the fact that $\operatorname{tr}(e_i) = 0$ for $i=1,2,3$ as this corresponds to the transformation being volume-preserving.

%However, we will not stress this point outside of Chapter \ref{chapter:macroscopic}, where we will have to assume $\chi_0 \equiv 0$.
Further simplifying choices are using equal isotropic elastic moduli with vanishing second Lam\'e constant and penalizing interfaces by the total variation of $D\chi_i$.
Of course, as such it is unlikely that the model can give quantitatively correct predictions.
Bhattacharya for example argues that assuming equal elastic moduli is not reasonable \cite[Page 238]{Bhattacharya93comparison}.

We still expect our analysis to give relevant insight as we will for the most part prove compactness properties of \emph{generic} sequences  $u_\eta \in W^{1,2}(\Omega;\R^3)$ and partitions $\chi_\eta$ such that
\[\limsup_{\eta \to 0} E_{\eta}(u_\eta,\chi_\eta) < \infty.\]
This regime is the appropriate one to analyze branching microstructures:
On the one hand, (generalizations of) the Kohn-M\"uller branching construction of habit planes have bounded energy.
On the other hand, the stability result of Capella and Otto \cite{CO12} rules out branching by ensuring that in a strong topology there is either almost exclusively austenite or the configuration is close to a laminate.
In other words, the branching construction implies that the stability result is sharp with respect to the energy regime as pointed out by Capella and Otto in their paper.

\subsubsection{Compatibility properties of the stress-free strains}
It is well known, see \cite[Chapter 11.1]{Bhattacharya_microstructure}, that for $A$, $B \in \R^{3\times 3}$ and $n \in \Sph^2$ the following two statements are equivalent:
\begin{itemize}
 \item There exists a continuous function $u:\R^3 \to \R^3$ with
  \[e(u)(x) = \begin{cases}
	    A & \text{ if } x \cdot n > 0,\\
	    B & \text{ if } x \cdot n < 0,
         \end{cases}\numberthis \label{rank-one-connection}
  \]
see Figure \ref{fig:single_interface}.
 \item The two strains are (symmetrically) rank-one connected in the sense that there exists $a \in \R^3$ such that
\[A - B = \frac{1}{2}( a \otimes n + n \otimes a) := a \odot n.\]
\end{itemize} 
Note that the condition is symmetric in $a$ and $n$ thus every rank-one connection generically gives rise to two possible normals.
Additionally, as rank-one connectedness is also symmetric in $A$ and $B$ this allows for the construction of laminates.

 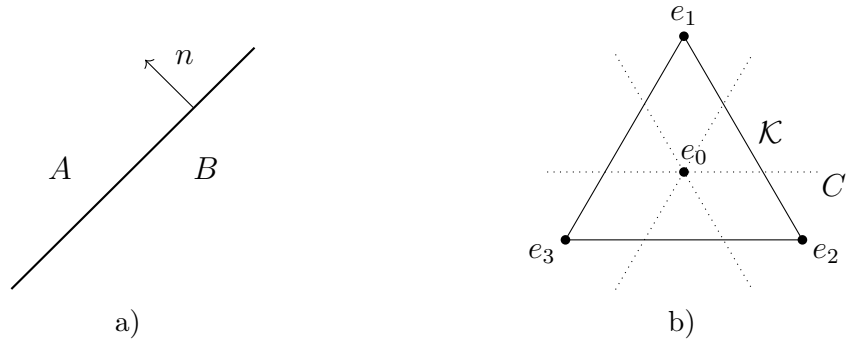
\begin{figure}[htbp]
   \centering
   \subcaptionbox{\label{fig:single_interface}}[.45\linewidth]{
     \centering      
      \begin{tikzpicture}[scale=1.6]
% 	\node at (-2,1) {a)};
	%interfaces
% 	\draw[thick] (-2,-1) -- (0,1);
	\draw[thick] (-1,-1) -- (1,1);
%       \draw[thick] (0,-1) -- (2,1);
	%normal
	\draw[->] (0.5,0.5) -- (0.1,0.9);
	%labels
% 	\node at (-2,0) {$A$};
	\node at (-0.6,0) {$A$};
	\node at (0.6,0) {$B$};
% 	      \node at (1.25,0) {$B$};
	\node at (.425,0.925) {$n$};
      \end{tikzpicture}
   } 
   \subcaptionbox{\label{fig:K_cone}}[.45\linewidth]{
    \centering
      \begin{tikzpicture}[scale=1.8]
% 	  \node at (-1,1) {b)};
	  \fill (0,0) circle (1pt);
	  \node at (60:0.15) {$e_0$};
	  \fill (90:1) circle (1pt);
	  \node at (90:1.15) {$e_1$};
	  \fill (210:1) circle (1pt);
	  \node at (210:1.2) {$e_3$};
	  \fill (330:1) circle (1pt);
	  \node at (330:1.2) {$e_2$};
	  \draw (90:1) -- (210:1);
	  \draw (210:1)-- (330:1);
	  \draw (330:1) -- (90:1);
	  \node at (25:.7) {$\K$};
	  \node at (355:1.1) {$C$};
% 	  \clip (90:1) -- (210:1) -- (330:1) -- (90:1);
	  \draw[dotted] (240:1) -- (60:1);
	  \draw[dotted] (180:1) -- (0:1);
	  \draw[dotted] (120:1) -- (300:1);
      \end{tikzpicture}
  }
  \caption{a) Geometry of an interface parallel to the plane $\{x\cdot n =0\}$ in a laminate joining the strains $A$ and $B$.
  b) Sketch relating the martensite strains with the cone $C$ (dotted) of symmetrized rank-one matrices in the two-dimensional strain space $S$. Note that $C$ is a union of three lines parallel to the edges of the triangle $\K$.}
  \label{fig:rank_one_connections_in_triangle}
 \end{figure}

In order to present the result of applying the rank-one connectedness condition to the case of cubic-to-tetragonal transformations notice that
\[e_0, \ldots e_3 \in S:=\left\{e\in \R^{3\times 3}: e \text{ diagonal, }\tr e = 0\right\}.\]
Here, we call the two-dimensional space $S$ \emph{strain space}.
It can be shown, either by direct computation or an application of \cite[Lemma 3.1]{chenchiah2013non}, that all rank-one directions in $S$ are multiples of $e_2 - e_1$, $e_3 - e_2$ and $e_1 - e_3$.
This means that they are parallel to one of the sides of the equilateral triangle
\[\K := \bigcup_{i=1}^3 \{\lambda e_{i+1} + 1-\lambda e_{i-1}: \lambda \in [0,1]\}\numberthis \label{intro:K}\]
spanned by $e_1, e_2$ and $e_3$ shown in Figure \ref{fig:K_cone}.
In particular, the martensite strains are mutually compatible but austenite is only compatible to certain convex combinations of martensites which turn out to be $\frac{1}{3} e_i + \frac{2}{3} e_j$ for $i,j =1,2,3$ with $i\neq j$.

\subsection{The contributions of the paper}\label{subsec:SMA_contributions}
We study the rigidity of branching microstructures due to ``macroscopic'' effects in the sense that we only look at the limiting volume fractions $\chi_{i,\eta} \overset{\ast}{\warr} \theta_i$ in $L^\infty$ after passage to a subsequence, which completely determines the limiting strain $e(u_\eta) \warr e(u)$ in $L^2$.

Similarly to the result of Capella and Otto \cite{CO12}, our main result, Theorem \ref{main}, is local in the sense that for $\Omega = \ball{0}{1}$ we can classify the function $\theta$ on a smaller ball $\ball{0}{r}$ of universal radius $0<r<1$.
As the characterization of each of the four possible cases is a bit lengthy, we postpone a detailed discussion to Subsection \ref{subsec:limiting_configurations}.
An important point is that we deduce all interfaces between different mixtures of martensites to be hypersurfaces whose normals are as predicted by the rank-one connectedness of the average strains on either side.
In this respect our theorem improves on previously available ones, as they either explicitly assume the correct alignment of a habit plane, see e.g.\ Kohn and M\"uller \cite{KM94branching}, or require other ad-hoc assumptions:
For example, Ball and James \cite[Theorem 3]{BallJames87minimizers} show habit planes to be flat under the condition that the set formed by the austenite phase is taken is topologically well-behaved.

The broad strategy of our proof is to first ensure that in the limit the displacement satisfies the non-convex differential inclusion
\[e(u)  \in \K\]
encoding that locally at most two variants are involved, see Definition \ref{intro:K} and Figure \ref{fig:rank_one_connections_in_triangle}, and then to classify all solutions.
We strongly stress the point that we do not need to assume any additional regularity in order to do so.
In particular, the differential inclusion is rigid in the sense that it does not allow for convex integration solutions with extremely intricate geometric structure.
To our knowledge this is the first instance of a rigidity result for a non-discrete differential inclusion in the framework of linearized elasticity.

The main idea is that ``discontinuity'' of $e(u)$ and the differential inclusion $e(u) \in \K$ balance each other:
If $e(u) \notin VMO$, see Definition \ref{VMO}, a blow-up argument making use of measures describing the distribution of values $e(u)\in \K$, similar in spirit to Young measures, proves that the strain is independent of one direction.
If $e(u) \in VMO$ the differential inclusion gives us less information, but we can still prove that only two martensite variants are involved by using an approximation argument.
Finally, we classify all solutions which are independent of one direction.

The structure of the paper is as follows:
In Section \ref{sec:main_result} we state and discuss our main theorem in detail.
We then proceed to break down its proof into several main steps in Section \ref{sec:outline}, and give an in-depth explanation of all necessary auxiliary results.
Finally, we give the proofs of these results in Section \ref{sec:proof}.

% Also Roitburd-Khatchaturyan-Shatalov!!!
% NO CONVEX INTEGRATION!!!
% We refer to planes by $\plane{\nu}{c}:=\{x\in \R^3: x\cdot \nu = c\}$ for $\nu\in N$, $c\in \R$.
% 
% DIE BEDEUTUNG VON THETA HAT SICH GEAENDERT, von den Eintraegen von e(u) zu den Volumenanteilen!!!
% 
% VORSICHT, RESKALIERUNG LEBESGUE PUNKTE FUNKTIONIEREN NICHT SO WIE DU ES GERADE NOCH IN DEN ANDEREN DOKUMENTEN HAST!!!
% 
% IST PI IRGENDWO ANDERS DEFINIERT?
% 
% g IST SCHLECHTE NOTATION IN DEM POLYEDER LEMMA!

% VORSICHT, b IST SCHLECHTE NOTATION FUER KONSTANTE TEILE IN DEN AFFINEN FUNKTIONEN BEI DREI FUNKTIONEN!!!

\section{The main rigidity theorem}\label{sec:main_result}
Note that any sequence with asymptotically bounded energy has subsequences such that $u_\eta \warr u$ in $W^{1,2}$ and $\chi_\eta \overset{*}{\warr} \theta$ in $L^\infty$.

\begin{thm}\label{main}
  There exists a universal radius $r>0$ such that the following holds:
  Let $(u_\eta$, $\chi_\eta)$ be a sequence of displacements and partitions such that $E_\eta(u_\eta,\chi_\eta) < C$ for some $0 < C < \infty$.
  Then, for any subsequence along which they exist, the weak limits
  \[u_\eta \rightharpoonup u \text{ in } W^{1,2}\text{, } \chi_\eta \stackrel{*}{\rightharpoonup} \theta \text{ in } L^\infty\]
  satisfy
  \begin{align*}
   e(u)  \equiv \sum_{i=1}^3 \theta_i e_i & \text{ and } e(u) \in \K = \bigcup_{i=1}^3 \{\lambda e_{i+1} + (1-\lambda) e_{i-1}: \lambda \in [0,1]\},
  \end{align*}
  see Figure \ref{K}, for almost all $x\in \ball{0}{1}$.
  
  Furthermore, on the smaller ball $\ball{0}{r}$ all solutions to this differential inclusion are two-variant configurations, planar second-order laminates, planar checkerboards or planar triple intersections, according to Definitions \ref{def:degenerate}-\ref{def:triple} below.
\end{thm}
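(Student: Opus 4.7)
My plan is to split the theorem into (a) establishing the two properties of the weak limits on $\ball{0}{1}$, and (b) the classification on $\ball{0}{r}$. For part (a), the elastic piece of the energy, $\eta^{-\frac{2}{3}}\int_{\ball{0}{1}}|e(u_\eta)-\sum_i\chi_{i,\eta}e_i|^2\intd x\leq C$, forces $e(u_\eta)-\sum_i\chi_{i,\eta}e_i\to 0$ strongly in $L^2$; combining strong convergence with the weak/$L^\infty$-weak-$\ast$ convergences already in hand yields $e(u)=\sum_i\theta_ie_i$ almost everywhere, placing $e(u)$ a priori in the closed triangle $\operatorname{conv}\{e_1,e_2,e_3\}$. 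To push the inclusion onto the boundary $\K$ I would invoke the decomposition provided by Lemma \ref{lemma: decomposition}. The idea is that the $\eta^{\frac{1}{3}}$-scaling of the interface term, while permitting unbounded perimeter, still forces enough organisation of $\chi_\eta$ that genuine three-way mixing in the limit is incompatible with $e(u)$ being a symmetric gradient in the two-dimensional strain space $S$, in which all rank-one directions are parallel to the three edges of $\K$.

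For the classification on $\ball{0}{r}$ I would follow the dichotomy suggested by the outline. At each base point $x_0\in\ball{0}{r}$ the relevant question is whether $e(u)$ lies in $VMO$ (Definition \ref{VMO}) near $x_0$. In the non-$VMO$ case I would perform a blow-up $u_k(x)=u(x_0+r_kx)/r_k$ along a sequence of radii on which the mean oscillation fails to shrink; the limit still satisfies $e(u_\infty)\in\K$, but carries the additional structure of a Young-type measure tracking the distribution of edge-values of $\K$ occupied by the strain. The residual oscillation captured by the choice of $r_k$, combined with the constraint that $e(u_\infty)$ be a symmetrised gradient, should force $e(u_\infty)$ to be invariant along one coordinate direction, since each edge of $\K$ has its own rank-one normal and oscillating freely between different edges is incompatible with being a symmetrised gradient unless one tangential direction is frozen. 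In the $VMO$ case the differential inclusion by itself gives less information, but a mollification--projection argument should show that $e(u)$ cannot jump between non-adjacent edges of $\K$ on small scales, so locally $e(u)$ takes values on a single edge and the classical two-well rigidity (as in Dolzmann--M\"uller) applies. In either case the problem reduces to solutions of $e(u)\in\K$ that are invariant along one direction, whose planar analysis yields exactly the four configurations listed: a single edge of $\K$ (two-variant), stacked laminates (planar second-order laminate), a crossing pattern (planar checkerboard), or three edges meeting at a point (planar triple intersection).

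The chief obstacle I anticipate is the non-$VMO$ blow-up argument. Because $\K$ is neither finite nor discrete, the standard Young-measure machinery for discrete-valued differential inclusions (as used in the classical two- and three-well problems) does not transfer directly, and the measure describing the distribution of values $e(u)\in\K$ must be tailored so as to respect both the symmetric-gradient constraint $e(u)=\frac{1}{2}(Du+Du^T)$ and the one-dimensional edge structure of $\K$ as a union of three line segments. Linking the selection of a blow-up sequence to the failure of $VMO$ in such a way that this distributional object certifiably transfers directional invariance to the limit is the technical core of the argument, and is what allows the result to go beyond earlier rigidity theorems in linearised elasticity that are restricted to discrete differential inclusions.
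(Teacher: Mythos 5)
Your overall skeleton for the second half (decomposition into one-dimensional functions, a $VMO$/non-$VMO$ dichotomy, blow-ups with Young-measure-type objects in the non-$VMO$ case, reduction to configurations depending on fewer directions) is essentially the paper's strategy. But there is a genuine gap at the very first step: your mechanism for the inclusion $e(u)\in\K$ is not correct. You claim that the decomposition of Lemma \ref{lemma: decomposition} together with the symmetric-gradient constraint makes ``genuine three-way mixing'' impossible. It does not: the decomposition only encodes $e(u)\in S$ (diagonal, trace-free), and any constant strain in the interior of $\operatorname{conv}\{e_1,e_2,e_3\}$ is the symmetrized gradient of an affine displacement, satisfies the decomposition trivially, and mixes all three variants. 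The non-convex inclusion is an energetic, not a kinematic, statement. The paper obtains it (Lemma \ref{diff_incl}) by rescaling balls $\ball{y}{r}$ to the unit ball, observing that the energy transforms like $r^{-3+\frac{2}{3}}E_\eta(\ball{y}{r})$, applying the quantitative Capella--Otto stability result on each rescaled ball to bound $\min_i\|\chi_{i,\eta}\|_{L^1}$ by the square root of that renormalized energy, passing to the limiting Radon measure $E$, and then using a covering argument showing that $\{x:\limsup_r r^{-3+\frac23}E(\overline{\ball{x}{r}})>0\}$ has Hausdorff dimension at most $3-\frac23$, hence Lebesgue measure zero; this yields $\min_i\theta_i(x)=0$ a.e. Your appeal to the interfacial term ``forcing enough organisation'' does not supply this mechanism, and without it the inclusion $e(u)\in\K$ is unproven.

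A second, smaller but still real gap is the final classification of planar configurations, which you assert rather than argue. Once planarity is established, distinguishing planar second-order laminates, checkerboards and triple intersections requires nontrivial work: the paper needs the two-valuedness of a trace $\theta_i|_{H(\alpha,\nu)}=b\chi_B$ extracted during the blow-up step, a bookkeeping lemma when one one-dimensional function is affine (Lemma \ref{lemma: two_functions}), and, when none is affine, a genuinely two-dimensional rigidity argument (Proposition \ref{prop: six_corner} together with Lemmas \ref{lemma: intervals} and \ref{lemma:surface_product_sets}) showing that the zero sets $\theta_i^{-1}(0)$ are products of intervals meeting in the very rigid triple-intersection geometry. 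Relatedly, in your non-$VMO$ branch you only argue that the blow-up limit is invariant in one direction; the paper's harder point is transferring that information back to the original configuration (constancy of the shifts forces several of the original $f_\nu$ to be affine, giving planarity of $\theta$ itself, or else $\theta_i\equiv 0$ propagated via the transversal-lines lemma), and your proposal does not address this transfer.
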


The first part of the conclusion states that the volume fractions $\theta_i$ for $i=1,2,3$ act as barycentric coordinates for the triangle in strain space with vertices $e_1$, $e_2$ and $e_3$.
In terms of these, the differential inclusion $e(u) \in \K$ boils down to locally only two martensite variants being present.

\begin{figure}[htbp]
	\centering
 	\begin{tikzpicture}[scale=1.8]
% 	  \node at (-1,1) {b)};
	  \draw [name path=I--J] (90:1) -- (210:1) -- (330:1) -- cycle;
	  \node [fill=black,inner sep=1pt,label=90:$e_1$] at (90:1) {};
	  \node [fill=black,inner sep=1pt,label=180:$e_3$] at (210:1) {};
	  \node [fill=black,inner sep=1pt,label=0:$e_2$] at (330:1) {};
% 	  \node at (30:1) {$V_2$};
% 	  \node at (150:1) {$V_3$};
% 	  \node at (270:1) {$V_1$};
 	  \node at (0:0.3) {$\K$};
	\end{tikzpicture} 
	\caption{\label{K} Sketch of $\K$.}
 \end{figure}

In plain words, the classification of solutions states that
\begin{enumerate}
 \item only two martensite variants are involved, see Definition \ref{def:degenerate},
 \item or the volume fractions $\theta$ only depend on one direction and look like a second order laminate, see Definition \ref{def:second-order_laminate},
 \item or they are independent of one direction and look like a checkerboard of up to two second-order laminates crossing, see Definition \ref{def:checkerboard},
 \item or they are independent of one direction and macroscopically look like three second-order laminates crossing in an axis, see \ref{def:triple}.
\end{enumerate}

Comparing this list to the list of observed microstructures in the introduction, we see that three crossing second-order laminates are missing.
Indeed, we are unaware of them being mentioned in the currently available literature.
One possible explanation is that planar triple intersections are an artifact of the linear theory.
Another one is that its very rigid geometry, see Definition \ref{def:triple}, leads to it being unlikely to develop during the inherently dynamic process of microstructure formation.

Furthermore, we see that the theorem of course captures neither wedges (which are known to be missing in the geometrically linearized theory anyway \cite{Bhattacharya93comparison}) nor habit planes due to austenite being absent.
Unfortunately, an extension of the theorem including austenite does not seem tractable with the methods used here:
The central step allowing to classify all solutions of the differential inclusion is to that most configurations are independent of some direction.
And even those that do depend on all three variables have a direction in which they vary only very mildly.
However, with austenite being present this property is lost, as the following example shows:

\begin{lemma}\label{lemma:austenite_present}
	There exist solutions $u: \R^3 \to \R^3$ of the differential inclusion $e(u) \in \K\cup \{0\}$ such that $e(u)$ has a fully three dimensional structure.
\end{lemma}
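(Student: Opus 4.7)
The plan is to construct an explicit displacement $u\colon \R^3 \to \R^3$ by combining two Kohn--M\"uller-type branched habit plane configurations along linearly independent normals, filling the remaining regions with pure austenite. The austenite acts as a universal filler that allows two transverse two-dimensional microstructures to coexist in $\R^3$, so that their combined variation destroys every possible direction of translational invariance of $e(u)$.

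Concretely, I would pick two non-parallel habit plane normals with compatible twin averages. A convenient choice is $\nu_1 = (0,1,1)/\sqrt{2}$, symmetrically rank-one connected to the twin average $m_1 = \tfrac{1}{3}e_1 + \tfrac{2}{3}e_2 = \diag(0,-1,1)$, and $\nu_2 = (1,0,1)/\sqrt{2}$, symmetrically rank-one connected to $m_2 = \tfrac{1}{3}e_2 + \tfrac{2}{3}e_3 = \diag(1,0,-1)$. For each $i=1,2$ I would construct a dyadically branching habit plane displacement $u^{(i)}$ in the spirit of \cite{KM92, KM94branching, Conti00branched}: pure austenite on one side of $\{x\cdot\nu_i = 0\}$ and a finely refining twin of the two corresponding variants on the other, with $e(u^{(i)})$ taking pointwise values in $\{0, e_j, e_k\} \subset \K \cup \{0\}$ almost everywhere. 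I would then place $u^{(1)}$ in the half-space $\{x_3 < -L\}$ and a suitable rigid-motion translate of $u^{(2)}$ in $\{x_3 > L\}$ for large $L$, filling the central slab $\{|x_3| \le L\}$ with a rigid motion that matches both boundary values. If a nonzero $v \in \R^3$ were to leave $e(u)$ invariant, the orbit through any point of the twin region of $u^{(1)}$ would have to remain in that region, forcing $v \perp e_3$ (to stay in $\{x_3 < -L\}$), $v \perp \nu_1$ (to stay in the twin slab), and $v \perp \tau_1$ (to preserve the lamination direction of the $(e_1,e_2)$-twin). Since $e_3,\nu_1,\tau_1$ are linearly independent, this forces $v = 0$, a contradiction.

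\textbf{Main obstacle.} The delicate technical step is realizing each $u^{(i)}$ as a bona fide pointwise Lipschitz solution of the inclusion (rather than a limit of $E_\eta$-minimizing sequences), and cleanly truncating its twin region so that it stays within its assigned half-space while matching the central rigid motion across $\{x_3 = \pm L\}$. At any finite branching level the construction is piecewise-affine with $e(u^{(i)}) \in \K \cup \{0\}$ a.e., and with geometrically summable branching scales it passes to a Lipschitz limit that still satisfies the inclusion. Truncating the twin further by a second parallel habit plane of the same normal, or by additional nested branching transverse to $\nu_i$, keeps its non-austenite support confined to a slab, so that the three pieces glue cleanly into a composite $u \in W^{1,\infty}_{\mathrm{loc}}(\R^3;\R^3)$; the alignment of the asymptotic rigid motions of $u^{(1)}$ and $u^{(2)}$ can always be arranged by a global skew adjustment, which does not affect the strain.
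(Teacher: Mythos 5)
You treat the Kohn--M\"uller/Conti branched habit plane as if it could be realized as an \emph{exact} solution of the inclusion, with $e(u^{(i)})\in\{0,e_j,e_k\}$ a.e.\ (or at least in $\K\cup\{0\}$ at every finite branching level). It cannot: no pure variant $e_j$ is symmetrically rank-one connected to $0$ (each $e_j$ has rank three), so a strain taking only the values $\{0,e_j,e_k\}$ cannot have austenite meeting pure-variant regions compatibly, and the transition layers of any branching construction necessarily carry off-diagonal shear strains that lie outside $S\supset\K$ --- this is precisely why the branched constructions in \cite{KM92,KM94branching,Conti00branched} have strictly positive elastic energy and only enter the paper as finite-energy \emph{sequences}, never as solutions of the pointwise inclusion. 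So neither your finite-level piecewise affine approximations nor their Lipschitz limit satisfy $e(u)\in\K\cup\{0\}$ a.e., and the whole first half of the plan does not produce an admissible $u$. The inclusion in the lemma concerns macroscopic (averaged) strains, and the intended route --- the one the paper takes --- is to work directly with values on the edges of $\K$: prescribe $\theta_1=\tfrac13\chi_3(x\cdot\nu_3)$, $\theta_3=\tfrac13\chi_1^+(x\cdot\nu_1^+)+\tfrac13\chi_1^-(x\cdot\nu_1^-)$, $\theta_2=1-\theta_1-\theta_3$ with arbitrary one-dimensional characteristic functions along the basis $\{\nu_1^+,\nu_1^-,\nu_3\}$, check by a case distinction that $\sum_i\theta_ie_i\in\K\cup\{0\}$, and integrate the one-dimensional profiles explicitly to a displacement.

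Even granting some exact branched building block, two further steps fail. First, the confinement to half-spaces: a twinned (or twin-averaged) region compatible with surrounding austenite for $m_1=\diag(0,-1,1)$ can only be bounded by planes with normals $(011)$ or $(01\overline1)$, so it is an infinite slab or prism that is not contained in any half-space $\{x_3<-L\}$; and a ``doubly truncated'' patch of a single averaged strain surrounded by austenite corresponds to a \emph{product} of two one-dimensional cut-offs, which violates the Saint-Venant compatibility underlying decomposition \eqref{decomposition} --- admissible strains are built from \emph{sums} of one-dimensional profiles, which is exactly how the paper's construction evades this issue. Second, even if the gluing worked, your $e(u)$ would be three-dimensional only globally: every point has a neighbourhood on which the configuration is planar or pure austenite, so it would not exhibit the loss of the \emph{local} dimension-reduction property that motivates the lemma, whereas in the paper's example the strain varies in three linearly independent directions inside any ball where the three characteristic functions oscillate.
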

We will give the construction in Subsection \ref{subsec:macro_construction}.

Note that Theorem \ref{main} strongly restricts the geometric structure of the strain, even if the four cases exhibit varying degrees of rigidity.
Therefore, we can interpret it as a rigidity statement for the differential inclusion $e(u) \in \K$.
For example, it can be used to prove that $u(x) \equiv e \in \K$ is the only solution of the boundary value problem
\[\begin{cases}
 e(u)  \in \K & \text{ in }\ball{0}{1},\\
 u(x)  \equiv e x  & \text{ on } \partial \ball{0}{1}
\end{cases}\]
with affine boundary data, for which convex integration constructions would give a staggering amount of solutions with complicated geometric structures.
This can be seen by transporting the decomposition into one-dimensional functions of Definitions \ref{def:degenerate}-\ref{def:triple} to the boundary using the fact that they are unique up to affine functions, see \cite[Lemma 5]{CO12}.

\subsection{Inferring the microscopic behavior}\label{subsection:Young}
In order to properly interpret the various cases Theorem \ref{main} provides, we first need a clear idea of precisely what information the local volume fractions contain.
In principle, they have the same downside of using Young measures to describe microstructures:
They do not retain information about the microscopic geometric properties of the microstructures.
In fact, the Young measures generated by finite energy sequences are determined by the volume fractions and are given by the expression $\sum_{i=1}^3\theta_i \delta_{e_i}$, since the Young measures concentrate on the matrices $e_1$, $e_2$ and $e_3$, which span a non-degenerate triangle.

As every rank-one connection has two possible normals, see equations \eqref{rank_one_connections}, giving rise to two different twins, we cannot infer from the volume fractions which twin is used.
Consequently, what looks like a homogeneous limit could in principle be generated by a patchwork of different twins.
In fact, Figure \ref{fig:two_variant} shows an experimental picture of such a situation.

\begin{figure}
 \centering
 \includegraphics{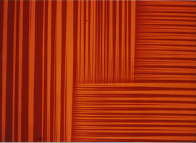}
 \caption{Experimental picture of a two-variant microstructure in a Cu-Al-Ni alloy, by courtesy of R.D.\ James and C.\ Chu.}
 \label{fig:two_variant}
\end{figure}

Additionally, without knowing which twin is present the interpretation of changes in volume fractions is further complicated by the fact there are at least three mechanisms which could be responsible:
\begin{enumerate}
	\item If there is only one twin throughout $\ball{0}{1}$ then the volume fractions can vary freely in the direction of lamination because there are no restrictions on the thickness of martensite layers in twins apart from the very mild control coming from the interface energy.
	\item If there is only one twin, the volume fractions may, perhaps somewhat surprisingly, vary perpendicularly to the direction of lamination in a sufficiently regular manner.
	Constructions exhibiting this behavior have been given by Conti \cite[Lemma 3.1]{Conti00branched} and Kohn, Mesiats and M\"uller \cite{Mesiats16zigzag} for the scalar Kohn-M\"uller model.
	\item There is a jump in volume fractions across a habit plane or a second-order twin.
	As such a behavior costs energy, one would expect that it cannot happen too often.
	However, without assuming the sequence to be minimizing is some sense we can only prove, roughly speaking, that the corresponding set of interfaces has at most Hausdorff-dimension $3-\frac{2}{3}$.
	This will be the content of a follow-up paper.
\end{enumerate}

\subsection{Some notation}\label{subsec:notation}
The rank-one connections between the martensite strains are
\begin{align}\label{rank_one_connections}
	\begin{split}
		e_2 - e_1 & = 6 \, \nu_3^+ \odot \nu_3^-\\
% 		:= 6 \cdot \frac{1}{2}\left( \nu_3^+ \otimes \nu_3^- + \nu_3^- \otimes \nu_3^+ \right),	\\
		e_3 - e_2 & = 6\, \nu_1^+ \odot \nu_1^-,\\
		e_1 - e_3 & = 6\, \nu_2^+ \odot \nu_2^-,
	\end{split}
\end{align}
where the possible normals are given by
\begin{align*}
%  \label{definition_normals}
 \begin{split}
	\nu_1^+:= \frac{1}{\sqrt 2}(011), \nu_1^-:=\frac{1}{\sqrt 2}(01\overline1),\\
	\nu_2^+:= \frac{1}{\sqrt 2}(101), \nu_2^-:=\frac{1}{\sqrt 2}(\overline101),\\
	\nu_3^+:= \frac{1}{\sqrt 2}(110), \nu_3^-:=\frac{1}{\sqrt 2}(1\overline10).
 \end{split}
\end{align*}
Here, we use crystallographic notation, meaning we define $\overline1 := -1$.
In addition, we use round brackets ``( )'' for dual vectors, i.e., normals of planes, while square brackets ``[ ]'' are used for primal vectors, i.e., directions in real space.

These normals can be visualized as the surface diagonals of a cube with side lengths $\frac{1}{\sqrt 2}$, see Figure \ref{fig:geometry_normals_a}.
We group them into three pairs according to which surface of the cube they lie in, i.e., according to the relation $\nu_i\cdot E_i=0$, where $E_i$ is the standard $i$-th basis vector of $\R^3$:
Let
\begin{align*}
	N_1:= \{\nu_1^+,\nu_1^-\},\\
	N_2:=\{\nu_2^+,\nu_2^-\},\\
	N_3:= \{\nu_3^+,\nu_3^-\}.
\end{align*}
Note that this grouping is also appears in equations \eqref{rank_one_connections}.
We will also frequently want to talk about the set of all possible twin and habit plane normals, which we will refer to by $N:=N_1\cup N_2 \cup N_3$.

Throughout the paper we make use of cyclical indices $1,2$ and $3$ corresponding to martensite variants whenever it is convenient.

\begin{remark}\label{rem:combinatorics}
An essential combinatorial property is that for any $\nu_i \in N_i$, $\nu_{i+1} \in N_{i+1}$ with $i\in \{1,2,3\}$ there exists exactly one $\nu_{i-1} \in N_{i-1}$ such that $\{\nu_{i},\nu_{i+1},\nu_{i-1}\}$ is linearly dependent:
Indeed, the linear relation is given by $\nu_j \cdot d = 0$ for a space diagonal
\[d\in \mathcal{D}:= \{[111],[\overline111],[1\overline11],[11\overline1]\}\numberthis\label{intro:space_d}\]
of the unit cube, see Figure \ref{fig:geometry_normals_b}.
We will prove in Step 1 of the Proof of Proposition \ref{prop: six_corner} that they form $120\degree$ angles.
Additionally, for every $\nu \in N$ there exist precisely two $d\in \mathcal{D}$ such that $\nu\cdot d = 0$ and for $\nu \in N_i$ and $\tilde \nu \in N_{i+1}$ there exists a single $d \in \mathcal{D}$ such that $\nu \cdot d = \tilde \nu \cdot d =0$.
In contrast, for each $d\in \mathcal{D}$ we have $\nu_i^+\cdot d=0$ and $\nu_i^- \cdot d \neq 0$ or vice versa.
\end{remark}

\begin{figure}
 \centering
 \subcaptionbox{ \label{fig:geometry_normals_a}}{
  \centering
  \begin{tikzpicture}[scale=3]
    \fill[color=gray,opacity=.6] (-.2,0,-1.2) -- (-.2,1,-1.2) -- (1.2,1,.2) -- (1.2,0,.2);
    \fill[color=gray,opacity=.6] (-.2,0,.2) -- (-.2,1,.2) -- (1.2,1,-1.2) -- (1.2,0,-1.2);
    \draw (0,0,0) -- (0,1,0) -- (1,1,0) -- (1,0,0) -- cycle;
    \draw (0,1,0) -- (0,1,-1) -- (1,1,-1) -- (1,1,0);
    \draw (1,1,-1) -- (1,0,-1) -- (1,0,0);
    \draw[dotted] (0,0,0) -- (0,0,-1) -- (0,1,-1);
    \draw[dotted] (0,0,-1) -- (1,0,-1);
    \draw[-{Latex[length=2mm]},dashed] (.5,0,-.5) -- (.5,1,-.5);
    \node at (.5,1.1,-.5) {$E_3$};
    \draw[-{Latex[length=2mm]}] (.25,0,-.25) -- (.5,0,0);
    \node at (.55,0,.2) {$\nu_3^-$};
    \draw[-{Latex[length=2mm]}] (.75,0,-.25) -- (1,0,-.5);
    \node at (1.15,0,-.3) {$\nu_3^+$};
  \end{tikzpicture}
}
 \subcaptionbox*{}{
  \begin{tikzpicture}
   \draw[-{Latex[length=2mm]}] (0,0,0) --  (1,0,0);
   \node at (1.3,0,.2) {$E_1$};
   \draw[-{Latex[length=2mm]}] (0,0,0) --  (0,0,-1);
   \node at (0,.2,-1) {$E_2$};
   \draw[-{Latex[length=2mm]}] (0,0,0) --  (0,1,0);
   \node at (-.3,1,0) {$E_3$};
  \end{tikzpicture}
 }
 \subcaptionbox{\label{fig:geometry_normals_b}}{
  \centering
  \begin{tikzpicture}[scale=3]
     \draw (0,0,0) -- (0,1,0) -- (1,1,0) -- (1,0,0) -- cycle;
      \draw (0,1,0) -- (0,1,-1) -- (1,1,-1) -- (1,1,0);
      \draw (1,1,-1) -- (1,0,-1) -- (1,0,0);
      \draw[dotted] (0,0,0) -- (0,0,-1) -- (0,1,-1);
      \draw[dotted] (0,0,-1) -- (1,0,-1);
      \draw[-{Latex[length=2mm]},dashed] (0,1,0)--(1,0,-1);
      \node at (.8,.2,-1) {$d$};
      \draw[-{Latex[length=2mm]}] (0,0,0) -- node[right]{$\nu_2$} (1,1,0);
      \draw[{Latex[length=2mm]}-] (1,1,0) -- node[above]{$\nu_3$} (0,1,-1);
      \draw[{Latex[length=2mm]}-] (0,1,-1) --node[left]{$\nu_1$} (0,0,0);
      \fill[color=gray,opacity=.6] (0,0,0) -- (1,1,0) -- (0,1,-1);
      \node [fill=black,inner sep=1.5pt,circle,label=270:$p$] at (.3333,.6666,-.3333) {};
      \node at (-.2,0,.2) {\phantom{h}};
  \end{tikzpicture}
 }
 \caption{ a) Sketch relating the normals $\nu_{3}^+, \nu_3^- \in N_3$ of the gray planes and $E_3$. Primal vectors are shown as dashed, dual vectors as continuous lines. The picture does not attempt to accurately capture the lengths.
 b) Sketch showing the linearly dependent normals $\nu_1^+$, $\nu_2^+$ and $\nu_3^-3$ spanning the gray plane. The point $p$ indicates the intersection of the affine span of the space diagonal $[11\overline1] \in \mathcal{D}$, see definition \eqref{intro:space_d}, with the span of the normals.}
 \label{fig:geometry_normals}
\end{figure}
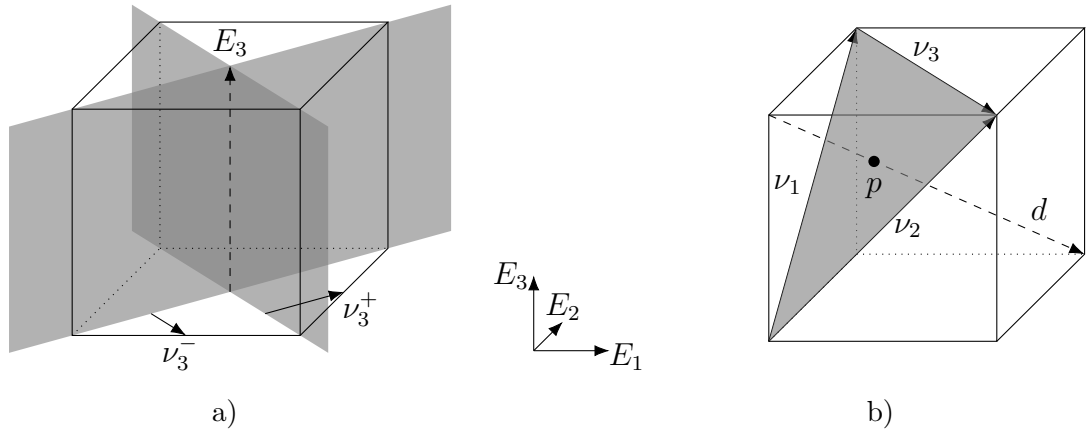

Additionally, we will also set
\begin{align*}
	\pi_\nu (x) := \nu \cdot x \text{ and } H(\alpha,\nu) := \left\{x \in \R^3: x \cdot \nu = \alpha\right\}
\end{align*}
for $\nu \in N$ to be the projection onto $\operatorname{span} (\nu)$, respectively the plane normal to $\nu$ containing $\alpha \nu$ for $\alpha \in \R$.

Furthermore, we use the notation $A \lesssim B$ if there exists a universal constant $C>0$ such that $A \leq C B$.
In proofs, such constants may grow from line to line in proofs.
In a similar vein, radii $r>\tilde r$ may shrink, where $\tilde r>0$ is a universal lower radius that stays fixed throughout a proof and whose numerical value we will typically choose at the end of the argument.

\subsection{Description of the limiting configurations} \label{subsec:limiting_configurations}

In the following we describe all types of configurations we can obtain as weak limits.
We start with those in which globally only two martensite variants are involved.
\begin{Def}\label{def:degenerate}
	We say that the configuration $e(u) \in \K$ is a \emph{two-variant configuration} on $\ball{0}{r}$ with $r>0$ if there exists $i \in \{1,2,3\}$ such that
	\begin{align*}
     \theta_i(x) & \equiv \phantom{-} 0,\\
     \theta_{i+1}(x) & \equiv \phantom{-} f_{\nu_i^+}\left(\nu_i^+\cdot x \right) + f_{\nu_i^-}\left(\nu_i^-\cdot x \right) +  \lambda x_i + 1,\\
     \theta_{i-1}(x) & \equiv - f_{\nu_i^+}\left(\nu_i^+\cdot x \right) - f_{\nu_i^-}\left(\nu_i^-\cdot x \right) -  \lambda x_i,
    \end{align*}
	for all $x \in \ball{0}{r}$, for some $\lambda \in \R$ and measurable functions $f_\nu$ for $\nu \in N_i$.
	For a definition of the normals $\nu$ see Subsection \ref{subsec:notation}.
\end{Def}

\begin{figure}[h]
     \centering
     \subcaptionbox{\label{fig:degenerate_a}}{
      \centering
      \includegraphics[height=3.9cm]{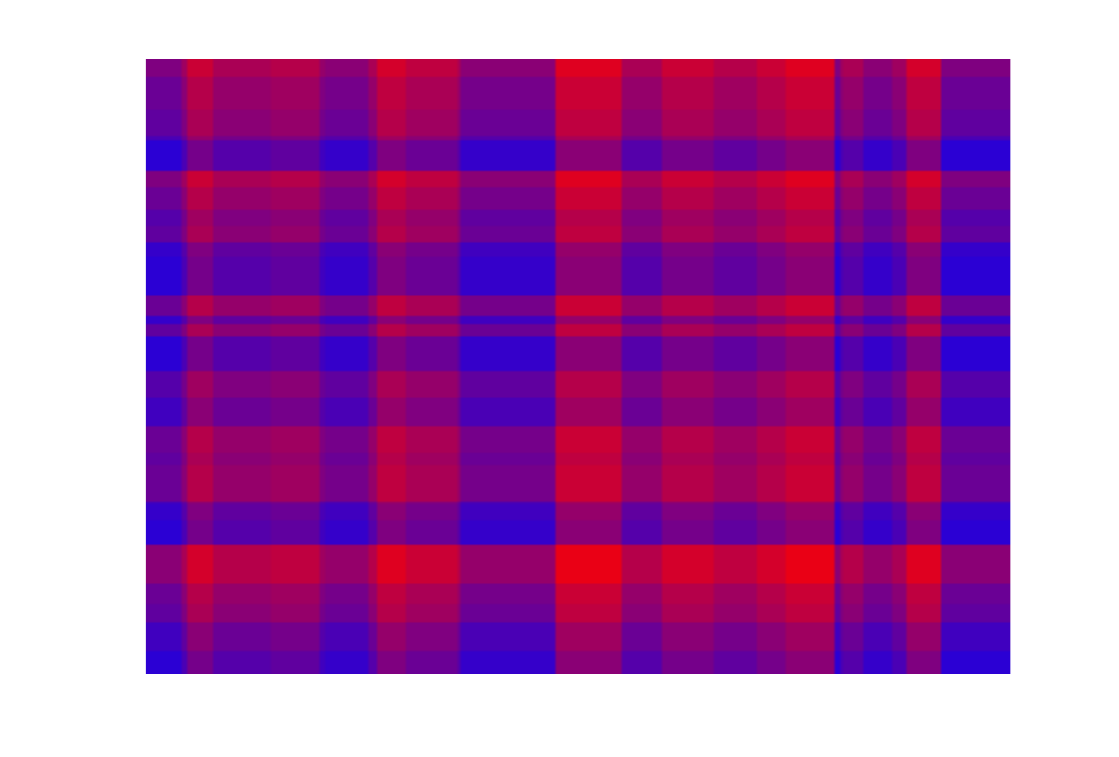}
   }
   \subcaptionbox{\label{fig:degenerate_b}}{
      \centering
      \begin{tikzpicture}
	\node at (0,0) {\includegraphics[height=4cm]{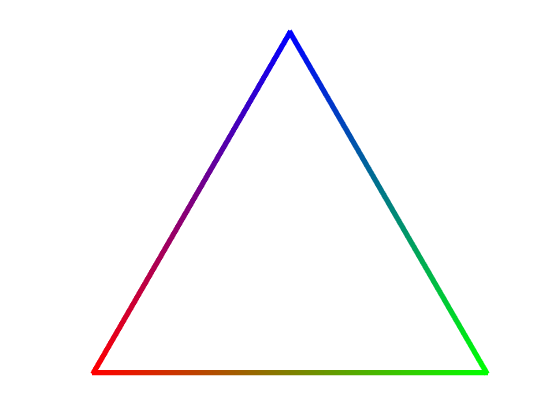} };
	\node at (.2,2) {$e_{i-1}$};
	\node at (-2,-1.75) {$e_{i+1}$};
	\node at (2.2,-1.7) {$e_i$};
      \end{tikzpicture}
  }
%   \subcaptionbox{\label{fig:two_variant}}{
%     \centering
%     \includegraphics[height=4.5cm]{degenerate_experimental.png}
%     c) Experimental of a two-variant microstructure in a Cu-Al-Ni alloy, by courtesy of R.D.\ James and C.\ Chu.
%   }
     \caption{a) Cross-section through a two-variant configuration. The configuration may be affine in the direction perpendicular to the cross-section.
     b) Color code indicating the volume fractions of martensite variants with pure blue, green and red corresponding to pure phases.
     } \label{fig:degenerate}
\end{figure} 

% Apparently only two martensite variants are involved, which explains our terminology.

An experimental picture of a two-variant configuration can be found in Figure \ref{fig:two_variant}, but be warned that comparing it with Figure \ref{fig:degenerate_a} is not entirely straightforward:
The former fully resolves a microstructure with mostly constant overall volume fraction.
In contrast, the latter only keeps track of the local volume fractions indicated by mix of pure red and blue, and indicates how they can vary in space.
Their deceptively similar overall geometric structure is due to the rank-one connections  for the microscopic and macroscopic interfaces coinciding.
This is also the reason why we cannot infer the microscopic structure from the limiting volume fractions.
We can only say that the affine change in $x_i$ should be due to Mechanism 2 from Subsection \ref{subsection:Young}.

In the context of the other structures appearing in Theorem \ref{main}, two-variant configurations are best interpreted as their building blocks, since said structures typically consist of patches where only two martensite variants are involved.
In the following, we will see that on these patches the microstructures are usually much more rigid than those in Figure \ref{fig:degenerate_a}.
This is a result of the non-local nature of kinematic compatibility when gluing two different two-variant configurations together to obtain a more complicated one.
% 
% One could also be tempted to interpret degenerate configurations as representing special cases of the various microstructures mentioned in Subsection \ref{subsec:shape_memory_effect}.
% However, we advise against doing so, because at this level there is no telling how the three mechanisms mentioned in Section \ref{subsection:Young} interact to produce the configuration:
% As the entire configuration takes values in a single edge of the triangle the rank-one connection for microscopic twinning and macroscopic interfaces is the same.
% Therefore the possible normals for both agree, so that we get next to no information about the underlying geometry of the microstructures from the changes in volume fraction.

Apart from two-variant configurations, all others will only depend on two variables.
We will call such configurations planar.

\begin{Def}\label{def:planar}
	A configuration is \emph{planar with respect to $d \in \{[111], [\overline111],[1\overline11],[11\overline1]\}$} on a ball $\ball{0}{r}$ with $r>0$ if the following holds:
	There exist measurable functions $f_{\nu_i}$ only depending on $x\cdot \nu_i$ and affine functions $g_j$ with $\partial_d g_j=0$ such that
	\begin{align}\label{reduced_decomposition}
 		\begin{split}
 		  \theta_1 & = \phantom{ {}-{} f_{\nu_1} {}+{} } f_{\nu_2} - f_{\nu_3} + g_1,\\
		  \theta_2 & = {}-{} f_{\nu_1} \phantom{ {}-{}  f_{\nu_2}} + f_{\nu_3} + g_2,\\
		  \theta_3 & = \phantom{ {}-{} } f_{\nu_1} - f_{\nu_2} \phantom{ {} + {} f_{\nu_3}} +g_3
	 	 \end{split}
	\end{align}
	on $\ball{0}{r}$.
	Here $\nu_i$ is the unique normal $\nu_i \in N_i$ with $\nu_i \cdot d = 0$, see Figure \ref{fig:geometry_normals_b}.
\end{Def}

There will be three cases of planar configurations, which at least in terms of their volume fractions look like one of the following:
single second-order laminates, ``checkerboard'' structures of two second order laminates crossing, and three single interfaces of second order laminates crossing.

The first two cases are closely related to each other, the first one being almost contained in the second.
However, the first case has slightly more flexibility away from macroscopic interfaces.
Despite the caveat discussed in Subsection \ref{subsection:Young}, we will name them planar second-order laminates.

\begin{Def}\label{def:second-order_laminate}
 A configuration is a \emph{planar second-order laminate} on a ball $\ball{0}{r}$ for $r>0$ if there exists an index $i\in \{1,2,3\}$ and $\nu \in N_i$ such that
 \begin{align*}
   \theta_{i-1}(x) & =  (1 - a x\cdot \nu - b) \chi_{\stcomp{A}}(x\cdot \nu),\\
   \theta_i (x) & =   \phantom{(1 {}+ {}  } a x\cdot \nu + b,\\
   \theta_{i+1}(x) & =  (1 - a x\cdot \nu + b) \chi_{A}(x\cdot \nu)
 \end{align*}
 with $A\subset \R$ measurable and $a$, $b \in \R$ such that $0\leq \theta_i \leq 1
 $ for almost all $x \in \ball{0}{r}$.
\end{Def}

A sketch of a planar second-order laminate can be found in Figure \ref{fig:second-order_laminate}, along with a matching experimental picture of a Cu-Al-Ni alloy, which, admittedly, undergoes a cubic-to-orthorhombic transformation.

\begin{figure}[h]
     \subcaptionbox{\label{fig:second-order_laminate_a}}{
	  \centering
% 	  \vspace{-\baselineskip}
	  \includegraphics[height=3.5cm]{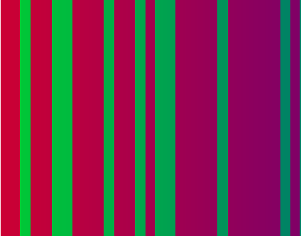}
     }
     \subcaptionbox{\label{fig:second-order_laminate_b}}{
      \centering
      \begin{tikzpicture}
	\node at (0,0) {\includegraphics[height=4cm]{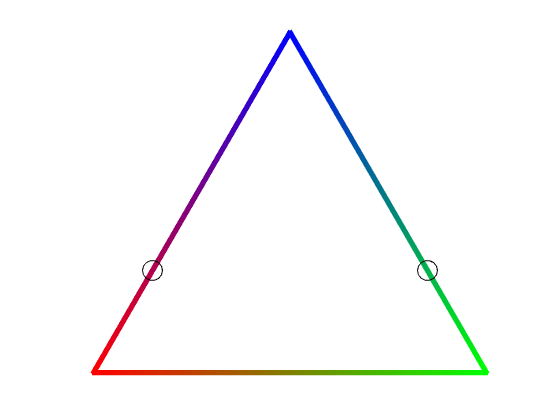} };
	\node at (.2,1.9) {$e_{i}$};
	\node at (-2,-1.75) {$e_{i-1}$};
	\node at (2.4,-1.75) {$e_{i+1}$};
      \end{tikzpicture}
     }
     \subcaptionbox{\label{fig:second-order_laminate_c}}{
		\centering
     		\includegraphics[height=3.5cm]{twin_in_twin.jpg}
     }
     \caption{
     	a) Cross-section of a planar second-order laminate arranged in such a way that it is constant in the direction perpendicular to plane of the paper.
     	b) Color code for the mixtures involved at one of the interfaces in the center of Subfigure \ref{fig:second-order_laminate_a}. The set $\{x\cdot \nu \in A\}$ is shown as mostly green.
     	c) Second-order laminate in a Cu-Al-Ni alloy, by courtesy of C.\ Chu and R.D.\ James.
     	The fine twins correspond to mixtures of pure blue and green and, respectively, blue and red in Subfigure \ref{fig:second-order_laminate_a}.
     	} \label{fig:second-order_laminate}
\end{figure} 

Indeed, such configurations can be interpreted and constructed as limits of finite-energy sequences as follows, using Figure \ref{fig:second-order_laminate} as a guide:
For simplicity let us assume that $A$ is a finite union of intervals, and that $i=1$.
Then on the interior of $\{x\cdot \nu \in A\}$ the configuration will be generated by twins of variants 1 and 2, while on the interior of $\{x\cdot \nu \in \stcomp{A}\}$, it will be generated by twins of variants 1 and 3.
At interfaces, a branching construction on both sides will be necessary to join these twins in a second-order laminate.
In order to realize the affine change in the direction of $\nu$ we will need to combine Mechanisms 1 and 2 of Subsection \ref{subsection:Young} because $\nu$ is neither a possible direction of lamination between variants 1 and 2 or variants 1 and 3, nor is it normal to one of them.
%The corresponding detailed construction is a bit more involved and can be found in Chapter \ref{chapter:constructions}.

The second case consists of configurations in which two second-order laminates cross.
In contrast to the first case, the strains are required to be constant away from macroscopic interfaces leading to only four different involved macroscopic strains.
\begin{Def}\label{def:checkerboard}
	We will say that a configuration is a \emph{planar checkerboard} on $\ball{0}{r}$ for $r>0$ if it is planar and there exists $i\in \{1,2,3\}$ such that
	\begin{align*}
 				 \theta_i(x) = &  - a \chi_A (x\cdot \nu_{i+1})  -  b \chi_B(x\cdot \nu_{i-1})  + 1,\\
 				 \theta_{i+1}(x) = &  \phantom{{}-{}a \chi_A(x\cdot \nu_{i+1})  {}+{} } b \chi_B (x\cdot \nu_{i-1}) ,\\
				 \theta_{i-1}(x) = & \phantom{ {}-{}} a \chi_A(x\cdot \nu_{i+1}) 
	\end{align*}
	with $A,B \subset \R$ measurable, $a,b \geq 0 $ such that $a + b = 1$ and $\nu_j \in N_j$ for $j \in \{1,2,3\}\setminus \{i\}$ on $\ball{0}{r}$.
\end{Def}

For a sketch of such configurations, see Figure \ref{fig:checkerboard}.
An experimental picture can be found in \cite[Figure 17]{basinski1954experiments}

\begin{figure}
	\centering
     \subcaptionbox{}{
		\centering
      		\includegraphics[height=3.5cm]{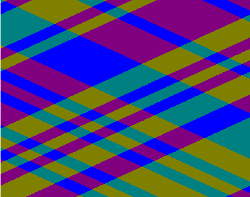}
     }
     \subcaptionbox{}{
      \centering
      \begin{tikzpicture}
	\node at (0,0) {\includegraphics[height=4cm]{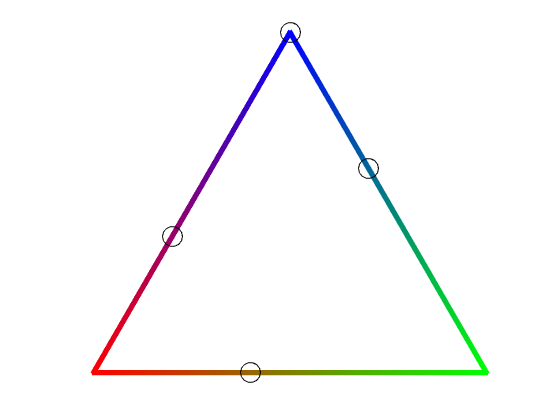} };
	\node at (.2,1.9) {$e_{i}$};
	\node at (-2,-1.75) {$e_{i-1}$};
	\node at (2.4,-1.75) {$e_{i+1}$};
      \end{tikzpicture}     
     }
%     \subcaptionbox{}{
%      \centering
%      \includegraphics[height=3.5cm]{crossing_comp.jpeg}
%     }
     \caption{a) Sketch of a planar checkerboard that is independent of the direction perpendicular to the cross-section.
     b) Color code showing the involved mixtures. The set $\{x\cdot \nu_{i+1} \in \stcomp{A}\}\cap \{x\cdot \nu_{i-1} \in \stcomp{B}\}$, colored in blue, corresponds to pure martensite. The set $\{x\cdot \nu_{i+1} \in A\}\cap \{x\cdot \nu_{i-1} \in \stcomp{B}\}$ is shown in turquoise and $\{x\cdot \nu_{i+1} \in \stcomp{A}\}\cap \{x\cdot \nu_{i-1} \in B\}$ is drawn as purple.
 %    c) Checkerboard structure in an Indium-Thallium crystal, although the bottom region is in the austenite phase. An interpretation of this structure in terms of the differential inclusion $e(u) \in \K \cup \{e_0\}$ can be found in Figure \ref{fig:austenite_2}. Reprinted from \cite{basinski1954experiments}, with permission from Elsevier.
 	} \label{fig:checkerboard}
\end{figure}

Again, we briefly discuss the construction of such limiting strains.
On $\{x\cdot \nu_{i+1} \in \stcomp{A}\}\cap\{ x\cdot \nu_3 \in \stcomp{B}\}$ there is of course only the martensite variant $i$ present.
On all other patches there will be twinning and the macroscopic interfaces require branching constructions unless the interface and the twinning normal coincide, which can only happen if both strains lie on the same edge of $\K$.
In particular, on $\{x\cdot \nu_{i+1} \in A,\, x\cdot \nu_{i-1} \in B\}$ there has to be branching towards all interfaces, i.e., the structure has to branch in two linearly independent directions.
%Also this construction is given in detail in Chapter \ref{chapter:constructions}.
% On $\{x\cdot \nu_2 \in A,\, x\cdot \nu_3 \in \stcomp{B}\}$, there will be twinning between variants 1 and 3.
% If the direction of lamination is $\nu_2$, the interface with $\{x\cdot \nu_2 \in \stcomp{A},\, x\cdot \nu_3 \in \stcomp{B}\}$ can be realized using Mechanism 1 from Subsection \ref{subsec:Young} by just leaving out variant 3.
% Otherwise it requires a branching contruction.
% On the other hand, interfaces with the set $\{x\cdot \nu_2 \in A,\, x\cdot \nu_3 \in B\}$ will require branching because these interfaces will be normal to $\nu_3$ and $\nu_3 \neq \nu_2$.
% Similar considerations hold for the sets $\{x\cdot \nu_2 \in \stcomp{A},\, x\cdot \nu_3 \in \stcomp{B}\}$ and $\{x\cdot \nu_2 \in A,\, x\cdot \nu_3 \in B\}$, except that on the latter set all interfaces require branching.
% As the choices $A, B = \emptyset $ or $A, B= [-r,r]$ are possible as well, the microstructures shown in Figure \ref{fig:twins_and_interfaces} which we set out to find are of this type.
% EIGENTLICH BRAUCHE ICH EINEN PHYSIKALISCHEN GRUND WARUM DEGENERIERTE TEILE NICHT SO OFT AUFTAUCHEN???

Lastly, we remark on the case of three crossing second-order laminates.

\begin{Def}\label{def:triple}
	A configuration is called a \emph{planar triple intersection} on $\ball{0}{r}$ for $r>0$ if it is planar and we have
	\begin{align*}
	  \theta_1(x) & = \phantom{(ax\cdot \tilde \nu_1 + b_1)\chi_{\stcomp{K_1}}{}+{}}  (a x\cdot \tilde\nu_2 +b_2)\chi_{\stcomp{K_2}}(x\cdot\tilde\nu_2) + (ax\cdot \tilde\nu_3 + b_3)\chi_{K_3}(x\cdot\tilde\nu_3),\\
 	  \theta_2(x) & = (ax\cdot \tilde\nu_1+b_1)\chi_{K_1}(x\cdot\tilde\nu_1)  \phantom{{}+{} (ax\cdot \tilde\nu_2 + b_2)\chi_{\stcomp{\stcomp{K_2}}}} + (ax\cdot \tilde\nu_3 + b_3) \chi_{\stcomp{K_3}}(x\cdot\tilde\nu_3),\\
	  \theta_3 (x) &  = (ax\cdot \tilde\nu_1 + b_1)\chi_{\stcomp{K_1}}(x\cdot\tilde\nu_1) + (ax\cdot \tilde\nu_2 + b_2)\chi_{K_2}(x\cdot\tilde\nu_2) \phantom{{}+{} (ax\cdot \tilde\nu_3+ b_3)\chi_{\stcomp{\stcomp{K_3}}}}
	\end{align*}
	for almost all $x\in \ball{0}{r}$.
	Here $\tilde \nu_i = \pm \nu_i$ for $i=1,2,3$ are oriented such that they are linearly dependent by virtue of $\tilde \nu_1 + \tilde \nu_2 + 
	\tilde \nu_3 = 0$, see Remark \ref{rem:combinatorics}.
	Furthermore, we have either
	\[K_i = (-\infty,x_0\cdot \tilde \nu_i]\text{ for } i=1,2,3\]
	or
	\[K_i = [x_0 \cdot \tilde \nu_i,\infty) \text{ for } i=1,2,3\]
	for some $x_0 \in \ball{0}{r}$ and $a, b_i \in \R$ for $i=1,2,3$ such that $\sum_{i=1}^3 b_i = 1$.
\end{Def}
A sketch of a planar triple intersection can be found in Figure \ref{fig:triple_intersection}.

\begin{figure}
     \centering
     \subcaptionbox{}{
      \centering
      \begin{tikzpicture}
	\node at (0,0) {\includegraphics[height=3.3cm]{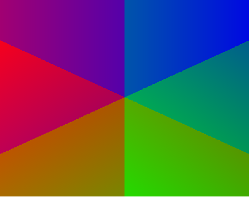}};
	\node at (-1,1.9) {$1$};
	\node at (1,1.9) {$2$};
	\node at (2.3,0) {$3$};
	\node at (1,-1.9) {$4$};
	\node at (-1,-1.9) {$5$};
	\node at (-2.3,0) {$6$};
      \end{tikzpicture}
     }
     \subcaptionbox{}{
      \centering
      \begin{tikzpicture}
	\node at (0,0) {\includegraphics[height=4cm]{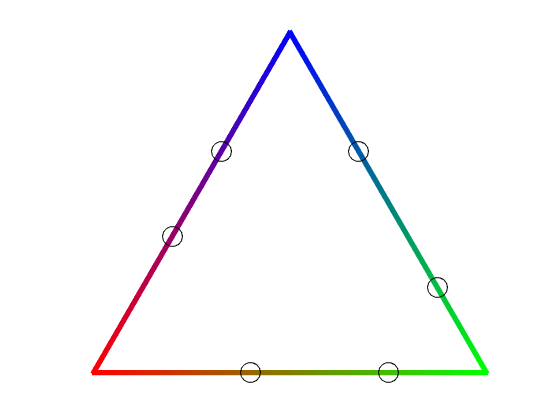} };
	\node at (.2,1.9) {$e_{i}$};
	\node at (-2,-1.75) {$e_{i-1}$};
	\node at (2.4,-1.75) {$e_{i+1}$};
	
	\node at (-.2,.6) {$1$};
	\node at (.4,.6) {$2$};
	\node at (1.15,-.7) {$3$};
	\node at (1.05,-1.2) {$4$};
	\node at (-.275,-1.2) {$5$};
	\node at (-.7,-.25) {$6$};
      \end{tikzpicture}  
     }
     
     \subcaptionbox{\label{fig:triple_c}}{
	\centering
	\begin{tikzpicture}[scale=1.5]
	  \node at (-.75,1.2) {1};
	  \node at (.75,1.2) {2};
	  \node at (1.7,0) {3};
	  \node at (.75,-1.2) {4};
	  \node at (-.75,-1.2) {5};
	  \node at (-1.7,0) {6};
 	  \clip (-1.5,1) -- (1.5,1) -- (1.5,-1) -- (-1.5,-1);
	  \draw[very thick] (30:2) -- (210:2);
	  \draw[very thick] (150:2) -- (330:2);
	  \begin{scope}
  	   \clip (0,1) -- (0,0) -- (210:2) -- (-1.5,1);
	   \foreach \s in {-30,...,30}
	    {
	      \draw[color=gray] ($(150:2)+\s/20*(60:1)$) -- ($(330:2) + \s/20*(60:1)$);
	    };
	  \end{scope}
	  \begin{scope}
  	   \clip (0,1) -- (0,0) -- (330:2) -- (1.5,1);
	   \foreach \s in {-30,...,30}
	    {
	      \draw[color=gray] ($(30:2)+\s/20*(120:1)$) -- ($(210:2) + \s/20*(120:1)$);
	    };
	  \end{scope}
	  \begin{scope}
  	   \clip (0,0) -- (330:2) -- (210:2);
	   \foreach \s in {-30,...,30}
	    {
	      \draw[color=gray] ($(0,-1)+\s/20*(1,0)$) -- ($(0,1) + \s/20*(1,0)$);
	    };
	  \end{scope}
	  \draw[very thick] (0,-1) -- (0,1);
	\end{tikzpicture}
     }
     \caption{a) Sketch of a planar triple intersection that is independent of the direction perpendicular to the cross-section. The numbers relate the different subfigures to each other.
     b) Color code indicating the mixtures involved at the center of the structure.
     c) Sketch indicating a possible choice for the microscopic twins where parallel lines represent equal normals, but neither their volume fractions nor the necessary branching.} \label{fig:triple_intersection}
\end{figure}

There are a number of possible choices of microscopic twins for constructing triple sections.
We will only describe the simplest one here, which is depicted in Figure \ref{fig:triple_c}.
Going around the central axis the macroscopic interfaces alternate between being a result of Mechanism 1 from Subsection \ref{subsection:Young}, namely varying the relative thickness of layers in a twin, and Mechanism 3, i.e., branching, otherwise.
Similarly to the case of second-order laminates, the affine changes require a combination of Mechanisms 1 and 2 on the individual patches in Figure \ref{fig:triple_c}.

\subsection{Construction of a fully three-dimensional structure in the presence of austenite}\label{subsec:macro_construction}
Here we flesh out the previously announced example in Lemma \ref{lemma:austenite_present}.
The idea is to construct planar checkerboards on hyperplanes $H(c,\nu)$ for some normal $\nu \in N$ and $c \in \R$ that include austenite and between which we can switch as $c$ varies, see Figure \ref{fig:austenite}.

\begin{proof}[Proof of Lemma \ref{lemma:austenite_present}]
Recall $\nu_1^+= \frac{1}{\sqrt 2} (011),\nu_1^- = \frac{1}{\sqrt 2} (01\overline1) $ from Subsection \ref{subsec:notation} and let $\nu_3  :=\nu_3^+ =\frac{1}{\sqrt 2} (110)$.
It is clear that $\{\nu_1^+,\nu_1^-,\nu_3\}$ is a basis of $\R^3$, see also Figure \ref{fig:basis}.
Let $\chi_1^+,  \chi_1^-, \chi_3: \R \to \{0,1\}$ be measurable characteristic functions.
We define the volume fractions to be
\begin{align*}
	\theta_1 &  := \phantom{ 1 - \frac{1}{3}\chi_1^+(x\cdot \nu_1^+) -\frac{1}{3}\chi_1^-(x\cdot \nu_1^-) -{} } \frac{1}{3}\chi_3(x\cdot \nu_3),\\
	\theta_2 & := 1 - \frac{1}{3}\chi_1^+(x\cdot \nu_1^+) -\frac{1}{3}\chi_1^-(x\cdot \nu_1^-) -\frac{1}{3}\chi_3(x\cdot \nu_3),\\
	\theta_3 &  := \phantom{1 - {}} \frac{1}{3}\chi_1^+(x\cdot \nu_1^+)  + \frac{1}{3}\chi_1^-(x\cdot \nu_1^-),
\end{align*}
which clearly satisfy $0\leq \theta_i \leq 1$ for $i=1,2,3$ and $\theta_1 + \theta_2 + \theta_3 \equiv 1$.
As $\{\nu_1^+,\nu_1^-,\nu_3\}$ constitutes a basis of $\R^3$, the structure is indeed fully three-dimensional.

Straightforward case distinctions ensure that $\theta_i = 0$ for some $i=1,2,3$ or $\theta_i =\frac{1}{3}$ for all $i=1,2,3$ almost everywhere.
Setting $G:= \sum_{i=1}^3 \theta_i e_i$ we see that this implies $G \in \K \cup \{0\}$ almost everywhere.
A sketch of cross-sections through $G$ on $H(c,\nu_1^-)$ both with $\chi_1^-(c) =0$ and $\chi_1^-(c) =1$ is given in Figure \ref{fig:austenite}.

\begin{figure}
 \centering
 \begin{subfigure}{.15\linewidth}
  \begin{tikzpicture}
   \draw[-{Latex[length=2mm]}] (0,0,0) --  (1,0,0);
   \node at (1.3,0,.2) {$E_1$};
   \draw[-{Latex[length=2mm]}] (0,0,0) --  (0,0,-1);
   \node at (0,.2,-1) {$E_2$};
   \draw[-{Latex[length=2mm]}] (0,0,0) --  (0,1,0);
   \node at (-.3,1,0) {$E_3$};
  \end{tikzpicture}
 \end{subfigure}
 \begin{subfigure}{.45\linewidth}
   \begin{tikzpicture}[scale=3]
    \fill[color=gray,opacity=.6] (0,-.1,.1) -- (1,-.1,.1) -- (1,1.1,-1.1) -- (0,1.1,-1.1);
    \draw (0,0,0) -- (0,1,0) -- (1,1,0) -- (1,0,0) -- cycle;
    \draw (0,1,0) -- (0,1,-1) -- (1,1,-1) -- (1,1,0);
    \draw (1,1,-1) -- (1,0,-1) -- (1,0,0);
    \draw[dotted] (0,0,0) -- (0,0,-1) -- (0,1,-1);
    \draw[dotted] (0,0,-1) -- (1,0,-1);
    \draw[-{Latex[length=2mm]}] (0,1,0) -- (1,1,-1);
    \node at (.5,1.1,-.5) {$\nu_3$};
    \draw[-{Latex[length=2mm]}] (1,0,0) -- (1,1,-1);
    \node at (1.125,.95,-.25) {$\nu_1^+$};
    \draw[{Latex[length=2mm]}-] (1,0,-1) -- (1,1,0);
    \node at (1.15,.5,-.45) {$\nu_1^-$};
  \end{tikzpicture}
 \end{subfigure}
 \caption{Sketch showing the basis $\{\nu_1^+, \nu_1^-,\nu_3\}$ and a plane with normal $\nu_1^-$, parallel to which the cross-sections of Figure \ref{fig:austenite} are chosen.}
 \label{fig:basis}
\end{figure}
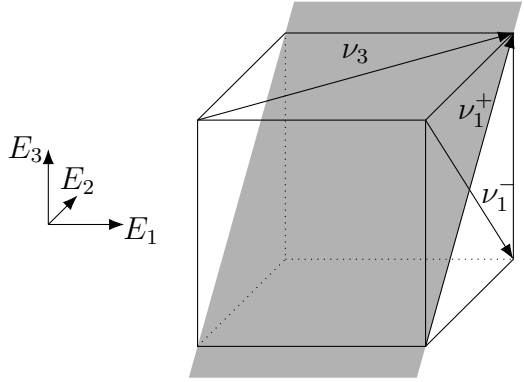

\begin{figure}
 \begin{subfigure}{\linewidth}
  \begin{minipage}{.45\linewidth}
    \centering
    \includegraphics[height=4cm]{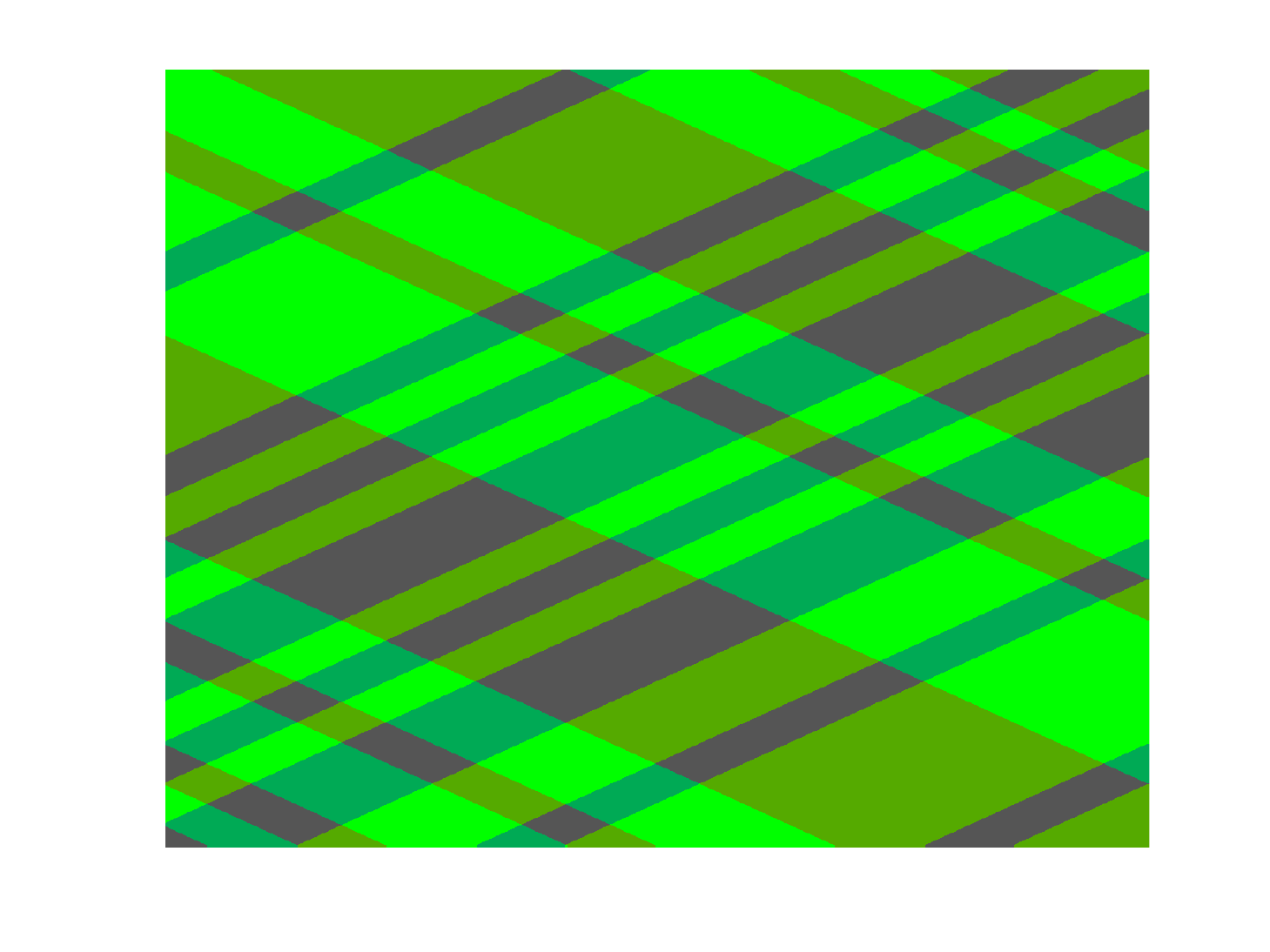}
  \end{minipage}
  \begin{minipage}{.45\linewidth}
    \centering
    \includegraphics[height=4cm]{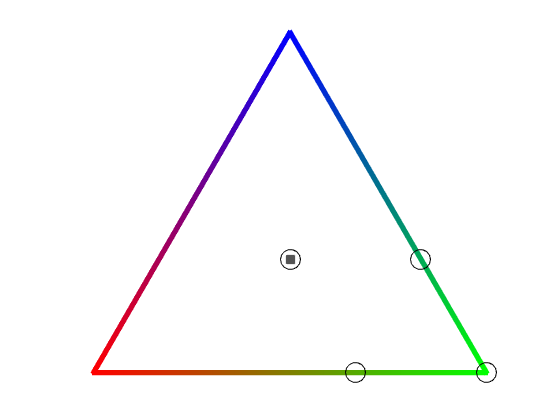}
  \end{minipage}
  \caption{
    The left-hand side shows a cross-section with $x\cdot \nu_1^- = c$ such that $\chi_1^-(c) = 0$.
    (Be warned that the angles between interfaces are not accurate in the picture because we have $\nu_1^-\cdot \nu_1^+, \nu_1^-\cdot\nu_3 \neq 0$.)
    The involved strains are marked on the right-hand side.
  }
  \label{fig:austenite_1}
 \end{subfigure}
 
 \begin{subfigure}{\linewidth}
  \begin{minipage}{.45\linewidth}
    \centering
    \includegraphics[height=4cm]{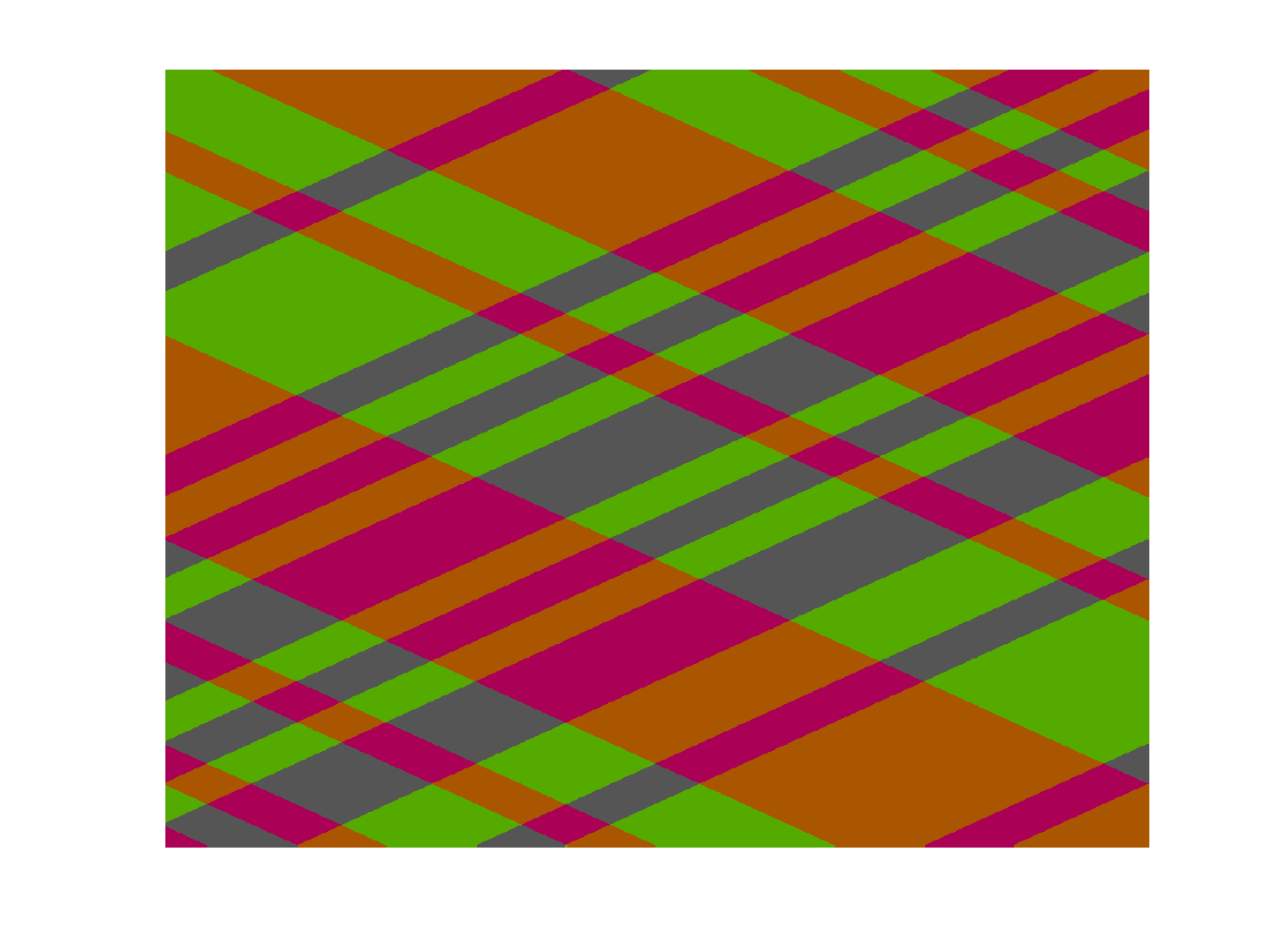}
  \end{minipage}
  \begin{minipage}{.45\linewidth}
    \centering
    \includegraphics[height=4cm]{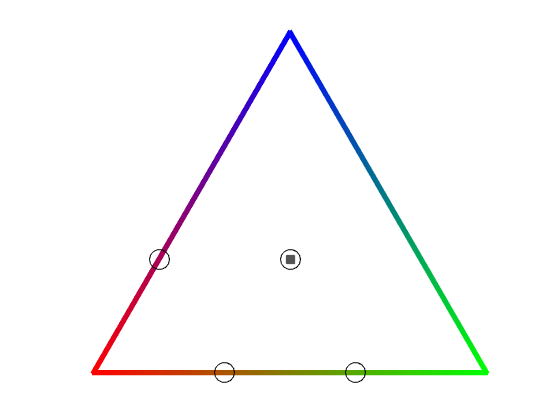}
  \end{minipage}
  \caption{
    On the left-hand side there is a cross-section with $x\cdot \nu_1^- = c$ depicted such that $\chi_1^-(c) = 1$.
    Again, the right-hand side indicates the involved strains.
  }
  \label{fig:austenite_2}
 \end{subfigure}
 \caption{}
 \label{fig:austenite}
\end{figure}

Finally, in order to identify $G$ as the symmetric gradient of a displacement we set
\begin{align*}
	u_1 & := \phantom{ x_2 - F_1^+(x\cdot \nu_1^+) - F_1^-(x\cdot \nu_1^-) - {} } F_3(x\cdot \nu_3),\\
	u_2 & := x_2 - F_1^+(x\cdot \nu_1^+) - F_1^-(x\cdot \nu_1^-) -F_3(x\cdot \nu_3),\\
	u_3 & :=\phantom{x_2 - {} } F_1^+(x\cdot \nu_1^+) - F_1^-(x\cdot \nu_1^-),
\end{align*}
for functions $F_1^+,F_1^-, F_3 : \R \to \R$ such that
\begin{align*}
	(F_1^+)'  = \frac{\sqrt 2}{3} \chi_1^+, \,  (F_1^-)'  = \frac{\sqrt 2}{3} \chi_1^- \text{ and }	(F_3)'  = \frac{\sqrt 2}{3} \chi_3.
\end{align*}
The identity $e(u) \equiv G$ is straightforward to check.
\end{proof}

\section{Outline of the proof}\label{sec:outline}
We will give the ideas behind each individual part of the proof of our main theorem in its own subsection.
The contents of each are organized by increasing detail, so that the reader may skip to the next subsection once they are satisfied with the explanations given.
However, we will first prove Theorem \ref{main} itself here to provide a road map to the following subsections.

Throughout the paper the number $r$ denotes a generic, universal radius that in proofs may decrease from line to line.

\begin{proof}[Proof of Theorem \ref{main}] 
 We first use Lemma \ref{diff_incl} to see that the limiting differential inclusion $e(u)\in \K$ in fact holds.
 Next, we apply Lemma \ref{lemma: decomposition} to deduce the existence of six one-dimensional functions $f_\nu \in L^\infty$ only depending on $x\cdot \nu$ for $\nu \in N$ and three affine functions $g_i$ for $i=1,2,3$ such that
 \begin{align*}
  \begin{split}
 		e(u)_{11} & = \phantom {- f_{(011)} - f_{(01\overline1)} +} f_{(101)} + f_{(\overline101)} - f_{(110)} - f_{(1\overline10)} + g_1,\\
 		e(u)_{22} & = - f_{(011)} - f_{(01\overline1)} \phantom{+ f_{(101)} + f_{(\overline101)} } + f_{(110)} + f_{(1\overline10)} + g_2,\\
 		 e(u)_{33} & =\phantom{-} f_{(011)} + f_{(01\overline1)} - f_{(101)} - f_{(\overline101)}\phantom{ - f_{(110)} - f_{(1\overline10)}} + g_3\\
  \end{split}
 \end{align*}
 on some smaller ball $\ball{0}{r}$.
 
 If $f_\nu \in VMO(-r,r)$ for all $\nu \in N$, then Proposition \ref{Prop: Rigidity VMO} implies that the solution of the differential inclusion is a two-variant configuration.
 If $f_\nu \notin VMO(-r,r)$ for some $\nu \in N_i$ and $i \in \{1,2,3\}$ we can use Proposition \ref{Prop: dimension reduction} to deduce that the configuration is planar or involves only two variants.
 Furthermore, if it is not a two-variant configuration, then there exists a plane $H(\alpha,\nu)$ for some $\alpha \in (-r,r)$ with the following property:
 It holds that
 \[\theta_i|_{H(\alpha,\nu)} = b\chi_B \numberthis \label{additional_information}\]
 for some $0<b<1$ and a Borel-measurable subset $B \subset \{x\cdot \nu = \alpha\} \cap \ball{0}{r}$ of non-zero $\mathcal{H}^2$-measure.
 This is measure-theoretically meaningful since $H(\alpha,\nu)$ is not normal to directions involved in the decomposition of $\theta_j$, see Lemma \ref{lemma: traces}.
 
 We are thus left with classifying planar configurations.
 If additionally one of the one-dimensional functions $f_{\nu_j}$ for $j\in \{1,2,3\}\setminus \{i\}$ is affine, we can apply Lemma \ref{lemma: two_functions} using the additional information \eqref{additional_information} to see that the configuration is a planar second-order laminate or a planar checkerboard.
 Otherwise an application of Proposition \ref{prop: six_corner} yields that the configuration is a planar triple intersection.
\end{proof}

\subsection{The differential inclusion}
We first mention that the inclusion $e(u) \in \K$ holds.

\begin{lemma}\label{diff_incl}
  Let $(u_\eta, \chi_\eta)$ be a sequence of displacements and partitions such that \[\limsup_{\eta \to 0} E_\eta(u_\eta,\chi_\eta) < \infty\]
  for some $0 < C < \infty$. Then for any subsequence for which the weak limits
  \[u_\eta \rightharpoonup u \text{ in } W^{1,2}\text{, } \chi_\eta \stackrel{*}{\rightharpoonup} \theta \text{ in } L^\infty\]
  exist, they satisfy
  \begin{align*}
   e(u) & \equiv \sum_{i=1}^3 \theta_i e_i\text{, } \theta \in \tilde K \text{ and } e(u) \in \K
  \end{align*}
  for almost all $x\in \ball{0}{1}$.
\end{lemma}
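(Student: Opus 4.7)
My plan is to establish the three conclusions of Lemma \ref{diff_incl} in the order they are stated, each one adding one piece of information to the weak limit. First I would extract from the hypothesis $E_\eta(u_\eta,\chi_\eta) \leq C$ only the elastic part, which gives
\[\left\| e(u_\eta) - \sum_{i=1}^3 \chi_{i,\eta}\, e_i \right\|_{L^2(\ball{0}{1})}^2 \leq C \eta^{2/3} \to 0,\]
so the defect $F_\eta := e(u_\eta) - \sum_i \chi_{i,\eta}\, e_i$ vanishes strongly in $L^2$. Since $\ball{0}{1}$ is bounded, the weak-$*$ convergence $\chi_{i,\eta} \overset{*}{\warr} \theta_i$ in $L^\infty$ implies $\sum_i \chi_{i,\eta}\, e_i \warr \sum_i \theta_i\, e_i$ in $L^2$; combined with $F_\eta \to 0$ strongly this forces $e(u_\eta) \warr \sum_i \theta_i\, e_i$ in $L^2$. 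On the other hand, the assumption $u_\eta \warr u$ in $W^{1,2}$ also yields $e(u_\eta) \warr e(u)$ in $L^2$, so uniqueness of weak limits gives $e(u) = \sum_i \theta_i\, e_i$ almost everywhere.

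The simplex constraint $\theta \in \tilde K$, meaning $\theta_i \geq 0$ and $\sum_i \theta_i \equiv 1$, is then immediate: the pointwise inequalities $\chi_{i,\eta} \geq 0$ and the affine identity $\sum_i \chi_{i,\eta} \equiv 1$ are both preserved under weak-$*$ limits in $L^\infty$.

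The delicate step is the pointwise inclusion $e(u)(x) \in \K$, which requires that at least one barycentric coordinate $\theta_i(x)$ vanish almost everywhere. My strategy would combine the fact that $\sum_i \chi_{i,\eta}\, e_i$ takes values in the three-point set $\{e_1,e_2,e_3\}$---so the strong $L^2$-convergence of $F_\eta$ implies $\dist(e(u_\eta),\{e_1,e_2,e_3\}) \to 0$ in $L^2$---with the interface bound $\sum_i |D\chi_{i,\eta}|(\ball{0}{1}) \leq C \eta^{-1/3}$. From the elastic bound alone one only obtains that the Young measure generated by $(e(u_\eta))$ is supported on $\{e_1,e_2,e_3\}$ at almost every point, so $e(u)(x) = \sum_i \theta_i(x)\, e_i$ appears as its barycenter; since the wave cone of the symmetric gradient admits the entire triangle as the $\Lambda$-convex hull of the three wells, this alone only places $e(u)$ in $\tilde K$. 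The interface penalty, calibrated at the critical Kohn--M\"uller scaling $\eta^{1/3}$, is precisely what rules out genuine three-well mixing. Concretely I would blow up at a Lebesgue point $x_0$ at scale $\eta^{1/3}$ and argue, via a compactness/rigidity argument in the spirit of Dolzmann--M\"uller and Capella--Otto, that any bounded-energy limit exhibiting all three $\theta_i(x_0) > 0$ is incompatible with the pairwise-laminate rigidity of the two-well problem enforced by the interface energy at this scale. This last coupling of the two energy regimes is the main obstacle; the preceding two conclusions are routine passage-to-limit arguments.
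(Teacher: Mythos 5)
Your first two steps (the identity $e(u)=\sum_i\theta_ie_i$ and the simplex constraint) are correct and coincide with the paper's treatment; these are indeed routine. The genuine gap is exactly where you place it: the inclusion $e(u)\in\K$, and the plan you sketch for it would not work as stated. Blowing up at a Lebesgue point at scale $\eta^{1/3}$ gives you no usable smallness: the only information available on $\ball{x_0}{\eta^{1/3}}$ is the global bound $E_\eta\leq C$, and after rescaling that ball to unit size the energy is multiplied by $(\eta^{1/3})^{-3+\frac{2}{3}}=\eta^{-\frac{7}{9}}$, so it blows up. The Capella--Otto result is a quantitative stability estimate that requires the (rescaled) energy to be \emph{small}, not merely bounded --- and the whole point of the bounded-energy regime is that rigidity in the strong sense fails there (branching is possible). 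So ``pairwise-laminate rigidity enforced by the interface energy at this scale'' is not something you can invoke at a scale tied to $\eta$ without a mechanism producing small local energy.

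The missing mechanism, which is how the paper proceeds, is a fixed-scale localization combined with a non-concentration argument for the energy as a measure. One views $E_\eta$ as Radon measures, extracts $E_\eta\stackrel{*}{\rightharpoonup}E$, and uses the anomalous rescaling of the functional: rescaling $\ball{y}{r}$ (with $r$ fixed, independent of $\eta$) to the unit ball and setting $\hat\eta=\eta/r$ gives rescaled energy $r^{-3+\frac{2}{3}}E_\eta(\ball{y}{r})$, i.e.\ a gain of $r^{-\frac{7}{3}}$ rather than the naive average $r^{-3}$. Capella--Otto then yields the quantitative bound $\min_i\|\hat\chi_{i,\hat\eta}\|_{L^1(\ball{0}{s})}\lesssim\big(r^{-\frac{7}{3}}E_\eta(\ball{y}{r})\big)^{\frac{1}{2}}$ for a universal $s$, which survives the passage $\eta\to0$ by lower semicontinuity of the $L^1$-norm and upper semicontinuity of $E_\eta(\overline{\ball{y}{r}})$, giving $\frac{1}{r^3}\min_i\|\theta_i\|_{L^1(\ball{y}{sr})}\lesssim\big(r^{-\frac{7}{3}}E(\overline{\ball{y}{r}})\big)^{\frac{1}{2}}$. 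Finally, a standard covering argument shows that $\{x:\limsup_{r\to0}r^{-\frac{7}{3}}E(\overline{\ball{x}{r}})>0\}$ has Hausdorff dimension at most $\frac{7}{3}<3$, hence is Lebesgue-null; at every other Lebesgue point of $\theta$ the right-hand side vanishes as $r\to0$ and one concludes $\min_i\theta_i(x)=0$, i.e.\ $e(u)(x)\in\K$. Without this ordering of limits (first $\eta\to0$ at fixed $r$, then $r\to0$ through the limit measure) and without the $r^{-\frac{7}{3}}$ gain plus the dimension estimate that rules out energy concentration at a.e.\ point, your outline cannot close.
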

%We will need a much more precise version of this argument later in Chapter \ref{chapter:h-measures}.
%Therefore, rather than providing the same argument twice, we refer the reader to Corollary \ref{cor:diff_incl} for the full proof and only provide an outline here:

The statement $e(u) \equiv \sum_{i=1}^3 \theta_i e_i$ is an immediate consequence of the elastic energy vanishing in the limit and the proof of the non-convex inclusion relies on the rescaling properties of the energy.
We will set
\begin{align*}
	r \hat x = x\text{, }\hat u ( \hat x) =  r u (x)\text{, } \hat \chi (\hat x)   = \chi(x)\text{, } r \hat \eta  = \eta,  
\end{align*}
where $\eta$ needs to be re-scaled as well due to it playing the role of a length scale, to obtain
\[ E_{\hat \eta}(\hat u, \hat \chi) = r^{-3 + \frac{2}{3}}  E_\eta(u,\chi).\]
The right-hand side consequently behaves better than just taking averages, which allows us to locally apply the result by Capella and Otto \cite{CO12} to get the statement.

\subsection{Decomposing the strain}

Next, we link the convex differential inclusion
\[e(u) \in S = \{e \in \R^{3\times3}: e \text{ diagonal, } \tr e = 0\}\]
to a decomposition of the strain into simpler objects, namely functions of only one variable and affine functions.
Already Dolzmann and M\"uller \cite{DM95} used the interplay of this decomposition with the non-convex inclusion $e(u) \in \{e_1,e_2,e_3\}$ to get their rigidity result.
% KANN MAN DA AUCH DIE L hoch sinfty SCHRANKE HERHOLEN???
% AUSSERDEM: SPUREN FUER 1D FUNKTIONEN"?

\begin{lemma}\label{lemma: decomposition}
 There exists a universal $r>0$ with the following property:
 Let a displacement $u \in W^{1,2}(\ball{0}{1})$ be such that $e(u) \in K$ a.e., where $K\subset S$ is a compact set.
 Then there exist
 \begin{enumerate}
  \item a function $f_\nu \in L^\infty([-r,r])$ for each $\nu\in N$ which will take $\nu\cdot x$ as its argument and
  \item affine functions $g_1$, $g_2$, $g_3$
 \end{enumerate}
 such that we have
 \begin{align}\label{decomposition}
 	\begin{split}
 		e(u)_{11} & = \phantom {- f_{(011)} - f_{(01\overline1)} +} f_{(101)} + f_{(\overline101)} - f_{(110)} - f_{(1\overline10)} + g_1,\\
 		e(u)_{22} & = - f_{(011)} - f_{(01\overline1)} \phantom{+ f_{(101)} + f_{(\overline101)} } + f_{(110)} + f_{(1\overline10)} + g_2,\\
 		 e(u)_{33} & =\phantom{-} f_{(011)} + f_{(01\overline1)} - f_{(101)} - f_{(\overline101)}\phantom{ - f_{(110)} - f_{(1\overline10)}} + g_3\\
 	\end{split}
 \end{align}
  on $\ball{0}{r}$.
\end{lemma}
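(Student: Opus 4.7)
My plan is to exploit that $e(u)$, being the symmetric gradient of a displacement, automatically satisfies the Saint-Venant compatibility equations. Since $e(u)$ is moreover diagonal and trace-free, these reduce to two families of identities I would single out: for $\{i,j,k\}=\{1,2,3\}$,
\begin{equation*}
	\partial_j\partial_k\, e(u)_{ii}=0,\qquad \partial_i^2\, e(u)_{jj}+\partial_j^2\, e(u)_{ii}=0\ \ (i\neq j).
\end{equation*}
The first identity (in the distributional sense) immediately produces a splitting
\begin{equation*}
	e(u)_{11}=h_{11}(x_1,x_2)+k_{11}(x_1,x_3),
\end{equation*}
and analogously for $e(u)_{22}$ and $e(u)_{33}$.

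Next, I would insert these splittings into the trace-free constraint $\sum_i e(u)_{ii}=0$. Grouping by coordinate pair, only two of the three pairwise summands depend on a fixed pair, so applying $\partial_1\partial_2$ (resp.\ $\partial_1\partial_3$, $\partial_2\partial_3$) kills the third and forces
\begin{equation*}
	h_{11}+h_{22}=\phi_{12}(x_1)+\psi_{12}(x_2),
\end{equation*}
and cyclically. Combining the three then yields the constant constraints $\phi_{12}+\phi_{13}\equiv c_1$, cyclically. Substituting $h_{11}+h_{22}=\phi_{12}(x_1)+\psi_{12}(x_2)$ into the second compatibility identity collapses it to the inhomogeneous 1D wave equation
\begin{equation*}
	\partial_2^2 h_{11}-\partial_1^2 h_{11}=-\phi_{12}''(x_1),
\end{equation*}
whose general solution via d'Alembert is $h_{11}=F_{12}(x_1+x_2)+G_{12}(x_1-x_2)+\phi_{12}(x_1)$ modulo homogeneous corrections that can be absorbed into $F_{12},G_{12}$. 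Carrying out the same analysis on $k_{11}$ and analogously on the other diagonal components, the leftover purely one-dimensional summands combine via $\phi_{12}+\phi_{13}\equiv c_1$ into the affine function $g_1$, producing the decomposition \eqref{decomposition} after identifying $x_i\pm x_j$ with $\sqrt 2\,\nu\cdot x$ for the appropriate $\nu\in N$.

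The main technical obstacle will be upgrading this qualitative decomposition to the bound $f_\nu\in L^\infty([-r,r])$ on a universal ball. Each $f_\nu$ is defined only modulo an affine function of its argument (the gauge ambiguity exploited above), so I would first fix a normalization, for instance by demanding vanishing zeroth and first moments on $[-r,r]$. To then extract an individual $f_\nu$ from the single bounded quantity $e(u)_{ii}\in L^\infty$, I would average $e(u)_{ii}$ against a smooth test function supported on hyperplanes $H(t,\nu)\cap\ball{0}{r}$, designed so that the three other $f_{\nu'}$-contributions project to affine functions of $t$ and are absorbed into $g_i$. Compactness of $K$ then bounds the resulting $f_\nu$ uniformly in $L^\infty([-r,r])$. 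The shrinkage to a universal $r<1$ is geometric: the averaging hyperplanes must intersect $\ball{0}{r}$ in codimension-one sets of positive measure for all relevant $t$, and the d'Alembert solution of the 1D wave equations entails the standard loss of domain along characteristics.
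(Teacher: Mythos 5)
Your qualitative derivation is, in substance, the route the paper itself takes by importing the construction of Capella and Otto \cite{CO12}: for a diagonal, trace-free strain the Saint-Venant conditions are exactly the six two-dimensional wave equations of Lemma \ref{lemma: wave equations}, and integrating them by d'Alembert, with the trace constraint forcing the leftover one-dimensional pieces to combine into constants, produces \eqref{decomposition}. Up to the usual distributional care (the splittings and the d'Alembert representation on a ball, the gauge freedom by functions of a single variable), that part of your plan is sound and is not where the difficulty of the lemma lies.

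The gap is in the only genuinely new content of the statement compared with \cite{DM95,CO12}, namely $f_\nu\in L^\infty([-r,r])$. Your extraction step rests on the claim that one can average $e(u)_{ii}$ over $H(t,\nu)\cap\ball{0}{r}$ against a weight ``designed so that the three other $f_{\nu'}$-contributions project to affine functions of $t$'', but no such design is given, and the claim fails for any natural choice: since $\nu'\cdot x$ sweeps an interval as $x$ varies in $H(t,\nu)$, the weighted average of $f_{\nu'}(\nu'\cdot x)$ has the form $\int f_{\nu'}(s)\,w_{\nu'}(t,s)\intd s$, a genuine smoothing of the unknown $f_{\nu'}$, not an affine function of $t$. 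Demanding affinity in $t$ for \emph{every} bounded $f_{\nu'}$ forces $w_{\nu'}(\cdot,s)$ to be affine on the whole window for a.e.\ $s$; if, as is natural, the same profile $\rho$ is used on each hyperplane of the family, this makes the corresponding push-forward $R$ affine on an interval containing its support, hence $R\equiv 0$, hence $\int\rho=\int R=0$ --- which is exactly the coefficient multiplying $f_\nu$ in your extraction, so the procedure degenerates; and compactness of $K$ only bounds $e(u)$, not the individual summands, so the moment normalization cannot substitute for a working extraction. The paper avoids this problem altogether: in the Capella--Otto construction each $f_\nu$ is obtained by averaging the bounded diagonal strain components in suitable directions and applying pointwise linear operations, with the remaining pieces identified as affine functions, so boundedness is preserved at every step. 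A repair in your spirit would be to take finite differences of $e(u)_{ii}$ along directions orthogonal to the other three normals, which bounds second differences of a single $f_\nu$, and then to apply a Hyers--Ulam-type stability statement (note that Lemma \ref{lemma:almost_affine} as stated presupposes $g\in L^\infty$, so a variant without that hypothesis is needed); as written, however, the averaging step does not go through.
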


Here we abuse notation by dropping $\frac{1}{\sqrt 2}$ when referring to the one-dimensional functions, e.g., we write $f_{(011)}$ instead of $f_{\frac{1}{\sqrt 2} (011)}$.
  Furthermore, we will at times not distinguish between $f_{\nu}$ and $f_{\nu}(\nu\cdot x)$ as long as the context clearly determines which we mean. 

Throughout the paper, we only use the fact that the inclusion $e(u)(x) \in \K$ a.e.\ involves a differential through decomposition \eqref{decomposition}.
Therefore, we can easily transfer all the relevant information to the volume fractions $\theta$ via the relation
\[e(u)_{ii} =  \sum_{j=1}^3 \theta_j e_j = -2\theta_i + \theta_{i+1} + \theta_{i-1} = 1-3\theta_i\]
for all $i=1,2,3$.
In fact, most of the arguments in the following subsections become much more transparent if we re-formulate the differential inclusion in terms of the volume fractions as $\theta(x) \in \tilde \K$ a.e.\ with
\[\tilde \K := \left\{\hat \theta \in \R^3: 0\leq \hat \theta_i \leq 1 \text{ for } i=1,2,3,\ \sum_{i=1}^3 \hat \theta_i =1,\ \hat \theta_i = 0 \text{ for some } i=1,2,3\right\}.\numberthis \label{def_tilde_K}
\]

The only (marginally) new aspect of Lemma \ref{lemma: decomposition} compared to the previously known versions \cite[Lemma 3.2]{DM95} and \cite[Proposition 1]{CO12} is the statement $f_\nu \in L^\infty$ for all $\nu \in N$.
We will thus only highlight the required changes to the proof of Capella and Otto \cite[Proposition 1]{CO12}.
Essentially, the strategy here is to integrate the Saint-Venant compatibility conditions for linearized strains, which in our situation take the form of six two-dimensional wave equations, see Lemma \ref{lemma: wave equations}.
Thus it is not surprising that the decomposition is in fact equivalent to
\[\begin{pmatrix}
	e(u)_{11} & 0 & 0 \\
	0 & e(u)_{22} & 0 \\
	0 & 0 & e(u)_{33}
\end{pmatrix}\]
being a symmetric gradient, which reassures us in our approach of only appealing to the differential information through equations \eqref{decomposition}.

A central part of the proof of Lemma \ref{lemma: decomposition} is uniqueness up to affine functions of the decomposition \cite[Lemma 3.8]{CO12}.
We can apply this result to characterize two-variant configurations as the only ones with $\theta_i \equiv 0$ for some $i =1,2,3$, i.e., as the only ones that indeed only combine two variants.

\begin{cor}\label{cor: theta constant}
 There exists a universal radius $r>0$ with the following property:
 If for $i \in \{1,2,3\}$ we have $\theta_i \equiv 0$ in the setting of Theorem \ref{main}, then the solution of the differential inclusion is a two-variant configuration on $\ball{0}{r}$ according to Definition \ref{def:degenerate}.
\end{cor}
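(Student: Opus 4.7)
The plan is to apply Lemma~\ref{lemma: decomposition} on some $\ball{0}{r}$, use the hypothesis $\theta_i\equiv 0$ to force four of the six one-dimensional functions $f_\nu$ to be affine, and then read off the structure required in Definition~\ref{def:degenerate}. Without loss of generality fix $i=1$, so that $e(u)_{11}\equiv 1$. Grouping the summands of \eqref{decomposition} by their normal pair, set
\[F_j(x) := f_{\nu_j^+}(\nu_j^+\cdot x) + f_{\nu_j^-}(\nu_j^-\cdot x),\]
so that $F_j$ is independent of $x_j$, and the first line of \eqref{decomposition} reduces to $F_2(x_1,x_3)-F_3(x_1,x_2)+g_1(x)\equiv 1$ on $\ball{0}{r}$.

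Differentiating this identity distributionally in $x_2$ and using that $F_2$ is independent of $x_2$ while $g_1$ is affine yields $\partial_{x_2}F_3\equiv c_2$ as a constant, and symmetrically $\partial_{x_3}F_2\equiv c_3$. A standard Fubini-type identification then produces representatives $F_3(x_1,x_2)=H_3(x_1)+c_2 x_2$ and $F_2(x_1,x_3)=H_2(x_1)+c_3 x_3$ with $H_j\in L^\infty(-r,r)$. In the variables $s=x_1+x_2$, $t=x_1-x_2$ the identity $f_{(110)}(s)+f_{(1\overline 1 0)}(t)=H_3((s+t)/2)+c_2(s-t)/2$ is annihilated on the left by $\partial_s\partial_t$, forcing $H_3''\equiv 0$ and hence $H_3$ affine; matching the resulting linear coefficients in $s$ and $t$ shows that $f_{(110)}$ and $f_{(1\overline 1 0)}$ are themselves affine. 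The analogous argument applied to $F_2$ shows that $f_{(101)}$ and $f_{(\overline 1 01)}$ are affine as well.

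Substituting the affine expressions back into $\theta_2=\tfrac{1-e(u)_{22}}{3}=\tfrac{1}{3}(1+F_1-F_3-g_2)$ reduces $\theta_2$ to $\tfrac{1}{3}F_1$ plus a globally affine function of $x$. Since $F_1(x_2,x_3)=f_{(011)}(x_2+x_3)+f_{(01\overline 1)}(x_2-x_3)$ already carries the $N_1$-dependence demanded by Definition~\ref{def:degenerate}, it only remains to remove the linear dependence on $x_2$ and $x_3$ from the affine remainder: any contribution $\alpha_2 x_2+\alpha_3 x_3$ is rewritten via $x_2=\tfrac12((x_2+x_3)+(x_2-x_3))$ and $x_3=\tfrac12((x_2+x_3)-(x_2-x_3))$ and absorbed into the redefined one-dimensional summands $f_{\nu_1^\pm}$, leaving a term $\lambda x_1 + 1$ as required, and the formula for $\theta_3$ follows from $\theta_1+\theta_2+\theta_3\equiv 1$. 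The main obstacle is the $L^\infty$-only regularity of the $f_\nu$, which forces the Fubini identification and the functional-equation step to be carried out in the sense of distributions; with that caveat the argument is essentially algebraic and uses no geometry beyond the pair structure of $N$.
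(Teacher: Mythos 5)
Your argument is correct, and its skeleton is the same as the paper's: reduce via Lemma \ref{lemma: decomposition}, show that the four one-dimensional functions with normals in $N_2\cup N_3$ (i.e.\ $f_{(101)}$, $f_{(\overline101)}$, $f_{(110)}$, $f_{(1\overline10)}$) must be affine once $\theta_1\equiv 0$, and then absorb the affine remainder into $f_{(011)}$, $f_{(01\overline1)}$ and recover $\theta_3$ from $\theta_1+\theta_2+\theta_3\equiv 1$. The one genuine difference lies in how the affinity is established: the paper simply invokes the uniqueness of the decomposition up to affine functions, \cite[Lemma 5]{CO12}, applied to the constant function $\theta_1\equiv 0$, whereas you re-derive exactly that instance by hand — distributional differentiation in $x_2$ and $x_3$ using that $g_1$ is affine, a Fubini identification $F_3(x_1,x_2)=H_3(x_1)+c_2x_2$ (and analogously for $F_2$), and then a two-dimensional wave-equation argument in the characteristic variables $s,t$ forcing $H_3$, and hence $f_{(110)}$ and $f_{(1\overline10)}$, to be affine. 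Your route is more self-contained and elementary (it essentially re-proves the needed special case of the Capella--Otto uniqueness lemma), at the cost of the mollification/null-set care you flag and of possibly shrinking to a smaller universal ball, which is all the corollary requires; the paper's route is shorter by citation but imports the full strength of the external lemma. The final bookkeeping — absorbing the $x_2,x_3$-dependence of the affine part into the $N_1$-functions, leaving $\lambda x_1+1$, and reading off Definition \ref{def:degenerate} — is identical in both proofs.
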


Another very useful consequence of the decomposition \eqref{decomposition} is that such functions have traces on hyperplanes as long as none of the individual one-dimensional functions are necessarily constant on them. See Figure \ref{fig:traces} for the geometry in a typical application.

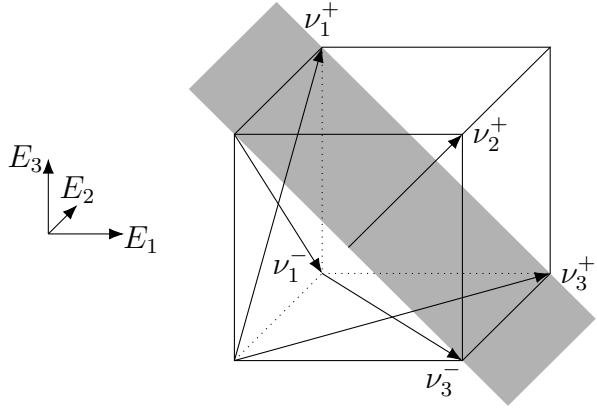
\begin{figure}
 \centering
 \begin{subfigure}{.15\linewidth}
  \begin{tikzpicture}
   \draw[-{Latex[length=2mm]}] (0,0,0) --  (1,0,0);
   \node at (1.3,0,.2) {$E_1$};
   \draw[-{Latex[length=2mm]}] (0,0,0) --  (0,0,-1);
   \node at (0,.2,-1) {$E_2$};
   \draw[-{Latex[length=2mm]}] (0,0,0) --  (0,1,0);
   \node at (-.3,1,0) {$E_3$};
  \end{tikzpicture}
 \end{subfigure}
 \begin{subfigure}{.45\linewidth}
    \begin{tikzpicture}[scale=3]
    \fill[color=gray,opacity=.6] (-.2,1.2,0) -- (-.2,1.2,-1) -- (1.2,-.2,-1) -- (1.2,-.2,0);

    \draw (0,0,0) -- (0,1,0) -- (1,1,0) -- (1,0,0) -- cycle;
    \draw (0,1,0) -- (0,1,-1) -- (1,1,-1) -- (1,1,0);
    \draw (1,1,-1) -- (1,0,-1) -- (1,0,0);
    \draw[dotted] (0,0,0) -- (0,0,-1) -- (0,1,-1);
    \draw[dotted] (0,0,-1) -- (1,0,-1);

    \draw[-{Latex[length=2mm]}] (0,0,0) -- (1,0,-1);
    \draw[-{Latex[length=2mm]}] (0,0,-1) -- (1,0,0);
    \draw[-{Latex[length=2mm]}] (0,0,0) -- (0,1,-1);
    \draw[-{Latex[length=2mm]}] (0,1,0) -- (0,0,-1);
    \draw[-{Latex[length=2mm]}] (.5,.5,0) -- (1,1,0);
    
    \node at (1,0,.215) {$\nu_3^-$};
    \node at (1.125,0,-1) {$\nu_3^+$};
    \node at (-.1,0.075,-.9) {$\nu_1^-$};
    \node at (0,1.125,-1) {$\nu_1^+$};
    \node at (1.125,1,0) {$\nu_2^+$};
  \end{tikzpicture}
 \end{subfigure}
 \caption{Sketch indicating that $\theta_2$ has traces on hyperplanes with normal $\nu_2^+$ since its decomposition only involves continuous functions and the normals $\nu_i^\pm$ for $i=1,3$. As usual we do not keep track of the lengths of the drawn vectors.}
 \label{fig:traces}
\end{figure}

\begin{lemma}\label{lemma: traces}
 Let $F: \R^n \to \mathcal C$ for a closed convex set $\mathcal C \subset \R^m$ satisfy the decomposition
 \[F(x) \equiv \sum_{i=1}^P f_i(x\cdot \nu_i) \numberthis \label{decomp_abstract}\]
 with locally integrable functions $f_i:\R \to \R^m$ and directions $\nu_i\in \Sph^{n-1}$ for $i=1,\ldots,P$.
 Let furthermore $V\subset \R^n$ be a $k$-dimensional subspace such that $\nu_i \notin V^\perp$ for all indices $i=1,\ldots, P$.
 
 Then the decomposition \eqref{decomp_abstract} defines a locally integrable restriction
 $F|_V: V \to \mathcal{C}$ and
 \[F_\delta(x) := \int_\ball{x}{\delta}F(y) \intd \Leb^n(y) \to F(x)\]
 for $\mathcal{H}^k$-almost all $x \in V$.
\end{lemma}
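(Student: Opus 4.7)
The plan is to define $F|_V$ directly from the decomposition, via $F|_V(x) := \sum_{i=1}^P f_i(x \cdot \nu_i)$ for those $x \in V$ at which each summand is defined, and to prove the pointwise convergence of averages by reducing to the one-dimensional Lebesgue differentiation theorem on each factor.

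The first step is well-definedness and local integrability. Because $\nu_i \notin V^\perp$, the map $V \to \R$, $x \mapsto x \cdot \nu_i$, is a non-zero linear functional, hence a surjection with constant (non-vanishing) Jacobian. Choosing any orthonormal basis of $V$ whose first vector is parallel to the orthogonal projection of $\nu_i$ onto $V$, this functional becomes, up to a positive scalar, the first coordinate, and Fubini shows that the preimage of an $\mathcal{L}^1$-null subset of $\R$ is $\mathcal{H}^k$-null in $V$, while the pull-back of an $L^1_{\mathrm{loc}}(\R)$-function is $L^1_{\mathrm{loc}}$ on $V$. Applying this to each index $i=1,\ldots, P$ and taking a finite union of the exceptional null sets, each summand is a well-defined element of $L^1_{\mathrm{loc}}(V)$, and so is their sum $F|_V$.

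For the convergence of averages, the same slicing argument yields that for $\mathcal{H}^k$-a.e.\ $x \in V$ each scalar $x \cdot \nu_i$ is simultaneously a Lebesgue point of the corresponding $f_i$. For such $x$, a polar-type computation adapted to $\nu_i$ gives
\[
\dashint_{\ball{x}{\delta}} f_i(y \cdot \nu_i)\intd \Leb^n(y) = c_n \int_{-1}^{1} f_i(x \cdot \nu_i + \delta s)\,(1-s^2)^{\frac{n-1}{2}}\intd s,
\]
with a dimensional constant $c_n$ making the rescaled kernel a bounded, compactly supported approximate identity of unit mass. At a Lebesgue point of $f_i$ such convolutions converge to $f_i(x \cdot \nu_i)$, and summing over $i$ yields $F_\delta(x) \to F|_V(x)$.

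The inclusion $F|_V(x) \in \mathcal{C}$ then comes for free: each $F_\delta(x)$ is an average of $\mathcal{C}$-valued data, hence lies in $\mathcal{C}$ by convexity, and $\mathcal{C}$ is closed. The only genuine point requiring care is the slicing statement in step one, but this is elementary once the orthonormal basis of $V$ is chosen to diagonalize the restriction of $\pi_{\nu_i}$ to $V$; everything else is bookkeeping plus standard one-dimensional real analysis.
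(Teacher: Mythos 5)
Your proposal is correct and takes essentially the same route as the paper: both rewrite the ball average of each $f_i(y\cdot\nu_i)$ as a one-dimensional convolution with the rescaled marginal of the unit-ball indicator (your kernel $c_n(1-s^2)^{\frac{n-1}{2}}$ is exactly the paper's $\phi$), use convexity and closedness of $\mathcal{C}$ for the inclusion, and transfer the one-dimensional exceptional null sets to $V$ via the non-degeneracy $\nu_i\notin V^\perp$. The only, harmless, difference is that you argue at Lebesgue points and so get convergence along the full family $\delta\to 0$, while the paper extracts an a.e.-convergent subsequence from the $L^1$-convergence of the convolutions.
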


Finally, we give the wave equations constituting the Saint-Venant compatibility conditions.
%Also these we will require in Chapter \ref{chapter:h-measures} in a stronger version given in Lemma \ref{lemma:H-measure_wave_equations}, and thus we here only show how to obtain the present statement from the stronger one.

\begin{lemma}\label{lemma: wave equations}
 If $e(u) \in S$, the diagonal elements of the strain satisfy the following wave equations:
 \begin{align}
  \begin{split}
   \partial_{[111]}\partial_{[\overline 111]} \theta_1& = 0,\\
   \partial_{[1\overline11]}\partial_{[ 11\overline1]} \theta_1 & = 0,\\
   \partial_{[1\overline11]}\partial_{[111]} \theta_2 & = 0,\\
   \partial_{[\overline111]}\partial_{[ 11\overline1]} \theta_2 & = 0,\\
   \partial_{[111]}\partial_{[ 11\overline1]} \theta_3 & = 0,\\
   \partial_{[1\overline11]}\partial_{[\overline 111]} \theta_3 & = 0.
  \end{split}
 \end{align}
\end{lemma}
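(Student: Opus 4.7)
My plan is to derive the six wave equations directly from the classical Saint-Venant compatibility identities together with the trace-zero condition encoded in the definition of $S$. The assumption $e(u) \in S$ means simultaneously that $\partial_i u_j = -\partial_j u_i$ for $i \neq j$ (the off-diagonal entries vanish) and that $\partial_1 u_1 + \partial_2 u_2 + \partial_3 u_3 = 0$ (trace zero). Both pieces of information enter in an essential way.

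First, I would establish two auxiliary identities that drive the argument. A short antisymmetrization shows that for pairwise distinct $i,j,k \in \{1,2,3\}$ one has $\partial_j \partial_k u_i = 0$: using $\partial_j u_i = -\partial_i u_j$ and $\partial_k u_j = -\partial_j u_k$ in succession gives $\partial_j \partial_k u_i = -\partial_i \partial_j u_k$, whereas commuting the partial derivatives and reusing antisymmetry yields $\partial_j \partial_k u_i = \partial_i \partial_j u_k$, forcing both sides to vanish. In particular $\partial_j \partial_k \theta_i = 0$ after one more differentiation. Second, applying $\partial_i$ to the trace-zero identity and substituting $\partial_i \partial_l u_l = -\partial_l^2 u_i$ for $l \neq i$ produces the ultrahyperbolic equation $(\partial_i^2 - \partial_j^2 - \partial_k^2) u_i = 0$, and one more application of $\partial_i$ yields
\[ (\partial_i^2 - \partial_j^2 - \partial_k^2)\,\theta_i = 0 \qquad \text{for } \{i,j,k\} = \{1,2,3\}. \]

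Finally, I would expand each of the six directional operators in the lemma in terms of Cartesian partials. A direct calculation shows that for any pair of distinct space diagonals $d, d' \in \mathcal{D}$,
\[ \partial_d \partial_{d'} = \pm (\partial_i^2 - \partial_j^2 - \partial_k^2) + 2\,\partial_j \partial_k, \]
where $i$ is the unique coordinate in which $d$ and $d'$ either agree (giving the $+$ sign) or disagree (giving the $-$ sign), and $\{j,k\} = \{1,2,3\} \setminus \{i\}$. Matching this decomposition against each of the six cases in the statement and applying the two identities established above kills both summands when acting on $\theta_i$, which completes the proof. I expect no genuine obstacle here; the only observation worth pausing on is that the specific operators along pairs of space diagonals are tuned so as to decompose into exactly the two pieces annihilated by the compatibility identity $\partial_j\partial_k \theta_i = 0$ and by the wave equation derived from the trace-zero condition, after which the verification is essentially bookkeeping.
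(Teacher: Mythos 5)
Your proof is correct and follows essentially the same route as the paper's: the identity $\partial_j\partial_k u_i = 0$ obtained by antisymmetrizing the vanishing off-diagonal strains, the equation $(\partial_i^2-\partial_j^2-\partial_k^2)u_i = 0$ obtained from the trace-free condition, and the expansion of $\partial_d\partial_{d'}$ into exactly these two annihilated pieces (the paper carries this out explicitly for $\theta_1$ and invokes symmetry for the rest). Note that the precise sign of the mixed term $2\partial_j\partial_k$ in your expansion is immaterial, since that term is killed by the first identity in any case.
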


\subsection{Planarity in the case of non-trivial blow-ups}

While the statements in the previous subsections either rely on rather soft arguments or were previously known, we now come to the main ideas of the paper.
As $\tilde \K$, see definition \eqref{def_tilde_K}, is a connected set, there are no restrictions on varying single points continuously in $\tilde \K$.
However, the crucial insight is that two different points $\tilde \theta, \bar \theta \in \tilde \K$ with $\tilde \theta_1 = \bar \theta_1 >0$ are much more constrained.

To exploit this rigidity, we first for simplicity assume the decomposition
\begin{align*}
  \theta_1(x) & = \phantom{+ f_1(x_1) {}+{} }  f_2(x_2) - f_3(x_3) + 1,\\
  \theta_2(x) & = - f_1(x_1)  \phantom{{}+{}   f_2(x_2)} + f_3(x_3),\\
  \theta_3(x) & = \phantom{+} f_1(x_1) - f_2(x_2). \phantom{- f_3(x_3)}
\end{align*}
Furthermore, suppose that $f_1$ is a $BV$-function with a jump discontinuity of size $\delta f_1$ at $x_1=0$ and that the other functions are continuous.
Thus the blow-up of $\theta$ at some point $(0,x')\in \ball{0}{1}$ takes two values $\tilde \theta, \bar \theta$, both of which satisfy $\tilde \theta_1 = \bar \theta_1 =\theta_1(0,x')$.
A look at Figure \ref{fig:sketch_theta_two_valued} hopefully convinces the reader that $\theta_1(0,x')$ can take at most two values, which furthermore are independent of $x'$.
As it is a sum of two one-dimensional functions some straightforward combinatorics imply that one of the two functions must be constant.
Consequently $\theta$ only depends on two directions.

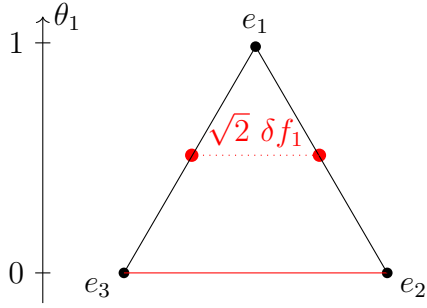
\begin{figure}
    \centering
      \begin{tikzpicture}[scale=2]
% 	  \node at (-1,1) {b)};
	  \fill[red] (0.42,0.28) circle (1.25pt);
	  \fill[red] (-0.42,0.28) circle (1.255pt);
	  \fill (90:1) circle (1pt);
	  \node at (90:1.15) {$e_1$};
	  \fill (210:1) circle (1pt);
	  \node at (210:1.2) {$e_3$};
	  \fill (330:1) circle (1pt);
	  \node at (330:1.2) {$e_2$};
	  \draw (90:1) -- (210:1);
	  \draw[red] (210:1)-- (330:1);
	  \draw (330:1) -- (90:1);
	  \draw[red,dotted] (0.42,0.28) -- (-0.42,0.28);
	  \node at (0,0.45) {\textcolor{red}{$\sqrt 2\,\, \delta f_1$}};
	  \draw[->] (-1.4, -0.7 ) -- (-1.4, 1.2) node[right] {$\theta_1$};
	  \draw (-1.35, -0.5) -- (-1.45, -0.5) node[left] {0};
	  \draw (-1.35, 1.025) -- (-1.45, 1.025) node[left] {1};
      \end{tikzpicture}
     \caption{Illustration of the argument for two-valuedness of $\theta_1$ near $x_1= 0$. The length of the dotted line has to be $\sqrt2\,\, \delta f_1$, where $\delta f_1 >0$ is the size of the jump of $f_1$ at zero. Consequently, the function $\theta_1$ can only take the two values $0$ or $1-\sqrt{\frac{3}{2}  }\,\, \delta f_1$. \label{fig:sketch_theta_two_valued} }
\end{figure}

This can be adapted to our more complex decomposition \eqref{decomposition}, even without any a priori regularity of the one-dimensional functions.
To do so we need to come up with a topology for the blow-ups which respects the non-convex inclusion $e(u) \in \K$, and a quantification of discontinuity for $f_\nu$ which ensures that its blow-up is non-constant.

In order to keep the non-convexity we consider the push-forwards
\[f \mapsto \int_{\ball{0}{1}} f(\theta(x+\epsi y)) \intd y \text{ for } f\in C_0(\R^3)\]
for $x\in \R^3$ and $\eps \to 0$.
This approach is very similar in spirit to using Young-measures, but without a further localization in the variable $y$.
Positing that $f_\nu$ does not have a constant blow-up along some sequence then means that $f_\nu$ does not converge strongly to a constant on average, i.e., it does not converge to its average on average.
If one allows the midpoints $x$ of the blow-ups to depend on $\epsi$, we see that this is equivalent to $f_\nu  \notin VMO$ according to Definition \ref{VMO} given below. 

The resulting statement is:

\begin{prop}\label{Prop: dimension reduction}
 There exists a universal radius $r>0$ with the following property:
 Let $e(u) \in \K$ on $\ball{0}{1}$.
 Furthermore, let the decomposition in Lemma \ref{lemma: decomposition} hold in $\ball{0}{1}$ and let $f_{\nu} \notin VMO([-r,r])$ for some $\nu \in N_i$ with $i\in\{1,2,3\}$.
 Then on $\ball{0}{r}$ the configuration is planar with respect to some $d \in \{[111],[\overline111],[1\overline11],[11\overline1]\}$ with $d\cdot \nu =0$ or we have $\theta_i \equiv 0$, i.e., a two-variant configuration.
 
 Furthermore, if $\theta_i \not \equiv 0$ there exists $\alpha \in (-r,r)$ such that $\theta_i|_{ \{x \cdot \nu = \alpha\}} = b \chi_B $ for some $0<b<1$ and a Borel-measurable set $B \subset H(\alpha,\nu)\cap \ball{0}{r}$ of non-zero $\mathcal{H}^2$-measure.
\end{prop}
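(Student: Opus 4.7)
The plan is to carry out a blow-up analysis built on push-forward measures. Since $f_\nu \notin VMO([-r,r])$, one can pick a sequence of points $x_k \in \ball{0}{r/2}$ and scales $\epsi_k \to 0$ such that the rescaled one-dimensional profiles $f_\nu^k(t) := f_\nu(x_k \cdot \nu + \epsi_k t)$ on $(-1,1)$ have $L^1$-oscillation from their means bounded below by some $\delta > 0$. Rescale the full configuration via $\theta^k(y) := \theta(x_k + \epsi_k y)$ on $\ball{0}{1}$, which still satisfies the rescaled decomposition of Lemma \ref{lemma: decomposition} and lies in $\tilde\K$. By the $L^\infty$ bounds in Lemma \ref{lemma: decomposition}, each $f_{\nu'}^k$ for $\nu' \in N$ has a weak-$*$ limit $\tilde f_{\nu'}$ along a subsequence, with $\tilde f_\nu$ necessarily non-constant. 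Because the pointwise inclusion $\theta^k \in \tilde\K$ is non-convex, weak convergence of the individual $f_{\nu'}^k$ is not enough; I would also extract a weak-$*$ limit $\mu$ of the push-forward probability measures $(\theta^k)_\sharp (\Leb^3\restriction \ball{0}{1})$ on $\tilde\K$, which retains the non-convex information in the support of $\mu$.

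The heart of the argument exploits that for $\nu \in N_i$ the decomposition of $\theta_i$ provided by Lemma \ref{lemma: decomposition} does not involve $f_\nu$, so in the blow-up $\tilde\theta_i$ depends only on the four remaining directions $N_{i+1} \cup N_{i-1}$, whereas a jump of $\tilde f_\nu$ shifts $\tilde\theta_{i+1}$ and $\tilde\theta_{i-1}$ by equal and opposite amounts. The inclusion $\tilde\theta \in \tilde\K$ demands that one of the three components of $\tilde\theta$ vanish at each point. Inspecting the geometry of $\tilde\K$ as in Figure \ref{fig:sketch_theta_two_valued}, this leaves only two local scenarios near any jump of $\tilde f_\nu$: either the jump stays on the edge $[e_{i+1}, e_{i-1}]$ of $\K$, forcing $\tilde\theta_i \equiv 0$ in a neighborhood, or it bridges from the edge $[e_i, e_{i+1}]$ to the edge $[e_i, e_{i-1}]$. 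In the first case, transferring the blow-up conclusion back yields $\theta_i \equiv 0$ and Corollary \ref{cor: theta constant} gives a two-variant configuration. In the second case, compatibility of the jump with the non-convex inclusion forces rigid combinatorial relations between the remaining $\tilde f_{\nu'}$ which, using Remark \ref{rem:combinatorics}, collapse the decomposition of $\tilde\theta$ onto the three directions in $N$ orthogonal to some $d \in \mathcal{D}$ with $d \cdot \nu = 0$.

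To upgrade this blow-up picture to $\theta$ on $\ball{0}{r}$, I would use the uniqueness of the decomposition in Lemma \ref{lemma: decomposition} up to affine functions (\cite[Lemma 3.8]{CO12}): constancy of certain $\tilde f_{\nu'}$ in the blow-up limit translates into affineness of the corresponding $f_{\nu'}$ at the original scale, matching Definition \ref{def:planar}. For the final clause, the quantitative non-VMO bound persists under sufficiently small rescalings and produces nearby levels in $(-r,r)$ across which $f_\nu$-averages differ by at least a fixed amount; Lemma \ref{lemma: traces} applied to $\theta_i$ (whose decomposition does not involve $\nu$) then yields a well-defined trace on a hyperplane $H(\alpha, \nu)$ on which $\theta_i$ inherits the two-valued edge structure, so that $\theta_i|_{H(\alpha, \nu)} = b\chara_B$ with $0 < b < 1$ and $\Hd^2(B) > 0$. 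The main obstacle is precisely this interaction between weak-$*$ convergence of the $f_{\nu'}^k$ and the non-convex pointwise inclusion: one must use the push-forward measures $\mu_k$ together with the edge combinatorics of $\tilde\K$ to rule out mass leaking into the interior of $\tilde\K$ in the limit, and to ensure that simultaneous VMO-failure of several $f_{\nu'}$ at the same point still yields a jump pattern compatible only with the two scenarios above.
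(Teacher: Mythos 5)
Your broad strategy (blow-up plus push-forward measures to preserve the non-convex inclusion, then the two-point geometry of $\tilde\K\cap\{\theta_i=c\}$, then Corollary \ref{cor: theta constant} in the degenerate case) is the right philosophy, but the execution has genuine gaps at the two places where the actual work happens. First, you blow up isotropically around points $x_k$ at scale $\epsi_k$. In such a limit \emph{every} one-dimensional function $f_{\nu'}$, $\nu'\neq\nu$, has a constant blow-up at a.e.\ base point (this is just Lebesgue-point behaviour of an $L^\infty$ function), so constancy of $\tilde f_{\nu'}$ in the limit carries no information whatsoever about $f_{\nu'}$ at unit scale; your proposed upgrade ``constancy of certain $\tilde f_{\nu'}$ in the blow-up translates into affineness of the corresponding $f_{\nu'}$ at the original scale via uniqueness of the decomposition up to affine functions'' is therefore not valid, and with it the passage from the blow-up picture to planarity of $\theta$ on $\ball{0}{r}$ collapses. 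The paper avoids this by blowing up \emph{anisotropically}: only the direction $\nu$ is rescaled by $\delta_k$, while the two in-plane coordinates $(\beta,\gamma)$ of $H(\alpha_k,\nu)$ are kept at unit scale. This yields macroscopic statements on the whole plane — $\theta_i\circ X(\beta,\gamma)$ is two-valued with one value $0$, and the shift $(z_2,z_3)$ is constant on $B$ — which are then differentiated discretely in $\beta$ and $\gamma$ (using \cite[Lemma 7]{CO12}) to show that specific one-dimensional functions are affine; that is where planarity actually comes from, and nothing in your point blow-up replaces it. Relatedly, your claim that the weak-$*$ limit $\tilde f_\nu$ is ``necessarily non-constant'' is false: a rapidly oscillating $f_\nu$ with oscillation bounded below can have a constant weak-$*$ limit; the correct statement (and the one the paper proves) is that the limiting push-forward measure is not a Dirac mass.

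Second, both remaining conclusions are asserted rather than derived. In the two-variant branch you need to propagate $\theta_i\equiv 0$ from the blow-up plane to a full ball; the paper does this with Lemma \ref{lemma: almost maxima on transversal lines, 3D}, which crucially exploits that $0$ is an \emph{extremal} value of $\theta_i$ (the analogue of ``$f(x_1)+g(x_2)$ constant and minimal on a line implies constant on a square''), and no substitute for this mechanism appears in your sketch. For the final clause, the identity $\theta_i|_{H(\alpha,\nu)}=b\chi_B$ with a single constant $0<b<1$ is obtained in the paper from the fact that the non-Dirac measure $\mu$ is independent of $(\beta,\gamma)$, via a diameter comparison of the two-point set $\tilde\K\cap\{\theta_i=\theta_i\circ X\}$ with $\{\hat f,\hat g\}-(z_2,z_3)$, which forces $\theta_i\circ X<1$ and constant on $B$ and forces the shift to be constant; your appeal to ``nearby levels across which $f_\nu$-averages differ'' plus Lemma \ref{lemma: traces} does not produce the constancy of $b$ or the constancy of the shift, both of which are needed later (the shift's constancy is exactly what gets differentiated to prove planarity). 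As it stands the proposal would need to be restructured around a plane-based blow-up (or an equivalent device retaining unit-scale information in the directions transverse to $\nu$) before it could be completed.
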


Note that the second part is measure-theoretically meaningful by Lemma \ref{lemma: traces}, see in particular Figure \ref{fig:traces}.

For the convenience of the reader, we provide a definition of the space $VMO(U)$ for an open domain $U \subset \R^n$ for $n\in \N$, which is modeled after the one given by Sarason \cite{sarason1975functions} in the whole space case.

\begin{Def}\label{VMO}
 Let $U \subset \R^n$ with $n \in N$ be an open domain and let $f\in L_1(U)$.
 We say that the function $f$ is of bounded mean oscillation, or $f\in BMO(U)$, if we have
 \[\sup_{x\in U, 0< r < 1} \dashint_{\ball{x}{r}\cap U} \left|f(y) - \dashint_{\ball{x}{r}\cap U}  f(z) \intd z \right| \intd y < \infty.\]
 If we additionally have
 \[\lim_{r\to 0} \sup_{x\in U} \dashint_{\ball{x}{r}\cap U} \left|f(y) - \dashint_{\ball{x}{r}\cap U}  f(z) \intd z \right| \intd y =0,\]
 then $f$ is of vanishing mean oscillation, in which case we write $f\in VMO(U)$.
\end{Def}

It can be shown that at least for sufficiently nice sets $U$ the space $VMO$ is the $BMO$-closure of the continuous functions on $U$ and as such it serves as a substitute for $C(U)$ in our setting.
Functions of vanishing mean oscillation need not be continuous, although they do share some properties with continuous functions, such as the ``mean value theorem'', see Lemma \ref{lemma: Mean value theorem for VMO}.
We stress that the uniformity of the convergence in $x$ is crucial and cannot be omitted without changing the space, as can be proven by considering a function consisting of very thin spikes of height one clustering at some point.

There is another slightly more subtle issue in the proof of Proposition \ref{Prop: dimension reduction}:
As already explained, our argument works by looking at a single plane at which we blow-up.
Consequently, we can only distinguish the two cases $\theta_i \equiv 0$ and $\theta_i \not \equiv 0$ on said hyperplane.
Therefore we need a way of transporting the information $\theta_i \equiv 0$ from the hyperplane to an open ball.
Given our combinatorics this turns out to be the 3D analog of the question:
``If $F(x,y) = f(x)+g(y)$ is constant on the diagonal, is it constant on an non-empty open set?''
Looking at the function $F(x,y) = x - y$ one might think that the argument is doomed since $F$ vanishes on the diagonal but clearly does not do us the favor of vanishing on a non-empty open set.

However, the fact that $0$ is an extremal value for $\theta_1$ saves us:
If $F$ is constant on the diagonal of a square and achieves its minimum there, then it has to be constant on the entire square, see also Figure \ref{fig:polyhedron_a}.
For later use we already state this fact in its perturbed form.

\begin{lemma}\label{lemma: almost maxima on transversal lines}
 Let $f, g \in L^\infty(0,1)$ such that $f(x_1) + g(x_2) \geq c$ for almost all $x \in (0,1)^2$ and some constant $c\in \R$.
 Let $\eps \geq 0$ and let one of the following two statements be true:
 \begin{enumerate}
  \item The sum satisfies $f(x_1) + g(x_2) \leq c +\eps$ almost everywhere in $(0,1)^2$.
  \item The sum satisfies $f(t) + g(t) \leq c + \eps$ for almost all $t\in (0,1)$.
 \end{enumerate}
 Then for $\operatorname{ess\ inf} h:= - \esssup - h$ for functions $h \in L^\infty$ it holds that 
 \begin{enumerate}
  \item[3.] We have $f \leq \operatorname{ess\ inf} f + \eps$, $g \leq \operatorname{ess\ inf} g + \eps$ and $c \leq \operatorname{ess\ inf} f + \operatorname{ess\ inf} g \leq c + \eps$ for almost every $x_1,\, x_2 \in (0,1)$.
 \end{enumerate}
 If $\eps = 0$, then all three statements are equivalent.
\end{lemma}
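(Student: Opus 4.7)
The plan is as follows. When $\eps = 0$, hypothesis $3$ forces $f$ and $g$ to be essentially constant, equal to $\essinf f$ and $\essinf g$ respectively with $\essinf f + \essinf g = c$, which immediately yields $1$ (and hence also $2$). So the genuine content of the lemma is the implication: either $1$ or $2$ implies $3$.

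I would carry out this implication in two steps. First, exploiting only the global lower bound $f(x_1) + g(x_2) \geq c$ a.e.\ on $(0,1)^2$, Fubini's theorem gives a full-measure $x_1$-slice for a.e.\ $x_2$, which yields $\essinf f \geq c - g(x_2)$; taking $\essinf$ in $x_2$ produces the auxiliary bound
\[
\essinf f + \essinf g \geq c.
\]
Second, I would derive the upper estimate
\[
\esssup f \leq c + \eps - \essinf g.
\]
Under hypothesis $2$, this is immediate: $f(t) + g(t) \leq c + \eps$ a.e., and $g(t) \geq \essinf g$ a.e., so $f(t) \leq c + \eps - \essinf g$ a.e. Under hypothesis $1$, I would apply Fubini symmetrically to conclude $\esssup f \leq c + \eps - g(x_2)$ for a.e.\ $x_2$ and then take $\essinf$ in $x_2$.

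Combining the two steps closes the argument: substituting $\essinf g \geq c - \essinf f$ into the upper bound collapses it to $\esssup f \leq \essinf f + \eps$, i.e., $f \leq \essinf f + \eps$ a.e. The analogous bound on $g$ follows by swapping roles, and the sandwich $c \leq \essinf f + \essinf g \leq c + \eps$ drops out of $\essinf f \leq \esssup f \leq c + \eps - \essinf g$ together with step one. No step poses a serious obstacle; the most delicate point is handling the Fubini reductions cleanly so that pointwise-a.e.\ inequalities in one variable interact correctly with $\essinf$ and $\esssup$ taken in the other.
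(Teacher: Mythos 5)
Your proposal is correct and follows essentially the same route as the paper: first establish $\operatorname{ess\ inf} f + \operatorname{ess\ inf} g \geq c$ from the lower bound, then combine this with hypothesis 1 or 2 (via Fubini, resp.\ the diagonal inequality) to bound $f$ and $g$ above by their essential infima plus $\eps$, with the $\eps=0$ converse being immediate. The only cosmetic difference is that the paper proves the first step by intersecting two positive-measure sublevel sets rather than by Fubini slices, and phrases the second step by fixing a good slice $x_2$ instead of working with $\esssup f$; the content is the same.
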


This statement can be lifted to three-dimensional domains.
It states that in order to deduce that $\theta_i$ is constant and extremal, it is enough to know that the extremal value is attained on a suitable line, which we will parametrize by $l(t) := x_0 + \sqrt{2}t E_i$.
Here, $E_i$ is the $i$-th standard basis vector of $\R^3$ and the restriction of $\theta_i$ to the image of $l$ is defined by Lemma \ref{lemma: traces}.
It will later be important that we have a precise description of the maximal set to which the information $\theta_i = 0$ can be transported, which turns out to be the polyhedron
\[P := \bigcap_{\nu \in N_{i+1} \cup N_{i-1}} \{x \in \R^3: \nu \cdot x = \nu \cdot l(I)\},\]
see Figure \ref{fig:polyhedron_b}.
The general strategy of the proof is described in Figure \ref{fig:polyhedron_proof}.

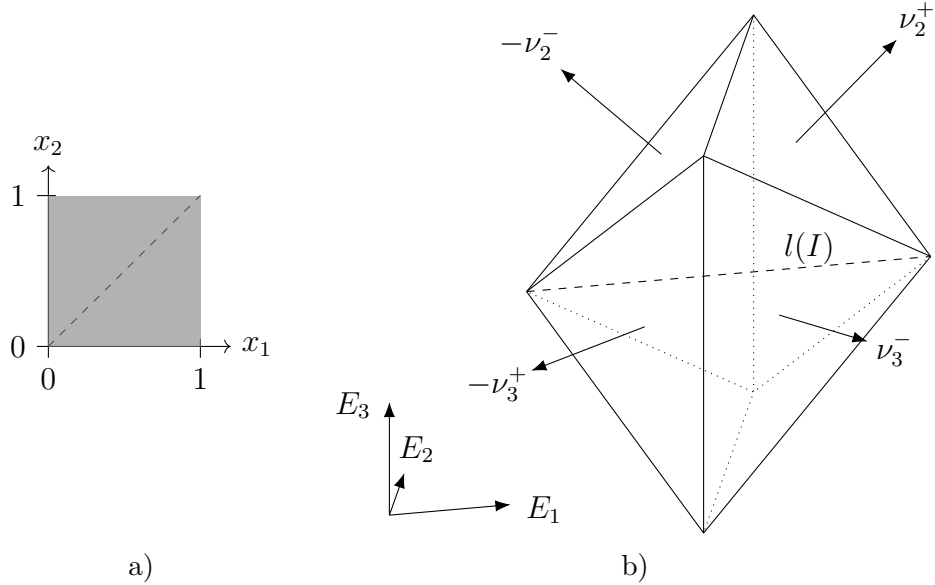
\begin{figure}
 \setbox9=\hbox{
  \begin{minipage}[t]{.1\linewidth}
    \begin{tikzpicture}
      \begin{scope}[rotate around y=14,scale=1.5]
	\draw[-{Latex[length=2mm]}] (0,0,0) --  (1,0,0);
	\node at (1.3,0,.2) {$E_1$};
	\draw[-{Latex[length=2mm]}] (0,0,0) --  (0,0,-1);
	\node at (0.1,.2,-1) {$E_2$};
	\draw[-{Latex[length=2mm]}] (0,0,0) --  (0,1,0);
	\node at (-.3,1,0) {$E_3$};
      \end{scope}
    \end{tikzpicture}
   \end{minipage}
   \begin{tikzpicture}
    \begin{scope}[rotate around y=14,scale=2.5]
%     \draw (1,0,0) -- (0,1,1) -- (-1,0,0) -- (0,-1,-1) -- cycle;
%     \draw (1,0,0) -- (0,1,-1) -- (-1,0,0) -- (0,-1,1) -- cycle;
      \draw (0,-1,1) -- (0,1,1) -- (0,1,-1);
      \draw[dotted] (0,1,-1) -- (0,-1,-1) -- (0,-1,1);
      \draw (1,0,0) -- (0,1,1) -- (-1,0,0);
      \draw (1,0,0) -- (0,-1,1) -- (-1,0,0);
      \draw (1,0,0) -- (0,1,-1) -- (-1,0,0);
      \draw[dotted]  (1,0,0) -- (0,-1,-1) -- (-1,0,0);
    
      \draw[dashed] (-1,0,0) --  (1,0,0);
      \node at (.4,.1,0) {$l(I)$};
%     \node at (1.2,0,0) {$E_1$};
    
      \draw[-{Latex[length=2mm]}] (.333,0,.666) -- ++ (.5,0,.5);
      \draw[-{Latex[length=2mm]}] (-.333,0,.666) -- ++ (-.5,0,.5);
      \draw[-{Latex[length=2mm]}] (.333,.666,0) -- ++ (.5,.5,0);
      \draw[-{Latex[length=2mm]}] (-.333,.666,0) -- ++ (-.5,.5,0);
    
      \node at ($(.333,0,.666)  + (.65,0,.65)$) {$\nu_3^-$};
      \node at ($(-.333,0,.666)  + (-.65,0,.65)$) {$-\nu_3^+$};
      \node at ($(.333,.666,0)  + (.6,.6,0)$) {$\nu_2^+$};
      \node at ($(-.333,.666,0)  + (-.65,.65,0)$) {$-\nu_2^-$};
    \end{scope}
    \end{tikzpicture}
 }
 \centering
 \subcaptionbox{\label{fig:polyhedron_a}}{\raisebox{\dimexpr.5\ht9-.5\height}{
   \centering 
    \begin{tikzpicture}[scale=2]
      \draw[->] (0,0)-- (1.2,0) node[right]{$x_1$};
      \draw[->] (0,0) -- (0,1.2) node[above]{$x_2$};
      \draw (-.075,0)node[left] {$0$}--(0,0);
      \draw (-.075,1)node[left] {$1$}--(0.05,1);
      \draw (0,-.075)node[below] {$0$}--(0,0);
      \draw (1,-.075)node[below] {$1$}--(1,.05);     
      \draw[dashed] (0,0)--(1,1);
      \fill[color=gray, opacity=.6] (0,0) -- (1,0) -- (1,1) -- (0,1)  -- cycle;  
    \end{tikzpicture}
  }
 }
 \subcaptionbox{\label{fig:polyhedron_b}}{\raisebox{\dimexpr\ht9-\height}{
  \centering
   \begin{minipage}[t]{.1\linewidth}
    \begin{tikzpicture}
      \begin{scope}[rotate around y=14,scale=1.5]
	\draw[-{Latex[length=2mm]}] (0,0,0) --  (1,0,0);
	\node at (1.3,0,.2) {$E_1$};
	\draw[-{Latex[length=2mm]}] (0,0,0) --  (0,0,-1);
	\node at (0.1,.2,-1) {$E_2$};
	\draw[-{Latex[length=2mm]}] (0,0,0) --  (0,1,0);
	\node at (-.3,1,0) {$E_3$};
      \end{scope}
    \end{tikzpicture}
  \end{minipage}
  \begin{tikzpicture}
   \begin{scope}[rotate around y=14,scale=2.5]
%     \draw (1,0,0) -- (0,1,1) -- (-1,0,0) -- (0,-1,-1) -- cycle;
%     \draw (1,0,0) -- (0,1,-1) -- (-1,0,0) -- (0,-1,1) -- cycle;
      \draw (0,-1,1) -- (0,1,1) -- (0,1,-1);
      \draw[dotted] (0,1,-1) -- (0,-1,-1) -- (0,-1,1);
      \draw (1,0,0) -- (0,1,1) -- (-1,0,0);
      \draw (1,0,0) -- (0,-1,1) -- (-1,0,0);
      \draw (1,0,0) -- (0,1,-1) -- (-1,0,0);
      \draw[dotted]  (1,0,0) -- (0,-1,-1) -- (-1,0,0);
    
      \draw[dashed] (-1,0,0) --  (1,0,0);
      \node at (.4,.1,0) {$l(I)$};
%     \node at (1.2,0,0) {$E_1$};
    
      \draw[-{Latex[length=2mm]}] (.333,0,.666) -- ++ (.5,0,.5);
      \draw[-{Latex[length=2mm]}] (-.333,0,.666) -- ++ (-.5,0,.5);
      \draw[-{Latex[length=2mm]}] (.333,.666,0) -- ++ (.5,.5,0);
      \draw[-{Latex[length=2mm]}] (-.333,.666,0) -- ++ (-.5,.5,0);
    
      \node at ($(.333,0,.666)  + (.65,0,.65)$) {$\nu_3^-$};
      \node at ($(-.333,0,.666)  + (-.65,0,.65)$) {$-\nu_3^+$};
      \node at ($(.333,.666,0)  + (.6,.6,0)$) {$\nu_2^+$};
      \node at ($(-.333,.666,0)  + (-.65,.65,0)$) {$-\nu_2^-$};
    \end{scope}
   \end{tikzpicture}
  }
 }
 \caption{a) The information $f(x_1) +g(x_2) = c$ along the dashed diagonal can be transported to the whole gray square provided $f(x_1) +g(x_2) \geq c$.\\
 b) Sketch of the polyhedron $P$ with normals $\nu_i^\pm$ for $i=2,3$, which is the maximal set to which we can propagate the information $\theta_1 \equiv 0$ or $\theta_1\equiv1$ on the dashed line $l(I)$.}
 \label{fig:polyhedron}
\end{figure}

There is also a generalization of the one-dimensional functions being almost constant in two dimensions:
In three dimensions, the one-dimensional functions are close to being affine on $P$ in the sense that the inequality \eqref{almost_affinity} holds.
(Lemma \ref{lemma:almost_affine} ensures that then there exist affine functions which are close.)
% Also here the point is identifying $P$ to be the maximal set on which we get the estimate.
As we only need this part of the statement in approximation arguments we may additionally assume that the one-dimensional functions are continuous to avoid technicalities.

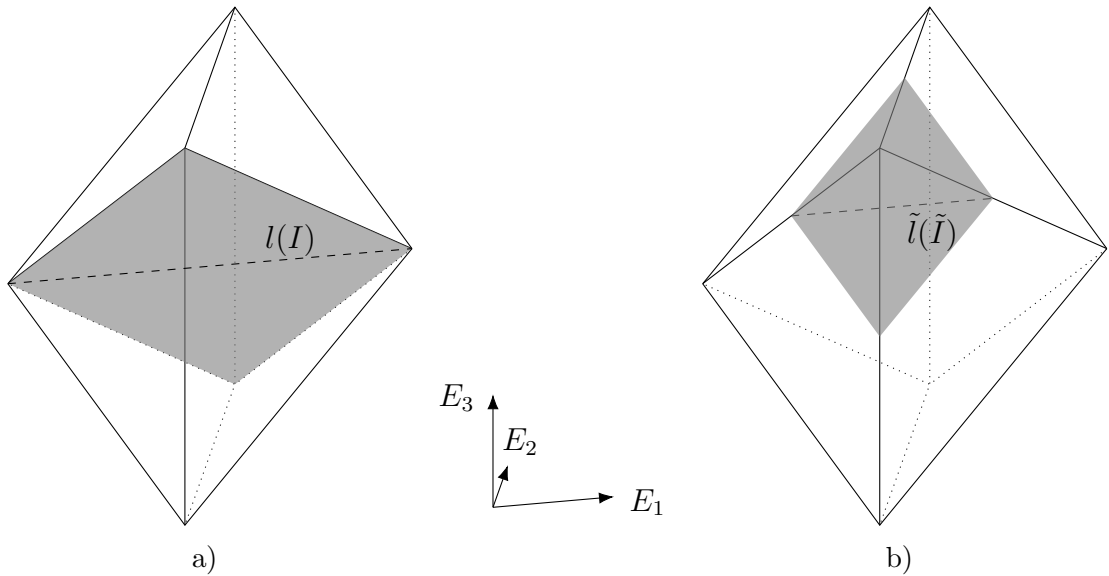
\begin{figure}
 \subcaptionbox{\label{fig:polyhedron_proof_a}}{
  \begin{tikzpicture}
   \begin{scope}[rotate around y=14,scale=2.5]
%     \draw (1,0,0) -- (0,1,1) -- (-1,0,0) -- (0,-1,-1) -- cycle;
%     \draw (1,0,0) -- (0,1,-1) -- (-1,0,0) -- (0,-1,1) -- cycle;
      \draw (0,-1,1) -- (0,1,1) -- (0,1,-1);
      \draw[dotted] (0,1,-1) -- (0,-1,-1) -- (0,-1,1);
      \draw (1,0,0) -- (0,1,1) -- (-1,0,0);
      \draw (1,0,0) -- (0,-1,1) -- (-1,0,0);
      \draw (1,0,0) -- (0,1,-1) -- (-1,0,0);
      \draw[dotted]  (1,0,0) -- (0,-1,-1) -- (-1,0,0);
      
      \begin{scope}
	\fill[color=gray, opacity=.6] (1,0,0) -- (0,-1,-1) -- (-1,0,0) -- (0,1,1);
      \end{scope}
      
      \draw[dashed] (-1,0,0) --  (1,0,0);
      \node at (.4,.1,0) {$l(I)$};
    \end{scope}
   \end{tikzpicture}
 }
    \begin{tikzpicture}
      \begin{scope}[rotate around y=14,scale=1.5]
	\draw[-{Latex[length=2mm]}] (0,0,0) --  (1,0,0);
	\node at (1.3,0,.2) {$E_1$};
	\draw[-{Latex[length=2mm]}] (0,0,0) --  (0,0,-1);
	\node at (0.1,.2,-1) {$E_2$};
	\draw[-{Latex[length=2mm]}] (0,0,0) --  (0,1,0);
	\node at (-.3,1,0) {$E_3$};
      \end{scope}
    \end{tikzpicture}
 \subcaptionbox{\label{fig:polyhedron_proof_b}}{
  \begin{tikzpicture}
   \begin{scope}[rotate around y=14,scale=2.5]
%     \draw (1,0,0) -- (0,1,1) -- (-1,0,0) -- (0,-1,-1) -- cycle;
%     \draw (1,0,0) -- (0,1,-1) -- (-1,0,0) -- (0,-1,1) -- cycle;
      \draw (0,-1,1) -- (0,1,1) -- (0,1,-1);
      \draw[dotted] (0,1,-1) -- (0,-1,-1) -- (0,-1,1);
      \draw (1,0,0) -- (0,1,1) -- (-1,0,0);
      \draw (1,0,0) -- (0,-1,1) -- (-1,0,0);
      \draw (1,0,0) -- (0,1,-1) -- (-1,0,0);
      \draw[dotted]  (1,0,0) -- (0,-1,-1) -- (-1,0,0);
      
%       \begin{scope}
% 	\fill[color=gray, opacity=.5] (1,0,0) -- (0,-1,-1) -- (-1,0,0) -- (0,1,1);
%       \end{scope}
%       
      \draw[dashed] (-.5,.5,.5) --  (.5,.5,.5);
      \begin{scope}[shift={(0,.5,.5)},scale=.5]
	\fill[color=gray, opacity=.6] (1,0,0) -- (0,1,-1) -- (-1,0,0) -- (0,-1,1);
      \end{scope}
      \node at (.2,.35,.5) {$\tilde l(\tilde I)$};
%       \node at (.4,.1,0) {$l(I)$};
% %     \node at (1.2,0,0) {$E_1$};
%     
%       \draw[-{Latex[length=2mm]}] (.333,0,.666) -- ++ (.5,0,.5);
%       \draw[-{Latex[length=2mm]}] (-.333,0,.666) -- ++ (-.5,0,.5);
%       \draw[-{Latex[length=2mm]}] (.333,.666,0) -- ++ (.5,.5,0);
%       \draw[-{Latex[length=2mm]}] (-.333,.666,0) -- ++ (-.5,.5,0);
%     
%       \node at ($(.333,0,.666)  + (.65,0,.65)$) {$\nu_3^-$};
%       \node at ($(-.333,0,.666)  + (-.65,0,.65)$) {$-\nu_3^+$};
%       \node at ($(.333,.666,0)  + (.6,.6,0)$) {$\nu_2^+$};
%       \node at ($(-.333,.666,0)  + (-.65,.65,0)$) {$-\nu_2^-$};
    \end{scope}
   \end{tikzpicture}
  }
  \caption{a) First, we transport the information $\{\theta_1 \approx 0 \}$ from the dashed line $l(I)$ to the gray plane $H(0,(011))\cap P$ using the two-dimensional result.\\
  b) In a second step, we use $\{\theta_1 \approx 0 \}$ along another dashed line $\tilde l (\tilde I)$ parallel to $E_1$ to propagate the information to $H(\alpha,(011))\cap P$ for all $\alpha\in \R$.}
  \label{fig:polyhedron_proof}
\end{figure}

The resulting statement is the following:

\begin{lemma}\label{lemma: almost maxima on transversal lines, 3D}
 There exists a radius $0<r<1$ with the following property:
 Let $\theta$ satisfy decomposition \eqref{decomposition} on $\ball{0}{1}$ and let $0\leq \theta_i \leq 1$ for all $i=1,2,3$.
 Let $I \subset \R$ be a closed interval, let $x_0 \in \R^3$ and let $l(t) := x_0 + \sqrt{2} tE_i \in \ball{0}{r}$ for $t \in I$ and some $i\in\{1,2,3\}$.
 Additionally, let $\nu \in N_i$.
% Let $\pi_\nu(x) := \nu \cdot x$ for $\nu \in N$ and $x \in \R^3$.
 We define the polyhedron $P$ to be
 \[P := \bigcap_{\nu \in N_{i+1} \cup N_{i-1}} \{x \in \R^3: \nu \cdot x \in \nu \cdot l(I)\},\]
 see also Figure \ref{fig:polyhedron}.
 
 For $\eps>0$ assume that either 
 \[ \theta_i\circ l(t) \leq \eps \text{ for almost all } t \in I \text{ or } 1 -\theta_i\circ l(t) \leq \eps \text{ for almost all } t \in I. \numberthis\label{max trans lines, main ass}\]
 
 Then for almost all $x \in P\subset \ball{0}{1}$ we have
 \[0 \leq \theta_i(x) \leq 6\eps \text{ or, respectively, } 1-6\eps \leq \theta_i(x) \leq 1.\numberthis \label{close to maximum on 3D set}\]
 
 Furthermore, if additionally the one-dimensional functions $f_\nu$ are continuous for every $\nu \in N_{i+1} \cup N_{i-1}$, then they are almost affine in the sense that
 \[\left|f_\nu(s + h + \tilde h) + f_\nu(s) - f_\nu(s + h) - f_\nu (s + \tilde h)\right| \leq 24 \eps\numberthis \label{almost_affinity}\]
 for all $(s,h,\tilde h) \in \R \times (0,\infty)^2$ with $s, s+ h, s+ \tilde h, s+h + \tilde h \in \nu\cdot l(I)$.
\end{lemma}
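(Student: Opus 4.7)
By translation and the reflection symmetries of $\K$ I reduce to $x_0 = 0$, $I = [-a,a]$, $i = 1$, and the inequality $\theta_1 \circ l \leq \eps$ on $I$; the case $1 - \theta_1\circ l \leq \eps$ is symmetric via $\theta_1 \mapsto 1 - \theta_1$, which admits an analogous decomposition. Lemma \ref{lemma: decomposition} then gives, on a smaller ball,
\[\theta_1(x) = \tfrac{1}{3}\bigl(1 - F_{\nu_2^+}(\nu_2^+ \cdot x) - F_{\nu_2^-}(\nu_2^- \cdot x) + F_{\nu_3^+}(\nu_3^+ \cdot x) + F_{\nu_3^-}(\nu_3^- \cdot x)\bigr) + g(x)\]
with $g$ affine. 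The strategy follows the two stages sketched in Figure \ref{fig:polyhedron_proof}.

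\textbf{Step 1 (reduction to a plane).} I work on the plane $\Pi := H(0,\nu_1^+) = \{x_2 + x_3 = 0\}$, which contains $l$. In the coordinates $\alpha := \nu_3^+ \cdot x$, $\beta := \nu_3^- \cdot x$ on $\Pi$, a direct computation shows $\nu_2^+ \cdot x = \beta$ and $\nu_2^- \cdot x = -\alpha$, so after absorbing the affine $g$ one has $\theta_1|_\Pi = \tilde A(\alpha) + \tilde B(\beta) + \textrm{const}$. Moreover $\Pi \cap P$ identifies with $[-a,a]^2$ in these coordinates, while $l$ corresponds to the diagonal $\{\alpha = \beta\}$. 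Applying Lemma \ref{lemma: almost maxima on transversal lines} in the diagonal form with $c = 0$ yields $\theta_1 \leq 3\eps$ on $\Pi \cap P$.

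\textbf{Step 2 (3D extension via a parallelogram identity).} For $x = (x_1, x_2, x_3) \in P$ I set
\[y_1 := (x_1, -x_3, x_3), \qquad y_2 := (x_1, x_2, -x_2), \qquad y_3 := (x_1, -x_3, -x_2).\]
Then $y_1, y_2 \in \Pi$, and a short case check reveals that $\nu \cdot y_j$ coincides up to sign with $\nu' \cdot x$ for some $\nu' \in N_2 \cup N_3$ for every $\nu \in N_2 \cup N_3$; hence $y_1, y_2, y_3 \in P$. On the slice $\{x_1 = \textrm{const}\}$ each of the four normals in $N_2 \cup N_3$ reduces to a function of $x_2$ alone or $x_3$ alone, so $\theta_1$ restricted to the slice is separable: $\theta_1 = A'(x_2) + B'(x_3) + (\textrm{affine})$. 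Since $x, y_1, y_2, y_3$ form the corners of a rectangle in this slice (with $x_2$-coordinates $\{x_2, -x_3\}$ and $x_3$-coordinates $\{x_3, -x_2\}$), the standard parallelogram identity
\[\theta_1(x) + \theta_1(y_3) = \theta_1(y_1) + \theta_1(y_2)\]
follows. Combined with $\theta_1(y_1), \theta_1(y_2) \leq 3\eps$ from Step 1 and $\theta_1(y_3) \geq 0$ from $\theta \in \tilde \K$, this yields $\theta_1(x) \leq 6\eps$.

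\textbf{Step 3 (almost affinity).} For fixed $\nu \in N_2 \cup N_3$ under the continuity hypothesis, I will construct four points $z_1, \dots, z_4 \in P$ with $z_1 - z_2 - z_3 + z_4 = 0$, whose $\nu$-projections realize $s+h+\tilde h$, $s+h$, $s+\tilde h$, $s$, and whose projections onto the remaining three normals $\nu' \in (N_2 \cup N_3)\setminus\{\nu\}$ pair as multisets $\{\nu' \cdot z_1, \nu' \cdot z_4\} = \{\nu' \cdot z_2, \nu' \cdot z_3\}$. Existence of such a quadruple relies on the linear dependence among $N_2 \cup N_3$. In the alternating sum $\theta_1(z_1) - \theta_1(z_2) - \theta_1(z_3) + \theta_1(z_4)$ the affine $g$ cancels (by $z_1 - z_2 - z_3 + z_4 = 0$), the three other $F_{\nu'}$'s cancel (by the multiset matching), and only the second difference of $F_\nu$ survives with coefficient $\pm\tfrac13$. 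The uniform bound $\theta_1 \in [0,6\eps]$ from Step 2, combined with careful bookkeeping of these cancellations, yields the claimed $24\eps$ estimate. The main technical hurdle is the combinatorial construction of the quadruple $z_j$ respecting both the multiset matching on three normals and the hidden linear constraint among $N_2 \cup N_3$, while keeping all auxiliary points of Steps 2 and 3 inside $P \cap \ball{0}{1}$; this last requirement is what fixes the universal radius $r$.
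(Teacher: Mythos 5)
Your Steps 1--2 are correct and establish the first conclusion \eqref{close to maximum on 3D set} by a route that genuinely differs from the paper's. Step 1 is the same as the paper's (same plane $H(0,\nu_1^+)$, same diagonal application of Lemma \ref{lemma: almost maxima on transversal lines}, same $3\eps$). For the passage from the plane to the full polyhedron, the paper re-runs the two-dimensional argument on almost every line parallel to $E_i$ inside that plane, obtaining the bound on a one-parameter family of planes, and then shows the union of these sets is convex and contains all extremal points of $\overbar P$, hence exhausts $P$. Your reflection argument short-circuits this: for $x\in P$ the points $y_1,y_2,y_3$ do lie in $P$ (the defining inequalities $|x_1\pm x_2|,|x_1\pm x_3|\le\sqrt 2\,a$ are stable under your substitutions, using that after centering $\nu\cdot l(I)$ is the symmetric interval $[-a,a]$ for every $\nu\in N_2\cup N_3$), the slice-wise separability makes the parallelogram identity exact, and $\theta_1(x)\le \theta_1(y_1)+\theta_1(y_2)-\theta_1(y_3)\le 6\eps$ follows. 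This is shorter than the paper's extreme-point argument and yields the same constant; what you gloss over (pointwise evaluation on the plane via Lemma \ref{lemma: traces}, and that preimages of null sets under the linear maps $x\mapsto y_j$ and $x\mapsto \nu\cdot y_j$ are null) is routine.

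Step 3, however, is a sketch rather than a proof: the construction of the quadruple $z_1,\dots,z_4$, which you yourself flag as the main technical hurdle, is precisely the substance of this part and is left undone. It can be made explicit, and is in fact the paper's argument: for $\nu=\frac{1}{\sqrt 2}(101)$ take $z_4:=\sqrt 2\,s E_1$, $z_2:=z_4+h\frac{1}{\sqrt 2}[111]$, $z_3:=z_4+\tilde h\frac{1}{\sqrt 2}[1\overline11]$, $z_1:=z_4+h\frac{1}{\sqrt 2}[111]+\tilde h\frac{1}{\sqrt 2}[1\overline11]$. Since $\frac{1}{\sqrt2}[111]$ and $\frac{1}{\sqrt2}[1\overline11]$ both have unit projection on $\nu$, while $[111]\perp(\overline101),(1\overline10)$ and $[1\overline11]\perp(\overline101),(110)$, each of the other three one-dimensional functions depends on at most one of the two increments and cancels in the second difference, as does the affine part; one must still verify $z_j\in P$ by checking all normals (this is where the hypothesis $s,s+h,s+\tilde h,s+h+\tilde h\in\nu\cdot l(I)$ and the choice of $r$ enter), which your proposal does not do. Separately, note a normalization issue: with your factor $\frac13$ in front of the one-dimensional functions, the surviving coefficient in the alternating sum is $\pm\frac13$, so the bound $|\theta_1(z_1)-\theta_1(z_2)-\theta_1(z_3)+\theta_1(z_4)|\le 24\eps$ would only give $72\eps$ for those functions; the constant $24\eps$ in \eqref{almost_affinity} refers to the one-dimensional functions of the decomposition of $\theta$ itself (unit coefficients), which is how the lemma and the paper's proof read it.
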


There is yet another minor subtlety of measure theoretic nature.
We already mentioned that we require the midpoints of the blow-ups to be dependent on its radius.
It is thus entirely possible that the radii vanish much faster than the midpoints converge.
This means we cannot use Lebesgue point theory in an entirely straightforward manner to prove that the blow-ups of $f_{\tilde \nu}$ converge to their point values almost everywhere.
We deal with this issue by exploiting density of continuous functions in $L^p$ in a straightforward manner.

\begin{lemma}\label{lemma: shifts don't matter}
 Let $f \in L^p(\R^n)$ for some dimension $n\in \N$ and $1\leq p < \infty$.
 For $\tau>0$ and $y, z \in \R^n$ we have
 \[\lim_{\tau,|z| \to 0} \int_{\R^n} \dashint_{\ball{0}{1}} |f(x + z + \tau y) - f(x)|^p \intd y \intd x =0.\]
\end{lemma}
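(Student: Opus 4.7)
The plan is to reduce to the case of a continuous compactly supported function via density of $C_c(\R^n)$ in $L^p(\R^n)$, where the statement follows immediately from uniform continuity. Given $\eps > 0$, I would pick $g \in C_c(\R^n)$ with $\supp g \subset \ball{0}{R}$ and $\|f - g\|_{L^p} < \eps$.

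Using the convexity bound $|a+b+c|^p \leq 3^{p-1}(|a|^p + |b|^p + |c|^p)$, I would split
\begin{align*}
 |f(x+z+\tau y) - f(x)|^p & \leq 3^{p-1} |f(x+z+\tau y) - g(x+z+\tau y)|^p \\
 & \quad + 3^{p-1} |g(x+z+\tau y) - g(x)|^p + 3^{p-1} |g(x) - f(x)|^p
\end{align*}
and integrate. By Fubini and translation invariance of Lebesgue measure, the first and third pieces each contribute exactly $3^{p-1} \|f-g\|_{L^p}^p \leq 3^{p-1} \eps^p$, uniformly in the parameters $z, \tau, y$. For the middle piece, uniform continuity of $g$ supplies $\delta > 0$ such that $|g(x+z+\tau y) - g(x)| < \eps$ whenever $\tau + |z| < \delta$ and $y \in \ball{0}{1}$. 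Restricting further to $\tau + |z| \leq 1$ forces the support of $x \mapsto |g(x+z+\tau y) - g(x)|^p$ to lie in $\ball{0}{R+2}$ independently of $z, \tau, y$, so
\begin{equation*}
 \int_{\R^n} \dashint_{\ball{0}{1}} |g(x+z+\tau y) - g(x)|^p \intd y \intd x \leq |\ball{0}{R+2}| \, \eps^p.
\end{equation*}

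Combining the three bounds yields $\limsup_{\tau, |z| \to 0}$ of the full integral at most $(2 \cdot 3^{p-1} + |\ball{0}{R+2}|) \eps^p$, and since $\eps > 0$ was arbitrary, the claimed limit vanishes. The argument is essentially bookkeeping with no deeper obstacle; the only minor point to watch is that the bounded set on which the middle integrand is supported must be chosen uniformly in the small parameters, which is why I restrict to $\tau + |z| \leq 1$ before appealing to uniform continuity.
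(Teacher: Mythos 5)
Your proof is correct and rests on the same idea as the paper's: the paper simply quotes continuity of translations in $L^p$ (itself a consequence of density of $C_c(\R^n)$), notes that $h=z+\tau y$ tends to $0$ uniformly over $y\in\ball{0}{1}$, and integrates in $y$, whereas you unroll that standard fact with the explicit three-term splitting. One small bookkeeping point: in your final bound the constant $|\ball{0}{R+2}|$ depends on $\eps$ through the choice of $g$, so "since $\eps$ was arbitrary" does not literally apply; fix this either by taking the $\limsup$ in $(\tau,z)$ first for fixed $g$ (the middle term then vanishes outright, leaving $\limsup \leq 2\cdot 3^{p-1}\eps^p$) or by invoking uniform continuity of $g$ at level $\eps/(1+|\ball{0}{R+2}|)^{1/p}$ rather than $\eps$.
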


\subsection{The case \texorpdfstring{$f_\nu \in VMO$}{f in VMO}  for all \texorpdfstring{$\nu \in N$}{n in N}}
Having simplified the case where one of the one-dimensional functions is not of vanishing mean oscillation, we now turn to the case where all of them lie in $VMO$.
The statement we will need to prove here is the following:

\begin{prop}\label{Prop: Rigidity VMO}
 There exists a universal radius $r>0$ with the following property:
 Let $e(u) \in \K$ almost everywhere, and let the decomposition $\eqref{decomposition}$ of Lemma \ref{lemma: decomposition} hold throughout $\ball{0}{1}$.
 Furthermore, let $f_\nu \in VMO([-r,r])$ for all $\nu \in N$.
 Then on $\ball{0}{r}$ the $e(u)$ is a two-variant configuration in the sense of Definition \ref{def:degenerate}.
\end{prop}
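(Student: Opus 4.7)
The strategy is to exploit the $VMO$ regularity to extract a continuous representative of $\theta$, and then combine continuity with the inclusion $\theta \in \tilde\K$, the uniqueness (up to affine functions) of the decomposition \eqref{decomposition}, and Corollary \ref{cor: theta constant} to force some $\theta_i$ to vanish identically on a sub-ball.

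My first step is to use the mean value theorem for $VMO$ (Lemma \ref{lemma: Mean value theorem for VMO}) to produce a continuous pointwise representative $\bar f_\nu$ of each $f_\nu$, defined as the uniform limit of $\dashint_{(s-\rho,s+\rho)} f_\nu(t)\intd t$ as $\rho \to 0$. Substituting into \eqref{decomposition} yields a continuous $\bar\theta : \ball{0}{r'} \to \R^3$ which agrees with $\theta$ at common Lebesgue points. Since $\tilde\K$ is closed and the set of Lebesgue points is dense, continuity forces $\bar\theta(x) \in \tilde\K$ for every $x \in \ball{0}{r'}$. By uniform continuity of $\bar\theta$, on a small enough sub-ball $\ball{0}{r''}$ the image $\bar\theta(\ball{0}{r''})$ is contained in an arbitrarily small neighborhood of the single point $\bar\theta(0) \in \tilde\K$.

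I then distinguish two cases according to the location of $\bar\theta(0)$. If $\bar\theta(0)$ lies in the relative interior of an edge $\{\theta_i = 0\}$ of $\tilde\K$, the connectedness of $\bar\theta(\ball{0}{r''})$ together with the localization prevents the image from reaching a vertex, so it is forced to lie on this single edge; this gives $\bar\theta_i \equiv 0$ on $\ball{0}{r''}$, hence $\theta_i \equiv 0$ almost everywhere. If instead $\bar\theta(0) = e_j$ is a vertex, continuity keeps $\bar\theta_j$ strictly positive on a small sub-ball, so the closed sets $A_k := \{\bar\theta_k = 0\}$ for $k \in \{j-1, j+1\}$ cover this sub-ball. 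By the Baire category theorem at least one of them, say $A_{j-1}$, has non-empty interior. On such an open set the uniqueness of the decomposition (Lemma 3.8 of \cite{CO12}, invoked already in the proof of Lemma \ref{lemma: decomposition}) forces every $\bar f_\nu$ with $\nu \in N_j \cup N_{j+1}$ to be affine on its projection; this makes $\bar\theta_{j-1}$ an affine function on the full ball $\ball{0}{r''}$, and combined with $\bar\theta_{j-1} \geq 0$ and $\bar\theta_{j-1}(0) = 0$ yields $\bar\theta_{j-1} \equiv 0$ on $\ball{0}{r''}$.

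In either case some $\theta_i \equiv 0$ on a ball of universal radius $r$, at which point Corollary \ref{cor: theta constant} produces the two-variant configuration as desired. The main obstacle is the vertex case, where one must turn the single-point information $\bar\theta(0)=e_j$ into the vanishing of some component on an open set. This is precisely where the $VMO$ hypothesis enters essentially: it supplies the continuous representative that makes the Baire argument possible and rules out the sharp one-dimensional interfaces in $f_\nu$ responsible for the planar configurations of Definitions \ref{def:second-order_laminate}--\ref{def:triple}, leaving only the two-variant geometry as an admissible solution.
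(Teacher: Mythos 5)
Your first step is where the proof breaks down: a function of vanishing mean oscillation does \emph{not} admit a continuous representative, and Lemma \ref{lemma: Mean value theorem for VMO} does not say that the averages $f_{\nu,\delta}$ converge uniformly to anything — it only says that the (continuous) mollifications nearly take values in the compact set, i.e.\ $\dist(f_{\nu,\delta},K)\to 0$ locally uniformly. A bounded $VMO$ function can fail to have a limit at a point, which the paper stresses right after Definition \ref{VMO}; so the object $\bar f_\nu$, the uniformly continuous $\bar\theta$, the connectedness-of-the-image argument in the edge case, and the Baire argument on the closed sets $\{\bar\theta_k=0\}$ in the vertex case all rest on something that does not exist. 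The genuine proof must work with the mollifications $\theta_\delta$ and the approximate inclusion $\theta_\delta\in\tilde\K+\ball{0}{\eps}$, replacing continuity by quantitative substitutes (Lemma \ref{lemma: almost maxima on transversal lines, 3D} for propagating "almost extremal" values, Lemma \ref{lemma:almost_affine} for almost affine profiles), and the $VMO$ hypothesis enters essentially at one precise point: to show that the maximal intervals on which the relevant one-dimensional profiles are almost constant cannot shrink as $\delta\to 0$, since shrinking would produce a fixed amount of mean oscillation at vanishing scales. No analogue of these quantitative steps appears in your proposal.

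Even granting a continuous representative, the localization you use cannot deliver the statement. The sub-ball on which $\bar\theta$ stays close to $\bar\theta(0)$ has a radius depending on the modulus of continuity of the particular configuration, while the proposition asserts a universal radius $r$; and knowing $\theta_i\equiv 0$ only on such a small, non-universal ball does not let you apply Corollary \ref{cor: theta constant} to conclude a two-variant structure on $\ball{0}{r}$. In the vertex case there is an additional gap: from $\bar\theta_{j-1}\equiv 0$ on some open set $U$, the uniqueness result \cite[Lemma 5]{CO12} only makes the relevant one-dimensional functions affine on the projections $\nu\cdot U$, not on the whole ball, so the step "$\bar\theta_{j-1}$ is affine on all of $\ball{0}{r''}$, is nonnegative and vanishes at $0$, hence vanishes identically" does not follow. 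Propagating such information from a face or a sub-region to a full ball of universal size is precisely the work done in the paper by the transport arguments through the polyhedra $P$ and $Q$ (Steps 7--10 of its proof), together with the density-one point chosen in Step 1; this machinery, or something replacing it, is indispensable and is missing from your argument.
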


\begin{figure}
 \centering
 \begin{tikzpicture}[scale=2]
% 	  \node at (-1,1) {b)};
	  \fill (90:1) circle (1pt);
	  \node at (90:1.15) {$e_1$};
	  \node at (210:1.2) {$e_3$};
	  \node at (330:1.2) {$e_2$};
	  \draw (90:1) -- (210:1);
	  \draw (210:1)-- (330:1);
	  \draw (330:1) -- (90:1);
	  \fill (330:1) circle (1pt);
	  \fill (210:1) circle (1pt);
	  \fill ($(330:1)!.4!(210:1)$) circle (1pt);
	  \node at ($(330:1)!.4!(210:1) - (0,.3)$) {$e(u)(x)$};
	  \draw[->] (-1.4, -0.7 ) -- (-1.4, 1.2) node[right] {$\theta_1$};
	  \draw (-1.35, -0.5) -- (-1.45, -0.5) node[left] {0};
	  \draw (-1.35, 1.025) -- (-1.45, 1.025) node[left] {1};
 \end{tikzpicture}
 \caption{Sketch of how $e(u)(x)$ lies in $\K$. At the boundary of $\theta_1^{-1}(0)$ the strain needs to take the two values $e_2$ and $e_3$.}
 \label{fig:stays_on_side}
\end{figure}
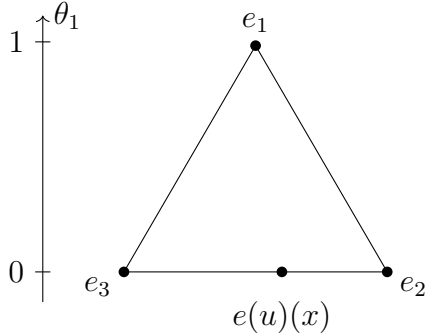

To fix ideas, let us first illustrate the argument in the case of continuous functions in the whole space:

By the mean value theorem the case $e(u) \in \{e_1,e_2,e_3\}$ is trivial, so let us suppose that there is a point $x$ such that $e(u)(x)$ lies strictly between two pure martensite strains.
We may as well suppose $\theta_1(0) = 0$ and $0 < \theta_2(0), \theta_3(0) <1$, see Figure \ref{fig:stays_on_side}.
By continuity, the set $\{\theta_1 = 0\}$ has non-empty interior, and, by the decomposition \eqref{decomposition}, any connected component of it should be a polyhedron $P$ whose faces have normals lying in $N_2\cup N_3$, see Figure \ref{fig:preimage_a}.
Additionally, continuity implies that
\begin{align*}
	e(u) \equiv e_2  \text{ or } e(u) \equiv e_2 \text{ on each face}.
\end{align*}
Unfortunately, on a face with normal in $N_i$ for $i=2,3$ only $\theta_i$ will later be a well-defined function due to Lemmas \ref{lemma: decomposition} and \ref{lemma: traces} after dropping continuity.
Therefore on such a face we can only use the above information in the form
\begin{align*}
	\theta_i \equiv 0  \text{ or } \theta_i \equiv 1.
\end{align*}

\begin{figure}
 \centering
 \subcaptionbox{\label{fig:preimage_a}}{
  \centering
  \begin{tikzpicture}
   \begin{scope}[rotate around y=14,scale=1.8]
      \fill[color=red, opacity=.6] (0,0,0) -- (-1,1,-1)-- (.5,-.5,-2.5) -- (1.5,-1.5,-1.5);
      \draw (0,0,0) -- (-1,1,-1)-- (.5,-.5,-2.5) -- (1.5,-1.5,-1.5) -- cycle;
      \draw (0,0,0) -- (0,-1.5,0);
      \draw (-1,1,-1) -- (-1,-1,-1);
      \draw[dotted] (.5,-.5,-2.5) -- (.5,-2,-2.5);
      \draw (1.5,-1.5,-1.5) -- (1.5,-2.5,-1.5);
      
      \draw[dashed] (.3,-.3,-.3) -- node[right]{$l$} (.3,-.3,-2.3);
      
      \draw[-{Latex[length=2mm]}] (.3,-.6,-.3) -- ++ (.5,0,.5);
      \draw[-{Latex[length=2mm]}] (-.3,-.6,-.3) -- ++ (-.5,0,.5);
      \draw[-{Latex[length=2mm]}] ($(-.3,.3,-.3) + (.3,-.3,-.3)$) -- ++ (.5,.5,0);
%       \draw[-{Latex[length=3mm]}] (-.333,.666,0) -- ++ (-.5,.5,0);    
      \node at ($(.3,-.6,-.3)  + (.65,0,.65)$) {$\nu_3^-$};
      \node at ($(-.3,-.6,-.3)  + (-.7,0,.7)$) {$-\nu_3^-$};
      \node at ($(-.3,.3,-.3) + (.3,-.3,-.3)  + (.6,.6,0)$) {$\nu_2^+$};
%       \node at ($(-.333,.666,0)  + (-.65,.65,0)$) {$-\nu_2^-$};
   \end{scope}
  \end{tikzpicture}
}
 \subcaptionbox*{}{
  \centering
  \begin{tikzpicture}
   \begin{scope}[rotate around y=14,scale=1.3]
    \draw[-{Latex[length=2mm]}] (0,0,0) --  (1,0,0);
    \node at (1.3,0,.2) {$E_1$};
    \draw[-{Latex[length=2mm]}] (0,0,0) --  (0,0,-1);
    \node at (0.1,.2,-1) {$E_2$};
    \draw[-{Latex[length=2mm]}] (0,0,0) --  (0,1,0);
    \node at (-.3,1,0) {$E_3$};
   \end{scope}
  \end{tikzpicture}
 }
 \subcaptionbox{\label{fig:preimage_b}}{
  \centering
  \begin{tikzpicture}
   \begin{scope}[rotate around y=14,scale=1.8]
      \draw  (-1,1,-1)-- (.5,-.5,-2.5) -- (1.5,-1.5,-1.5) -- (0,0,0);
      \draw (0,0,0) -- (0,-.5,0);
      \draw (0,-.5,0) -- (0,-1.5,0);
      \draw (-1,1,-1) -- (-1,-1,-1);
      \draw[dotted] (.5,-.5,-2.5) -- (.5,-2,-2.5);
      \draw (1.5,-1.5,-1.5) -- (1.5,-2.5,-1.5);
      
      \draw[dashed] (.3,-.3,-.3) -- (.3,-.3,-2.3);
      \translatepoint{.3,-.3,-1.3};
      \begin{scope}[shift=(middlepoint),draw=red,thick]
        \draw[dashed] (-1,-1,0) -- (1,-1,0);
        \draw (1,-1,0) -- (1,1,0);
        \draw (1,1,0) -- (-1,1,0);
        \draw[dashed] (-1,1,0) -- (-1,-1,0);
        \draw[dashed] (0,0,-1) -- (-1,-1,0);
        \draw[dashed] (-1,-1,0) -- (0,0,1);
        \draw[dashed] (0,0,-1) -- (1,1,0);
        \draw (1,1,0) -- (0,0,1);
        \draw[dashed] (0,0,-1) -- (-1,1,0);
        \draw (-1,1,0) -- (0,0,1);
        \draw[dashed] (0,0,-1) -- (1,-1,0);
        \draw (1,-1,0) -- (0,0,1);
    \end{scope}
    \draw (0,0,0) -- (-1,1,-1);
   \end{scope}
  \end{tikzpicture}
 }
 \caption{a) Sketch of a connected component $P$ of $\theta_1^{-1}(0)$ with normals $\nu_2^+$, $\nu_3^-$ and $\nu_3^+$. On the red face we get the information $\theta_2 \equiv 0$ or $\theta_2 \equiv 1$. In particular, we get it along the line $l$, which is parallel to $E_2$.
 b) Sketch of the polyhedron $Q$ that transports the information $\theta_2 \equiv 0$ or $\theta_2 \equiv 1$ along $l$ to the inside of $P$.}
 \label{fig:preimage}
\end{figure}
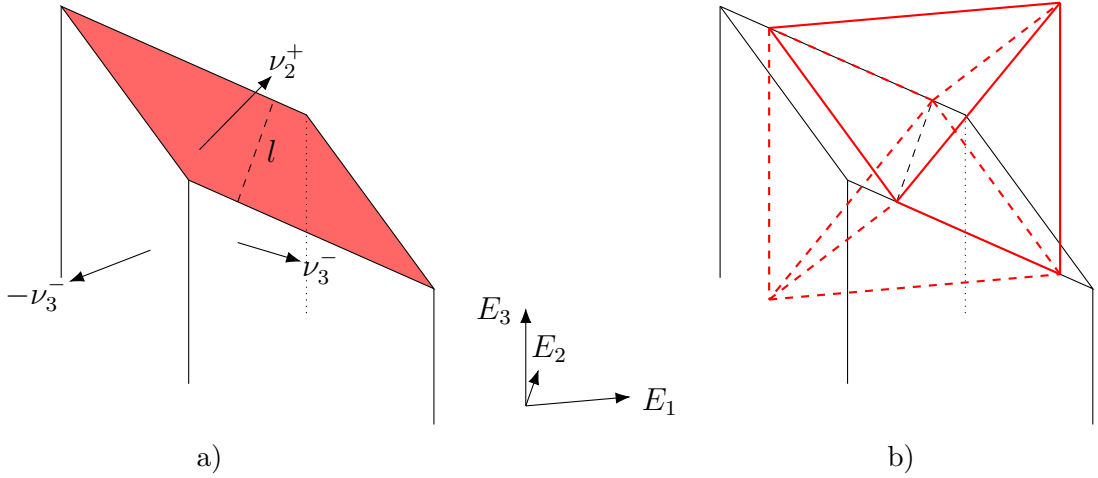

Using Lemma \ref{lemma: almost maxima on transversal lines, 3D} we get a polyhedron $Q$ that transports this information back inside $P$, see Figure \ref{fig:preimage_b}.
The goal is then to show that we can reach $x$ in order to get a contradiction to $e(u)(x)$ lying strictly between $e_2$ and $e_3$, which we will achieve by using the face of $P$ closest to $x$.

In order to turn this string of arguments into a proof in the case $f_\nu \in VMO$ for all $\nu \in N$ the key insight is that non-convex inclusions and approximation by convolutions interact very nicely for $VMO$-functions.
As has been pointed out to us by Radu Ignat, this elementary, if maybe a bit surprising fact has previously been used to in the degree theory for $VMO$-functions, see Brezis and Nirenberg \cite[Inequality (7)]{brezis1995degree}, who attribute it to L.\ Boutet de Monvel and O.\ Gabber.
For the convenience of the reader, we include the statement and present a proof later.

\begin{lemma}[L.\ Boutet de Monvel and O.\ Gabber]\label{lemma: Mean value theorem for VMO}
 Let $f \in VMO(U)$ with $f\in K$ almost everywhere for some open set $U \subset \R^n$ and a compact set $K \subset \R^d$, where we have $n,d \in \mathbb{N}$.
 Let $f_\delta(x) := \dashint_{\ball{x}{\delta}} f(y) \intd y$.
 Then $f_\delta$ is continuous and we have that $\dist (f_\delta, K) \to 0$ locally uniformly in $U$.
\end{lemma}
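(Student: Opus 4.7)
The plan is to separate the two conclusions. Continuity of $f_\delta$ is a soft statement: since $K$ is compact we have $f \in L^\infty_{loc}(U)$, so $f_\delta = f \ast (\chara_{\ball{0}{\delta}}/|\ball{0}{\delta}|)$ is a convolution of a bounded function with an $L^1$ kernel on the interior subset where it is defined, and hence is continuous (actually Lipschitz for each fixed $\delta > 0$). So the real content is the local uniform convergence $\dist(f_\delta(x),K) \to 0$.

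For this, I would argue by contradiction. Fix a compact subset $V \subset\subset U$ and suppose there existed $\eps > 0$, a sequence $\delta_k \to 0$ and points $x_k \in V$ with $\dist(f_{\delta_k}(x_k), K) \geq \eps$. For $k$ large enough, $\ball{x_k}{\delta_k} \subset U$ and so $\ball{x_k}{\delta_k} \cap U = \ball{x_k}{\delta_k}$, which means the VMO hypothesis applies directly to give
\[\dashint_{\ball{x_k}{\delta_k}} \left|f(y) - f_{\delta_k}(x_k)\right| \intd y \longrightarrow 0\]
as $k \to \infty$, since this quantity is controlled by the supremum appearing in the VMO seminorm at scale $\delta_k$.

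The next step is to upgrade this $L^1$-smallness into a pointwise conclusion using Chebyshev's inequality. Indeed, the set
\[G_k := \left\{y \in \ball{x_k}{\delta_k} : \left|f(y) - f_{\delta_k}(x_k)\right| < \eps/2\right\}\]
satisfies $\Leb^n(\ball{x_k}{\delta_k} \setminus G_k) \leq (2/\eps)\int_{\ball{x_k}{\delta_k}} |f - f_{\delta_k}(x_k)| \intd y = o(\Leb^n(\ball{x_k}{\delta_k}))$, so $G_k$ has positive measure for $k$ large. Because $f \in K$ almost everywhere, $G_k$ must intersect $f^{-1}(K)$ on a set of positive measure as well, and any representative $y_k$ from this intersection satisfies $f(y_k) \in K$ together with $|f_{\delta_k}(x_k) - f(y_k)| < \eps/2$. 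Consequently $\dist(f_{\delta_k}(x_k), K) < \eps/2$, contradicting the choice of $x_k$ and $\delta_k$.

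The only point requiring any care is the matching between scales in the VMO seminorm and the radius $\delta_k$ of the averaging ball, which is why uniformity of the convergence in $x$ in Definition \ref{VMO} is essential; this is also why the argument produces \emph{uniform} and not merely pointwise convergence. I do not foresee a serious technical obstacle: the proof is essentially a one-line observation once one realizes that $f_\delta(x)$ is the barycenter of a probability measure which, by VMO, concentrates more and more on the compact set $K$ where $f$ takes its values.
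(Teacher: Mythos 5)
Your proposal is correct and rests on the same mechanism as the paper's proof: once $\ball{x}{\delta}\subset U$, the averaged oscillation $\dashint_{\ball{x}{\delta}}|f_\delta(x)-f(y)|\intd y$ is small uniformly in $x$ by the $VMO$ definition, and $f\in K$ a.e.\ converts this into $\dist(f_\delta(x),K)$ being small, with continuity coming from the convolution structure in both cases. The only difference is cosmetic: the paper concludes directly via the one-line estimate $\dist(f_\delta(x),K)\leq \dashint_{\ball{x}{\delta}}|f_\delta(x)-f(y)|\intd y$, whereas you reach the same point by contradiction together with Chebyshev's inequality.
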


Unfortunately, formalizing the set $\{\theta_{1,\delta} \approx 0\}$ in such a way that connected components are polyhedra is a bit tricky.
We do get that they contain polyhedra on which the one-dimensional functions are close to affine ones, see Lemmas \ref{lemma: almost maxima on transversal lines, 3D} and \ref{lemma:almost_affine}.
However, we do not immediately get the other inclusion:
As the directions in the decomposition are linearly dependent, one of the one-dimensional functions deviating too much from their affine replacement does not translate into $\theta_1$ deviating too much from zero.

We side-step this issue by first working on hyperplanes $H(\alpha,(011))$.
In that case, the decomposition of $\theta_1$ simplifies to two one-dimensional functions
% , which will turn out to depend on $[11\overline1]$ and $[1\overline11]$,
and thus we do get that connected components of $\{\theta_{1,\delta} \approx 0\} \cap H(\alpha,(011))$ are parallelograms.
The goal is then to prove that at least some of them, let us call them $R_\delta$, do not shrink away in the limit $\delta \to 0$.
Making use of Lemma \ref{lemma: almost maxima on transversal lines, 3D} we can go back to a full dimensional ball and get that the set $\{\theta_1 =0\}$ has non-empty interior.
This allows the argument for continuous functions to be generalized to $VMO$-functions.

\begin{figure}
 \centering
   \begin{tikzpicture}[scale=3]
    \draw (0,0) -- (30:1) -- ($(30:1) + (330:1.25)$);
    \draw (0,0) -- (330:1.25);
    \draw[dashed] (30:.3) -- node[above]{$l$}($(30:1) + (330:.7)$);
    \draw[dotted] (30:.3) -- ++ (-.25,0);
    \draw[dotted] ($(30:1) + (330:.7)$) -- ++ (.5,0);
    \node[fill=black, circle, inner sep=1.5pt,label={$y_\delta$}] at (30:.3) {};
    \node[fill=black, circle, inner sep=1.5pt,label={$z_\delta$}] at ($(30:1) + (330:.7)$){};
%     
%     \draw[-{Latex[length=2mm]}] (-.85,-.15) -- ($(-.85,-.15) + (30:.5)$);
%     \node at ($(-.85,-.15) + (30:.7)$) {$[11\overline1]$};
%     \draw[-{Latex[length=2mm]}] (-.85,-.15) -- ($(-.85,-.15) + (330:.5)$);
%     \node at ($(-.85,-.15) + (330:.7)$) {$[1\overline11]$};
  \end{tikzpicture}
  \caption{Sketch of the parallelogram $R_\delta$. Along the dashed part of the line $l$, which intersects $\partial R_\delta$ at $y_\delta$ and $x_\delta$, the volume fraction $\theta_{2,\delta}$ is almost affine. At $x$ we have $c\leq\theta_{2,\delta}(x)\leq 1-c$ for some $c>0$.}
  \label{fig:does_not_shrink}
\end{figure}
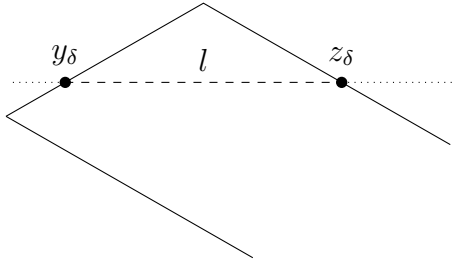

In order to prove that $R_\delta$ does not get too small we choose it such that we are in the situation depicted in Figure \ref{fig:does_not_shrink}.
We will show that $\theta_{2,\delta}(y_\delta) \approx 0$, $\theta_{2,\delta}(z_\delta) \approx 1$ or vice versa.
Together with the fact that $\theta_2 \circ l$ is close to an affine function in a strong topology by the following Lemma \ref{lemma:almost_affine}, the function $\theta_{2}$ would not have vanishing mean oscillation if $R_\delta$ shrank away, i.e., if $|y_\delta - z_\delta| \to 0$.

\begin{lemma}\label{lemma:almost_affine}
 There exists a number $C>0$ with the following property:
 Let $g\in L^\infty([0,1])$ and
 \[\eps := \sup_{ t, t+ h ,t+ \tilde h, t + h + \tilde h \in [0,1] } |g(t + h + \tilde h) - g(t+ h) - g(t+ \tilde h) + g(t)|.\]
 Then there exists an affine function $\tilde g$ such that
 \[||g-\tilde g||_\infty \leq C \left(||g||_\infty^{\frac{1}{2}}\eps^{\frac{1}{2}} + \eps \right).\]
\end{lemma}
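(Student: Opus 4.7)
My plan is to reduce the problem to a smooth function via mollification, to establish the affine approximation in the interior by a dyadic Hyers-Ulam type induction, and then to close the estimate across the boundary strip by balancing parameters.

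For a parameter $\delta \in (0, 1/4)$ to be chosen later, let $\rho_\delta$ be a symmetric smooth mollifier supported in $(-\delta, \delta)$ and set $g_\delta := g \ast \rho_\delta$. By Fubini, convolution preserves the second-difference bound, so the quantity $|g_\delta(t+h+\tilde h) - g_\delta(t+h) - g_\delta(t+\tilde h) + g_\delta(t)|$ is at most $\eps$ whenever the four evaluation points $t,t+h,t+\tilde h,t+h+\tilde h$ all lie in $[\delta, 1-\delta]$. Specializing the hypothesis on $g$ to $\tilde h = -h$ gives the approximate midpoint inequality $|2g(t) - g(t-h) - g(t+h)| \leq \eps$ for almost every admissible $(t,h)$, and averaging against the symmetric $\rho_\delta$ in the $h$-variable yields $|g - g_\delta| \leq \eps/2$ almost everywhere on $[\delta, 1-\delta]$.

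Next, let $\tilde g$ denote the affine function with $\tilde g(\delta) = g_\delta(\delta)$ and $\tilde g(1-\delta) = g_\delta(1-\delta)$, extended affinely to all of $[0,1]$. Setting $h = \tilde h = (b-a)/2$ in the bound for $g_\delta$ translates into the approximate midpoint inequality $|\phi((a+b)/2) - (\phi(a) + \phi(b))/2| \leq \eps/2$ for the continuous function $\phi := g_\delta - \tilde g$, which vanishes at the endpoints of $[\delta, 1-\delta]$. A standard dyadic midpoint induction controls $\phi$ at dyadic rationals of that interval and, combined with the continuity of $\phi$, yields $||\phi||_{L^\infty([\delta, 1-\delta])} \lesssim \eps$. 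Together with the first step this produces the interior estimate $||g - \tilde g||_{L^\infty([\delta, 1-\delta])} \lesssim \eps$.

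On the complementary boundary strip $[0, \delta] \cup [1-\delta, 1]$ the slope of $\tilde g$ is bounded by $|g_\delta(1-\delta) - g_\delta(\delta)|/(1-2\delta) \leq 4||g||_\infty$ (for $\delta \leq 1/4$), yielding the crude estimate $|g - \tilde g| \leq C||g||_\infty$ on a strip of total width $2\delta$. Balancing this boundary contribution against the interior bound of order $\eps$ by optimizing $\delta \sim \sqrt{\eps/||g||_\infty}$ and slightly adjusting $\tilde g$ to redistribute error from the boundary into the interior produces the announced $C(||g||_\infty^{1/2}\eps^{1/2} + \eps)$. The main obstacle is precisely this boundary treatment: the $L^\infty$ function $g$ is essentially uncontrolled near the endpoints, the approximate midpoint inequality cannot simply be iterated there without additional loss, and it is this unavoidable loss that forces the $||g||_\infty^{1/2}\eps^{1/2}$ term rather than the pure $O(\eps)$ estimate one obtains in the interior alone.
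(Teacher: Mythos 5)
The decisive gap is in your boundary step. Your only estimate on the strip $[0,\delta]\cup[1-\delta,1]$ is $|g-\tilde g|\lesssim ||g||_\infty$, and the statement to be proved is an $L^\infty$ bound: the small width $2\delta$ of the strip is irrelevant for a sup-norm estimate, so no choice of $\delta$ (in particular not $\delta\sim\sqrt{\eps/||g||_\infty}$) and no ``slight adjustment'' of $\tilde g$ can convert an $O(||g||_\infty)$ pointwise error on the strip into the claimed $O(||g||_\infty^{1/2}\eps^{1/2}+\eps)$; that balancing logic would work for an $L^1$- or $L^2$-type norm, not for $L^\infty$. Moreover, the premise that $g$ is ``essentially uncontrolled near the endpoints'' is false: anchoring the second-difference hypothesis at an endpoint (normalize $g(0)=0$, take $t=0$, iterate $h$) gives, exactly as in the paper, $|g(x)|\le 2x\,||g||_\infty+\eps$ for $x\le\tfrac12$ and hence $|g(x)-g(y)|\le 2||g||_\infty|x-y|+2\eps$ uniformly up to the boundary. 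With this near-Lipschitz bound the strip error becomes $\lesssim ||g||_\infty\delta+\eps$, and then your optimization $\delta\sim(\eps/||g||_\infty)^{1/2}$ really does close the argument. Without some such quantitative control near the endpoints the proof as written does not reach the stated conclusion; note also that in the paper the $||g||_\infty^{1/2}\eps^{1/2}$ term is not a boundary effect at all, but comes from interpolating between the rational-point estimate $|g(k/N)-\tfrac kN g(1)|\lesssim N\eps$ and the coarse modulus $||g||_\infty/N$.

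A secondary, fixable imprecision is the interior step: the ``standard dyadic midpoint induction'' run naively (bisection, error $\eps/2$ per level) accumulates an error growing linearly in the dyadic level, which does not yield $||\phi||_{L^\infty([\delta,1-\delta])}\lesssim\eps$. You need a smarter iteration, e.g.\ evaluate $\phi$ at a maximum point $x^*$ and reflect through the nearer endpoint where $\phi$ vanishes: if $x^*$ lies in the left half, then $\phi(x^*)\le\tfrac12\phi(2x^*-\delta)+\tfrac{\eps}{2}\le\tfrac12\max\phi+\tfrac{\eps}{2}$, so $\max\phi\le\eps$, and similarly for $-\phi$ (alternatively, invoke the Ng--Nikodem/Takagi-type bound for approximately midconvex continuous functions). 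With these two repairs your route --- mollify, compare with the affine interpolant through the interior endpoints, and run an approximate Jensen argument --- is a genuinely different and viable alternative to the paper's proof, which instead establishes approximate homogeneity $|g(nx)-ng(x)|\le(n-1)\eps$, deduces the rational-point and near-Lipschitz estimates, and optimizes the discretization parameter $N$.
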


This is closely related to the so-called Hyers-Ulam-Rassias stability of additive functions, on which there is a large body of literature determining rates for the closeness to linear functions, see e.g.\ Jung \cite{jung2011hyersulam}.
As such, this statement may well be already present in the literature.
However, as far as we can see, the corresponding community seems to be mostly concerned with the whole space case.

\subsection{Classification of planar configurations}
It remains to exploit the two-dimensionality that was the result of Proposition \ref{Prop: dimension reduction}.
It allowed us to reduce the complexity of the decomposition \eqref{decomposition} to three one-dimensional functions with linearly dependent normals and three affine functions.
We first deal with the easier case where one of the one-dimensional functions is affine and can be absorbed into the affine ones.

\begin{lemma}\label{lemma: two_functions}
 There exists a universal number $r>0$ with the following property:
 
 Let $e(u) \in \K$ almost everywhere.
 Let the configuration be planar with respect to the direction $d \in \{[111], [\overline111], [1\overline11],[11\overline1]\}$ and let it not be a two-variant configuration in $\ball{0}{r}$.
  Furthermore assume for $i$, $j\in \{1,2,3\}$ with $i\neq j$ that the function $f_{\nu_j}$ is affine and that
  \[\theta_i |_{H(\alpha,\nu_i)} = b \chi_{B} \numberthis \label{checkerboard_assumption}\]
  for some $\alpha \in (-r,r)$, a Borel-measurable set $B \subset H(\alpha,\nu_i)$ of non-zero $\mathcal{H}^2$-measure and $0<b<1$. 
 
 Then the configuration is a planar second-order laminate or a planar checkerboard on $\ball{0}{r}$.
\end{lemma}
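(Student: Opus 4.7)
The plan is to simplify the planar decomposition using the hypothesis that $f_{\nu_j}$ is affine, extract further structure on one of the remaining one-dimensional functions from the boundary-type assumption on $\theta_i$, and then apply the pointwise differential inclusion $\theta\in\tilde\K$ to pin down the configuration.

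First, by the uniqueness of the planar decomposition up to affine functions, I absorb the affine $f_{\nu_j}$ into $g_1,g_2,g_3$, reducing the number of non-trivial one-dimensional functions to at most two. After cyclically relabeling the martensite variants I may take $j=1$ and $i=2$, leaving
\begin{align*}
 \theta_1 &= f_{\nu_2}(x\cdot\nu_2) - f_{\nu_3}(x\cdot\nu_3) + g_1, \\
 \theta_2 &= f_{\nu_3}(x\cdot\nu_3) + g_2, \\
 \theta_3 &= -f_{\nu_2}(x\cdot\nu_2) + g_3,
\end{align*}
with each $g_i$ affine on $\R^3$ and constant in $d$. The trace $\theta_2|_{H(\alpha,\nu_2)}$ is well defined by Lemma \ref{lemma: traces}, and since $\theta_2$ is constant in $d$ while $d$ is tangent to $H(\alpha,\nu_2)$, the set $B$ must be a strip $\{x\cdot\nu_3\in B_0\}$ for some $B_0\subset\R$. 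Restricting to the plane yields $f_{\nu_3}(s_3)+(\text{affine in }s_3)=b\chi_{B_0}(s_3)$ on an open range of $s_3$-values, so $f_{\nu_3}=b\chi_{B_0}+(\text{affine})$ there; absorbing the affine correction again leaves $f_{\nu_3}=b\chi_{B_0}$. I then distinguish Case A, in which $\chi_{B_0}$ is essentially constantly equal to $1$ (equivalently $f_{\nu_3}$ was affine from the outset), and Case B, in which both $B_0$ and $\stcomp{B_0}$ have positive one-dimensional measure.

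In Case A only $f_{\nu_2}$ is non-trivial and $\theta_2=g_2$ is globally affine; the two-variant-exclusion hypothesis rules out $\theta_2\equiv 0$, so the differential inclusion forces $\theta_1=0$ or $\theta_3=0$ almost everywhere. Since $f_{\nu_2}$ depends only on $s_2=x\cdot\nu_2$, the alternatives $f_{\nu_2}(s_2)=-g_1(s_2,s_3)$ and $f_{\nu_2}(s_2)=g_3(s_2,s_3)$ force both $g_1$ and $g_3$ to be $s_3$-independent (otherwise one of the corresponding $\theta_i$ is identically zero, again excluded), whence $g_2=1-g_1-g_3$ is also $s_3$-independent and all three $\theta_i$ depend only on $s_2$ in the form required by Definition \ref{def:second-order_laminate}. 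In Case B the bounds $0\le\theta_2\le 1$ on both strips pin $g_2\in[0,1-b]$; on $\{s_3\in B_0\}$ one has $\theta_2\ge b>0$, so $\theta_1\theta_3=0$ almost everywhere there, and the same analysis forces $g_1$ and $g_3$ to be $s_3$-independent and $f_{\nu_2}$ to take only the two values $b-g_1(s_2)$ and $1-g_1(s_2)$. A short case analysis on whether $g_2$ is identically zero, a non-zero constant, or a genuinely non-constant affine function rules out the two non-zero possibilities (each forces $\theta_3\equiv 0$, excluded), so $g_2\equiv 0$; writing $f_{\nu_2}=(b-g_1)+(1-b)\chi_A$ for a measurable $A\subset\R$ and unwinding matches the configuration to Definition \ref{def:checkerboard} after relabeling $A$ to its complement.

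The main obstacle will be the careful bookkeeping of the affine parts at each absorption step, and in particular justifying the deductions "$g_i$ is $s_3$-independent" by comparing values of $f_{\nu_2}$ at the same $s_2$ but different $s_3$. This requires $\ball{0}{r}$ to meet both strips $\{s_3\in B_0\}$ and $\{s_3\in\stcomp{B_0}\}$ in sets with non-degenerate projection on the $s_3$-axis, and the cross-section $H(\alpha,\nu_2)\cap\ball{0}{r}$ to cover enough $s_3$-values to propagate the affineness conclusion for $f_{\nu_3}$ to the entire domain; choosing the universal radius $r>0$ small enough, and if necessary shrinking it relative to $|\alpha|$, ensures this. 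The repeated use of the two-variant-exclusion hypothesis is what selects between the two conclusions and eliminates the degenerate sub-cases.
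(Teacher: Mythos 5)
Your proposal is correct and follows essentially the same route as the paper's proof: absorb the affine $f_{\nu_j}$, use the trace hypothesis \eqref{checkerboard_assumption} to reduce $f_{\nu_3}$ (in your normalization $j=1$, $i=2$) to an affine shift of $b\chi_{B_0}$, and then combine the inclusion $\theta \in \tilde \K$ with the rigidity of affine functions and the not-two-variant hypothesis to kill the $x\cdot\nu_3$-dependence of the affine parts, your Case A/Case B split matching the paper's split according to which of $f_{\nu_2}, f_{\nu_3}$ are affine, except that the paper settles the "one of them affine" cases without invoking the trace at all. The only point to repair is the propagation you flag yourself: the trace only determines $f_{\nu_3}$ on the interval of $x\cdot\nu_3$-values swept by $H(\alpha,\nu_i)\cap\ball{0}{r}$, and a universal radius cannot be "shrunk relative to $|\alpha|$"; the correct bookkeeping (implicit in the paper's "abuse of notation" and in how the lemma is used inside Theorem \ref{main}) is to assume planarity on a larger ball with $\alpha$ confined to a fixed fraction of it, so that this interval covers the range needed on the conclusion ball.
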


While the preceding lemma is mostly an issue of efficient book-keeping to reap the rewards of previous work, we now have to make a last effort to prove the rather strong rigidity properties of planar triple intersections:

\begin{prop}\label{prop: six_corner}
 There exists a universal radius $r>0$ with the following property:
 
 Let $e(u) \in \K$ almost everywhere and let the configuration be planar with respect to the direction $d \in \{[111], [\overline111], [1\overline11],[11\overline1]\}$.
 Furthermore let all $f_{\nu_i}$ for $i=1,2,3$ be non-affine on $\ball{0}{\frac{r}{2}}$ and let $|\theta_i^{-1}(0) \cap \ball{0}{r}|>0$ and $|\theta_j^{-1}(0) \cap \ball{0}{r}| > 0$ for $i,j \in \{1,2,3\}$ with $i\neq j$.  

 Then the configuration is a planar triple intersection on $\ball{0}{r}$.
\end{prop}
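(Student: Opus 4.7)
My plan is to reduce to a two-dimensional problem via planarity, use the sum-of-one-dimensional structure to force the zero sets $\theta_i^{-1}(0)$ to be wedges emanating from a common vertex $x_0$, and finally extract the slopes and offsets. I will work on the 2D cross-section perpendicular to $d$, on which the three normals, oriented so that $\tilde\nu_1 + \tilde\nu_2 + \tilde\nu_3 = 0$, lie at $120^\circ$ angles (see Remark \ref{rem:combinatorics}). Setting $u = x \cdot \tilde\nu_1$, $s = x \cdot \tilde\nu_2$, $t = x \cdot \tilde\nu_3$ (which satisfy $u + s + t = 0$), I rewrite each $\theta_i$ as a sum $F_i + G_i + c_i$ of two one-dimensional functions of the two non-excluded coordinates, absorbing the 2D affine remainders appropriately.

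The first substantive step will be the structural analysis of each zero set: since $\theta_i \geq 0$ is a sum of two one-dimensional functions plus a constant in two linearly independent directions, Lemma \ref{lemma: almost maxima on transversal lines} applied with $\eps = 0$ yields (via a Fubini argument to produce a rectangle of positive measure inside $\{\theta_i=0\}$) that $\{\theta_i = 0\}$ equals the product $\{F_i = \essinf F_i\} \times \{G_i = \essinf G_i\}$ up to null sets. Assuming without loss of generality $i = 1, j = 2$, the positive-measure hypothesis gives four one-dimensional minimum sets in $\R$ of positive measure. Since $\theta_1$ and $\theta_2$ share the summand $f_{\nu_3}$, on the overlap of their minimum sets in the $t$-coordinate both $-f_{\nu_3} + [\text{linear}]$ and $f_{\nu_3} + [\text{linear}]$ would be constant, pinning $f_{\nu_3}$ to be affine on the overlap.

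Next I will exploit the non-affinity assumption on all three $f_{\nu_k}$. By the above observation, minimum sets of different $\theta_i$ cannot overlap in an arbitrary measurable pattern without making some $f_{\nu_k}$ affine on too large a set. A careful case analysis, combined with deducing positive measure of $\theta_3^{-1}(0)$ from the mass constraint $\theta_3 = 1 - \theta_1 - \theta_2$ together with the structural rigidity forced by $\theta_i \in [0, 1]$, should drive each of the six one-dimensional minimum sets to be exactly a half-line $(-\infty, \alpha]$ or $[\alpha, \infty)$ in the relevant coordinate. This is precisely the statement that each $f_{\nu_k}$ is affine on a half-line and constant on the complementary half-line, with a single breakpoint, so that each $\theta_i^{-1}(0)$ becomes the intersection of two half-planes, that is, a wedge.

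Finally I will identify the common vertex and read off the slope. Consistency of the six breakpoints at the prospective intersection point, together with $u + s + t = 0$, forces $\alpha_1 + \alpha_2 + \alpha_3 = 0$, so the three breakpoint lines meet at a single point $x_0 \in \R^3$ with $x_0 \cdot \tilde\nu_i = \alpha_i$. Matching the affine parts on any non-vanishing sector using $\sum \theta_i = 1$ will give a common slope $a$ and constants $b_i$ with $b_1 + b_2 + b_3 = 1$, and the two possible orientations of the wedges around $x_0$ correspond to the two cases of $K_i$ in Definition \ref{def:triple}. The main obstacle will be the non-affinity bookkeeping above: a priori the one-dimensional minimum sets are just measurable sets of positive measure, and forcing them to be single half-lines rather than general unions of intervals requires a combinatorial case analysis that simultaneously balances the $[0, 1]$ bound on all three $\theta_i$, the sum constraint, and the shared-summand constraints between different $\theta_i$'s.
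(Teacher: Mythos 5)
Your setup matches the paper's: the reduction to the 2D cross-section with $\tilde\nu_1+\tilde\nu_2+\tilde\nu_3=0$, the use of Lemma \ref{lemma: almost maxima on transversal lines} with $\eps=0$ to show that each zero set $\theta_i^{-1}(0)$ is, up to null sets, a product of two one-dimensional ``minimum sets'', and the shared-summand observation are exactly the paper's Steps 1--3. One caution on that last point: ``$f_{\nu_3}$ affine on the overlap'' is not by itself a contradiction to non-affinity on the ball; what the paper actually uses (Claim 3.1) is that, by planarity, the \emph{sum} of the two relevant one-dimensional functions is a globally affine function, so if it vanishes on a set of positive measure it vanishes identically, and then non-negativity of both summands forces both to vanish, contradicting non-constancy. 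That is the mechanism that makes the overlaps null, and you should state it in this form.

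The genuine gap is the step you yourself flag as ``the main obstacle'': you assert that a ``careful case analysis'' will force the six measurable minimum sets to be half-lines with a common vertex, but you give no argument, and no routine case analysis exists. The sets $J_i$ are a priori arbitrary measurable sets (they could be fat Cantor sets, not unions of intervals), and the only constraints available are the two covering identities $|\pi_1^{-1}(J_1)\cap\pi_2^{-1}(J_2)\cap\pi_3^{-1}(J_3)|=0$ and $|\pi_1^{-1}(\stcomp{J_1})\cap\pi_2^{-1}(\stcomp{J_2})\cap\pi_3^{-1}(\stcomp{J_3})|=0$ together with $0<|J_1|,|J_2|<\text{full}$. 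Turning these into the interval structure is the entire content of the paper's Lemma \ref{lemma: intervals}, whose proof requires a genuinely new quantitative idea: Lemma \ref{lemma:surface_product_sets}, stating that the set $M$ of parameters $s$ for which the line $\{x\cdot\nu_3=s\}$ meets the product set $\pi_1^{-1}(K_1)\cap\pi_2^{-1}(K_2)$ satisfies $|M|\ge|K_1|+|K_2|$. One then works near density points where $J_1$ locally sits ``mostly below'' and $J_2$ ``mostly above'' a threshold, applies this bound both to the sets and to their complements, and concludes that more than the full measure of a parameter interval would have to be split between $J_3$ and $\stcomp{J_3}$ --- a contradiction unless $J_1,J_2$ (and then $J_3$) are matching-orientation half-intervals. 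Without this (or an equivalent) argument, your proposal does not establish the wedge structure, and the final step of reading off the common slope $a$, the offsets $b_i$ with $\sum_i b_i=1$, and the vertex $x_0$ --- which is indeed straightforward, and done the same way in the paper --- has nothing to stand on.
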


The idea is to prove that the sets $\theta_i^{-1}(0)$ for $i=1,2,3$ take the form
\[\theta_i^{-1}(0)= \pi^{-1}_{i+1}(J_{i+1}) \cap \pi^{-1}_{i-1}(J_{i-1}),\]
where $J_j \subset \R$ and $\pi_j(x) := \nu_j \cdot x$ for $j=1,2,3$, i.e., they are product sets in suitable coordinates.
Expressing the condition $\bigcup_{i=1}^3 \theta_i^{-1}(0) = \ball{0}{1}$ in terms of these sets allows us to apply Lemma \ref{lemma: intervals} below to conclude that $J_j$ is an interval for $j=1,2,3$.
The actual representation of the strain is then straightforward to obtain.

\begin{lemma}\label{lemma: intervals}
 There exists a universal radius $0<r<\frac{1}{2}$ such that the following holds:
 Let $\nu_1, \nu_2, \nu_3 \subset \Sph^1$ be linearly dependent by virtue of $\nu_1 + \nu_2 + \nu_3 = 0$.
 Let $\pi_i(x) := x\cdot \nu_i$ for $x \in \R^2$ and $i=1,2,3$.
 Let $J_1,J_2,J_3 \subset [-1,1]$ be measurable such that
 \begin{enumerate}
  \item we have
   \begin{align}\label{intervals_assumption_main}
    \begin{split}
		 \big|\ball{0}{r}  \cap \left( \pi_1^{-1}(J_1)\cap\pi_2^{-1}(J_2)\cap\pi_3^{-1}(J_3)\right)\big| & =0,\\
		 \big|\ball{0}{r}  \cap \left( \pi_1^{-1}(\stcomp{ J_1})\cap\pi_2^{-1}(\stcomp{J_2})\cap\pi_3^{-1}(\stcomp{J_3})\right)\big| & = 0,
    \end{split}
   \end{align}
  \item and the two sets $J_1$ and $J_2$ neither have zero nor full measure, i.e., it holds that
    \begin{align}\label{intervals_assumption_non-empty}
      0 <  \left|J_1\cap\left[-\frac{r}{2},\frac{r}{2}\right]\right| , \left|J_2\cap\left[-\frac{r}{2},\frac{r}{2}\right]\right| < 2r.
    \end{align}
 \end{enumerate}    
  Then there exist a point $x_0 \in \R$ such that $x\cdot \nu_i \in (-r,r)$ for all $i=1,2,3$ and, up to sets of $\Leb^1$-measure zero, either
 \[J_i \cap [-r,r] = [-r,x_0\cdot \nu_i] \text{ for } i=1,2,3\]
 or
 \[J_i \cap [-r,r] = [-x_0\cdot \nu_i,r] \text{ for } i=1,2,3.\]
\end{lemma}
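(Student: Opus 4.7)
I would begin by changing coordinates to $(t_1,t_2) := (\pi_1(x),\pi_2(x))$, which is a linear isomorphism of $\R^2$ since $\nu_1,\nu_2$ are linearly independent (as $\nu_3 = -\nu_1-\nu_2 \neq 0$). Under this map, $\ball{0}{r}$ becomes an open neighborhood $U$ of the origin and $\pi_3(x) = -t_1-t_2$. Writing $\phi_i := \chi_{J_i}$, assumption \eqref{intervals_assumption_main} becomes
\[\big|\{(t_1,t_2)\in U : \phi_1(t_1)\phi_2(t_2)\phi_3(-t_1-t_2) = 1\}\big| = 0\]
together with the analogous statement for $1-\phi_i$. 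By Fubini, for a.e.\ $t_1$ and a.e.\ $t_2$ in the corresponding slice of $U$,
\begin{align*}
 \phi_1(t_1) = \phi_2(t_2) = 1 &\implies \phi_3(-t_1-t_2) = 0,\\
 \phi_1(t_1) = \phi_2(t_2) = 0 &\implies \phi_3(-t_1-t_2) = 1.
\end{align*}

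The core of the argument is a \emph{shift-invariance} statement. Fix Lebesgue density points $t_2^{\mathrm{in}} \in J_2$ and $t_2^{\mathrm{out}} \in \stcomp{J_2}$, which exist in $[-r/2,r/2]$ by \eqref{intervals_assumption_non-empty}, and set $w := t_2^{\mathrm{in}} - t_2^{\mathrm{out}}$. I claim $J_1 + w \subseteq J_1$ up to null sets in a neighborhood of the origin. Indeed, if $\phi_1(t_1) = 1$, the first implication applied at $(t_1,t_2^{\mathrm{in}})$ forces $\phi_3(-t_1-t_2^{\mathrm{in}}) = 0$; were also $\phi_1(t_1+w) = 0$, the second applied at $(t_1+w,t_2^{\mathrm{out}})$ would force $\phi_3(-(t_1+w)-t_2^{\mathrm{out}}) = 1$, but $-(t_1+w)-t_2^{\mathrm{out}} = -t_1-t_2^{\mathrm{in}}$, a contradiction.

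I would then argue that $J_2$ is essentially a half-line in $[-r,r]$. If not, by the Lebesgue density theorem one can find density points of $J_2$ on both sides of one of $\stcomp{J_2}$ (or vice versa); varying the density points slightly produces values of $w$ filling open sets in both $(-\infty,0)$ and $(0,\infty)$. The shift invariance $J_1 + w \subseteq J_1$ for such $w$, iterated, forces $J_1$ to cover all of $[-r,r]$ up to null sets, contradicting the nontriviality in \eqref{intervals_assumption_non-empty}. Hence $J_2 \cap [-r,r] = [-r,c_2]$ or $[c_2,r]$ up to null sets, for some $c_2 \in (-r/2,r/2)$. In the first case every admissible $w$ is negative, so $J_1 + w \subseteq J_1$ makes $J_1$ downward-closed, forcing $J_1 = [-r,c_1]$; symmetrically, the second case forces $J_1 = [c_1,r]$. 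Thus $J_1$ and $J_2$ are half-lines of matching orientation.

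Plugging $J_1 = [-r,c_1]$ and $J_2 = [-r,c_2]$ back into the two implications then pins down $J_3$: the image points $-t_1-t_2$ for $(t_1,t_2) \in U$ with $t_1 \leq c_1$ and $t_2 \leq c_2$ must lie in $\stcomp{J_3}$, and those with $t_1 > c_1$ and $t_2 > c_2$ must lie in $J_3$; this forces $J_3 \cap [-r,r] = [-r,-c_1-c_2]$ up to null sets. The unique point $x_0 \in \R^2$ with $x_0 \cdot \nu_1 = c_1$ and $x_0 \cdot \nu_2 = c_2$ then satisfies $x_0 \cdot \nu_3 = -c_1-c_2$ by $\nu_1+\nu_2+\nu_3 = 0$, and the bounds $c_1,c_2 \in (-r/2,r/2)$ give $x_0 \cdot \nu_i \in (-r,r)$ for all $i$; the opposite orientation case is entirely analogous. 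I expect the main obstacle to be the measure-theoretic bookkeeping in the shift-invariance step, since the Fubini implications only hold for $(t_1,t_2) \in U$: one must choose the radius universally small so that the density points and their shifts by $w$ all stay inside $U$, and the iterated application of $J_1+w\subseteq J_1$ must be carried out within this bounded domain.
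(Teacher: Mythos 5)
Your core device is attractive and genuinely different from the paper's route: instead of counting, as the paper does via Lemma \ref{lemma:surface_product_sets}, how many lines $\pi_3^{-1}(s)$ must meet a product set, you extract from the two degeneracy conditions in \eqref{intervals_assumption_main} a shift invariance $J_1+w\subseteq J_1$ for $w$ ranging over a Steinhaus-type difference set of $J_2$ and $\stcomp{J_2}$, and the verification of that single implication (the cancellation $-(t_1+w)-t_2^{\mathrm{out}}=-t_1-t_2^{\mathrm{in}}$) is correct. However, there is a genuine gap in how you propose to live with the fact that the hypothesis only holds on $\ball{0}{r}$. In the coordinates $(t_1,t_2)=(\pi_1(x),\pi_2(x))$ the information region is the ellipse $\{t_1^2+t_1t_2+t_2^2<\tfrac34 r^2\}$ (since $\nu_i\cdot\nu_j=-\tfrac12$), and this never contains the rectangle $[-r,r]\times[-r/2,r/2]$, no matter how small $r$ is: the hypothesis and the conclusion involve the \emph{same} $r$, and the whole configuration is scale invariant, so "choosing the radius universally small" buys nothing. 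Concretely, for $t_1$ near $\pm r$ the only admissible partners are $t_2\approx \mp r/2$, so the shift invariance is simply unavailable there (and for shifts $|w|$ comparable to $2r$, which arise when the density points witnessing a non-monotone $J_2$ sit near opposite ends of $[-r,r]$, the set of valid $t_1$ can be empty). Your plan therefore cannot pin down $J_1,J_2,J_3$ up to the endpoints $\pm r$; the paper sidesteps exactly this by fixing the radius so that the whole rhombus $\pi_i^{-1}([-r,r])\cap\pi_j^{-1}([-r,r])$ lies inside the region where the degenerate-intersection information is available, i.e.\ by a separation of scales that your argument does not have, and without an extra edge argument (e.g.\ exploiting the pairing of $t_1\approx\pm r$ with $t_2\approx\mp r/2$, or using the third projection symmetrically) the proof does not close.

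A second, related gap is the sentence "the shift invariance \ldots iterated, forces $J_1$ to cover all of $[-r,r]$." The admissible shifts form two intervals $I_+\subset(0,\infty)$ and $I_-\subset(-\infty,0)$ which may be short and bounded away from $0$, with magnitudes up to roughly $2r$; every iterate must keep both the base point and its image inside the window where the invariance holds, and the contradiction has to be with $J_1$ not having full measure in $[-r/2,r/2]$ (note that, read literally, the bound "$<2r$" in \eqref{intervals_assumption_non-empty} is vacuous, so you are already relying on the intended reading). Filling a prescribed interval from a single density point of $J_1$ using the additive semigroup generated by $I_+\cup I_-$ while never leaving a bounded window is a nontrivial quantitative covering argument, not an automatic consequence; this is precisely the work that the paper's Lemma \ref{lemma:surface_product_sets} does in one stroke, with no iteration, by converting the mismatch in the local one-sided densities of $K_1$ and $K_2$ into a measure contradiction for the set of lines meeting the product sets. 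If you want to rescue your approach, you should (i) state the hypothesis on a ball strictly larger than the scale of the intervals in the conclusion (as the paper effectively does) and (ii) replace "iterate" by an explicit argument, e.g.\ first showing that small shifts of both signs are admissible near an interior non-monotonicity point of $J_2$ and only then propagating.
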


To illustrate the proof let us first assume that $J_1$ and $J_2$ are intervals of matching ``orientations'', e.g., we have $J_1 = J_2 = [-r,0]$, in which case
Figure \ref{fig:sketch_intervals_fitting} suggests that also $J_3 = [-r,0]$.

If they are not intervals of matching ``orientations'', we will see that, locally and up to symmetry, more of $J_1$ lies below, for example, the value $0$ than above, while the opposite holds for $J_2$.
The corresponding parts of $J_1$ and $J_2$ are shown in Figure \ref{fig:sketch_intervals_contradiction}.
One then needs to prove that sufficiently many lines $\pi_3^{-1}(s)$ for parameters $s$ close to $0$ intersect the ``surface'' of $\pi_1^{-1}(J_1) \cap \pi_2^{-1}(J_2)$, see Lemma \ref{lemma:surface_product_sets} below.
As a result less than half the parameters around $0$ are contained in $J_3$.
The same argument for the complements ensures that also less than half of them are not contained in $J_3$, which cannot be true.

\begin{figure}
	\centering
	\subcaptionbox{\label{fig:sketch_intervals_fitting}}{
%		\vskip 0pt
		\centering
		\begin{tikzpicture}[scale=.9]
			\begin{scope}
				\clip (-2.5,2) -- (2.5,2) -- (2.5,-2) -- (-2.5,-2);
				\fill[color=blue] (240:5) -- (0,0) -- (180:5) -- (-5,-5);
				\fill[color=red] (0:5) -- (0,0) -- (60:5) -- (5,5);
				\draw[-] (120:5) -- (300:5);
				\draw[pattern=north west lines, pattern color=gray] (120:5) -- (300:5) -- (5,5);
			\end{scope}
%			\node at (1,2.4) {$\pi_1^{-1}(\stcomp{J_1}) \cap \pi_2^{-1}(\stcomp{J_2})$};
%			\node at (-1,-2.4) {$\pi_1^{-1}(J_1) \cap \pi_2^{-1}(J_2)$};
%			\node at (2,-2.4) {$\pi_3^{-1}(J_3)$};
		\end{tikzpicture}
	}
	\subcaptionbox*{}{
%		\vskip 0pt
		\centering
		\begin{tikzpicture}
			 \draw[->] (0,0) -- (90:1)  node[right] {$\nu_1$};
			 \draw[->] (0,0) -- (210:1) node[left] {$\nu_3$};
			 \draw[->] (0,0) -- (330:1) node[right] {$\nu_2$};
		\end{tikzpicture}
	}
		\subcaptionbox{ \label{fig:sketch_intervals_contradiction}}[.37\linewidth]{
			
			\begin{tikzpicture}
				\begin{scope}[cm={1,0,0.5,0.866,(0,0)}]
					\begin{scope}
						\clip (-2.5,2) -- (2.5,2) -- (2.5,-2) -- (-2.5,-2);
						\fill[color=blue] (0,-.4) -- (.7,-.4) -- (.7,0) -- (0,0);
						\fill[color=blue] (1,-.4) -- (1.3,-.4) -- (1.3,0) -- (1,0);
						\fill[color=blue] (1.6,-.4) -- (2.5,-.4) -- (2.5,0) -- (1.6,0);
						\fill[color=blue] (0,-1.4) -- (.7,-1.4) -- (.7,-.7) -- (0,-.7);
						\fill[color=blue] (1,-1.4) -- (1.3,-1.4) -- (1.3,-.7) -- (1,-.7);
						\fill[color=blue] (1.6,-1.4) -- (2.5,-1.4) -- (2.5,-.7) -- (1.6,-.7);
						\fill[color=blue] (0,-2) -- (.7,-2) -- (.7,-1.6) -- (0,-1.6);
						\fill[color=blue] (1,-2) -- (1.3,-2) -- (1.3,-1.6) -- (1,-1.6);
						\fill[color=blue] (1.6,-2) -- (2.5,-2) -- (2.5,-1.6) -- (1.6,-1.6);
					
%					\fill[color=blue] (-1.8,.8) -- (-1.2,.8) -- (-1.2,1) -- (-1.8,1);				
				
						\fill[color=red] (-2.5,0) -- (-1.8,0) -- (-1.8,.8) -- (-2.5,.8);	
						\fill[color=red] (-1.2,0) -- (0, 0) -- (0,.8) -- (-1.2,.8);
						\fill[color=red] (-2.5,1) -- (-1.8,1) -- (-1.8,2) -- (-2.5,2);
						\fill[color=red] (-1.2,1) -- (0,1) -- (0,2) -- (-1.2,2);
				
%					\fill[color=red] (.7,-.4) -- (1,-.4) -- (1,-.7) -- (.7,-.7);
%					\fill[color=red] (1.3,-.4) -- (1.6,-.4) -- (1.6,-.7) -- (1.3,-.7);
%					\fill[color=red] (.7,-1.4) -- (1,-1.4) -- (1,-1.6) -- (.7,-1.6);
%					\fill[color=red] (1.3,-1.4)  -- (1.6,-1.4) -- (1.6,-1.6) -- (1.3,-1.6);
						\begin{scope}[xshift=-13, yshift=-13]
							\draw[-] (135:4) -- (315:4);
						\end{scope}
					\end{scope}
					\node at (205:1.2) {$\pi_3^{-1}(s)$};
				\end{scope}
			\end{tikzpicture}
	}
	\caption{Sketches illustrating the proof of Lemma \ref{lemma: intervals}. The arrows in the middle indicate the three linearly dependent directions $\nu_1$, $\nu_2$ and $\nu_3$. a) The set $\pi_3^{-1}(J_3)$ (hatched) may only intersect $\pi_1^{-1}(\stcomp{J_1}) \cap \pi_2^{-1}(\stcomp{J_2})$ (red) and its complement may only intersect $\pi_1^{-1}(J_1) \cap \pi_2^{-1}(J_2)$ (blue). b) The line $\pi_3^{-1}(s)$ intersects both a subset of $\pi_1^{-1}(J_1) \cap \pi_2^{-1}(J_2)$ (blue) and a subset of $\pi_1^{-1}(\stcomp{J_1}) \cap \pi_2^{-1}(\stcomp{J_2})$ (red). }
	\label{fig:triple_intersection_strategy}
\end{figure}
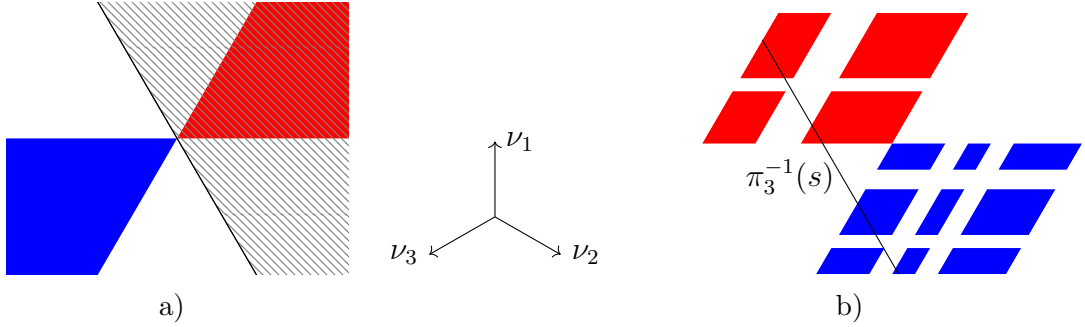

To link intersecting lines to the ``surface area'' we use that our sets are of product structure, i.e., they can be thought of as unions of parallelograms, and that the intersecting lines are not parallel to one of the sides of said parallelograms.
In the following and final lemma, we measure-theoretically ensure the line $\pi_3^{-1}(s)$ intersects a product set $\pi_1^{-1}(K_1) \cap \pi_2^{-1}(K_2)$ by asking
\[\int_{\{x\cdot\nu_3 = s\}} \chi_{K_1}(x\cdot \nu_1) \chi_{K_2}(x\cdot \nu_2) \intd \Hd^1(x) > 0.\]

\begin{lemma}\label{lemma:surface_product_sets}
 Let $\nu_1, \nu_2, \nu_3 \subset \Sph^1$ with $\nu_1 + \nu_2 + \nu_3 = 0$.
 Let $K_1, K_2 \subset \R$ be measurable with $|K_1|,|K_2|>0$.
 Then the set
 \[M:=\left\{s \in \R : \int_{\{x\cdot\nu_3 = s\}} \chi_{K_1}(x\cdot \nu_1) \chi_{K_2}(x\cdot \nu_2) \intd \Hd^1(x) > 0\right\}\]
 is measurable and satisfies $|M|\geq |K_1| + |K_2|$. 
\end{lemma}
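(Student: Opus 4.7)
The plan is to reduce the problem to a one-dimensional convolution estimate and then apply the Brunn-Minkowski inequality.

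First, I would exploit the geometry: since $\nu_1,\nu_2,\nu_3\in\Sph^1$ sum to zero, expanding $|\nu_1|^2=|\nu_2+\nu_3|^2$ forces $\nu_2\cdot\nu_3=-\tfrac12$, and similarly for the other pairs, so the three directions are pairwise at $120^\circ$. Parametrizing $\{x\cdot\nu_3=s\}$ by $x=s\nu_3+t\nu_3^\perp$, a direct computation gives $x\cdot\nu_1=-s/2+t\sqrt{3}/2$ and $x\cdot\nu_2=-s-x\cdot\nu_1$. Substituting $u=x\cdot\nu_1$ identifies the slice integral as a positive constant times
\[
\int_\R \chi_{K_1}(u)\,\chi_{K_2}(-s-u)\intd u = |K_1\cap(-s-K_2)|.
\]
Writing $g:=\chi_{K_1}\ast\chi_{K_2}$, so that $g(t)=|K_1\cap(t-K_2)|$, we obtain $M=-\{g>0\}$.

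Next, after a harmless truncation of $K_1$ and $K_2$ to bounded subsets of positive measure (the unbounded case follows by monotone convergence in such subsets), the functions $\chi_{K_i}$ lie in $L^1\cap L^\infty(\R)$, so $g$ is continuous. Hence $\{g>0\}$ is open, and therefore $M$ is measurable.

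The key observation is that $\{g>0\}$ contains the Minkowski sum $K_1^*+K_2^*$, where $K_i^*$ denotes the set of Lebesgue density-one points of $K_i$ (which has full measure in $K_i$). Indeed, for $k_1\in K_1^*$ and $k_2\in K_2^*$, set $t:=k_1+k_2$; the translate $t-K_2$ is an isometric image of $K_2$ under the reflection $x\mapsto t-x$ sending $k_2$ to $k_1$, hence has density one at $k_1$. Since both $K_1$ and $t-K_2$ have density one at $k_1$, for all sufficiently small $\delta>0$ and fixed small $\eps>0$,
\[
|K_1\cap(t-K_2)\cap\ball{k_1}{\delta}| \geq 2(1-\eps)\cdot 2\delta - 2\delta = 2\delta(1-2\eps) > 0,
\]
so $g(t)>0$.

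Finally, the one-dimensional Brunn-Minkowski inequality yields $|K_1^*+K_2^*|^*\geq |K_1^*|+|K_2^*| = |K_1|+|K_2|$, where $|\cdot|^*$ denotes outer measure (the sumset need not be Borel). Since $K_1^*+K_2^*\subset\{g>0\}$ and the latter is measurable, we conclude $|M|=|\{g>0\}|\geq|K_1|+|K_2|$, which is the claim. The only mildly subtle step is the density-point argument upgrading mere non-emptiness of $K_1\cap(t-K_2)$ to positive measure; once this is in place, Brunn-Minkowski delivers the estimate immediately.
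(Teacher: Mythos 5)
Your proposal is correct, and it reaches the conclusion by a route that is packaged quite differently from the paper's. The paper stays in the two-dimensional picture throughout: its Step 1 shows, via the parallelogram $D_\eps = \pi_1^{-1}(-\eps,\eps)\cap\pi_2^{-1}(-\eps,\eps)$ and an estimate on how many lines $\{x\cdot\nu_3=s\}$ can miss the product set, that $-t_1-t_2$ is a point of \emph{density one} of $M$ whenever $t_i$ is a density-one point of $K_i$; its Step 2 then proves the measure bound by hand, splitting $M$ at the ``corner'' determined by $\inf \tilde K_1$ and $\sup \tilde K_2$ into two disjoint pieces, each containing (up to null sets) a translate of the relevant $\tilde K_i$ — in effect an inline proof of the one-dimensional Brunn--Minkowski inequality, for which the density-one conclusion of Step 1 is genuinely needed. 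You instead collapse the slice integral to the convolution $g=\chi_{K_1}\ast\chi_{K_2}$, so that $M=-\{g>0\}$; after truncation $g$ is continuous (and in general lower semicontinuous as an increasing limit of the truncated convolutions), so $M$ is essentially open, which gives measurability at once and lets you get away with the weaker statement that sums of density-one points merely lie in $\{g>0\}$ — your overlap estimate $|K_1\cap(t-K_2)\cap\ball{k_1}{\delta}|\geq 2\delta(1-2\eps)$ is exactly right for that. Quoting the one-dimensional Brunn--Minkowski inequality (with outer measure, or after passing to compact subsets of the measurable sets of density points) then replaces the paper's corner argument. What your approach buys is brevity and a cleaner measurability argument; what the paper's buys is self-containedness (no appeal to Brunn--Minkowski) and the stronger structural fact that $M$ contains the sumset of density points as density-one points, though that extra strength is not needed for the statement. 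The only places to tighten your write-up are cosmetic: note explicitly that the sets of density-one points are measurable with $|K_i^*|=|K_i|$ (so inner regularity or the compact-subset form of Brunn--Minkowski applies), and that the truncation argument also delivers measurability of $M$ for unbounded $K_i$ (alternatively, measurability follows directly from Tonelli, which is what the paper does).
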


\section{Proofs}\label{sec:proof}
 \subsection{The differential inclusion}
\begin{proof}[Proof of Lemma \ref{diff_incl}]
Fixing the sequence $(u_\eta, \chi_\eta)$ we interpret the energies
\[E_\eta(B) := \eta^{-\frac{2}{3}}\int_B \left| e(u)- \sum_{i=1}^3 \chi_ie_i\right|^2 \intd x+\eta^\frac{1}{3} \sum_{i=1}^3 |D\chi_i|(B)\]
as a sequence of finite Radon measures on $\ball{0}{1}$.

Let $y \in \ball{0}{1}$ and $r>0$ be such that $\ball{y}{r}\subset \Omega$.
By translation invariance we can assume $y=0$.
We rescale our functions to the unit ball by setting $\hat x := \frac{x}{r}$ and $\hat \eta := \frac{\eta}{r}$, and defining $\hat u_{\hat \eta} : \ball{0}{1} \to \R^3$ and $\hat \chi_{\hat \eta}: \ball{0}{1} \to \{0,1\}^3$ to be
\[\hat u_{\hat \eta}(\hat x):= \frac{1}{r}u_\eta\left(r \hat x\right)\text{, }\hat \chi_{\hat \eta}(\hat x):= \chi_\eta\left(r \hat x\right).\]
The energy of the rescaled functions is
\begin{align*}
E_{\hat \eta}(\hat u_{\hat \eta},\hat \chi_{\hat \eta}) & = \left(\frac{\eta}{r}\right)^{-\frac{2}{3}}\int_\ball{0}{1} \left| e(u_\eta)(r\hat x) - \sum_{i=1}^3 \chi_i(r\hat x) e_i \right|^2 \intd \hat x + \left(\frac{\eta}{r}\right)^{\frac{1}{3}}|D\hat \chi_{\hat \eta}|(\ball{0}{1})\\
& = r^{-3 + \frac{2}{3}}E_{\eta}(\ball{0}{r}).
\end{align*}
By the Capella-Otto rigidity result \cite{CO12} there exist a universal radius $0<s<1$ such that
\[\min\left\{||\hat \chi_{1,\hat \eta}||_{L^1(\ball{0}{s})},||\hat \chi_{2,\hat \eta}||_{L^1(\ball{0}{s})},||\hat \chi_{3,\hat \eta}||_{L^1(\ball{0}{s})}\right\} \lesssim \left(r^{-3 + \frac{2}{3}}E_{\eta}(\ball{0}{r})\right)^\frac{1}{2}.\]
Rescaling back to $\ball{0}{r}$ we get
\begin{align*}
  \frac{1}{r^3}\min\left\{||\chi_{1,\eta}||_{L^1(\ball{0}{sr})},||\chi_{2,\eta}||_{L^1(\ball{0}{sr})},||\chi_{3,\eta}||_{L^1(\ball{0}{sr})}\right\} \lesssim \left(r^{-3 + \frac{2}{3}}E_{\eta}(\ball{0}{r})\right)^\frac{1}{2}.
\end{align*}
After passing to a subsequence, we have $E_\eta \stackrel{*}{\rightharpoonup} E$ as Radon measures in the limit $\eta \to 0$.
Consequently weak lower semi-continuity of the $L^1$-norm and upper semi-continuity of the total variation on compact sets imply 
\begin{align*}
  \frac{1}{r^3}\min\left\{||\theta_{1}||_{L^1(\ball{0}{sr})},||\theta_{2}||_{L^1(\ball{0}{sr})},||\theta_{3}||_{L^1(\ball{0}{sr})}\right\} \lesssim \left(r^{-3 + \frac{2}{3}}E(\overline{\ball{0}{r}})\right)^\frac{1}{2}.
\end{align*}

By standard covering arguments one can see that \[\operatorname{dim}_H \left\{x \in \ball{0}{1} : \limsup_{r\to 0} r^{-3 + \frac{2}{3}}E\left(\overline{\ball{x}{r}}\right) >0\right\}\leq 3 - \frac{2}{3}.\]
Thus for almost every point $x \in \ball{0}{1}$ we have
\[\min\left\{\theta_{1}(x),\theta_{2}(x),\theta_{3}(x)\right\} = 0.\qedhere\]
\end{proof}

\subsection{Decomposing the strain}

\begin{proof}[Proof of Lemma \ref{lemma: decomposition}]
 The proof is essentially a translation of the proofs of Capella and Otto \cite[Lemma 4 and Proposition 1]{CO12} into our setting.
 To this end, we use the ``dictionary''
  \begin{align*}
   e(u)_{11}  & \longleftrightarrow \chi_1, \\
   e(u)_{22}  & \longleftrightarrow \chi_2, \\
   e(u)_{33}  & \longleftrightarrow \chi_3, \\
          0   & \longleftrightarrow \chi_0,
  \end{align*}
  where the left-hand side shows our objects and the right-hand side shows the corresponding ones of Capella and Otto.
 The two main changes are the following:
 \begin{enumerate}
   \item In our case all relevant second mixed derivatives vanish (see Lemma \ref{lemma: wave equations}), instead of being controlled by the energy.
   Furthermore, whenever Capella and Otto refer to their ``austenitic result'', we just have to use the fact that $e(u)_{11} + e(u)_{22} + e(u)_{33} \equiv 0$.
   \item We need to check at every step that boundedness of all involved functions is preserved.
 \end{enumerate}
 
 We will briefly indicate how boundedness of all functions is ensured.
 The functions in \cite[Lemma 4]{CO12} are constructed by averaging in certain directions.
 This clearly preserves boundedness.
 The proof of \cite[Proposition 1]{CO12} works by applying pointwise linear operations to all functions, which again preserves boundedness, and by identifying certain functions as being affine, which are also bounded on the unit ball.
\end{proof}

\begin{proof}[Proof of Corollary \ref{cor: theta constant}]
 By symmetry we can assume $i=1$.
 Applying \cite[Lemma 5]{CO12} to $\theta_1$ we see that the functions
 $f_{(101)}$, $ f_{(\overline101)}$, $ f_{(110)}$ and $ f_{(1\overline10)}$
 are affine on some ball $\ball{0}{r}$ with a universal radius $r>0$.
 Thus the decomposition reduces to
   \begin{align*}
 	\begin{split}
 		\theta_1 & \equiv 0, \\
 		 \theta_{2} & = \phantom{-}  f_{(011)} + f_{(01\overline1)} + g_{2}(x),\\
 		 \theta_{3} & = - f_{(011)} - f_{(01\overline1)} + g_{3}(x)\\
 	\end{split}
 \end{align*}
  on $\ball{0}{r}$.
  As the vectors $(011)$ and $(01\overline1)$ form a basis of the plane $H(0,E_1)$, we can absorb the parts of $g_2$ depending on $x_2$ and $x_3$ into $f_{(011)}$ and $f_{(01\overline1)}$.
  Due to $\theta_1 + \theta_2 + \theta_3 = 1$ we have
  \[g_2(x) + g_3(x) \equiv 1\]
  and the decomposition simplifies to
     \begin{align*}
 	\begin{split}
 		\theta_1 & \equiv 0, \\
 		 \theta_{2} & = \phantom{-}  f_{(011)} + f_{(01\overline1)} + \lambda x_1 +1,\\
 		 \theta_{3} & = - f_{(011)} - f_{(01\overline1)} -\lambda x_1\\
 	\end{split}
 \end{align*}
 for some $\lambda \in \R$.
\end{proof}

\begin{proof}[Proof of Lemma \ref{lemma: traces}]
 Let
 \[\phi(t):= \int_{\{x_1=t\}} \frac{1}{\Leb^n(\ball{0}{1})}\chi_\ball{0}{1}(t,x') \intd \Leb^{n-1}(x')\text{ and }\phi_\delta(t):= \frac{1}{\delta}\phi\left(\frac{t}{\delta}\right).\]
 For $x\in V$ and $\delta>0$ we have that
 \[\sum_{i=1}^P \phi_\delta * f_i(x\cdot \nu_i) = \dashint_\ball{x}{\delta}F(y) \intd \Leb^n(y) \in \mathcal C, \]
 since $\ball{0}{1}$ is invariant under rotation and $\mathcal{C}$ is convex.
 By standard statements about convolutions and sequences converging in $L^1$ we get a subsequence in $\delta$, which we will not relabel, and a measurable set $T \subset \R$ such that $\phi_\delta * f_i (t) \to f_i (t)$ for all $i=1,\ldots,P$ and all $t \in T$ with $\Leb(\R\setminus T) = 0$.
 Let $\tilde \nu \in V \cap \ball{0}{1}\setminus\{0\}$ be the orthogonal projection of $\nu_i$ onto $V$ for all $i=1,\dots,n$.
 A simple calculation implies that
 \[\Leb^k\left(\{x\in V: x\cdot \nu_i \in \R\setminus T\}\right) = \Leb^k\left(\{x\in V: x\cdot \tilde \nu_i \in \R\setminus T\}\right) = 0.\]
 Thus for almost all $x\in V$ we have that
 \[\dashint_\ball{x}{\delta}F(y) \intd \Leb^n(y) \to F|_V(x):= \sum_{i=1}^P f_i(x\cdot \nu_i) \in \mathcal{C}.\qedhere\]
\end{proof}

\begin{proof}[Proof of Lemma \ref{lemma: wave equations}]
 By symmetry it is sufficient to prove the equations involving $\theta_1$.
 We calculate
 \begin{align*}
  \partial_{[111]}\partial_{[\overline 111]}  & = -\partial^2_1 + \partial_1\partial_2 + \partial_1\partial_3 - \partial_1\partial_2 + \partial^2_2 + \partial_2\partial_3 - \partial_1\partial_3  + \partial_2\partial_3  + \partial^2_3 \\
  & = -\partial_1^2 +\partial_2^2 + \partial_3^2 + 2 \partial_2\partial_3
 \end{align*}
 and, similarly,
  \[\partial_{[1\overline11]}\partial_{[ 11\overline1]}  = \partial_1^2 - \partial_2^2 - \partial_3^2 - 2 \partial_2\partial_3.\]
 Due to $\frac{1}{2}(Du + Du^T) = e(u) \in S$ we have
 \begin{align*}
  (-\partial_1^2 +\partial_2^2 + \partial_3^2)u_{1} & = -\partial_1^2 u_{1} -\partial_2\partial_1 u_{2} -\partial_3\partial_1u_{3} \\
  & = -\partial_1 \operatorname{tr} Du \\
  & = 0.
 \end{align*}
 We also know
 \[\partial_2\partial_3 u_{1} = - \partial_2\partial_1 u_{3} = \partial_1\partial_3 u_{2}  = - \partial_2\partial_3 u_{1},\]
 which gives
 \[\partial_2\partial_3 u_{1} =0.\]
 Taking a further derivative we see
 \[\partial_{[111]}\partial_{[\overline 111]} \theta_1 = \partial_{[111]}\partial_{[\overline 111]} \partial_1 u_1 = 0\]
 and
 \[ \partial_{[1\overline11]}\partial_{[ 11\overline1]} \theta_1 = 0.\]
\end{proof}

\subsection{Planarity in the case of non-trivial blow-ups}

\begin{proof}[Proof of Proposition \ref{Prop: dimension reduction}]
\textit{Step 1: Identification of a suitable plane to blow-up at.}\\
 By symmetry, we may assume $\nu = \frac{1}{\sqrt 2} (011)$.
 We use two symbols for universal radii throughout the proof.
 The radius $\tilde r>0$, which will be the radius referred to in the statement of the proposition, will stay fixed throughout the proof and its value will be chosen at the end of the proof.
 In contrast, the radius $r>\tilde r$ may decrease from line to line.
 
 As $f_{(011)} \notin VMO([-\tilde r, \tilde r])$, there exist sequences $\alpha_k \in [-\tilde r, \tilde r]$ and $\delta_k > 0$ such that
 \begin{enumerate}
  \item \[ \lim_{k\to \infty} \dashint_{(\alpha_k -\delta_k, \alpha_k + \delta_k)} \left|f_{(011)}(s) - \dashint_{(\alpha_k -\delta_k, \alpha_k + \delta_k)} f_{(011)}(\tilde s)  \intd \tilde s \right| \intd s > 0,\numberthis \label{nontrivial blowup}\]
  \item $\lim_{k\to\infty}\delta_k = 0$,
  \item $\lim_{k\to \infty} \alpha_k = \alpha \in [-\tilde r, \tilde r]$.
 \end{enumerate}

 We parametrize the plane $H\left(\alpha_k,\frac{1}{\sqrt{2}}(011)\right)$ at which we will blow-up by 
 \[X_k(\beta,\gamma):= \alpha_k \frac{1}{\sqrt 2} (011) + \left(\beta - \frac{1}{2} \alpha_k\right)\frac{1}{\sqrt 2}[1\overline11] + \left(\gamma -\frac{1}{2}\alpha_k\right)\frac{1}{\sqrt 2}[11\overline1],\]
 where $\beta,\gamma \in \R$ such that $(\beta,\gamma)\in \ball{0}{r}$.
 For $r$ small enough we have $X_k(\beta,\gamma) \in \ball{0}{1}$.
 It is straightforward to see that then we have the following relations
 \begin{align}
  X_k(\beta,\gamma) \cdot \frac{1}{\sqrt 2} (011) & = \alpha_k, \label{parametrization k (011)}\\
  X_k(\beta,\gamma) \cdot \frac{1}{\sqrt 2} (101) & = \beta, \label{parametrization k (101)}\\
  X_k(\beta,\gamma) \cdot \frac{1}{\sqrt 2} (110) & = \gamma, \label{parametrization k (110)}\\
  X_k(\beta,\gamma) \cdot \frac{1}{\sqrt 2} (01\overline1) & = \gamma - \beta, \label{parametrization k (01-1)}\\
  X_k(\beta,\gamma) \cdot \frac{1}{\sqrt 2} (\overline101) & = \alpha_k - \gamma, \label{parametrization k (-101)}\\
  X_k(\beta,\gamma) \cdot \frac{1}{\sqrt 2} (1\overline10) & = \beta-\alpha_k. \label{parametrization k (-110)}
 \end{align}
 Note that they nicely capture the combinatorics we discussed in Remark \ref{rem:combinatorics}:
 The expression $ X_k(\beta,\gamma) \cdot \nu_1^+$ depends on neither $\beta$ nor $\gamma$, while $ X_k(\beta,\gamma) \cdot \nu_1^-$ depends on both.
 Furthermore, we see that $ X_k(\beta,\gamma) \cdot \nu_i^\pm$ for $i=2,3$ depend on precisely one of the two.
 For a sketch relating $H\big(\alpha_k,\frac{1}{\sqrt 2}(011)\big)$ with the normals $\nu \in N$ see Figure \ref{fig:relation_normals_a}.
 
 In the limit we get the uniform convergence
 \[X_k(\beta,\gamma) \to X(\beta,\gamma) =\alpha \frac{1}{\sqrt 2} (011) +\left( \beta - \frac{1}{2} \alpha\right) \frac{1}{\sqrt 2} [1\overline11] +\left( \gamma - \frac{1}{2} \alpha\right)\frac{1}{\sqrt 2}[11\overline1]\numberthis\label{differentiate_gamma}\]
 and the relations with the normals turn into
 \begin{align}
  X(\beta,\gamma) \cdot \frac{1}{\sqrt 2} (011) & = \alpha, \label{parametrization (011)}\\
  X(\beta,\gamma) \cdot \frac{1}{\sqrt 2} (101) & = \beta, \label{parametrization (101)}\\
  X(\beta,\gamma) \cdot \frac{1}{\sqrt 2} (110) & = \gamma, \label{parametrization (110)}\\
  X(\beta,\gamma) \cdot \frac{1}{\sqrt 2} (01\overline1) & = \gamma - \beta, \label{parametrization (01-1)}\\
  X(\beta,\gamma) \cdot \frac{1}{\sqrt 2} (\overline101) & = \alpha - \gamma, \label{parametrization (-101)}\\
  X(\beta,\gamma) \cdot \frac{1}{\sqrt 2} (1\overline10) & = \beta - \alpha. \label{parametrization (-110)}  
 \end{align} 
  For $\nu \in N$ we define the blow-ups to be
 \begin{align*}
  \theta_i^{(k)}(\beta,\gamma; \xi) & := \theta_i(X_k(\beta,\gamma) + \delta_k \xi),\\
  f_{\nu}^{(k)}(\beta,\gamma;\xi) & := f_\nu ( X_k(\beta,\gamma) + \delta_k \xi), \\
  g_i^{(k)}(\beta,\gamma; \xi) & := g_i ( X_k(\beta,\gamma) + \delta_k \xi)
 \end{align*}
 for $\xi \in \ball{0}{1}$ and $i=1,2,3$.
 
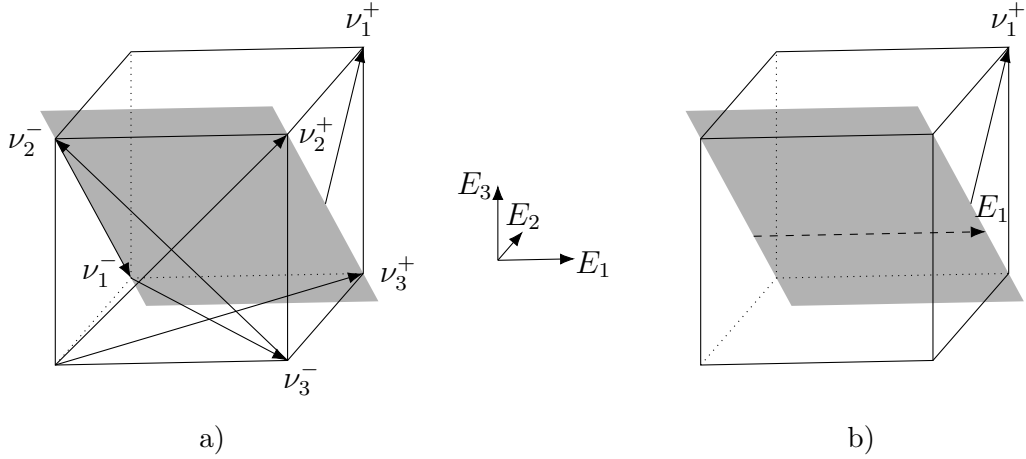
\begin{figure}
 \centering
 \begin{subfigure}{.37\linewidth}
   \centering
   \begin{tikzpicture}[scale=3, rotate around y=3]
    \fill[color=gray,opacity=.6] (0,1.2,.2) -- (1,1.2,.2) -- (1,-.2,-1.2) -- (0,-.2,-1.2);
    \draw (0,0,0) -- (0,1,0) -- (1,1,0) -- (1,0,0) -- cycle;
    \draw (0,1,0) -- (0,1,-1) -- (1,1,-1) -- (1,1,0);
    \draw (1,1,-1) -- (1,0,-1) -- (1,0,0);
    \draw[dotted] (0,0,0) -- (0,0,-1) -- (0,1,-1);
    \draw[dotted] (0,0,-1) -- (1,0,-1);
    
    \draw[-{Latex[length=2mm]}] (0,0,0) -- (1,0,-1);
    \draw[-{Latex[length=2mm]}] (0,0,-1) -- (1,0,0);
    \draw[-{Latex[length=2mm]}] (1,.5,-.5) -- (1,1,-1);
    \draw[-{Latex[length=2mm]}] (0,1,0) -- (0,0,-1);
    \draw[-{Latex[length=2mm]}] (0,0,0) -- (1,1,0);
    \draw[-{Latex[length=2mm]}] (1,0,0) -- (0,1,0);

    \node at (1.13,0,.215) {$\nu_3^-$};
    \node at (1.15,0,-1) {$\nu_3^+$};
    \node at (-.1,0.075,-.9) {$\nu_1^-$};
    \node at (1,1.125,-1) {$\nu_1^+$};
    \node at (1.125,1,0) {$\nu_2^+$};
    \node at (-.125,1,0) {$\nu_2^-$};
  \end{tikzpicture}
  \caption{}
  \label{fig:relation_normals_a}
 \end{subfigure}
 \begin{subfigure}{.15\linewidth}
  \begin{tikzpicture}[rotate around y=3]
   \draw[-{Latex[length=2mm]}] (0,0,0) --  (1,0,0);
   \node at (1.3,0,.2) {$E_1$};
   \draw[-{Latex[length=2mm]}] (0,0,0) --  (0,0,-1);
   \node at (0,.2,-1) {$E_2$};
   \draw[-{Latex[length=2mm]}] (0,0,0) --  (0,1,0);
   \node at (-.3,1,0) {$E_3$};
  \end{tikzpicture}
 \end{subfigure}
 \begin{subfigure}{.37\linewidth}
   \centering
   \begin{tikzpicture}[scale=3, rotate around y=3]
    \fill[color=gray,opacity=.6] (0,1.2,.2) -- (1,1.2,.2) -- (1,-.2,-1.2) -- (0,-.2,-1.2);
    \draw (0,0,0) -- (0,1,0) -- (1,1,0) -- (1,0,0) -- cycle;
    \draw (0,1,0) -- (0,1,-1) -- (1,1,-1) -- (1,1,0);
    \draw (1,1,-1) -- (1,0,-1) -- (1,0,0);
    \draw[dotted] (0,0,0) -- (0,0,-1) -- (0,1,-1);
    \draw[dotted] (0,0,-1) -- (1,0,-1);
    
    \draw[-{Latex[length=2mm]}] (1,.5,-.5) -- (1,1,-1);
    \draw[-{Latex[length=2mm]},dashed] (0,.3,-.7) -- (1,.3,-.7); 
    \node at (1.025,.4,-.7) {$E_1$};
    \node at (1,1.125,-1) {$\nu_1^+$};
    \node at (1.13,0,.215) {\phantom{$\nu_3^-$}};
  \end{tikzpicture}
  \caption{}
  \label{fig:relation_normals_b}
 \end{subfigure}
 \caption{a) Sketch relating planes $H(\tilde \alpha,(011))$ for $\tilde \alpha \in \R$ with all normals $\nu\in N$.
 b) Planes $H(\tilde \alpha,(011))$ for $\tilde \alpha \in \R$ contain lines parallel to $E_1$.}
 \label{fig:relation_normals}
\end{figure}
 
\textit{Step 2: There exists a subsequence, which we will not relabel, such that for almost all $(\beta,\gamma) \in \ball{0}{r}$ we have
\begin{flalign*}
  ||f_{\nu}^{(k)}(\beta,\gamma;\bullet) - f_\nu\circ X (\beta,\gamma) ||_{L^1(\ball{0}{1})} & \to 0 & &  \text { for } \nu \in N\setminus \left\{\frac{1}{\sqrt{2}} (011)\right\},& & \\
  ||g_i^{(k)}(\beta,\gamma; \bullet) - g_i\circ X (\beta,\gamma)||_{L^1(\ball{0}{1})} & \to 0  &  & \text { for } i=1,2,3,& &\\
  \int_\ball{0}{1} \psi\left(\left(-f_{(011)}^{(k)},f_{(011)}^{(k)}\right)(\beta,\gamma;\xi)\right)\mathrm{d} \xi & \to \int_\ball{0}{1} \psi(\hat f) \intd \mu(\hat f) & & \text{ for all } \psi \in C(\R^2).& & \numberthis \label{cumulative_Young}
\end{flalign*}
 Additionally, the probability measure $\mu$ on $\R^2$ is not a Dirac measure.}\\
 The combinatorics behind the first convergence can be found in Figure \ref{fig:relation_normals_a}.
 
 For $\nu \in N\setminus \{\frac{1}{\sqrt{2}} (011)\}$ we have
 \begin{align*}
  & \quad \int_\ball{0}{r} \dashint_\ball{0}{1} \left|f_\nu^{(k)}(\beta,\gamma;\xi) - f_\nu \circ X(\beta,\gamma) \right| \intd \xi \intd (\beta,\gamma) \\
  & = \int_\ball{0}{r} \dashint_\ball{0}{1} \left|f_\nu \left(\nu \cdot X_k(\beta,\gamma) + \delta_k \nu \cdot\xi\right) - f_\nu (\nu \cdot X(\beta,\gamma)) \right| \intd \xi \intd (\beta,\gamma) \\
  & \lesssim \int_\ball{0}{r} \dashint_{-1}^1 \left| f_\nu(\nu \cdot X_k(\beta,\gamma) + \delta_k s ) - f_\nu (\nu\cdot X(\beta,\gamma)) \right| \intd s \intd (\beta,\gamma).
 \end{align*}
 As $\nu \cdot X_k(\beta,\gamma)$ and $\nu \cdot X(\beta,\gamma)$ depend on at least $\beta$ or $\gamma$, see equations \eqref{parametrization k (101)}-\eqref{parametrization k (-110)} and \eqref{parametrization (101)}-\eqref{parametrization (-110)}, and we have the uniform convergence $X_k \to X$, we can apply Lemma \ref{lemma: shifts don't matter} to deduce that the integral in the last line vanishes in the limit.
 Passing to a subsequence, we get strong convergence in $\xi$ for almost all $(\beta,\gamma)\in \ball{0}{r}$.

 Also, for $i=1,2,3$ we have $g_i^{(k)}(\beta,\gamma;\xi) \to g_i \circ X(\beta,\gamma)$ pointwise and in $L^1$ by continuity of affine functions.
 
 Due to the fact that $X_k(\beta,\gamma)\cdot \frac{1}{\sqrt 2} (011) = \alpha_k$ we see that $f^{(k)}_{(011)}$ does not depend on $\beta$ and $\gamma$.
 Hence we may drop them in equation \eqref{cumulative_Young}.
 As $f_{(011)}$ is a bounded function, the sequence of push-forward measures defined by the left-hand side have uniformly bounded supports.
 Consequently, there exists a limiting probability measure $\mu$ such that along a subsequence we have
 \[\int_\ball{0}{1} \psi\left(\left(-f_{(011)}^{(k)},f_{(011)}^{(k)}\right)(\xi)\right)\intd \xi  \to \int_\ball{0}{1} \psi(\hat f) \intd \mu(\hat f)\]
 for all $\psi \in C(\R^2)$.
 Finally, if we had $\mu = \delta_{\hat f}$, then testing this convergence with the function $\psi(\hat g) =  |\hat g _2 - \hat f_2|$ we would see that
 \[\dashint_\ball{0}{1} \left| f_{(011)}^{(k)}(\xi) - \dashint_\ball{0}{1} f_{(011)}^{(k)}(\zeta) \intd \zeta \right| \intd \xi \lesssim \dashint_\ball{0}{1} | f_{(011)}^{(k)}(\xi) - \hat f_2| \intd \xi \to 0,\]
  because in $L^1$ the average is almost the constant closest to a function.
 However, this would contradict the convergence to a strictly positive number \eqref{nontrivial blowup} after undoing the rescaling.

\textit{Step 3: For all $(\beta,\gamma)$ as in Step 2 we have
  \[ \int_\ball{0}{1} \psi\left(\theta^{(k)}(\beta,\gamma;\xi)\right)\intd \xi  \to \int_\ball{0}{1} \psi\left((\theta_1\circ X,\hat f + (z_2,z_3))(\beta,\gamma)\right) \intd \mu(\hat f)\numberthis \label{cumulative_Young_support}\]
  for all $\psi \in C_0(\R^3)$ and where $z_2$, $z_3$ are defined by equations \eqref{shift_first_component} and \eqref{shift_second_component}.
  The measure $\bar \mu$ defined by the right-hand side is supported on $\tilde \K$, see definition \eqref{def_tilde_K}.}\\
 The previous calculations immediately give that $\theta_1^{(k)}$ converges strongly in $\xi$ to \begin{align}\label{decomposition_lower_dim}
  \theta_1\circ X(\beta,\gamma) = \left(f_{(101)} + f_{(\overline101)} - f_{(110)} - f_{(1\overline10)} + g_1\right) \circ X(\beta,\gamma).
 \end{align}
 Similarly, the blow-ups $(\theta_2^{(k)} + f^{(k)}_{(011)})(\beta,\gamma;\xi)$ and $(\theta_3^{(k)} - f^{(k)}_{(011)})(\beta,\gamma;\xi)$ converge strongly to
 \[z_2(\beta,\gamma) := \left(f_{(110)} + f_{(1\overline10)} - f_{(01\overline1)} + g_2\right) \circ X(\beta,\gamma),\numberthis \label{shift_first_component}\] 
 resp.\
 \[z_3(\beta,\gamma) := \left( f_{(01\overline1)} - f_{(101)} -f_{(\overline101)} + g_3 \right) \circ X(\beta,\gamma).\numberthis \label{shift_second_component}\]
 
 As the required convergence \eqref{cumulative_Young_support} is induced by a topology, we only have to identify the limit along subsequences, which may depend on $\beta$ and $\gamma$, of arbitrary subsequences.
 Thus we may extract a subsequence to obtain pointwise convergence a.e.\ of the sequences $\theta_1^{(k)}$, $(\theta_2^{(k)} + f^{(k)}_{(011)})(\beta,\gamma;\xi)$ and $(\theta_3^{(k)} - f^{(k)}_{(011)})(\beta,\gamma;\xi)$.
 Applying both Egoroff's and Lusin's Theorem, these convergences can be taken to be uniform and the limits to be continuous on sets of almost full measure.
 Consequently we get that
 \[ \int_\ball{0}{1} \psi\left(\theta^{(k)}(\beta,\gamma;\xi)\right)\intd \xi  \to \int_\ball{0}{1} \psi\left((\theta_1\circ X,\hat f + (0,z_2,z_3))(\beta,\gamma)\right) \intd \mu(\hat f)\]
 for all $\psi \in C_0(\R^3)$.
 Testing with $\psi = \dist(\bullet, \tilde \K)$ we see that the measure $\bar \mu$ has support in $\tilde \K$.

\textit{Step 4: \texorpdfstring{We have $\theta_1\circ X = b\chi_B$ for some $0<b<1$ and some measurable set $B \subset \R^2$ almost everywhere. Furthermore, the shift $(z_2,z_3)(\beta,\gamma)$ is constant on $B$ almost everywhere.}{Theta 1 is two valued, shift z is constant.}}\\
 Note that what we claim to prove in Step 4 is an empty statement if $\theta_1 \circ X \equiv 0$ a.e.\ in $\ball{0}{r}$.
 Let
 \[B:= \{(\beta,\gamma) \in \ball{0}{r}:\, \theta_1\circ X(\beta,\gamma) > 0 \text{ and the conclusion of Step 2 holds}\}.\]
 Let $T_z$ for $z\in \R^2$ be the translation operator acting on measures $\tilde \mu$ on $\R^2$ via the formula $(T_z \tilde \mu)(A) = \tilde \mu (A-z)$.
 Due to the support of $\bar \mu$ lying in $\tilde \K$ and $\tilde \K \cap \{\theta_1 = c\} = \{(c,0,1-c), (c,1-c,0)\}$ for $0<c\leq1$, see Figure \ref{fig:rigidity_K}, we have for any $(\beta,\gamma) \in B$ that
 \[\supp T_{-(z_2,z_3)(\beta,\gamma)}\mu \subset \left\{\left(0,1-\theta_1\circ X(\beta,\gamma)\right), \left(1-\theta_1\circ X(\beta,\gamma),0\right)\right\}.\]

 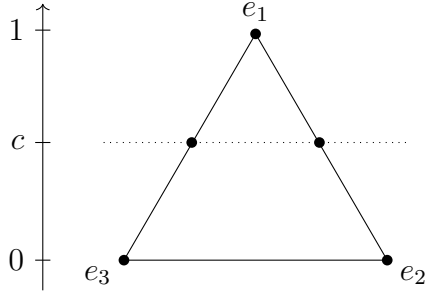
\begin{figure}
    \centering
      \begin{tikzpicture}[scale=2]
% 	  \node at (-1,1) {b)};
	  \fill (0.42,0.28) circle (1pt);
	  \fill (-0.42,0.28) circle (1pt);
	  \fill (90:1) circle (1pt);
	  \node at (90:1.15) {$e_1$};
	  \fill (210:1) circle (1pt);
	  \node at (210:1.2) {$e_3$};
	  \fill (330:1) circle (1pt);
	  \node at (330:1.2) {$e_2$};
	  \draw (90:1) -- (210:1);
	  \draw (210:1)-- (330:1);
	  \draw (330:1) -- (90:1);
	  \draw[dotted] (-1,0.28)  -- (1,0.28);
	  \draw[->] (-1.4, -0.7 ) -- (-1.4, 1.2);
	  \draw (-1.35, -0.5) -- (-1.45, -0.5) node[left] {0};
	  \draw (-1.35, 0.28) -- (-1.45, 0.28)  node[left]{$c$};
	  \draw (-1.35, 1.025) -- (-1.45, 1.025) node[left] {1};
      \end{tikzpicture}
     \caption{The dotted line $\{\theta_1 = c\}$ for $0<c\leq 1$ intersects $\K$ in the two points $ce_1 + (1-c)e_3$ and $ce_1 + (1-c)e_2$.}
      \label{fig:rigidity_K} 
 \end{figure}
 
 Thus we get that
 \[\mu = \lambda \delta_{\hat f} + (1-\lambda) \delta_{\hat g}\]
 with $0 <\lambda <1$ and $\hat f \neq \hat g$ since $\mu$ is not a Dirac measure by Step 2.
 Consequently, we get
 \[\{\hat f,\hat g\} - (z_2, z_3)(\beta,\gamma) = \left\{\left(0,1-\theta_1\circ X(\beta,\gamma)\right), \left(1-\theta_1\circ X(\beta,\gamma),0\right)\right\}.\]
 Both sets have the same diameter, which gives
 \[2\left(1-\theta_1\circ X(\beta,\gamma)\right) = |\hat f - \hat g|>0.\]
 Consequently we have $\theta_1\circ X(\beta,\gamma) < 1$ a.e.
 Furthermore, as $\mu$ is independent of $(\beta,\gamma)$  also $\hat f$ and $\hat g$ are, which implies that $\theta_1\circ X$ is constant on $B$.
 
 To see that $(z_2,z_3)$ is constant on $B$ note that the above implies
 \[\{\hat f,\hat g\} - (z_2, z_3)(\beta,\gamma) = \{\hat f,\hat g\} - (z_2, z_3)(\tilde \beta, \tilde \gamma)\]
 for $(\beta,\gamma), (\tilde \beta, \tilde \gamma) \in B$.
 As a non-empty set which is invariant under a single, non-vanishing shift has to at least be countably infinite, we see that $(z_2,z_3)$ has to be constant on $B$.

\textit{Step 5: If we have $|B|=0$, i.e., $\theta_1 \circ X(\beta,\gamma) \equiv 0$ for almost all $|(\beta,\gamma)| < r$, then the solution $u$ is a two-variant configuration.}\\
 As the plane $H(\alpha,(011))$ contains plenty of lines parallel to $E_1$, see Figure \ref{fig:relation_normals_b}, an application of Lemma \ref{lemma: almost maxima on transversal lines, 3D} ensures that $\theta_1 \equiv 0$ on $\ball{0}{r}$.
 Corollary \ref{cor: theta constant} then implies that we are dealing with a two-variant configuration.
 
\textit{Step 6: If $|B| > 0$, then there exists $d \in \{[111], [\overline1 11], [1\overline 1 1], [1 1 \overline1]\}$ such that the configuration is planar with respect to $d$.}\\
 By the decomposition of $\theta_1\circ X(\beta,\gamma)$, see equation \eqref{decomposition_lower_dim}, and its interplay with the coordinates $X$, see equations \eqref{parametrization (011)}-\eqref{parametrization (-110)}, we have
 \begin{align*}
  \theta_1 \circ X(\beta,\gamma)  & = f_{(101)}(\beta) + f_{(\overline101)}(\alpha-\gamma) - f_{(110)}(\gamma) - f_{(1\overline10)}(\beta - \alpha) + \lambda_1 \beta + \lambda_2 \gamma + c\\
  & = F_1(\beta) + F_2(\gamma), 
 \end{align*}
 where $\lambda_1$, $\lambda_2$, $c\in \R$ and
 \begin{align*}
  F_1(\beta) & := f_{(101)}(\beta)- f_{(1\overline10)}(\beta - \alpha) + \lambda_1 \beta,\\ F_2(\gamma) & := f_{(\overline101)}(\alpha-\gamma) - f_{(110)}(\gamma) + \lambda_2 \gamma + c.
 \end{align*}
 
 As by Step 4 the function $\theta_1\circ X(\beta,\gamma)$ takes at most two values almost everywhere we have that either $F_1$ is constant or $F_2$ is constant almost everywhere.
 
 We only deal with the case in which $F_2$ is constant.
 The argument for the other one works analogously.
 Consequently, we get a measurable set $D \subset (-r,r)$ such that $|D|>0$ and  $D\times (-r,r) \subset B$, see Figure \ref{fig:2d_differentiate}.

 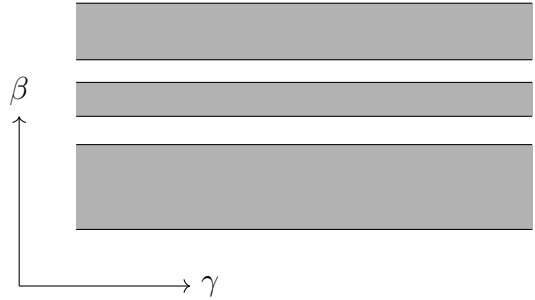
\begin{figure}
  \centering
  \begin{tikzpicture}[scale=1.5]
   \begin{scope}[shift={(-.5,-.5)}]
    \draw[->] (0,0) -- (0,1.5) node[above] {$\beta$};
    \draw[->] (0,0) -- (1.5,0) node[right] {$\gamma$};
   \end{scope}

   \fill[color=gray, opacity=.6] (0,0) -- (4,0) -- (4,.75) -- (0,.75) -- cycle;
   \draw (0,0) -- (4,0);
   \draw (0,.75) -- (4,.75);
   
   \fill[color=gray, opacity=.6] (0,1) -- (4,1) -- (4,1.3) -- (0,1.3) -- cycle;
   \draw (0,1) -- (4,1);
   \draw (0,1.3) -- (4,1.3);
   
   \fill[color=gray, opacity=.6] (0,1.5) -- (4,1.5) -- (4,2) -- (0,2) -- cycle;
   \draw (0,1.5) -- (4,1.5);
   \draw (0,2) -- (4,2);
  \end{tikzpicture}
  \caption{Sketch of the set $D\times(r,r)$. We take differences of the constant shifts $(z_2,z_3)$ in $\gamma$ and in $\beta$ in order to isolate a single function $f_\nu$ by Remark \ref{rem:combinatorics} and prove that it is affine.}
  \label{fig:2d_differentiate}
 \end{figure}

 We will follow the notation of Capella and Otto \cite{CO12} in writing discrete derivatives of a function $\phi(\gamma)$ as
 \[\partial_\gamma^h \phi(\gamma) : =  \phi(\gamma + h) - \phi(\gamma).\numberthis \label{discrete_derivatives}\]
 
 We proved in Step 4 that the shift $(z_2,z_3)$ is constant almost everywhere on $B$.
 Thus we get for $h\in (-r,r)$, $\beta \in D$ and almost all $\gamma \in (-r,r)$ that
 \begin{align*}
  0 & \overset{\phantom{\eqref{parametrization (110)}-\eqref{parametrization (-110)}}}{=} \partial_\gamma^h z_2 \circ X(\beta,\gamma) \overset{\eqref{shift_first_component}}{=} \partial_\gamma^h \left(f_{(110)} + f_{(1\overline10)} - f_{(01\overline1)} + g_2 \right)\circ X(\beta,\gamma) \\
  & \overset{\eqref{parametrization (110)} - \eqref{parametrization (-110)}}{=} \partial_\gamma^h \left( f_{(110)}(\gamma) + f_{(1\overline10)}(\beta - \alpha) - f_{(01\overline1)}(\gamma-\beta)\right) + \partial_\gamma^h g_2 \circ X(\beta,\gamma)\\
  & \overset{\phantom{\eqref{parametrization (110)} - \eqref{parametrization (-110)}}}{=} \partial_\gamma^h \left( f_{(110)}(\gamma) - f_{(01\overline1)}(\gamma-\beta)\right) + \partial_\gamma^h g_2 \circ X(\beta,\gamma) \numberthis\label{affine argument}.
 \end{align*}
 The fact that $g_2$ is affine implies that $\partial_\gamma^h g_2 \circ X$ is independent of $\beta$.
 Thus, ``differentiating'' again under the constraint $\beta$, $\tilde \beta \in D$ we get
 \[0 =  \partial_\gamma^h  f_{(01\overline1)}(\gamma - \beta) - \partial_\gamma^h f_{(01\overline1)}(\gamma - \tilde \beta).\]
 Even though in general we have $D\neq (-r,r)$, we can still apply \cite[Lemma 7]{CO12} due to $|D|>0$ to get
 \[\partial^h\partial^{\tilde h} f_{(01\overline1)}(t) = 0\]
 for almost all $t \in (-r,r)$ and shifts $h,\tilde h \in (-r,r)$.
 Consequently, the function $f_{(01\overline1)}$ is affine, see e.g.\ Lemma \ref{lemma:almost_affine}.
 Referring back to equation \eqref{affine argument} we see that also $f_{(110)}$ is affine.
 
 The upshot is that the decomposition for $\theta_2$ can be re-written as
 \[\theta_2 = -f_{(011)} + f_{(1\overline10)} + \tilde g_2 \numberthis \label{reduced_decomposition_1}\]
 in $\ball{0}{r}$
 with the affine function
 $\tilde g_2 := f_{(110)} - f_{(01\overline1)} + g_2.$
 By equation \eqref{affine argument} it furthermore satisfies
 \[\partial_\gamma \tilde g_2 \circ X = 0 \text{ in } \ball{0}{r}.\]
 In the standard basis of $\R^3$ this translates to
 \[\partial_{[11\overline1]} \tilde  g_2  = 0 \text{ on } \ball{\frac{\alpha}{\sqrt2}(011)}{r},\]
 since $\partial_\gamma$ corresponds to differentiating in the direction of $[11\overline1]$ by equation \eqref{differentiate_gamma}.
 At last we are in the position to choose $\tilde r := \frac{1}{2}r$, so that we get
 \[\partial_{[11\overline1]} \tilde  g_2   = 0 \text{ on } \ball{0}{\tilde r}.\]
% Using $\partial_{[11\overline1]} f_{(1\overline10)} = \partial_{[11\overline1]} f_{(011)} = 0$ we see that
% \[\partial_{[11\overline1]} \theta_2 = 0\]
% in $\ball{0}{r}$.
 
 The analogue of \eqref{affine argument} using $z_3$ rather than $z_2$ gives that $f_{(\overline101)}$ is affine and that we may find an affine function $\tilde g_3$ with $\partial_{[11\overline1]} \tilde g_3= 0$ such that
 \[\theta_3 = f_{(011)} - f_{(101)}  + \tilde g_3 \numberthis \label{reduced_decomposition_2}\]
% \[\partial_{[11\overline1]} \theta_3 = \partial_{[11\overline1]} \left(f_{(01\overline1)}- f_{(\overline101)} + g_3 \right) = 0\]
 in $\ball{0}{r}$.
 
 The relation $\theta_1 + \theta_2 + \theta_3 = 1$ and the two vanishing derivatives $\partial_{[11\overline1]} \theta_2 = \partial_{[11\overline1]} \theta_3 = 0$ imply $\partial_{[11\overline1]} \theta_1 = 0$.
 Therefore the affine function $\tilde g_1 := f_{(\overline101)} - f_{(110)} +g_1$ satisfies
 \[\partial_{[11\overline1]} \tilde g_1 = \partial_{[11\overline1]} \theta_1 =  0 \]
 on $\ball{0}{r}$ as well and we get the decomposition
 \[\theta_1 = f_{(101)} - f_{(1\overline10)} +  \tilde g_1.\numberthis \label{reduced_decomposition_3}\]
 Equations \eqref{reduced_decomposition_1}-\eqref{reduced_decomposition_3} together with the affine function $\tilde g_i$ being independent of the $[11\overline1]$-direction constitute planarity of the configuration, see Definition \ref{def:planar}.
\end{proof}

\begin{proof}[Proof of Lemma \ref{lemma: almost maxima on transversal lines}]
  Without loss of generality, we may assume
   \[\essinf_{x_1,x_2 \in [0,1]} f(x_1) + g(x_2) = c = 0.\]
  
\textit{Step 1: We have $\essinf f + \essinf g \geq 0$.}\\
  Let $\delta >0$.
  We know that
  \[\left|\left\{t \in (0,1) :  f(t) < \essinf f + \frac{\delta}{2}\right\}\right| >0 \]
  and 
  \[\left|\left\{t \in (0,1) :  g(t) < \essinf g + \frac{\delta}{2}\right\}\right| >0.\]
  Consequently, we have that
  \[\left|\left\{x \in (0,1)^2 : 0 \leq f(x_1) + g(x_2) < \essinf f + \essinf g + \delta \right\}\right| >0.\]
  As a result we know $-\delta \leq \essinf f + \essinf g$ for all $\delta >0$, which implies the claim.
  
\textit{Step 2: Statement 1 implies statement 3.}\\
  For almost all $x \in (0,1)^2$ we know that
  \[ \eps \geq f(x_1) + g(x_2) \geq \essinf f + g(x_2) \geq \essinf f + \essinf g \geq 0.\]
  In particular, we know 
  \[\essinf f + \essinf g \leq \eps.\]
  By Fubini's Theorem there exists an $x_2 \in (0,1)$ such that we
  have 
  \[\eps \geq f(x_1) + g(x_2) \geq \essinf f + g(x_2)\geq 0\]
  for almost all $x_1 \in (0,1)$.
  Thus we see
  \[ f(x_1) -\essinf f = f(x_1) + g(x_2) - ( \essinf f + g(x_2))\leq \eps. \]
  A similar argument ensures $g \leq \essinf g + \eps$.
  
% \textit{Step 3: If \[\left|\{(x_1,x_2) \in [0,1]^2: f(x_1) + g(x_2) = \operatorname{ess\ inf}_{(\tilde x_1,\tilde x_2) \in [0,1]^2 } f(\tilde x_1) + g(\tilde x_2)\}\right|>0,\]
%  then we also have $\left|\{x_1\in [0,1]: f =\operatorname{ess\ inf} f \}\right|>0$ and $\left|\{x_2 \in [0,1] : g(x_2) =\operatorname{ess\ inf} g \}\right|>0$.}\\
%   For almost all $(x_1,x_2) \in \{f + g = \operatorname{ess\ inf} f + g\}$ we get
%   \[ 0 = f(x_1) + g(x_2) \geq \essinf f + g(x_2) \geq \essinf f + \essinf g = 0.\]
%   By again a similar argument as above we have
%   $f = \essinf f$ and $g=\essinf g$ on subsets of positive measure.
%   
\textit{Step 3: Conclusion.}\\
  The proof for the implication ``2 $\implies$ 3 '' is very similar to Step 2.
  Lastly, if $\eps = 0$, the implications ``3 $\implies$ 1, 2'' are trivial.
\end{proof}

\begin{proof}[Proof of Lemma \ref{lemma: almost maxima on transversal lines, 3D}]
 The radius $r>0$ is only required to ensure that $P \subset \ball{0}{1}$.
 We may thus translate, re-scale and use the symmetries of the problem to only work in the case $i=1$, $x_0 = 0$, $I=(-1,1)$. 
 These additional assumptions imply
 \[\nu \cdot l(I) = \sqrt{2}E_i\cdot \nu (-1,1) = (-1,1)\]
 for $\nu \in N_2 \cup N_3$ and, consequently, $P = \bigcap_{\nu \in N_{2} \cup N_{3}} \{x \in \R^3: |\nu \cdot x| < 1\}$.
 Furthermore, we only have to deal with the case $\theta_1\circ l \leq \epsi$, as the other one can be dealt with by working with $\tilde \theta_1 := 1-\theta_1$.
 We remind the reader that Figure \ref{fig:polyhedron_proof} depicts the general strategy of the proof.
 
\textit{Step 1: Extend $0 \leq \theta_1 \leq \eps$ to the plane $H\big(0,\frac{1}{2}(011)\big)$.}\\
 We parametrize the plane via
 \[X(\alpha,\beta):=\alpha \frac{1}{\sqrt 2}[11\overline1] + \beta \frac{1}{\sqrt 2}[1\overline11] .\]
 By the decomposition into one-dimensional functions, see Lemma \ref{lemma: decomposition}, and the existence of traces, see Lemma \ref{lemma: traces}, we have for almost all $(\alpha,\beta) \in (-1,1)^2 $ that
 \[0\leq \theta_1 \circ X \left(\alpha , \beta  \right) = f_{(\overline101)}(-\alpha) - f_{(110)}( \alpha) + f_{(101)}(\beta) - f_{(1\overline10)}(\beta) \leq 1.\]
 As $X \cdot l(t) = t (1,1)$ parametrizes the diagonal, the assumption \eqref{max trans lines, main ass} of $\theta_1$ almost achieving its minimum along $l$ and the two-dimensional statement Lemma \ref{lemma: almost maxima on transversal lines} imply that for almost all points $\alpha,\beta \in (-1,1)$ we have
 \begin{align*}
  f_{(\overline101)}(-\alpha) - f_{(110)}( \alpha)  \leq & \operatorname{ess\ inf}_{\tilde \alpha} \left( f_{(\overline101)}(-\tilde \alpha) - f_{(110)}(\tilde \alpha) \right) + \eps, \\
  f_{(101)}(\beta) - f_{(1\overline10)}(\beta)  \leq & \operatorname{ess\ inf}_{\tilde \beta} \left(f_{(101)}\left(\tilde \beta\right) - f_{(1\overline10)}\left(\tilde \beta\right) \right) + \eps.
 \end{align*}
 Consequently, we have
 \begin{align*}
  \left|f_{(\overline101)}(-\alpha) - f_{(110)}( \alpha) - \left(f_{(\overline101)}(-\tilde \alpha) - f_{(110)}(\tilde \alpha) \right) \right| & \leq \eps,\\
  \left|f_{(101)}(\beta) - f_{(1\overline10)}(\beta) - \left( f_{(101)}(\tilde \beta) - f_{(1\overline10)}(\tilde \beta) \right) \right| & \leq \eps
 \end{align*}
 for all $\alpha,\tilde \alpha, \beta, \tilde \beta \in (-1,1)$.
 These inequalities together with the assumption \eqref{max trans lines, main ass} imply for almost all $(\alpha,\beta) \in (-1,1)^2$ that
 \[0 \leq \theta_1 \circ X \left(\alpha, \beta\right) \leq 3\eps.\]
 Changing coordinates to $y := \frac{1}{2} \left(\alpha + \beta\right)$, $z := \frac{1}{2} \left(\alpha - \beta\right)$ we see that
 \[0 \leq \theta_1\left(\sqrt 2 (y, z,- z)\right) \leq 3\eps\]
 for almost all $(y,z)\in \R^2$ with $y + z$, $y - z \in (-1,1)$.
 
\textit{Step 2: Prove inequality \eqref{close to maximum on 3D set} on a suitable subset of $P$.}\\
 Fubini's theorem implies that for almost all $z \in (-1,1)$ we have 
 \[0\leq \theta_1\left(\sqrt 2(y,z,-z)\right) \leq 3\eps\numberthis \label{offset_close_to_max}\] for almost all $y \in \R$ with $y + z$, $y - z \in (-1,1)$.
 Furthermore, this condition for $y$ is equivalent to $y \in I(z) := \left(-1 + |z|, 1 - |z|\right)$.
%   \item The decomposition \eqref{decomposition_max_trans_lines} together with $\theta_1 \leq 1$ holds $\mathcal{H}^2$-almost everywhere on the set $P \cap \left\{\frac{1}{\sqrt 2}(01\overline1)\cdot x =  2 z \right\}$.
 We may thus repeat the above argument for almost all $z \in (-1,1)$ with $\tilde l(t) = \sqrt{2}t E_i + \sqrt{2}(0,z,-z)$ and the plane $H\big(2z,\frac{1}{\sqrt 2}(01\overline1)\big)$ to see that
 \[0 \leq \theta_1\left(\sqrt 2 (0,z,-z) +  \alpha \frac{1}{\sqrt 2}[111] +  \beta \frac{1}{\sqrt 2}[\overline111]\right) \leq 6\eps\]
 for almost all $\alpha,\beta \in I(z)$. 
 Due to measurability of $\theta_1$ another application of Fubini's theorem implies that we have the above inequality for almost all $(z,\alpha,\beta) \in \R^3$ with $z\in (-1,1)$ and $\alpha, \beta \in I(z)$.
 
 The proof so far ensured that the argument of $\theta_1$ in this inequality lies in $P$.
 We now need to prove that we did not miss significant parts.
 
%  
%  We use the trace of $\theta_1$ on the plane 
%  by observing that for $\alpha,\beta$ with $-z + \alpha$, $z + \alpha$, $-z + \beta$, $-z - \beta \in (-1,1)$ we have
%  \begin{align*}
%   & \quad \theta_1\left(\sqrt 2 (0,z,-z) + \alpha \frac{1}{\sqrt 2}[111] + \beta \frac{1}{\sqrt 2}[\overline111]\right)\\
%   & = f_{(101)}(- z + \alpha) - f_{(110)}( z + \alpha) + f_{(\overline101)}(-z + \beta) + f_{(1\overline10)}(-z - \beta).  
%  \end{align*}
%  The conditions on $\alpha$ and $\beta$ can be rewritten as $\alpha,\beta \in I(z)$.
%  As we have $\sqrt 2 (0,z,-z) + y \frac{1}{\sqrt 2}[111] + y \frac{1}{\sqrt 2}[\overline111] = \sqrt 2(y,z,-z)$ and the estimate \eqref{offset_close_to_max}, yet another application of Lemma \ref{lemma: almost maxima on transversal lines} implies that
%  \begin{align*}\label{trans_lines_1D_almost_constant}
%   \begin{split}
%     f_{(101)}(- z + \alpha) - f_{(110)}( z + \alpha) & \geq \esssup_{\tilde \alpha \in I(z)} f_{(101)}(- z + \tilde \alpha) - f_{(110)}( z + \tilde \alpha) - 3 \eps,\\
%     f_{(\overline101)}(-z + \beta) + f_{(1\overline10)}(-z - \beta) & \geq \esssup_{\tilde \beta \in I(z)} f_{(\overline101)}(-z + \tilde \beta) + f_{(1\overline10)}(-z - \tilde \beta) - 3 \eps
%   \end{split}
%  \end{align*}
%  for almost all $z \in (-1,1)$ and for almost all $\alpha$, $\beta \in I(z)$.
%  As a result we get for almost all $z \in (-1,1)$ and almost all $\alpha$, $\beta \in I(z)$ that

\textit{Step 3: Prove that the estimate $0 \leq \theta_1 (x) \leq 6\eps$ holds for $x\in P$.}\\
 To this end, we exploit that $\overbar P = \bigcap_{\nu \in N_{2} \cup N_{3}} \{x \in \R^3: |\nu \cdot x| \leq 1\}$ is a three-dimensional polyhedron.
 A fundamental result in the theory of bounded, non-empty polyhedra, see Br{\o}ndsted \cite[Corollary 8.7 and Theorem 7.2]{brondsted2012introduction}, is that they can be represented as the convex hull of their extremal points.
 Following Br{\o}ndsted \cite[Chapter 1, \S5]{brondsted2012introduction}, extremal points $x\in \overbar P$ are defined to leave $\overbar P\setminus\{x\}$ still convex, see also Figure \ref{fig:extremal_points}.
 Thus, in order to prove $0 \leq \theta_1 (x) \leq 6\eps$ holds for $x\in P$ we only have to argue that the closure of the set
 \[Q:= \left\{\sqrt 2 (0,z,-z) + \alpha \frac{1}{\sqrt 2}[111] + \beta \frac{1}{\sqrt 2}[\overline111]: z \in (-1,1)\text{ and } \alpha ,\beta \in I(z) \right\}\]
 contains all extremal points and is convex.
 
  \begin{figure}
   \centering
  \begin{tikzpicture}
   \begin{scope}[rotate around y=14,scale=2.5]
%     \draw (1,0,0) -- (0,1,1) -- (-1,0,0) -- (0,-1,-1) -- cycle;
%     \draw (1,0,0) -- (0,1,-1) -- (-1,0,0) -- (0,-1,1) -- cycle;
      \draw (0,-1,1) -- (0,1,1) -- (0,1,-1);
      \draw[dotted] (0,1,-1) -- (0,-1,-1) -- (0,-1,1);
      \draw (1,0,0) -- (0,1,1) -- (-1,0,0);
      \draw (1,0,0) -- (0,-1,1) -- (-1,0,0);
      \draw (1,0,0) -- (0,1,-1) -- (-1,0,0);
      \draw[dotted]  (1,0,0) -- (0,-1,-1) -- (-1,0,0);
      
      \fill (1,0,0) circle (1pt);
      \node at (1.25,0,0) {$\sqrt{2} E_1$};
      \fill (-1,0,0) circle (1pt);
      \node at (-1.3,0,0) {$-\sqrt{2} E_1$};
      \fill (0,1,1) circle (1pt);
      \node at (.36,1,1) {$-2 \nu_1^-$};
      \fill (0,1,-1) circle (1pt);
      \node at (0,1.15,-1) {$2 \nu_1^+$};      
      \fill (0,-1,-1) circle (1pt);
      \node at (.1,-0.85,-1) {$2 \nu_1^-$};
      \fill (0,-1,1) circle (1pt);
      \node at (0,-1.175,1) {$-2 \nu_1^+$};
      
      \translatepoint{-1.5,-.75,1};
      \begin{scope}[shift=(middlepoint)]
	\draw[-{Latex[length=2mm]}] (0,0,0) --  (.75,0,0);
	\node at (.875,0,0) {$E_1$};
	\draw[-{Latex[length=2mm]}] (0,0,0) --  (0,0,-.75);
	\node at (0,0,-.95) {$E_2$};
	\draw[-{Latex[length=2mm]}] (0,0,0) --  (0,.75,0);
	\node at (0,.85,0) {$E_3$};
      \end{scope}
      
    \end{scope}
   \end{tikzpicture}
  \caption{Sketch showing the extremal points of the polyhedron $P$.}
  \label{fig:extremal_points}
\end{figure}
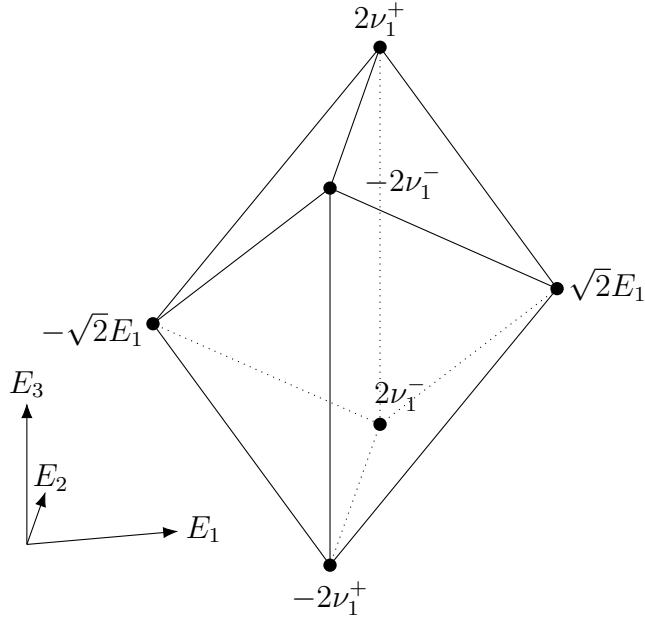
 
 The extremal points can be computed in a straightforward manner by finding all intersections of three of its two-dimensional faces still lying in $\bar P$.
 The resulting points are $\pm\sqrt 2 E_1$, $\pm  2 (011) = \pm 2 \nu_1^+$ and $\pm \sqrt 2 (01\overline1) = \pm \sqrt{2} \nu_1^-$, see Figure \ref{fig:extremal_points}.
 These can be presented as
 \begin{alignat*}{8}
  \pm \sqrt 2 e_1 & = \pm \left( \frac{1}{\sqrt 2} [111] - \frac{1}{\sqrt 2} [\overline 111]\right), & \text{ for } z & = 0,\  & \alpha & = - \beta & = & \pm 1,\\
  \pm \sqrt 2 (011) & = \pm \left( \frac{1}{\sqrt 2} [111] + \frac{1}{\sqrt 2} [\overline 111]\right), & \text{ for }  z & = 0,\ & \alpha & =  \beta & = & \pm 1,\\
  \pm \sqrt 2 (01\overline1), & & \text{ for } z & = \pm 1,\  & \alpha & = \beta & = & \ 0 
 \end{alignat*}
 and thus they lie in $\overbar Q$.
 
 Furthermore, in order to see that $\overbar Q$ is convex, we only have to prove
 \[\lambda I(z_1) + (1-\lambda) I(z_2) \subset I\left(\lambda z_1 + (1-\lambda) z_2\right)\]
  for all $ -1 \leq z_1, z_2 \leq 1$.
 Indeed, by the triangle inequality we have 
 \begin{align*}
  & \quad \lambda I(z_1) + (1- \lambda) I(z_2) \\
  & = \lambda \big(-1 + |z_1|, 1-|z_1|\big) + (1-\lambda) \big(-1 + |z_2|, 1-|z_2|\big) \\
  & =  \big(-1 + \lambda |z_1| + (1-\lambda) | z_2|, 1- \lambda |z_1| - (1-\lambda) |z_2| \big) \\
  & \subset \big(-1 + |\lambda z_1 + (1-\lambda) z_2|, 1- |\lambda z_1 + (1-\lambda) z_2| \big)\\
  & = I\left(\lambda z_1 + (1-\lambda) z_2\right).
 \end{align*}

\textit{Step 4: Prove that $f_\nu$ is almost affine for $\nu \in N_2 \cup N_3$  if the one-dimensional functions are continuous.}\\
We will only deal with $\nu = \frac{1}{\sqrt 2}(101)$.
The advantage of working with continuous functions is that we do not have to bother with sets of measure zero.
Let $(s,h,\tilde h) \in \R^3$ be such that $s$, $s + h$, $s + \tilde h$, $s + h + \tilde h \in (-1,1)$.
In order to exploit Remark \ref{rem:combinatorics} we set
\begin{align*}
 x_1 &:= \sqrt{2}sE_1 ,\\
 x_2 &:= \sqrt{2}sE_1 + h \frac{1}{\sqrt 2} [111],\\
 x_3 &:=  \sqrt{2}sE_1 + \tilde h \frac{1}{\sqrt 2} [1\overline11],\\
 x_4 &:=  \sqrt{2}sE_1 + h \frac{1}{\sqrt 2} [111] + \tilde h \frac{1}{\sqrt 2} [1\overline11].
\end{align*}
To prove $x_j \in P$ for all $j=1,2,3,4$ we go through the cases:
\begin{itemize}
 \item The facts $x_0 \cdot \nu = s$ and $\frac{1}{\sqrt 2}[111] \cdot \nu = \frac{1}{\sqrt 2}[1\overline11] \cdot \nu =1 $ clearly implies $x_j\cdot \nu \in (-1,1)$ for $j=1,2,3,4$.
 \item In contrast, for $\tilde \nu = \frac{1}{\sqrt 2} (\overline101)$ we have $x_0 \cdot \tilde \nu = - s$ and $\frac{1}{\sqrt 2}[111] \cdot \tilde \nu = \frac{1}{\sqrt 2}[1\overline11] \cdot \tilde \nu =0 $, which still implies $x_j \cdot \tilde \nu \in (-1,1)$.
 \item For $\tilde \nu \in N_3$ we have $x_0 \cdot \tilde \nu = s$ and 
 \[\left\{\frac{1}{\sqrt 2}[111] \cdot \nu , \frac{1}{\sqrt 2}[1\overline11] \cdot \nu \right\}=\{0,1\},\]
 which also implies $x_j \cdot \tilde \nu \in (-1,1)$.
\end{itemize}
By Step 3 have
\[|\theta_1(x_4) + \theta_1(x_1) - \theta_1(x_2) - \theta_1(x_3) | \leq 24 \eps.\]
Inserting the decomposition into the one-dimensional functions and making use of the combinatorics above we see that
\[\left|f_{(101)}(s + h + \tilde h) + f_{(101)}(s) - f_{(101)}(s + h) - f_{(101)} (s + \tilde h)\right| \leq 24 \eps.\qedhere\]
\end{proof}
%
%\begin{cor}\label{cor: almost affine on polyhedron}
% In the same setting as in Lemma \ref{lemma: almost maxima on transversal lines, 3D}, if the 1D functions a
%\end{cor}
%
%
%
%
%

\begin{proof}[Proof of Lemma \ref{lemma: shifts don't matter}]
Density of continuous functions with compact support in $L^p$ implies
\[\lim_{|h| \to 0} \int_{\R^n} |f(x+h) - f(x)|^p  \intd x  = 0.\]
For $y \in \ball{0}{1}$ setting $h=z+\tau y$ we thus get
\[\lim_{|z|,\tau \to 0} \int_{\R^n} |f(x+z+\tau y) - f(x)|^p  \intd x  = 0\]
uniformly in $y$.
After integration in $y$ we obtain the claim
\[\lim_{|z|, \tau \to 0}\int_{\R^n} \dashint_{\ball{0}{1}} |f(x + z + \tau y) - f(x)|^p \intd y \intd x = 0.\qedhere\]
\end{proof}

\subsection{\texorpdfstring{The case $f_\nu \in VMO$  for all $\nu \in N$}{The case f in VMO for all normals}}

\begin{proof}[Proof of Proposition \ref{Prop: Rigidity VMO}]
 Throughout the proof let $\tilde r>0$ be a universal, fixed radius, which we will choose later.
 We will denote generic radii with $r > \tilde r$.
 These may decrease from line to line.

 Applying the mean value theorem for $VMO$-functions, Lemma \ref{lemma: Mean value theorem for VMO}, we get that if $\theta \in \{e_1,e_2,e_3\}$ almost everywhere on $\ball{0}{\tilde r}$, then it holds that $\theta \equiv e_i$ for some $i \in \{1,2,3\}$ on $\ball{0}{\tilde r}$, which implies degeneracy by Corollary \ref{cor: theta constant}.
 Thus we may additionally assume that on $\ball{0}{\tilde r}$, exploiting symmetry of the problem, that
 \[\left|\left\{x\in \ball{0}{\tilde r}: \theta_1(x) = 0 , 0 < \theta_2(x), \theta_3(x) < 1\right\}\right| > 0.\numberthis \label{middle_of_edge}\]
\textit{Step 1: Find a set $A \subset \ball{0}{\tilde r}$ with $|A|>0$ and $\eps = \eps(\delta) \ssearrow 0$ as $\delta \ssearrow 0$ such that the following hold:
\begin{itemize}
 \item On $A$ we have
 	\begin{align}\label{average in middle of edge}
 		\theta_1 = 0\text{ and } & \frac{\eta}{2}   < \theta_{2} \text{, } \theta_{3} < 1 - \frac{\eta}{2},\\
 	 	\theta_{1,\delta}   < \eps\text{ and } & \frac{\eta}{2}  < \theta_{2,\delta} \text{, } \theta_{3,\delta} < 1 - \frac{\eta}{2} .
	 \end{align}
 \item On $\ball{0}{r}$ we have
 	\[\theta_\delta \subset \tilde K_\eps := \tilde \K + \ball{0}{\eps} \cap \operatorname{conv}(\tilde K) \text{ on } \ball{0}{r},\numberthis \label{approx_diff_incl}\]
 where $\operatorname{conv}(\tilde K)$ denotes the convex hull, see Figure \ref{fig:approx_K}.
\end{itemize}
%  and
% 
% In particular, given $x\in \ball{0}{r}$ we have $\theta_{i,\delta}(x) \leq \epsi$ for some $i\in \{1,2,3\}$.
 We may furthermore assume
 \begin{align}
  0 \in A \label{density one}  
 \end{align}
 to be a point of density one in the sense that $\frac{ |A\cap \ball{0}{\kappa}| }{    |\ball{0}{\kappa}| } \to 1$ as $\kappa \to 0$.}\\
 Recall that we defined $\theta_\delta(x) = \dashint_{\ball{x}{\delta}} \theta(y) dy$.
 As convolutions are convex operations we obtain $\theta_\delta \in \operatorname{conv}(\tilde \K)$ a.e.
 Another application of Lemma \ref{lemma: Mean value theorem for VMO} gives the fuzzy inclusion \eqref{approx_diff_incl} with $\eps = \eps(\delta) \to 0$ as $\delta \to 0$. 
 The additional assumption \eqref{middle_of_edge} implies that there exists $\eta >0$ such that on $\ball{0}{\tilde r}$ we have
 \begin{align}\label{WLOG_centered_at_zero}
  \left|\left\{x\in \ball{0}{\tilde r}: \theta_1(x) = 0 , \eta < \theta_2(x), \theta_3(x) < 1-\eta \right\}\right| > 0.
 \end{align}

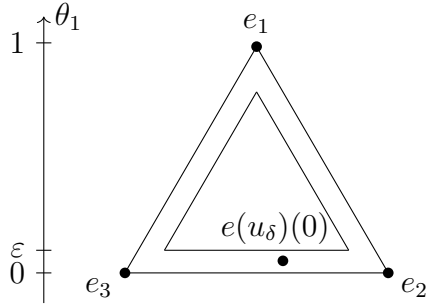
\begin{figure}
 \centering
 \begin{tikzpicture}[scale=2]
% 	  \node at (-1,1) {b)};
	  \fill (90:1) circle (1pt);
	  \node at (90:1.15) {$e_1$};
	  \node at (210:1.2) {$e_3$};
	  \node at (330:1.2) {$e_2$};
	  \draw (90:1) -- (210:1);
	  \draw (210:1)-- (330:1);
	  \draw (330:1) -- (90:1);
	  
	  \draw (90:.7) -- (210:.7);
	  \draw (210:.7)-- (330:.7);
	  \draw (330:.7) -- (90:.7);
	  
	  \fill (330:1) circle (1pt);
	  \fill (210:1) circle (1pt);
	  \fill ($(330:1)!.4!(210:1) + (0,.08)$) circle (1pt);
	  \node at ($(330:1)!.4!(210:1) + (-.05,.3)$) {$e(u_\delta)(0)$};
	  \draw[->] (-1.4, -0.7 ) -- (-1.4, 1.2) node[right] {$\theta_1$};
	  \draw (-1.35, -0.5) -- (-1.45, -0.5) node[left] {0};
	  \draw (-1.35, -.35) -- (-1.45, -.35) node[left] {$\eps$};
	  \draw (-1.35, 1.025) -- (-1.45, 1.025) node[left] {1};
 \end{tikzpicture}
  \caption{Sketch of the strains taking the form $e= \sum_{i=1}^3 \theta_ie_i$ for $\theta \in \tilde \K_\eps$. The strain $e(u_\delta)(0)= \sum_{i=1}^3\theta_{i,\delta} e_i$ essentially lies strictly between $e_2$ and $e_3$.}
 \label{fig:approx_K}
\end{figure}

 Lebesgue point theory implies that $\theta_\delta \to \theta$ pointwise almost everywhere.
 Using Egoroff's Theorem, we may upgrade this convergence to uniform convergence on some set
 \[A \subset \left\{x\in \ball{0}{\tilde r}: \theta_1(x) = 0 , \eta < \theta_2(x), \theta_3(x) < 1-\eta \right\}\]
 with $|A| >0$ and such that all points in $A$ have density one.
 Using both uniform convergences above we get that for $\delta > 0$ small enough we have
 \begin{align*}
  \theta_{1,\delta}  < \eps\text{, } \frac{\eta}{2}  < \theta_{2,\delta} \text{, } \theta_{3,\delta} < 1 - \frac{\eta}{2} \text{ on } A
 \end{align*}
 with $\eps = \eps(\delta) \to 0$ as $\delta \to 0$.

 To see that we may assume property \eqref{density one}, namely $0 \in A$, let $\bar r \leq 1$ be a universal radius with which the conclusion of the proposition holds under the assumption that we indeed have $0\in A$.
 We may then choose the radius $\tilde r = \frac{1}{4} \bar r$ in inequality \eqref{WLOG_centered_at_zero} so that $A \subset \ball{0}{\frac{1}{4}\bar r}$.
 For any point $x\in A$ we then clearly have $\ball{x}{\frac{1}{2}} \subset\ball{0}{1}$.
 Shifting and rescaling said ball to $\ball{0}{1}$ and applying the conclusion in the new coordinates, we see that the configuration only involves two variants on $\ball{x}{\frac{1}{2}\bar r}$.
 Consequently, it is a two-variant configuration on $\ball{0}{\frac{1}{4}\bar r} \subset \ball{x}{\frac{1}{2}\bar r}$.
%  
%  small enough such that $A \subset \ball{0}{r_1}$ and such that there exists a universal $r_2 >0$ with $\ball{0}{r_2} \subset \ball{x}{r_3}$ for all $x \in \ball{0}{r_1}$, where $r_3$ is a radius on which the conclusion of the theorem holds under the assumption $0 \in A$.
%  Then we apply the following arguments to $\theta$ shifted in such a way that $0 \in A$.
% 
% , with $\alpha \in (-\tilde r, \tilde r)$, for which \[\mathcal{H}^2\left(A \cap \left\{\frac{1}{\sqrt 2}(011)\cdot x = \alpha\right\}\right)>0,\]
 
\textit{Step 2: On the plane $H\big(0,\frac{1}{\sqrt 2}(011)\big)$ we split up $\theta_1$ into two one-dimensional functions and find maximal intervals on which they are essentially constant.}\\
 Similarly to the proof of Proposition \ref{Prop: dimension reduction} we parametrize the plane $H\big(0,\frac{1}{\sqrt 2}(011)\big)$ via
 \[X_k(\beta,\gamma):= \beta \frac{1}{\sqrt 2} [1\overline11] + \gamma \frac{1}{\sqrt 2} [11\overline1],\]
 which gives the relations
 \begin{align}
  X(\beta,\gamma) \cdot \frac{1}{\sqrt 2} (011) & = 0, \label{VMO parametrization (011)}\\
  X(\beta,\gamma) \cdot \frac{1}{\sqrt 2} (101) & = \beta, \label{VMO parametrization (101)}\\
  X(\beta,\gamma) \cdot \frac{1}{\sqrt 2} (110) & = \gamma, \label{VMO parametrization (110)}\\
  X(\beta,\gamma) \cdot \frac{1}{\sqrt 2} (01\overline1) & = \gamma - \beta, \label{VMO parametrization (01-1)}\\
  X(\beta,\gamma) \cdot \frac{1}{\sqrt 2} (\overline101) & =  - \gamma, \label{VMO parametrization (-101)}\\
  X(\beta,\gamma) \cdot \frac{1}{\sqrt 2} (1\overline10) & = \beta. \label{VMO parametrization (-110)}  
 \end{align}
 Absorbing the affine function $g_1$ in decomposition \eqref{decomposition} into the four one-dimensional functions $f_\nu$ for $\nu \in N_2 \cup N_3$ we may assume
 \begin{align}\label{VMO_simplified_decomposition}
  \theta_1 = f_{(101)} + f_{(\overline101)} - f_{(110)} - f_{(1\overline10)}.
 \end{align}
 
 As before, we exploit the combinatorial structure of the normals discussed in Remark \ref{rem:combinatorics} and sort these according to their dependence on $\beta$ or $\gamma$  on the plane $H(0,(011))$ by defining
 \begin{align*}
  \begin{split}
    F_{1}(\beta) & := f_{(101)}(\beta) - f_{(1\overline10)}(\beta), \\
    F_{2}(\gamma) & := f_{(\overline101)}(-\gamma) - f_{(110)}(\gamma).
  \end{split}
 \end{align*}
 As a result of Lemma \ref{lemma: almost maxima on transversal lines} we may shuffle around some constant so that we can assume
 \begin{align}\label{F_positive}
	  F_{1}, F_{1}\geq 0.
\end{align} 
 The decomposition then turns into
 \begin{align*}
  \theta_{1,\delta} \circ X(\beta,\gamma) = F_{1,\delta}(\beta) + F_{2,\delta}(\gamma)
 \end{align*}
 after averaging.
 
 Due to our assumption that $0 \in A$ and the fact that inequality \eqref{average in middle of edge} is an open condition, continuity of $\theta_\delta$ implies that there exists $\kappa = \kappa(\delta) >0$ such that 
 \begin{align}\label{average in middle of edge in small ball}
  \theta_{1,\delta}  < \eps\text{ and } \frac{\eta}{2} < \theta_{2,\delta} \text{, } \theta_{3,\delta} < 1 - \frac{\eta}{2} \text{ on }\ball{0}{\kappa}.
 \end{align}
 As $\theta_{1,\delta}$ is a sum of two one-dimensional functions that is small due to the first inequality of \eqref{average in middle of edge in small ball} the individual terms are small by Lemma \eqref{lemma: almost maxima on transversal lines}, i.e., we have
 \begin{align*}
    F_{1,\delta}(\beta) - \min_{[-\kappa, \kappa]} F_{1,\delta} & \leq \eps \text{ on } [-\kappa,  \kappa]  , \\
    F_{2,\delta}(\gamma) - \min_{[-\kappa, \kappa]} F_{2,\delta} & \leq \eps \text{ on } [-\kappa,  \kappa],
 \end{align*}
 where we used continuity to replace the essential infima.
 In particular, for the oscillations on closed intervals $I$, defined as
 \begin{align*}
  \osc_{I} F_{1,\delta} & :=   \max_{ I} F_{1,\delta} - \min_{[-\kappa, \kappa] }  F_{1,\delta}   , \\
  \osc_{I} F_{2,\delta} & :=   \max_{ I} F_{2,\delta} - \min_{[-\kappa, \kappa] }  F_{2,\delta},
 \end{align*}
 we have that
 \begin{align*}
  0 \leq \osc_{[-\kappa, \kappa]} F_{1,\delta} &  \leq \eps , \\
  0 \leq \osc_{[ -\kappa,  \kappa]} F_{2,\delta} & \leq \eps.
 \end{align*}
 
 By continuity of $F_{1,\delta}$ and $F_{2,\delta}$ the oscillations are continuous when varying the endpoints of the involved intervals.
 Thus there exist unique maximal intervals 
 \[[-\kappa, \kappa]\subset I^{\delta}_1 \subset [-r,r]\text{ and } [ -\kappa,  \kappa] \subset I^{\delta}_2 \subset [-r,r]\]
 such that
 \begin{align*}
  \osc_{I^{\delta}_1} F_{1,\delta} \leq \eps \text{ and } \osc_{I^{\delta}_2} F_{2,\delta}   \leq \eps.
 \end{align*} 
 We would like to prove that $[-\tilde r,\tilde r] \subset I_{1}^\delta, I_2^\delta$, but for the next couple of steps we will be content with making sure they do not shrink away as $\delta \to 0$, see Figure \ref{fig:VMO_outline} for an outline of the argument.
 Note that we will drop the dependence of $I_1$ and $I_2$ on $\delta$ in the following as long as we keep it fixed.
 
 \begin{figure}
 \centering
 \begin{tikzpicture}[scale=1.4]
%   \draw (-.5,-.5) -- (-.5,.5) -- (.5,.5) -- (.5,-.5) -- cycle;
%   \node at (1.2,0) {$[-\kappa,\kappa]^2$};
  \draw (-2,-1) -- (3,-1) -- (3, 2) -- (-2,2) -- cycle;
  \node at (2.4,-.6) {$I_1\times I_2$};
  \draw[dashed] (-1,-1) -- node[above]{$l$} (2,2);
  \draw[dotted] (-1.5,-1.5) -- (-1,-1);
  \draw[dotted] (2,2) -- (2.5,2.5);

  \fill (0,0) circle (1.5pt);
  \node at (-.2,.2) {$l(0)$};
  
  \fill (-1,-1) circle (1.5pt);
  \node at (-1.4,-.8) {$l(t_{min})$};
  \fill (2,2) circle (1.5pt);
  \node at (1.6,2.2) {$l(t_{max})$};
 \end{tikzpicture}
  \caption{Sketch relating $I_1\times I_2$ and the line $l(t) = t(1,1)$. Step 3 ensures that $\min(\theta_{2,\delta}, \theta_{3,\delta}) <\eps$ on $\partial (I_2\times I_3)$. In Step 4 we will show that $\theta_2$ is almost affine along the dashed part of $l$, which we will exploit in Step 5 to argue that $\theta_2\circ l(t_{min}) \approx 0$ and $\theta_2\circ l(t_{min}) \approx 1$ or vice versa due to $\theta_2\circ l(0) \not \approx 0,1$.
  The function $\theta_2$ being of vanishing mean oscillation allows us then to deduce that $t_{max}$ and $t_{min}$ cannot get too close as $\delta \to 0$.}
 \label{fig:VMO_outline}
\end{figure}
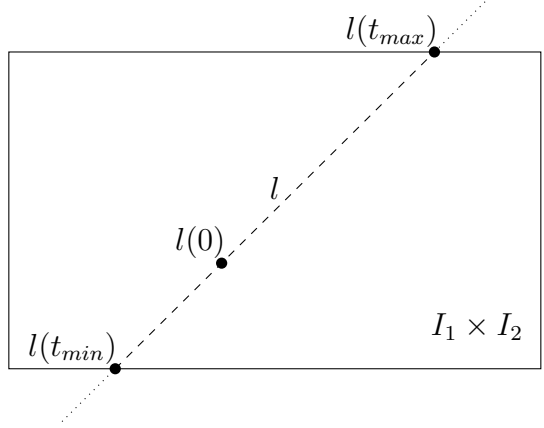
 
 \textit{Step 3: Prove $\min\{\theta_{2,\delta},\theta_{3,\delta}\}<\eps$ on $\partial \left(I_1\times I_2\right) \cap (-r,r)^2$.}\\
 For $\beta \in  \partial I_1 \cap (-r,r)$ we have
 \[  F_{1,\delta}(\beta) - \min_{[-\kappa, \kappa]} F_{1,\delta}  = \eps.\]
 Together with \eqref{F_positive} we obtain for $\gamma \in   I_2 \cap (-r,r)$ that
 \[\theta_{1,\delta}(\beta, \gamma) = F_{1,\delta}(\beta) + F_{2,\delta}(\gamma) = \eps + \min_{[-\kappa, \kappa]} F_{1,\delta} +  F_{2,\delta}(\gamma) \geq \eps .\]
 Swapping the roles of $\beta$ and $\gamma$ and using Step 1 and the definition of $\tilde \K$ we thus see
 \begin{align} \label{not_too_far}
  \min\{\theta_{2,\delta},\theta_{3,\delta}\}<\eps 
 \end{align}
 on the set $\partial (I_1 \times I_2 ) \cap (-r,r)^2$.

\textit{Step 4: The functions $f_{\nu,\delta}\circ X$ for $\nu \in N_2\cup N_3$, $\theta_{2,\delta}\circ X$ and $\theta_{3,\delta} \circ X$ are almost affine along $l(t):=  t (1,1)$ as long as $t^{\delta}_{min} < t < t^{\delta}_{max}$.}\\
 Here $t^{\delta}_{min} < 0< t^{\delta}_{max}$ are the two parameters for which $l$ intersects $\partial (I^{\delta}_1 \times I^{\delta}_2)$, see Figure \ref{fig:VMO_outline}. We again drop the superscripts in the notation of these objects as well as long as we keep $\delta$ fixed.
 
 For parameters $\bar \beta \in \operatorname{arg\ min}_{[ -\kappa,  \kappa] } F_{1,\delta}$ and $\bar \gamma \in \operatorname{arg\ min}_{[ -\kappa,  \kappa] } F_{2,\delta}$ we have
 \[\theta_{1,\delta} \circ X(\bar\beta,\bar\gamma) \leq \eps.\]
 Consequently we have for any $(\beta,\gamma) \in I_1\times I_2$ and for a generic constant $c>0$ which may change from line to line that
 \begin{align}\label{VMO theta_1 approx 0}
  0 \leq \theta_{1,\delta} \circ X(\beta,\gamma)  \leq \theta_{1,\delta} \circ X(\bar\beta,\bar\gamma) + \osc_{I_1} F_{1,\delta} + \osc_{I_2} F_{2,\delta} \leq c\eps.
 \end{align}
 
 As we have that $X \circ l(t) = \sqrt{2} t E_1$ is parallel to $E_1=[100]$ and $l(t)\in I_1\times I_2$ for $t \in [t_{min},t_{max}]$
 we can apply Lemma \ref{lemma: almost maxima on transversal lines, 3D}
 to see that $f_{\nu,\delta}$ is almost affine
 \begin{align*}
  \begin{split}
   & \big| f_{\nu,\delta}\circ X\circ l(t + h + \tilde h)  +  f_{\nu,\delta} \circ X \circ l(t) \\
   & \quad -f_{\nu,\delta}\circ X \circ l(t + h)   -  f_{\nu,\delta}\circ X \circ l(t + \tilde h) \big| < C \eps
  \end{split}
 \end{align*}
 for $t$, $h$, $\tilde h \in \R$ such that $t$, $t + h$, $t + \tilde h$, $t + h + \tilde h \in [t_{min},t_{max}]$ and $\nu \in N_2\cup N_3$.
 Plugging this into the decomposition \eqref{decomposition} of $\theta_2$ and $\theta_3$ and observing that affine functions drop out in second discrete derivatives and that $f_{(011)}$ and $f_{(01\overline1)}$ drop out as the line $X\circ l$ is parallel to $E_1$, we obtain
 \begin{align}\label{VMO_almost_affine}
  \begin{split}
   & \big| \theta_{2,\delta}\circ X \circ l(t + h + \tilde h) + \theta_{2,\delta} \circ X \circ l(t) \\
   & \quad -\theta_{2,\delta}\circ X \circ l(t + h) - \theta_{2,\delta}\circ X \circ l(t + \tilde h) \big| < C \eps,\\
   & \big| \theta_{3,\delta}\circ X \circ l(t + h + \tilde h) + \theta_{3,\delta} \circ X \circ l(t) \\
   & \quad -\theta_{3,\delta}\circ X \circ l(t + h) - \theta_{3,\delta}\circ X \circ l(t + \tilde h)\big| < C \eps,
  \end{split}
 \end{align}
 for $t$, $h$, $\tilde h \in \R$ such that $t$, $t + h$, $t + \tilde h$, $t + h + \tilde h \in [t_{min},t_{max}]$.
%  Lemma \ref{lemma: almost maxima on transversal lines, 3D} implies that
%  \begin{align}\label{1D functions almost affine}
%   \left| f_{\nu,\delta}\left(t + h + \tilde h\right) - f_{\nu,\delta}(t + h ) -f_{\nu,\delta}\left(t  + \tilde h\right) + f_{\nu,\delta}(t )\right| < c \eps
%  \end{align}
%  for $\nu \in N_2 \cup N_3$; $t$, $t+ h$, $t+\tilde h$, $t+h + \tilde h \in \nu \cdot X(I_1\times I_2)$ and some universal $c>0$.

%  Let $I^{\alpha,\delta}_1:=[\beta^{\alpha,\delta}_{min},\beta^{\alpha,\delta}_{max}]$ and $I^{\alpha,\delta}_2:=[\gamma^{\alpha,\delta}_{min}, \gamma^{\alpha,\delta}_{max}]$.

\textit{Step 5: If $\delta>0$ is sufficiently small and we have $ -r < t_{min} < t_{max} < r$, then
 \[\theta_{2,\delta} \circ X \circ l(t_{min}) < \eps \text{ and }  \theta_{3,\delta} \circ X \circ l(t_{max}) <\eps\]
 or
 \[\theta_{3,\delta} \circ X \circ l(t_{min}) < \eps \text{ and }  \theta_{2,\delta} \circ X \circ l(t_{max}) <\eps .\]
 By inequality \eqref{not_too_far} the statement $\theta_{3,\delta} \circ X \circ l(t_{max}) <\eps$ implies $\theta_{2,\delta} \circ X \circ l(t_{max}) >1-\eps$.
 We also get the same implication at $t_{min}$.
%  Furthermore, with a suitable choice of $\tilde r$, we have one of the following:
%  \begin{enumerate}
%   \item $\beta_{min}, \gamma_{min} \leq - \tilde r$,
%   \item $\beta_{max}, \gamma_{max} \geq \tilde r$,
%   \item $ [-\tilde r,\tilde r] \subset I_1$,
%   \item $ [-\tilde r,\tilde r] \subset I_2$.
%  \end{enumerate}
 }\\
 Aiming for a contradiction we assume that
 \begin{align} \label{theta_in_corner}
  \begin{split}
    \theta_{3,\delta} \circ X(l(t_{min})) & < \eps,\\
    \theta_{3,\delta} \circ X(l(t_{max})) & < \eps.
  \end{split}
 \end{align}
 Recalling Step 3 we see that the only other undesirable case is $\theta_{2,\delta} \circ X(l(t_{min}))<\eps$, $\theta_{2,\delta} \circ X(l(t_{max})) < \eps$, which can be dealt with in the same manner.
 
 In order to transport this information to the point $l(0)$ we use that $\theta_{3,\delta} \circ X $ is almost affine along $l(t)$, see \eqref{VMO_almost_affine}, to get
 \begin{align*}
    \big| \theta_{3,\delta} \circ X \circ l(t_{max}) - \theta_{3,\delta} \circ X\circ l(0)  -\theta_{3,\delta}\circ X\circ l(t_{min} + t_{max}) + \theta_{3,\delta} \circ X \circ l(t_{min}) \big| < C \eps.
 \end{align*}
 with $t : = t_{min}$, $h:= -t_{min}$ and $\tilde h : = t_{max}$.

 Combining this inequality with $\theta_{3,\delta}\circ X \circ l(t_{min} + t_{max})  \geq 0 $ and the supposedly incorrect assumption \eqref{theta_in_corner} we arrive at
 \begin{align*}
   \quad \theta_{3,\delta} \circ X \circ l(0) & < \theta_{3,\delta}\circ X \circ l(t_{max})  + \theta_{2,\delta}\circ X \circ l(t_{min})  - \theta_{2,\delta}\circ X \circ l(t_{min} + t_{max}) + C\eps \\
  & \leq  C\eps 
 \end{align*}
 However, this is in contradiction to the strain lying strictly between two martensite strains at $0$ for small $\delta$, see \eqref{average in middle of edge in small ball}, which proves the claim.
 
\textit{Step 6: We do not have $ \liminf_{\delta \to 0} t^{\delta}_{max} - t^{\delta}_{min} = 0$.}\\
 Towards a contradiction we assume that the difference does vanish in the limit. 
 Let $g_\delta(s) := \left(f_{(101),\delta} + f_{(\overline101), \delta} \right) ( (1-s)t^\delta_{min} + st^\delta_{max} ) $ for $s \in [0,1]$.
 By Lemma \ref{lemma:almost_affine} the sequence $g_\delta$ converges uniformly to an affine function $g$.
 As by Step 5 we know that the linear part of $g$ has to be nontrivial, recall that $f_{(011),\delta}$ and $f_{(01\overline1),\delta}$ drop out in the decomposition of $\theta_2$ along $X\circ l$, we get that
 \[\int_0^1 \left| g(s) - \int_0^1 g(\tilde s) \intd \tilde s \right| \intd s > 0.\]
 Undoing the rescaling we conclude that
 \[\lim_{\delta \to 0} \dashint_{t^{\delta}_{min}}^{t^{\delta}_{max}} \left|\left(f_{(101),\delta} + f_{(\overline101), \delta} \right) (t ) - \dashint_{t^{\delta}_{min}}^{t^{\delta}_{max}}\left(f_{(101),\delta} + f_{(\overline101), \delta} \right) ( \tilde t ) \intd \tilde t \right| \intd t>0.\]
 Due to Jensen's inequality this implies
 \[\liminf_{\delta \to 0} \dashint_{t^{\delta}_{min}-\delta}^{t^{\delta}_{max}+\delta} \left|\left(f_{(101)} + f_{(\overline101)} \right) (t ) - \dashint_{t^{\delta}_{min}-\delta}^{t^{\delta}_{max} + \delta }\left(f_{(101)} + f_{(\overline101)} \right) ( \tilde t ) \intd \tilde t \right| \intd t>0.\]
 However, this is a contradiction to our assumption that $f_{(101)}, f_{(\overline101)} \in VMO$ since we have $ t^\delta_{max} - t^\delta_{min} + 2\delta \to 0$. 
 
\textit{Step 7: The open set
\[\interior{\left\{x\in \ball{0}{r}: x \text{ is a Lebesgue point of } \theta_1 \text{ with } \theta_1(x) = 0\right\}}\]
has a connected component $P$ such that $0 \in \overline{P}$.
Furthermore, the set $P$ satisfies
\[P \cap \ball{0}{r} = \bigcap_{\nu \in N_2 \cup N_3} \{\nu \cdot x \in  I_\nu\} \cap \ball{0}{r}\]
for open, non-empty intervals $I_\nu \subset \R$, i.e., up to localization it is a polyhedron whose faces' normals are contained in $N_2\cup N_3$.}\\
 By Step 6 and Lemma \ref{lemma: almost maxima on transversal lines, 3D} we find a connected component $P$ of the above set such that $0 \in \overline{P}$ in the limit $\delta \to 0$.
 In the following, we will choose the precise representatives of all involved functions, see Evans and Gariepy \cite[Chapter 1.7.1]{evans2015measure}, so that we can evaluate $\theta_1$ in a pointwise manner.
 
 By distributionally differentiating the condition
 \[f_{(101)} + f_{(\overline101)} - f_{(110)} - f_{(1\overline10)} \overset{\eqref{VMO_simplified_decomposition}}{=} \theta_1 \equiv 0
 \]
 on $P$ in two different directions $d, \tilde d \in \mathcal{D}$, see Subsection \ref{subsec:notation}, and making use of Remark \ref{rem:combinatorics} we see that $f_\nu$ is locally affine on $P$ for $\nu \in N_2 \cup N_3$.
 By connectedness of $P$, they must be globally affine:
 
 Let $\nu \in N_2 \cup N_3$ and let $G:=\{g:\R^3 \to \R: g \text{ is affine}\}$.
 Let
 \[U_g:= \{x \in P:  f_\nu(\nu\cdot y) \equiv g(y) \text{ for } y \in \ball{x}{\kappa} \text{ for some } \kappa >0 \}.\]
 By construction, these sets are open.
 They are also disjoint because two affine functions agreeing on a non-empty open set have to coincide globally.
 Finally, we have $P = \bigcup_{g\in G} U_g$ by assumption.
 Therefore, there exists a single affine function $g$ such that $f_\nu = g$ on $P$.
 We may thus re-define $f_\nu$ for $\nu \in N_2 \cup N_3$ to satisfy 
 \[f_\nu \equiv 0 \text{ on } P.\numberthis \label{VMO_some_directions_constant}\]
 
 The image $I_\nu := \nu \cdot P$ is open and connected, and thus an interval.
 It is also clearly non-empty and by construction we have
 \[P \subset \bigcap_{\nu \in N_2 \cup N_3} \{\nu \cdot x \in  I_\nu\} \cap \ball{0}{r}.\]
 As it holds that $f_{\tilde \nu} =0$ on $\bigcap_{\nu \in N_2 \cup N_3} \{\nu \cdot x \in  I_\nu\} \cap \ball{0}{r}$ for all $\tilde \nu \in N_2 \cup N_3$ we get the other inclusion
 \[\bigcap_{\nu \in N_2 \cup N_3} \{\nu \cdot x \in I_\nu\} \cap \ball{0}{r} \subset P,\]
 which proves the claim.
 
\textit{Step 8: Let $F$ be a face of $P$ with normal $\nu \in N_i$ for $i\in \{2,3\}$ and $F\cap \ball{0}{r} \neq \emptyset$.
  Then $\theta_i \equiv 0$ or $\theta_i \equiv 1$ on $F$.}\\
  The claim is meaningful by Lemma \ref{lemma: traces}. 
  In order to keep notation simple, we assume that $\nu = \frac{1}{\sqrt 2} (101)$ and that $\nu$ is the outer normal to $P$ at $F$, i.e., we have $P\subset \{x\cdot \nu < b\}$ with $\{b\} = \nu \cdot F$.
  A two-dimensional sketch of this situation can be found in Figures \ref{fig:VMO_boundary_a}, while a less detailed three-dimensional one is shown in Figure \ref{fig:preimage_a}.
  
  Furthermore, we only have to prove the dichotomy $\theta_2 \equiv 0$ or $\theta_2 \equiv 1$ locally on $F$, i.e., on $\ball{x_0}{\kappa}$ for all $x_0\in F$ and some $\tilde \kappa = \tilde\kappa(x_0)>0$ such that
  \[\ball{x_0}{\tilde\kappa} \cap H(b,\nu) \subset F \cap \ball{0}{r}\text{ and }\ball{x_0}{\tilde \kappa} \cap \{x\cdot \nu < b\} \subset P: \numberthis \label{relative_interior}\]
  By Lemma \ref{lemma: traces} and $f_\nu \in VMO$ for all $\nu \in N$ we have $\theta_i \circ X \in VMO\big(\tilde F\big)$, where $ X : \tilde F \to F$ is an affine parametrization of $F$.
  An application of the mean value theorem for VMO-functions, Lemma \ref{lemma: Mean value theorem for VMO}, gives the ``global'' statement on $F$ due to connectedness of $F$.
 
 \begin{figure}
 \centering
 \subcaptionbox{\label{fig:VMO_boundary_a}}{
  \begin{tikzpicture}[scale=1]
    \draw(-3,-1) -- (-3,0) --(3,0) --  (3,-1);
    \node[fill=black, circle, inner sep=1.5, label=-90:{$x_0$}] at (0,0) {};
    \draw (0,0) circle (2);
    \draw[{Latex[length=2mm]}-{Latex[length=2mm]}] (0,0) --node[right]{$\kappa$} (320:2);
    \node[label={$F$}] at (-1,-.1){};
    
    \draw[-{Latex[length=2mm]}] (-2.5,0) -- (-2.5,1) node[above]{$(101)$};
  \end{tikzpicture}
 }
 \subcaptionbox{\label{fig:VMO_boundary_b}}{
    \begin{tikzpicture}[scale=1]
    \draw(-3,-1) -- (-3,0) --(3,0) --  (3,-1);
    \node[fill=black, circle, inner sep=1.5, label=-90:{$x_0$}] at (0,0) {};
    \draw (0,0) circle (2);
    \draw[dashed] (170:2) -- (10:2);
    \draw[dotted] (170:2) -- ++ (-1.26,0);
    \draw[dotted] (10:2) -- ++ (1.1,0)node[above] {$H(b_\delta,(101))$};
    
    \path[name path=A] (170:2) -- ++(2,2); 
    \path[name path=B] (10:2) -- ++(-2,2);
    \path[name intersections={of=A and B}];
    
    \coordinate (a) at (intersection-1);
    
    \path[name path=C] (170:2) -- ++(2,-2); 
    \path[name path=D] (10:2) -- ++(-2,-2);
    \path[name intersections={of=C and D}];
    \draw (170:2) -- (a) -- (10:2) --  (intersection-1)  -- cycle;
    \fill[color=gray, opacity=.6] (170:2) -- (a) -- (10:2) --  (intersection-1);
    
    \draw[dotted] (-3.7,.8) -- (1.9,.8);
    \draw[dotted] (-3.7,-.8) -- (3.2,-.8);
    \draw [{Latex[length=2mm]}-{Latex[length=2mm]}] (-3.5,-.8) -- node[left]{2c} (-3.5,.8);
  \end{tikzpicture}
 }
 \caption{a) Inside $\ball{x_0}{\kappa}$, the polyhedron $P$ looks like a half-space with boundary $F$ and exterior normal $\nu_2^+$. 
b) The dichotomy $\theta_{2,\delta} \approx 0$ or $\theta_{2,\delta} \approx 1$ on the dashed line $H(b_\delta,(101)) \cap \ball{x_0}{\kappa}$ can be propagated to the gray neighborhood of $x_0$ as long as we have $\dist(x_0,H(b_\delta,(101)))<c$.}
 \label{fig:VMO_boundary}
\end{figure}
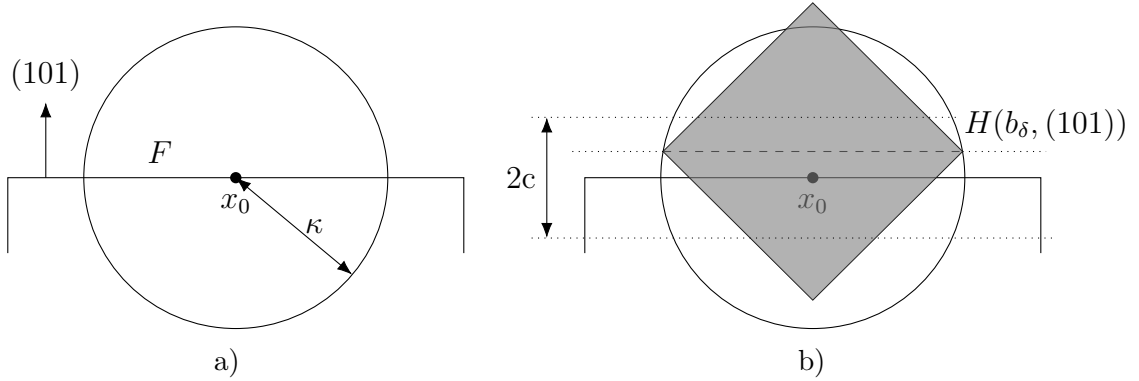
 
Let $x_0 \in F$ be such that there exists $\kappa>0$ with the inclusions  \eqref{relative_interior} being satisfied for $\tilde \kappa = \kappa$,
where in the following $\kappa$ may decrease from line to line in a universal manner.
We can use the identities \eqref{VMO_some_directions_constant} to conclude $f_{(101)} \equiv 0$ on $\ball{x_0}{2\kappa} \cap \{x\cdot \nu < b\}\subset P$ and $f_\nu \equiv 0$ on $\ball{x_0}{2\kappa}$ for $\nu \in N_2 \cup N_3 \setminus
 \left\{\frac{1}{\sqrt 2} (101)\right\}$.
 Consequently, we get
 \[f_{(101),\delta}\left(b - c\right) =0 \text{ and } f_{\nu,\delta} \equiv 0 \text{ on } \ball{x_0}{\kappa} \numberthis \label{(101) interior_2}\]
 after averaging provided we have $\delta <c$ for a constant $0<c<\kappa$ to be chosen later.
 In particular, the latter together with the decomposition \eqref{VMO_simplified_decomposition} implies
 \[\theta_{1,\delta} = f_{(101),\delta} \numberthis \label{component_theta_1d}.\]
 
 Therefore, we cannot have $f_{(101)} \equiv 0$ on the larger set $\ball{x_0}{\kappa} \cap \{x\cdot \nu < b + c\}$  as otherwise we would get the contradiction \[\ball{x_0}{\kappa} \cap \{x\cdot \nu < b + c\} \subset  P \subset \{x\cdot \nu < b\}.\]
 Written in terms of the approximation $f_{(101),\delta}$, recalling that $\eps = \eps(\delta) \to 0$ as $\delta \to 0$, this gives
 $f_{(101),\delta}(b_{\delta}) \geq \eps$
 for some $b_{\delta} \in  [b-c, b+c] \cap  [-r,r]$ and $\delta>0$ small enough.
 By equation \eqref{(101) interior_2} and continuity we may additionally assume that
 $f_{(101),\delta}(b_{\delta}) = \eps$ 
 which due to equation \eqref{component_theta_1d} implies that
 \[\theta_{1,\delta}(x) = \eps \numberthis \label{(101) changes} \]
 for all $x\in \tilde H := H\big(b_\delta,\frac{1}{\sqrt 2}(101)\big) \cap \ball{x_0}{\kappa}$, see Figure \ref{fig:VMO_boundary_b}.
 
 Combining this with the inclusion $\theta_\delta \in \tilde K + \ball{0}{\eps}$ we consequently get
 \[\min\{\theta_{2,\delta}(x), \theta_{3,\delta}(x) \} < \eps\]
 on $\tilde H$.
 Due to $\theta_1 + \theta_2 + \theta_3 \equiv 1$ we convert this into
 \[\min\{\theta_{2,\delta}(x), 1- \theta_{2,\delta}(x) \} < 2\eps\]
 for all $x \in \tilde H$.
 Continuity implies the dichotomy we have
 \[\text{ either } \theta_{2,\delta}(x) < 2\eps \text{ for } x\in \tilde H \text{ or } \theta_{2,\delta}(x) < 2\eps \text{ for } x\in \tilde H.\]
 
 In order to propagate this information back to $x_0$ let $x_\delta := x_0 + \left(b_\delta - b \right) \frac{1}{\sqrt 2} (101)$.
 The line $l(t):= x_\delta +  \sqrt 2 t E_2$ satisfies $l(t)\cdot \frac{1}{\sqrt 2} (101) = b_\delta$ by $x_0\in F \subset H\big(b,\frac{1}{\sqrt 2}(101)\big)$.
 We also have $l(t) \in \ball{x_0}{\kappa}$ for $t\in \left[-\frac{\kappa}{2},\frac{\kappa}{2}\right]$ provided we choose $c\leq \frac{\kappa}{2}$.
 Therefore, the above dichotomy holds along $l$.
 Consequently, Lemma \ref{lemma: almost maxima on transversal lines, 3D} implies that
 \[\min\{\theta_{2,\delta}(x), 1- \theta_{2,\delta}(x) \} < 12 \eps \numberthis \label{VMO_face_conclusion} \]
 on $\ball{x_\delta}{\kappa}$.
 By definition of $x_0$ and $b_\delta$ we have $|x_0- x_\delta| = |b-b_\delta| \leq c$.
 As a result, the choice $c\leq \frac{\kappa}{2}$ ensures that estimate \eqref{VMO_face_conclusion} holds on $\ball{x_0}{\tilde \kappa}$ for $\tilde \kappa = \kappa - c$.
 By Lemma \ref{lemma: traces} we see that in the limit $\delta \to 0$ we obtain $\theta_2 \equiv 0$ or $\theta_2 \equiv 1$ on $\ball{x_0}{\tilde \kappa} \cap F$, which concludes Step 8.

\textit{Step 9: Transport the information $\theta_i \equiv 0$ or $\theta_i \equiv 1$ on the face $F$ closest to the origin back into $P$.}\\
 Let $ I_\nu = (a_\nu,b_\nu)$ be the intervals obtained in Step 7.
 The proposition is proven once we can show that $a_\nu \geq -\tilde r <  \tilde r \leq  b_\nu $ for all $\nu \in N_2 \cup N_3$.
 Towards a contradiction we assume otherwise.
  Furthermore, for the sake of concreteness we assume that $b := b_{(101)} = \min_{\nu \in N_2 \cup N_3} \{-a_\nu,b_\nu\} < \tilde r$, i.e., we assume the face $F$ of $P$ we considered in the previous step to be the one closest to the origin.
  All other cases work the same.
  
  For $l(t):= b\frac{1}{\sqrt 2}(101) + \sqrt{2} tE_2$ we know by Step 8 that
 \[\theta_2 \circ l(t) = 0\]
  for almost all $t \in J:= l^{-1}(F\cap \ball{0}{r})$.
 Lemma \ref{lemma: almost maxima on transversal lines, 3D} implies that $\theta_2 \equiv 0$ on  the convex polyhedron
 \[Q:= \bigcap_{\nu \in N_1 \cup N_3} \left\{x\cdot \nu = \nu \cdot l(t) \text{ for some } t \in J\right\},\]
 see Figure \ref{fig:polyhedron_b} for a sketch relating $P$ and $Q$ in three dimensions.
 As any point of the closure $\overbar Q$ has positive density, we only have to prove $0 \in \overbar Q$ to get a contradiction to $0$ being a point of density one of the set
 \[\{\theta_1=0 \text{, } 0 < \theta_2, \theta_3 <1\},\]
 see Step 1.
 Furthermore, we may suppose that $b >0$ as that would imply $0 \in F$, which by $F \subset \overbar Q$ trivially gives the statement.
%  Note that in the case that $P$ is thin we are forced to use the closest face to propagate the dichotomy from, see Figure \ref{fig:why_close}.

\textit{Step 10: Prove $0 \in \overbar Q$, i.e., we can transport $\theta_2 = 0$ or $\theta_2 = 1$ to the origin.}\\
  To this end, let $x_\alpha := \sqrt{2} \alpha (101)$ for $\alpha >0$.
  In order to check $x_\alpha \in Q$ we calculate
  \begin{align*}
   x_\alpha \cdot \frac{1}{\sqrt 2} (011) = & \alpha,\\
   x_\alpha \cdot \frac{1}{\sqrt 2} (01\overline1) = & - \alpha,\\
   x_\alpha \cdot \frac{1}{\sqrt 2} (110) = & \alpha,\\
   x_\alpha \cdot \frac{1}{\sqrt 2} (1\overline10) = & \alpha.
  \end{align*}
  Consequently, we have
  \begin{align*}
   & \quad x_\alpha \cdot \frac{1}{\sqrt 2} (011)  = l(t) \cdot \frac{1}{\sqrt 2} (011)\text{ and } x_\alpha \cdot \frac{1}{\sqrt 2} (110)  = l(t) \cdot \frac{1}{\sqrt 2} (110) \\
   & \iff \alpha = \frac{1}{2} b + t \\
   & \iff t = \alpha - \frac{1}{2}b.
  \end{align*}
  For the $t \in \R$ in the previous line we have indeed $l(t) = b \frac{1}{\sqrt{2}}(1\overline11) + \sqrt{2} \alpha E_2\in \ball{0}{r}$ and $l(t) \in F$ due to the first equivalence above and
  \begin{alignat*}{4}
   a_{(01\overline1)} & < l(t) \cdot \frac{1}{\sqrt 2} (01\overline1) &  = - b + \alpha & < b_{(01\overline1)}\\
   a_{(1\overline10)} & < l(t) \cdot \frac{1}{\sqrt 2} (1\overline10)  &\, = b - \alpha & < b_{(1\overline10)}
  \end{alignat*}
  due to $a_{(01\overline1)} \leq - b < 0 \leq  b_{(01\overline1)}$ and $a_{(1\overline10)} \leq 0 <  b \leq b_{(1\overline10)}$.
  This proves
  \[x_\eps \in \left\{ \nu \cdot x =  \nu \cdot l(t) \text{ for some } t \in J \right\}\]
   for $\nu = \frac{1}{\sqrt 2}(110)$ and $\nu = \frac{1}{\sqrt 2} (011)$.
  
  Furthermore, we compute
  \begin{align*}
   & \quad x_\alpha \cdot \frac{1}{\sqrt 2} (01\overline1)  = l(t) \cdot \frac{1}{\sqrt 2} (01\overline1) \text{ and } x_\alpha \cdot \frac{1}{\sqrt 2} (1\overline10)  = l(t) \cdot \frac{1}{\sqrt 2} (1\overline10) \\
   & \iff \alpha= \frac{1}{2} b - t \\
   & \iff t = \frac{1}{ 2} b-\alpha
  \end{align*} 
  and, again for the $t \in \R$ given by the previous line, we have $l(t) = b\frac{1}{\sqrt 2}(111) - \sqrt{2}\alpha E_2 \in \ball{0}{r}$.
  We also have $l(t) \in F$ by the equivalence above and
  \begin{alignat*}{4}
   a_{(110)} & < l(t) \cdot \frac{1}{\sqrt 2} (110)  &  = b  -\alpha & < b_{(110)},\\
   a_{(011)} & < l(t) \cdot \frac{1}{\sqrt 2} (011)  &\, = b -\alpha & < b_{(011)},
  \end{alignat*}
  where we used $ a_{(110)} \leq 0 <b \leq  b_{(110)}$ and $a_{(011)} \leq 0 < b \leq b_{(011)}$.
  We thus have $x_\eps \in \left\{ \nu \cdot x =  \nu \cdot l(t) \text{ for some } t \in J \right\}$ for $\nu = \frac{1}{\sqrt 2}(01\overline1)$ and $\nu = \frac{1}{2}(1\overline10)$.
  As a result, we have $x_\alpha \in Q$, which ensures $0 \in \bar Q$ and finally concludes the proof.
\end{proof}

\begin{proof}[Proof of Lemma \ref{lemma: Mean value theorem for VMO}]
 The fact that $f_\delta= \dashint_{\ball{0}{\delta}} f(y)\intd y$ is continuous follows easily from the observation that $f_\delta$ is the convolution of $f$ with $\frac{1}{|\ball{0}{\delta}|} \chi_{\ball{0}{\delta}}$.
 
 As long as $\ball{x}{\delta} \subset U$, we have that
 \[\dist(f_\delta, K) = \inf_{\hat f \in K} | f_\delta(x) - \hat f| = \dashint_{\ball{x}{\delta}} \inf_{\hat f \in K} | f_\delta(x) - \hat f| \intd y \leq \dashint_{\ball{x}{\delta}} | f_\delta(x) - f(y)| \intd y  \to 0 \]
 uniformly in $x$ by definition of $VMO$.
\end{proof}

\begin{proof}[Proof of Lemma \ref{lemma:almost_affine}]
 By convolution (and restriction to a slightly smaller interval) we may suppose that $g$ is continuous.
 Without loss of generality we may additionally assume $g(0) = 0$.
 Recall $\eps := \sup_{t,t+h, t+\tilde h, t+ h + \tilde h \in [0,1]} | g(t+h+\tilde h) - g(t+ h) - g(t+\tilde h ) +g(t)|$.
 
 By induction, we can prove that for $x_i \geq 0$ with $1\leq i \leq n$ such that $\sum_{i=1}^n x_i \leq 1$ we have
 \[\left|g\left(\sum_{i=1}^n x_i\right) - \sum_{i=1}^n g(x_i)\right| \leq (n-1) \eps.\]
 Indeed, the case $n=1$ is trivial and the crucial part of the induction step is
  \[\left|g\left(\sum_{i=1}^{n-1} x_i + x_n\right) - \sum_{i=1}^{n-1} g(x_i) - g(x_n)\right| \leq \left|g\left(\sum_{i=1}^{n-1} x_i\right) - \sum_{i=1}^{n-1} g(x_i)\right| + \eps.\]
 
 In particular, for $x \in [0,1]$ and $n\in \mathbb{N}$ such that $nx \in [0,1]$ we have that
 \[\left|g\left(n x \right) -n g(x)  \right| \leq \left(n -1 \right) \eps,\numberthis \label{workhorse1}\]
 which implies
  \[\left|g\left(x \right) - \frac{1}{n} g(nx)  \right| \leq \eps.\numberthis \label{workhorse2}\]
 Choosing $|x| \leq \frac{1}{2}$ and $n = \left\lfloor \frac{1}{x}\right\rfloor$ in this inequality gives
 \[|g(x)| \leq \left\lfloor \frac{1}{x}\right\rfloor ^{-1} \left|g\left( \left\lfloor \frac{1}{x}\right\rfloor x\right) \right|  + \eps \leq 2x||g||_\infty + \eps,\]
where we used $\left\lfloor \frac{1}{x}\right\rfloor x  \geq \left( \frac{1}{x} -1 \right) x  = 1-x \geq \frac{1}{2}$.
%  \[\left|g\left(\left\lfloor \frac{1}{x}\right\rfloor x \right) -\left\lfloor \frac{1}{x}\right\rfloor g(x)  \right| \leq \left(\left\lfloor \frac{1}{x}\right\rfloor -1 \right) \eps.\] 
	For $x,y \in [0,1]$ with $|x-y|\leq \frac{1}{2}$ therefore get
 \[|g(x) - g(y)| \leq |g(|x-y|)| + \eps \leq 2|x-y|\,||g||_\infty + 2\eps.\]
 Plugging $x = \frac{1}{m}$, $n = k$ into estimate \eqref{workhorse1} and $x = \frac{1}{m}$, $n=m$ into estimate \eqref{workhorse2} for numbers $k, m \in \mathbb{N}$ with $k\leq m$ gives
 \[\left|g\left(\frac{k}{m}\right) - \frac{k}{m} g(1) \right| \leq \left|g\left(\frac{k}{m}\right) - k g\left(\frac{1}{m}\right) \right| + \left| k g\left(\frac{1}{m}\right) - \frac{k}{m} g(1) \right| \leq (2k-1)\eps \leq 2m\eps.\] 
 Additionally note that for $x\in [0,1]$ and $N \in \mathbb{N}$ we have
 \[\left| x - \frac{1}{N}\left\lfloor Nx \right\rfloor  \right| \leq \frac{1}{N}.\]
 
 Collecting all of the above, we have for $N \geq 2$ and $x\in [0,1]$ that
 \begin{align*}
  & \quad \left|g(x) - x g(1) \right| \\
  & \leq \left| g(x) - g\left(\frac{1}{N} \left\lfloor Nx\right\rfloor\right) \right| + \left| g\left(\frac{1}{N} \left\lfloor Nx\right\rfloor\right) - \frac{1}{N}\left\lfloor Nx \right\rfloor g(1) \right| + \left|\frac{1}{N}\left\lfloor Nx \right\rfloor - x \right| |g(1)| \\
  & \leq 2 \left| x - \frac{1}{N}\left\lfloor Nx \right\rfloor  \right|\, ||g||_\infty  + 2 \eps + 2N\eps + \left| x - \frac{1}{N}\left\lfloor Nx \right\rfloor  \right| \, ||g||_\infty\\
  & \leq \frac{3}{N}||g||_\infty +  4 N \eps.
 \end{align*}
 If $||g||_\infty ^\frac{1}{2} \eps^{-\frac{1}{2}} \geq 2$ we may choose $N\in \mathbb{N}$ with $N \geq 2$ such that
 \[||g||_\infty ^\frac{1}{2} \eps^{-\frac{1}{2}} \leq N < ||g||_\infty ^\frac{1}{2} \eps^{-\frac{1}{2}}+1\]
 and $\tilde g(x) := x g(1)$, which gives
 \[\left|\left|g - \tilde g \right|\right|_\infty \lesssim ||g||_\infty ^\frac{1}{2}\eps^{\frac{1}{2}} + \eps \leq \frac{3}{2} ||g||_\infty ^\frac{1}{2}\eps^{\frac{1}{2}}.\]
 If instead we have $||g||_\infty ^\frac{1}{2} \eps^{-\frac{1}{2}} < 2$ we set $\tilde g \equiv 0$ and get
 \[\left|\left|g - \tilde g \right|\right|_\infty \leq 2 \eps. \qedhere\]
\end{proof}

\subsection{Classification of planar configurations}

\begin{proof}[Proof of Lemma \ref{lemma: two_functions}]
 Without loss of generality, we may assume that $f_{\nu_1}$ is affine and that
 \[\theta_2 |_{H(\alpha,\nu_2)} = b \chi_{ B},\numberthis \label{2d_two_valued_2}\]
 where $B$ has non-vanishing measure.
 Absorbing $f_{\nu_1}$ into $g_2$ and $g_3$, as well as absorbing $g_1-1$ into $f_{\nu_2}$ and $f_{\nu_3}$, which we can do because $\partial_d g_1 = 0$ and the remaining variables are spanned by $\nu_2$ and $\nu_3$, we are left with
 \begin{align*}
  \theta_1(x) & = \phantom{{}-{}} f_{\nu_2}(x\cdot \nu_2) - f_{\nu_3}(x\cdot \nu_3) + 1,  \\
  \theta_2(x) & = \phantom{{}-{} f_{\nu_2}(x\cdot \nu_2) {}+{}} f_{\nu_3}(x\cdot \nu_3)  +g(x),\\
  \theta_3(x) &  = {}-{} f_{\nu_2}(x\cdot \nu_2) \phantom{{}+{} f_{\nu_3}(x\cdot \nu_3)}  - g(x) 
 \end{align*}
 for an affine function $g$ with $\partial_d g =0$.
 One of the two functions $f_{\nu_2}$ and $f_{\nu_3}$ cannot be affine as otherwise we would be dealing with a two-variant configuration by Proposition \ref{Prop: Rigidity VMO}.
 Therefore, there are two cases: Precisely one of the two remaining one-dimensional functions is affine, or both are not.
 
 Let us first deal with $f_{\nu_2}(x)$ being affine.
 We cannot have $|\theta_3^{-1}(0)|>0$, because two affine functions agreeing on a set of positive measure have to agree everywhere, which would imply $\theta_3 \equiv 0$ and thus there would only be two martensite variants present.
 We thus have $|\theta_1^{-1}(0)|>0$ and $|\theta_2^{-1}(0)|>0$. 
 The same argument applied to the $x\cdot \nu_2$-dependence of $\theta_1$ and $\theta_2$ implies that $f_{\nu_2}$ is constant and $g$ only depends on $x \cdot \nu_3$.
 Consequently, there exist $a$, $b \in \R$ such that the decomposition simplifies to
 \begin{align*}
  \theta_1(x) & =  {}- f_{\nu_3}(x\cdot \nu_3) + 1,  \\
  \theta_2(x) & =  \phantom{{}+{}} f_{\nu_3}(x\cdot \nu_3)  - a\, x\cdot \nu_3 - b,\\
  \theta_3(x) &  = \phantom{{}+{} f_{\nu_3}(x\cdot \nu_3)  +{}} a\, x\cdot \nu_3 + b. 
 \end{align*}
 For $x \in \ball{0}{r}$ such that $f_{\nu_3}(x) \neq 1$ we must have $\theta_2(x) = 0$, which implies that
 \[f_{\nu_3}(x\cdot \nu_3) = \chi_A(x\cdot \nu_3) + (a\, x\cdot \nu_3 +b)\chi_{\stcomp{A}}(x\cdot \nu_3)\]
 for some measurable set $A \subset \R$.
 Plugging this into the decomposition gives
 \begin{align*}
  \theta_1(x) & =  (1 - a\, x\cdot \nu_3 - b)\chi_{\stcomp{A}}(x\cdot \nu_3),  \\
  \theta_2(x) & =  (1 - a\, x\cdot \nu_3 - b)\chi_{A}(x\cdot \nu_3),\\
  \theta_3(x) &  = \phantom{(1 {}-{} }   a\, x\cdot \nu_3 + b, 
 \end{align*}
 i.e., the decomposition is a planar second-order laminate according to Definition \ref{def:second-order_laminate}. 
 The argument for $f_{\nu_3}$ being affine is the same.
 
 Finally, let us work with the case that both functions are not affine.
 Using the two-valuedness \eqref{2d_two_valued_2} on $H(\alpha,\nu_2)$, we may split up $g(x) = \tilde g_2 (x \cdot \nu_2) + \tilde g_3 (x\cdot \nu_3)$ into two affine functions such that $\tilde g_2(\alpha)=0$ and
 \[f_{\nu_3}(x\cdot \nu_3) + \tilde g_3(x\cdot \nu_3) = \theta_2(x) = b \chi_{ B}(x)\]
  for $x\in \ball{0}{r}$ with $x\cdot \nu_2 = \alpha$.
 Therefore $\chi_{B}$ captures the entire dependence on $x\cdot \nu_3$ and we abuse the notation in writing 
 \begin{align*}
  \theta_1(x) & = \phantom{{}-{}} f_{\nu_2}(x\cdot \nu_2) - f_{\nu_3}(x\cdot \nu_3)\,\, + 1,  \\
  \theta_2(x) & = \phantom{{}-{} f_{\nu_2}(x\cdot \nu_2) {}+{}} b\chi_B(x\cdot \nu_3) + \tilde g_2(x\cdot \nu_2),\\
  \theta_3(x) &  = {}-{} f_{\nu_2}(x\cdot \nu_2) \phantom{{}+{} f_{\nu_3}(x\cdot \nu_3)}  \,\,-g(x).
 \end{align*}
 As $f_{\nu_3}$ is not affine, the set $B$ has neither zero nor full measure.
 Choosing $x$ such that $\chi_B(x\cdot \nu_3)= 0$ we see that $\tilde g_2 \geq 0$.
 Thus it is an affine function which achieves its minimum at $\tilde g_2(\alpha) =0$, which in turn makes sure that $\tilde g_2 \equiv 0$.
 Consequently, we can re-define the functions on the right-hand side to get
 \begin{align*}
  \theta_1(x) & = \phantom{{}-{}} f_{\nu_2}(x\cdot \nu_2) -b\chi_B(x\cdot \nu_3) + 1,  \\
  \theta_2(x) & = \phantom{{}-{} f_{\nu_2}(x\cdot \nu_2) {}+{}} b\chi_B(x\cdot \nu_3) ,\\
  \theta_3(x) &  = {}-{} f_{\nu_2}(x\cdot \nu_2) \phantom{{}+{} b\chi_B(x\cdot \nu_3)}  - \tilde g_3(x\cdot \nu_3).
 \end{align*}
 For $x$ such that $x\cdot \nu_3 \in B$ we see that
 \[\theta_1(x) = 1-b,\, \theta_3(x) = 0 \text{ or } \theta_1(x) = 0,\, \theta_3(x) = 1-b.\]
 This implies $f_{\nu_2}  = - (1-b)\chi_A$ for a measurable set $A$ of neither zero nor full measure, since $f_{\nu_2}$ is not affine.
 On the set $\{x\cdot \nu_2 \in \stcomp{A}\} \cap \{x\cdot \nu_3 \in B\}$ of positive measure we get that $\theta_3(x) = 0$ due to our assumption that $0<b<1$, resulting in $\tilde g_3 \equiv 0$.
 Hence the decomposition can be written as
 \begin{align*}
  \theta_1(x) & = - (1-b)\chi_A(x\cdot \nu_2) -b\chi_B(x\cdot \nu_3) + 1,  \\
  \theta_2(x) & = \phantom{- (1-b)\chi_A(x\cdot \nu_2) {}+{}} b\chi_B(x\cdot \nu_3) ,\\
  \theta_3(x) &  =\phantom{-} (1-b)\chi_A(x\cdot \nu_2),
 \end{align*}
 meaning the configuration is a planar checkerboard according to Definition \ref{def:checkerboard}.
\end{proof}

\begin{proof}[Proof of Proposition \ref{prop: six_corner}]
We denote the fixed radius for which the assumptions of the lemma hold by $\tilde r$, while $r > \tilde r$ is a generic radius that may decrease from line to line.

\textit{Step 1: Rewrite the problem in a two-dimensional domain and bring the decomposition \eqref{decomposition} into an appropriate form.}\\
 Using the specific form of the normals $\nu_i$ and the fact that they are linearly independent, we can find orientations $\tilde \nu_i = \pm \nu_i$ for $i=1,2,3$ which satisfy $\tilde \nu_1+\tilde \nu_2+\tilde \nu_3 =0$.
 Furthermore, the strain $e(u)$ only depends on directions in $V := \operatorname{span}
 (\tilde \nu_1,\tilde \nu_2,\tilde \nu_3)$.
 Thus we can rotate the domain of definition such that $V = \R^2$ and treat $e(u)$ as a function defined on $\ball{0}{1} \subset \R^2$.
 In the following we will abuse the notation by writing $\nu_i$ for the images of $\tilde \nu_i$ under this rotation.
 
 The condition $\nu_1+\nu_2 + \nu_3 =0$ implies that
 \begin{align*}
  - \nu_1 \cdot \nu_2 - \nu_1\cdot \nu_3 \phantom{ {}-{} \nu_2\cdot \nu_3 } & = 1, \\
  - \nu_1 \cdot \nu_2 \phantom{ {}-{} \nu_1\cdot \nu_3} - \nu_2\cdot \nu_3  & = 1, \\
  \phantom{ {}-{} \nu_1 \cdot \nu_2} - \nu_1\cdot \nu_3 - \nu_2\cdot \nu_3  & = 1,
 \end{align*}
 which by elementary calculation gives $\nu_i\cdot \nu_j = -\frac{1}{2}$ for $i,j=1,2,3$ and $i\neq j$.
 Thus $\{\nu_i, \nu_j\}$ is a basis of $\R^2$ and the angle between the two vectors is universally bounded away from zero.
 In fact, it is given by $120\degree$, see Figure \ref{fig:sketch_intervals_fitting}.
  
  Furthermore, we rewrite the decomposition \eqref{reduced_decomposition} as
 \begin{align}
  \theta_1(x) & = \phantom{f_1^{(2)}(x\cdot \nu_1) {}+{} }f_2^{(1)}(x\cdot \nu_2) + f_3^{(1)}(x\cdot \nu_3), \notag \\
  \theta_2(x) & = f_1^{(2)}(x\cdot \nu_1) \phantom{{}+{} f_2^{(1)}(x\cdot \nu_2)} + f_3^{(2)}(x\cdot \nu_3), \label{decompostion_3_func_no_aff} \\
  \theta_3 (x) &  = f_1^{(3)}(x\cdot \nu_1) + f_2^{(3)}(x\cdot \nu_2), \phantom{{}+{} f_3^{(1)}(x\cdot \nu_3)} \notag    
 \end{align}
  where $f_k^{(i)} + f_k^{(j)}$ is affine almost everywhere for $\{i,j,k\}=\{1,2,3\}$ and all one-dimensional functions are non-constant in $L^\infty(\ball{0}{\tilde r})$.
  We may do so since for all $i=1,2,3$ the functions $g_i$ only depend on variables in $V$ for which any two of the three normals $\nu_j$, $j=1,2,3$, form a basis.
  
 \textit{Step 2: If $\left|\theta_i^{-1}(0)\cap\ball{0}{r}\right| > 0$ for some $i=1,2,3$ we re-define $f^{(i)}_{i+1}$ and $f^{(i)}_{i-1}$ to satisfy $f^{(i)}_{i+1},f^{(i)}_{i-1}\geq 0$ on $[- r, r]$ and $f^{(i)}_{i+1} = f^{(i)}_{i-1} =  0$ on $\theta_i^{-1}(0)\cap\ball{0}{r}$.}\\
  For almost all $x\in \theta_i^{-1}(0)\cap\ball{r}{0}$ we have
  \begin{align*}
  0 & = f^{(i)}_{i+1}(x\cdot \nu_{i+1}) + f^{(i)}_{i-1}(x\cdot \nu_{i-1}) \\
  & \geq \essinf_{[-r,r]} f^{(i)}_{i+1} + f^{(i)}_{i-1} (x\cdot \nu_{i-1}) \\
 & \geq \essinf_{[-r,r]} f^{(i)}_{i+1} + \essinf_{[-r,r]} f^{(i)}_{i-1} \\
 & \geq 0,
  \end{align*}
  where in the last step we used Lemma \ref{lemma: almost maxima on transversal lines} for large $\eps>0$.
  Fubini's Theorem thus implies
 $f^{(i)}_{i+1} = \essinf_{[-r,r]} f^{(i)}_{i+1}$ and $f^{(i)}_{i-1} = \essinf_{[-r,r]} f^{(i)}_{i-1}$ on sets of positive measure.
 Shuffling around some constant, we may assume that $\essinf f^{(i)}_{i+1} = \essinf f^{(i)}_{i-1} = 0$.

\textit{Step 3: There exist measurable sets $J_j \subset \R$ for $j=1,2,3$ such that  \[\theta_i^{-1}(0)\cap \ball{0}{r}  =  \pi_{i+1}^{-1}\left(J_{i+1}\right) \cap \pi_{i-1}^{-1}\left(\stcomp{J_{i-1}}\right) \cap \ball{ r}{0}\]
up to null-sets and the two sets
\begin{align*}
 \ball{0}{r} & \cap \left( \pi_1^{-1}(J_1)\cap\pi_2^{-1}(J_2)\cap\pi_3^{-1}(J_3)\right),\\
 \ball{0}{r} & \cap \left( \pi_1^{-1}(\stcomp{ J_1})\cap\pi_2^{-1}(\stcomp{J_2})\cap\pi_3^{-1}(\stcomp{J_3})\right)
\end{align*}
have measure zero.}\\ 
 If $\left|\theta_i^{-1}(0)\cap\ball{0}{r}\right| > 0$ we set 
 \begin{align}\label{f_vanish}
  I^{(i)}_{i+1}:= \left(f^{(i)}_{i+1}\right)^{-1}(0) \cap [-r, r]\text{, } I^{(i)}_{i-1}:= \left(f^{(i)}_{i-1}\right)^{-1}(0) \cap [-r, r].
 \end{align}
 Otherwise we set $I^{(i)}_{i+1}=I^{(i)}_{i-1}= \emptyset$.
 In any case we have
 \[\theta_i^{-1}(0) \cap \pi_{i+1}^{-1}([- r, r]) \cap \pi_{i-1}^{-1}([- r, r]) = \pi_{i+1}^{-1}\left(I^{(i)}_{i+1}\right) \cap \pi_{i-1}^{-1}\left(I^{(i)}_{i-1}\right)\]
 up to null-sets.
 
 \textit{Claim 3.1: We have $\left|I^{(i)}_k \cap I^{(j)}_k\right| = 0$ for $\{i,j,k\} = \{1,2,3\}$.}\\ 
 If $|\theta_i^{-1}(0)|=0$ or $|\theta_j^{-1}(0)|=0$ then there is nothing to prove.
 Otherwise we assume towards a contradiction that
 \[\left|I^{(i)}_k \cap I^{(j)}_k\right| > 0.\]
 In that case the affine function $f^{(i)}_{k}+f^{(j)}_{k}$ vanishes on a set of positive measure.
 Thus we have $f^{(i)}_{k} \equiv -f^{(j)}_{k}$.
 Since both functions are non-negative on $[-r, r]$ we get $f^{(i)}_{k} \equiv f^{(j)}_{k} \equiv 0$ on $[-r, r]$.
 However, this contradicts our assumption that they are non-constant.
 Thus we have
 \[\left|I^{(i)}_k \cap I^{(j)}_k \right| = 0,\]
  which proves Claim 3.1.

 Consequently we get, up to null-sets,
 \[\theta_i^{-1}(0)\cap \ball{0}{r}  \subset  \pi_{i+1}^{-1}\left(I^{(i)}_{i+1}\right) \cap \pi_{i-1}^{-1}\left( \stcomp{\left(I^{(i+1)}_{i-1}\right)}\right),\]
 which in terms of
  \begin{align}\label{def_J}
    J_j:= I_{j}^{(j-1)}\text{ for } j=1,2,3
 \end{align}
 reads, up to null-sets,
  \[\theta_i^{-1}(0)\cap \ball{0}{r}  \subset  \pi_{i+1}^{-1}\left(J_{i+1}\right) \cap \pi_{i-1}^{-1}\left( \stcomp{J_{i-1}}\right).\]
 
 Since the sets $\pi_{i+1}^{-1}\left(J_{i+1}\right) \cap \pi_{i-1}^{-1}\left( \stcomp{J_{i-1}}\right)$ are pairwise disjoint for $i=1,2,3$ and, again up to null-sets, we have $\bigcup_{i=1,2,3} \theta_i^{-1}(0) \cap \ball{0}{r}= \ball{ r}{0}$ we get that
 \[\theta_i^{-1}(0)\cap \ball{0}{r} =  \pi_{i+1}^{-1}\left(J_{i+1}\right) \cap \pi_{i-1}^{-1}\left( \stcomp{J_{i-1}}\right) \cap \ball{0}{r}\]
 up to null-sets.
 
 Some straightforward combinatorics ensure that
 \begin{align*}
    \ball{0}{r} & \setminus  \left(\bigcup_{i=1,2,3} \pi_{i+1}^{-1}\left(J_{i+1}\right) \cap \pi_{i-1}^{-1}\left(\stcomp{J_{i-1}}\right) \right)\\
     = & \, \ball{0}{r} \cap \left( \left( \pi_1^{-1}(J_1)\cap\pi_2^{-1}(J_2)\cap\pi_3^{-1}(J_3)\right)\cup\left( \pi_1^{-1}(\stcomp{ J_1})\cap\pi_2^{-1}(\stcomp{J_2})\cap\pi_3^{-1}(\stcomp{J_3})\right)\right).
 \end{align*}
 Thus we have
 \begin{align*}
  \left| \ball{0}{r} \cap \left( \pi_1^{-1}(J_1)\cap\pi_2^{-1}(J_2)\cap\pi_3^{-1}(J_3)\right)\right| & = 0, \\
  \left| \ball{0}{r} \cap \cup\left( \pi_1^{-1}(\stcomp{ J_1})\cap\pi_2^{-1}(\stcomp{J_2})\cap\pi_3^{-1}(\stcomp{J_3})\right)\right| & = 0.
 \end{align*}
 This finishes the proof of Step 3.

 \textit{Step 4: The conclusion of the lemma holds.}\\
 We now make sure that we can apply Lemma \ref{lemma: intervals}.
 To this end, we choose $\tilde r$ small enough such that we can use Lemma \ref{lemma: intervals} after rescaling $\ball{0}{r}$ to $\ball{0}{1}$.
 
 By assumption there are $i,j = 1,2,3$ with $i\neq j$ such that
 \begin{align*}
   \left|\theta_i^{-1}(0)\cap \pi_i^{-1}\left(\left[-\frac{\tilde r}{2},\frac{\tilde r}{2}\right]\right) \cap \pi_j^{-1}\left(\left[-\tilde r,
   \tilde r\right]\right)\right| & >0, \\
   \left|\theta_j^{-1}(0)\cap \pi_i^{-1}\left(\left[-\frac{\tilde r}{2}, \frac{\tilde r}{2}\right]\right) \cap \pi_j^{-1}([-\tilde r,\tilde r]) \right| & >0.
 \end{align*}
 By relabeling we may suppose $i=3$ and $j=1$.
 Consequently we get
 \[|J_1| , | \stcomp{J_2}| , |J_2| , |\stcomp{J_3}| >0.\]
 As $f_{\nu_1}^{(3)} = 0$ on $J_1$ and $f_{\nu_1}^{(3)}\not\equiv 0$ we must have $|\stcomp{J_1}| >0$.
 The upshot is that we have $0 < \left|J_1\cap \left[-\frac{\tilde r}{2}, \frac{\tilde r}{2} \right]\right| < \tilde r = \left| \left[-\frac{\tilde r}{2}, \frac{\tilde r}{2} \right] \right|  $ and $0< \left|J_2\cap \left[-\frac{\tilde r}{2},\frac{\tilde r}{2}\right]\right| < \tilde r $.

 Lemma \ref{lemma: intervals} implies that there exists a point $x_0 \in \ball{0}{1}$ such that $x_0 \cdot \nu_i \in (-\tilde r,\tilde r)$ for all $i=1,2,3$, up to sets of measure zero, we have either
 \[J_i \cap [-\tilde r, \tilde r] = [-\tilde r,x_0\cdot \nu_i] \text{ for } i=1,2,3\]
 or
 \[J_i \cap [-\tilde r,\tilde r] = [-x_0\cdot \nu_i,\tilde r] \text{ for } i=1,2,3.\]
 Let $K_i := J_i\cap [-\tilde r,\tilde r]$.
 Tracing back the definitions using \eqref{def_J}, Claim 3.1 and \eqref{f_vanish}, we see that on $K_{i+1}$ we have $f_{i+1}^{(i)}=0$ and on $[-r,r]\setminus K_{i-1}$ we have $f_{i-1}^{(i)}=0$.
 As a result we can rewrite the decomposition \eqref{reduced_decomposition} of $\theta$ on $\ball{0}{r}$ to be
  \begin{align}
  \theta_1(x) & = \phantom{f_1^{(2)}(x\cdot \nu_1)\chi_{\stcomp{K_1}}{}+{}}  f_2^{(1)}(x\cdot \nu_2)\chi_{\stcomp{K_2}}(x\cdot\nu_2) + f_3^{(1)}(x\cdot \nu_3)\chi_{K_3}(x\cdot\nu_3) ,\notag\\
  \theta_2(x) & = f_1^{(2)}(x\cdot \nu_1)\chi_{K_1}(x\cdot\nu_1)  \phantom{{}+{} f_2^{(1)}(x\cdot \nu_2)\chi_{\stcomp{\stcomp{K_2}}}} + f_3^{(2)}(x\cdot \nu_3) \chi_{\stcomp{K_3}}(x\cdot\nu_3) ,\label{reduced_decomposition_intermediate}\\
  \theta_3 (x) &  = f_1^{(3)}(x\cdot \nu_1)\chi_{\stcomp{K_1}}(x\cdot\nu_1) + f_2^{(3)}(x\cdot \nu_2)\chi_{K_2}(x\cdot\nu_2) \phantom{{}+{} f_3^{(1)}(x\cdot \nu_3)\chi_{\stcomp{\stcomp{K_3}}}} .\notag
 \end{align}
 The condition that certain sums of the one-dimensional functions are affine turns into
 \[\left(f_i^{(i-1)}\chi_{\stcomp{K}_{i-1}} + f_i^{(i+1)}\chi_{K_{i+1}}\right)(t) = a_it + b_i\] for $t\in (-r,r)$, $a_i, b_i \in \R$ and $i=1,2,3$.

 Due to $\sum_{i=1}^3\theta_1 \equiv 1$, summing the equations in the decomposition \eqref{reduced_decomposition_intermediate} gives
 \[\sum_{i=1}^3 a_i x\cdot \nu_i + b_i = 1\]
 for all $x \in  \ball{0}{r}$.
 Comparing the coefficients of both polynomials we see that
 \[\sum_{i=1}^3 b_i =1\text{, } \sum_{i=1}^3 a_i \nu_i = 0.\]
 Subtracting $a_1(\nu_1 + \nu_2 + \nu_3) = 0$ from the second equation and remembering from Step 1 that  $\nu_2$ and $\nu_3$ are linearly independent, we see that $a:= a_1= a_2 = a_3$.
 We thus get
 \begin{align*}
  \theta_1(x) & = \phantom{(ax\cdot \nu_1 + b_1)\chi_{\stcomp{K_1}}{}+{}}  (a x\cdot \nu_2 +b_2)\chi_{\stcomp{K_2}}(x\cdot\nu_2) + (ax\cdot \nu_3 + b_3)\chi_{K_3}(x\cdot\nu_3) ,\\
  \theta_2(x) & = (ax\cdot \nu_1+b_1)\chi_{K_1}(x\cdot\nu_1)  \phantom{{}+{} (ax\cdot \nu_2 + b_2)\chi_{\stcomp{\stcomp{K_2}}}} + (ax\cdot \nu_3 + b_3) \chi_{\stcomp{K_3}}(x\cdot\nu_3) ,\\
  \theta_3 (x) &  = (ax\cdot \nu_1 + b_1)\chi_{\stcomp{K_1}}(x\cdot\nu_1) + (ax\cdot \nu_2 + b_2)\chi_{K_2}(x\cdot\nu_2) \phantom{{}+{} (ax\cdot \nu_3+ b_3)\chi_{\stcomp{\stcomp{K_3}}}}
 \end{align*}
 with $\sum_{i=1}^3 b_i = 1$.
\end{proof}

\begin{proof}[Proof of Lemma \ref{lemma: intervals}]
 Let $r>0$ be small enough such that
 \[\pi_i^{-1}([-r,r])\cap \pi_j^{-1}([-r,r]) \subset \ball{0}{1}\]
 for $i,j=1,2,3$ with $i\neq j$.
 Let $K_i:= J_i\cap [-r,r]$.
 
 \textit{Claim 1: There exist $a_1, a_2 \in (-r,r)$ such that, up to null-sets, either
 \[K_1 = [-r,a_1]\text{ and } K_2 = [-r,a_2]\]
 or
 \[K_1 = [a_1,r]\text{ and } K_2 = [a_2,r].\]}
 \vspace{-\baselineskip}\\
 Towards a contradiction we assume the negation of Claim 1.
 
 \textit{Step 1.1: Up to symmetries of the problem, find Lebesgue points $-r < p_1 < p_2 < r$ of $\chi_{K_1}$ and $-r < q_1 < q_2 < r$ of $\chi_{K_2}$ such that
 \[\chi_{K_1}(p_1) = \chi_{K_2}(q_2)=  1\text{ and }\chi_{K_1}(p_2) = \chi_{K_2}(q_1) = 0.\]}
 \vspace{-\baselineskip}\\
 The negation of Claim 1 implies that there exist Lebesgue points $-r < p_1 < p_2 < r$ of $\chi_{K_1}$ and $-r < q_1 < q_2 < r$ of $\chi_{K_2}$ such that
%  PICTURE!!!!!!
 \begin{align*}
   \chi_{K_1}(p_1) & \neq \chi_{K_1}(p_2),\\
   \chi_{K_2}(q_1) & \neq \chi_{K_2}(q_2),\\
   \chi_{K_1}(p_1) & \neq \chi_{K_2}(q_1),\\
   \chi_{K_1}(p_2) & \neq \chi_{K_2}(q_2):\\
 \end{align*} 
 If, up to null-sets, both are intervals with one having an endpoint at $-r$ and the other one having an endpoint at $r$, then one may take, for $\delta >0$ small enough, $p_1 := a_1 - \delta$, $p_2 := a_1 + \delta$, $q_1 := a_2 -\delta$ and $q_2 := a_2 + \delta$, see Figure \ref{fig:triple_boundary_intervals}.
 
 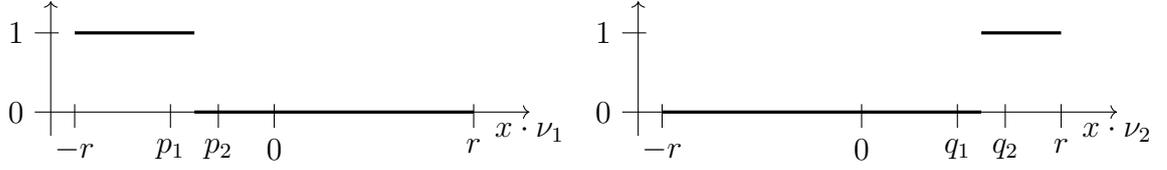
\begin{figure}
 \centering
%  \subcaptionbox{\label{fig:triple_pointvalues_a}}{
  \begin{tikzpicture}[scale=1.05]
    \draw[->] (-3,0)node[left]{$0$} -- (3.2,0) node[below]{$x\cdot \nu_1$};
    \draw (-2.5,-.2)node[below]{$-r$} -- (-2.5,.1);
    \draw (0,-.2)node[below]{$0$} -- (0,.1);
    \draw (2.5,-.2)node[below]{$r$} -- (2.5,.1);
    \draw[->] (-2.8,-.3) -- (-2.8,1.4);
    \draw (-3,1)node[left]{$1$} -- (-2.7,1);
    
    \draw[very thick] (-2.5,1) -- (-1,1);
    \draw[very thick] (-1,0) -- (2.5,0);
    \draw (-1.3,-.2)node[below]{$p_1$} -- (-1.3,.1);
    \draw (-.7,-.2)node[below]{$p_2$} -- (-.7,.1);    
  \end{tikzpicture}
%  }
%  \subcaptionbox{\label{fig:triple_pointvalues_b}}{
  \begin{tikzpicture}[scale=1.05]
    \draw[->] (-3,0)node[left]{$0$} -- (3.2,0) node[below]{$x\cdot \nu_2$};
    \draw (-2.5,-.2)node[below]{$-r$} -- (-2.5,.1);
    \draw (0,-.2)node[below]{$0$} -- (0,.1);
    \draw (2.5,-.2)node[below]{$r$} -- (2.5,.1);
    \draw[->] (-2.8,-.3) -- (-2.8,1.4);
    \draw (-3,1)node[left]{$1$} -- (-2.7,1);
    
    \draw[very thick] (-2.5,0) -- (1.5,0);
    \draw[very thick] (1.5,1) -- (2.5,1);
    \draw (1.2,-.2)node[below]{$q_1$} -- (1.2,.1);
    \draw (1.8,-.2)node[below]{$q_2$} -- (1.8,.1);        
  \end{tikzpicture}
%  }
 \caption{Graphs of $\chi_{K_1}$ and $\chi_{K_2}$ in the case that $K_1$ and $K_2$ are intervals such that one of them has an endpoint at $-r$ and the other one at $r$. In this case we choose $p_1, p_2$ and $q_1, q_2$ on opposite sides of the respective other endpoint.
 }
 \label{fig:triple_boundary_intervals}
\end{figure}
  
 If $K_1$ is not an interval with one endpoint at $-r$ or $r$, see Figure \ref{fig:triple_boundary_general}, there exist three Lebesgue points $\bar p_1<\bar p_2< \bar p_3$ such that $\theta_1(\bar p_1)\neq \theta_1(\bar p_2) \neq \theta_1(\bar p_3)$.
 Since $K_2$ has neither full nor zero measure, there exist Lebesgue points $q_1< q_2$ with $\chi_{K_2}(q_1) \neq \chi_{K_2}(q_2)$.
 In the case $\chi_{K_2}(q_1)\neq \chi_{K_1}(\bar p_1)$, set $p_1 := \bar p_1$ and $p_2 := \bar p_2$.
 Otherwise set $p_1 := \bar p_2$ and $p_2:= \bar p_3$.
  
 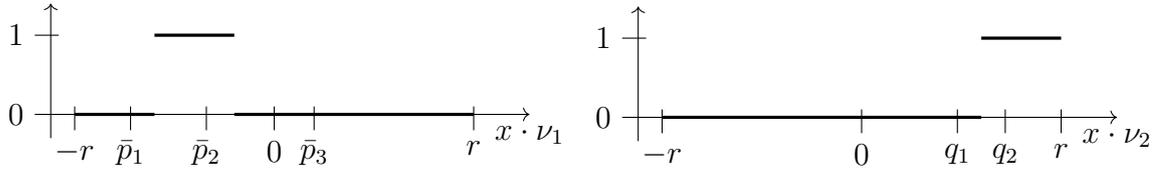
\begin{figure}
 \centering
%  \subcaptionbox{\label{fig:triple_pointvalues_a}}{
  \begin{tikzpicture}[scale=1.05]
    \draw[->] (-3,0)node[left]{$0$} -- (3.2,0) node[below]{$x\cdot \nu_1$};
    \draw (-2.5,-.2)node[below]{$-r$} -- (-2.5,.1);
    \draw (0,-.2)node[below]{$0$} -- (0,.1);
    \draw (2.5,-.2)node[below]{$r$} -- (2.5,.1);
    \draw[->] (-2.8,-.3) -- (-2.8,1.4);
    \draw (-3,1)node[left]{$1$} -- (-2.7,1);

    \draw[very thick] (-2.5,0) -- (-1.5,0);
    \draw[very thick] (-1.5,1) -- (-.5,1);
    \draw[very thick] (-.5,0) -- (2.5,0);
    \draw (-1.8,-.2)node[below]{$\bar p_1$} -- (-1.8,.1);
    \draw (-.85,-.2)node[below]{$\bar p_2$} -- (-.85,.1);
    \draw (.5,-.2)node[below]{$\bar p_3$} -- (.5,.1);
  \end{tikzpicture}
%  }
%  \subcaptionbox{\label{fig:triple_pointvalues_b}}{
  \begin{tikzpicture}[scale=1.05]
    \draw[->] (-3,0)node[left]{$0$} -- (3.2,0) node[below]{$x\cdot \nu_2$};
    \draw (-2.5,-.2)node[below]{$-r$} -- (-2.5,.1);
    \draw (0,-.2)node[below]{$0$} -- (0,.1);
    \draw (2.5,-.2)node[below]{$r$} -- (2.5,.1);
    \draw[->] (-2.8,-.3) -- (-2.8,1.4);
    \draw (-3,1)node[left]{$1$} -- (-2.7,1);
    
    \draw[very thick] (-2.5,0) -- (1.5,0);
    \draw[very thick] (1.5,1) -- (2.5,1);
    \draw (1.2,-.2)node[below]{$q_1$} -- (1.2,.1);
    \draw (1.8,-.2)node[below]{$q_2$} -- (1.8,.1);        
  \end{tikzpicture}
%  }
 \caption{Graphs of $\chi_{K_1}$ and $\chi_{K_2}$ in the case that $K_1$ is not an interval with one endpoint at $-r$ or $r$. In this specific instance we choose $p_1 = \bar p_2$ and $p_2 := \bar p_3$.
 }
 \label{fig:triple_boundary_general}
\end{figure}
  
 If $K_2$ is not an interval with one endpoint at $-r$ or $r$, the same reasoning applies.
 
 Furthermore, we may assume $\chi_{K_1}(p_1) = 1$ because the statement of the lemma is clearly invariant under replacing all sets by their complements.
 The above collection of unordered inequalities then turns into $\chi_{K_1}(p_1) = \chi_{K_2}(q_2) = 1$ and $\chi_{K_1}(p_2) = \chi_{K_2}(q_1) = 0$.
 
 \textit{Step 1.2: Find $\delta >0$ and $s_1, s_2 \in (-r+ \delta, r-\delta)$ such that for $K_1^<:= K_1 \cap (s_1 - \delta, s_1)$, $K_1^> := K_1 \cap (s_1, s_1 + \delta)$, $K_2^<:= K_2 \cap (s_2 - \delta, s_2)$ and $K_2^> := K_2 \cap (s_2, s_2 + \delta)$ we have
 \[|K_1^<| > | K_1^>| \text{ and } |K_2^> | > |K_2^<|,\] see Figure \ref{fig:triple_approx_boundary}.}\\ 
 By the virtue of $p_i$ and $q_i$ being Lebesgue points, there exists $\tilde \delta > 0$ such that we have $p_i \pm 3\tilde \delta$, $q_i \pm 3\tilde \delta \in [-r,r]$ and
 \begin{align*}
  \dashint_{p_1-\tilde \delta}^{p_1+\tilde \delta} \chi_{K_1} \intd t, &  \dashint_{q_2-\tilde \delta}^{q_2+\tilde \delta} \chi_{K_2} \intd t > \frac{3}{4}, \\
  \dashint_{p_2-\tilde \delta}^{p_2+\tilde \delta} \chi_{K_1} \intd t, &  \dashint_{q_1-\tilde \delta}^{q_1+\tilde \delta} \chi_{K_2} \intd t < \frac{1}{4}.
 \end{align*}
 Since the map $s\mapsto \dashint_{s-\tilde \delta}^{s+\tilde \delta} \chi_{K_1} \intd t$ is continuous, there exists
 \[\tilde s_1 := \max\left\{p_1 \leq  s \leq  p_2 : \dashint_{s-\tilde \delta}^{s+\tilde \delta} \chi_{K_1} \intd t = \frac{1}{2} \right\}.\]

\begin{figure}
 \centering
%  \subcaptionbox{\label{fig:triple_pointvalues_a}}{
  \begin{tikzpicture}[scale=1.05]
    \draw[->] (-3,0)node[left]{$0$} -- (3.2,0) node[below]{$x\cdot \nu_1$};
    \draw[->] (-2.8,-.3) -- (-2.8,1.4);
    \draw (-3,1)node[left]{$1$} -- (-2.7,1);
    
    \draw[very thick, color=gray] (-2.5,1) --(-1.5,1);
    \draw[very thick] (-1.5,1)--  (-1.2,1);
    \draw[very thick] (-1.2,0) -- (-1,0);
    \draw[very thick] (-1,1) -- (-.5,1);
    \draw[very thick] (-.5,0) -- (.1,0);
    \draw[very thick] (.1,1) -- (.3,1);
    \draw[very thick] (.3,0) -- (.5,0);
    \draw[very thick, color=gray] (.5,0) -- (2.5,0);
    
    \node at (-1,1.3) {$\chi_{K_1^<}$};
    \node at (0,1.3) {$\chi_{K_1^>}$};
    \draw[{Parenthesis[scale=1.75]}-{Parenthesis[scale=1.75]}] (-1.5,0) node[below]{$s_1-\delta$}--(-.5,0) node[below]{\phantom{$\delta$}$s_1$\phantom{$\delta$}};
    \draw[{Parenthesis[scale=1.75]}-{Parenthesis[scale=1.75]}] (-.5,0)-- (.5,0) node[below]{$s_1+\delta$};
%     \draw (-1.3,-.2)node[below]{$p_1$} -- (-1.3,.1);
%     \draw (-.7,-.2)node[below]{$p_2$} -- (-.7,.1);    
  \end{tikzpicture}
%  }
%  \subcaptionbox{\label{fig:triple_pointvalues_b}}{
  \begin{tikzpicture}[scale=1.05]
    \draw[->] (-3,0)node[left]{$0$} -- (3.2,0) node[below]{$x\cdot \nu_2$};
    \draw[->] (-2.8,-.3) -- (-2.8,1.4);
    \draw (-3,1)node[left]{$1$} -- (-2.7,1);
    
    \draw[very thick, color=gray] (-2.5,0) -- (-.7,0);
    \draw[very thick] (-.7,0) --  (-.5,0);
    \draw[very thick] (-.5,1) -- (-.35,1);
    \draw[very thick] (-.35,0) -- (0.3,0);
    \draw[very thick] (0.3,1) -- (.55,1);
    \draw[very thick] (.55,0) -- (.7,0);
    \draw[very thick] (.7,1) -- (1.3,1);
    \draw[very thick, color=gray] (1.3,1) -- (2.5,1);
    
    \node at (-.2,1.3) {$\chi_{K_2^<}$};
    \node at (.8,1.3) {$\chi_{K_2^>}$};
    \draw[{Parenthesis[scale=1.75]}-{Parenthesis[scale=1.75]}] (-.7,0) node[below]{$s_2-\delta$}-- (.3,0) node[below]{\phantom{$\delta$}$s_2$\phantom{$\delta$}};
    \draw[{Parenthesis[scale=1.75]}-{Parenthesis[scale=1.75]}] (.3,0)--(1.3,0)node[below]{$s_2+\delta$};
%     \draw (1.2,-.2)node[below]{$q_1$} -- (1.2,.1);
%     \draw (1.8,-.2)node[below]{$q_2$} -- (1.8,.1);        
  \end{tikzpicture}
%  }
 \caption{The sets $K^<_1$, $K^>_1$, $K^<_2$ and $K^>_2$ locally split up $K_1$ and $K_2$. The irrelevant parts of the graphs of $\chi_{K_1}$ and $\chi_{K_2}$ are shown in gray.
 }
 \label{fig:triple_approx_boundary}
\end{figure}
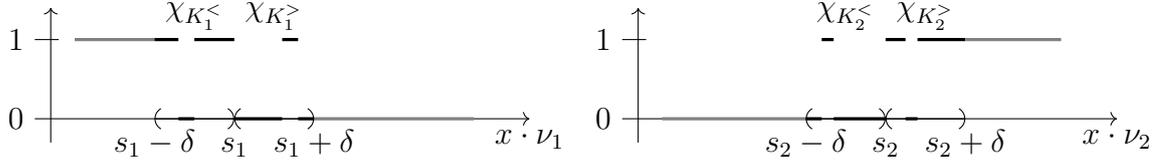
 
 Let $s_1:= \tilde s_1 +\tilde \delta$ and $\delta := 2 \tilde \delta$.
 Then we have 
 \[|K_1 \cap (s_1 - \delta, s_1)|= \frac{1}{2} > |K_1 \cap (s_1, s_1 + \delta)|,\]
 which with the notation $K_1^<= K_1 \cap (s_1 - \delta, s_1)$ and $K_1^> = K_1 \cap (s_1, s_1 + \delta)$ reads
 \[|K_1^< | > |K_1^>|.\]
 Using the same reasoning we can find $s_2 \in [-r+\delta , r-\delta]$ such that for $K_2^< = K_2 \cap (s_1-\delta , s_1)$ and $K_2^> = K_2 \cap (s_2 , s_2 + \delta )$ we get
 \[|K_2^>| > |K_2^<|.\]
 
 \textit{Step 1.3: Derive the contradiction.}\\ 
 Let $C_1:=\pi_1^{-1}(s_1-\delta,s_1)\cap \pi_2^{-1}(s_2, s_2 +\delta)$ and $C_2:=\pi_1^{-1}(s_1,s_1 +\delta)\cap \pi_2^{-1}(s_2 -\delta, s_2)$.
 In Figure \ref{fig:triple_intersection_strategy}, which illustrates the strategy of the argument, the set $C_1$ is colored blue, while $C_2$ is shown in red.
 From $\nu_1+\nu_2 + \nu_3 = 0$ it follows that
 \begin{align*}
  x \cdot \nu_1 + x \cdot \nu_2 + x \cdot \nu_3 = 0.
 \end{align*}
 As a result $\pi_3(C_1) = (-s_1- s_2 - \delta, -s_1 - s_2 + \delta) = \pi_3(C_2)$.
 Let
 \[M_1:= \left\{s \in \pi_3(C_1) : \int_{x\cdot \nu_3 = s} \chi_{K_1^<}(x\cdot \nu_1) \chi_{K_2^>}(x\cdot \nu_2) \intd \mathcal{H}^1(x) > 0 \right\}\]
 and
 \[M_2:= \left\{s \in \pi_3(C_1) : \int_{x\cdot \nu_3 = s} \chi_{[s_1,s_1+\delta]\setminus K_1^>}(x\cdot \nu_1) \chi_{[s_2 -\delta,s_2]\setminus K_2^<}(x\cdot \nu_2) \intd \mathcal{H}^1(x) > 0 \right\}.\]
 By Lemma \ref{lemma:surface_product_sets} we have 
 \[|M_1| \geq |K_1^<| + |K_2^>|\]
 and
 \[|M_2| \geq |[s_1,s_1+\delta]\setminus K_1^>| + |[s_2 -\delta,s_2]\setminus K_2^<| = 2\delta - |K_1^>| - |K_2^<|.\]
 Summing these two inequalities and using the strict inequalities of Step 1.2 we see that
 \begin{align*}
  |M_1| + |M_2| & \geq 2\delta + |K_1^<| - |K_1^>| + |K_2^>| - |K_2^<| > 2\delta = |\pi_3(C_1)|.
 \end{align*}
 As we also have $M_1, M_2 \subset \pi_3(C_1)$ we get that
 \[|M_1 \cap M_2| >0.\]
 By assumption \eqref{intervals_assumption_main} and Fubini's Theorem we have
 \[\int_{M_1 \cap K_3} \int_{\{x\cdot \nu_3 =s\}} \chi_{K_1}(x\cdot \nu_1) \chi_{K_2}(x\cdot \nu_2) \intd \Hd^1(x) \intd s = | \pi_1^{-1}(K_1) \cap \pi_2^{-1}(K_2) \cap \pi_3^{-1}(M_1 \cap K_3) | = 0.\]
 As the inner integral is positive on $M_1$, we must have $|M_1 \cap K_3| = 0$.
 Similarly, we get $|M_2 \cap \stcomp{K_3}| = 0$.
 However, this would imply
 \[0< |M_1\cap M_2| = |M_1\cap M_2 \cap K_3| + |M_1 \cap M_2 \cap \stcomp{K_3}| = 0,\]
 which clearly is a contradiction.
 We thus have either
 \[K_1 = [-r,a_1]\text{ and } K_2 = [-r,a_2]\]
 or
 \[K_1 = [a_1,r]\text{ and } K_2 = [a_2,r]\]
 up to sets of measure zero.
%  Remembering the definition $K_i = J_i \cap [-r,r]$ we get the statement of the lemma for $J_1$ and $J_2$.
 
 \textit{Claim 2: There exists $x_0 \in \ball{0}{r}$ with $x_0 \cdot \nu_1 = a_1$ and $x_0 \cdot\nu_2 = a_2$. Depending on the ``orientation'' of $K_1$ and $K_2$ we either have $J_3 \cap [-r,r] = [-r, x_0 \cdot \nu_3]$ or $J_3\cap [-r,r] = [x_0\cdot \nu_3 ,r]$ up to sets of measure zero.}\\
 Also here Figure \ref{fig:sketch_intervals_fitting} offers in illustration of the argument.
 
 Assumption \eqref{intervals_assumption_non-empty} immediately implies $a_1, a_2 \in \left(-\frac{1}{2}r, \frac{1}{2}r \right)$.
 As $\{\nu_1,\nu_2\}$ is a basis of $\R^2$, see Step 1 in the proof of Proposition \ref{prop: six_corner}, for $r>0$ small enough there exists $x_0 \in \ball{0}{1}$ with $x_0 \cdot \nu_1 = a_1$ and $x_0 \cdot\nu_2 = a_2$.
 This ensures that $J_1$ and $J_2$ have the form advertised in the statement of the Lemma.
 
 Let us assume we are in the case 
 \[K_1 = [-r,a_1]\text{ and } K_2 = [-r,a_2]\]
 up to sets of measure zero, the other case being similar.
 As before we get
 \[\int_{K_3} \int_{\{x\cdot \nu_3 =s\}} \chi_{[-r,a_1]}(x\cdot \nu_1) \chi_{[-r,a_2]}(x\cdot \nu_2) \intd \Hd^1(x) \intd s = | \pi_1^{-1}(K_1) \cap \pi_2^{-1}(K_2) \cap \pi_3^{-1}(K_3) | = 0.\]
 Due to $x\cdot \nu_3 = - x\cdot \nu_1 - x\cdot \nu_2$ we see 
 \[\int_{\{x\cdot \nu_3 =s\}} \chi_{[-r,a_1]}(x\cdot \nu_1) \chi_{[-r,a_2]}(x\cdot \nu_2) \intd \Hd^1(x) > 0\]
 for $s\in (-a_1-a_2,r)$.
 Therefore we get
 $|J_3 \cap [-r,r] \cap [-a_1 - a_2, r] | = | K_3\cap[-a_1 -a_2,  r]|=0$.
 Similarly we can see $|\stcomp{J_3} \cap [-r,r] \cap [-r, -a_1-a_2]| = 0$.
 As a result, we obtain
 \[J_3  \cap [-r,r] = [-r,-a_1-a_2]\]
 up to sets of measure zero.
 Finally, the computation
 \[x_0 \cdot \nu_3 = - x_0 \cdot \nu_1 - x_0 \cdot \nu_2 = -a_1 -a_2 \in (-r,r)\]
 yields the desired statement for $J_3$.
\end{proof}

\begin{proof}[Proof of Lemma \ref{lemma:surface_product_sets}]
 Measurability of
 \[M=\left\{s \in \R : \int_{\{x\cdot\nu_3 = s\}} \chi_{K_1}(x\cdot \nu_1) \chi_{K_2}(x\cdot \nu_2) \intd \Hd^1(x) > 0\right\}\]
 is a consequence of Fubini's theorem.
 By monotonicity of the Lebesgue measure it is sufficient to prove the statement for bounded $K_1$ and $K_2$.
 
 \textit{Step 1: If $t_1$ is a point of density one of $K_1$ and $t_2$ is point of density one of $K_2$, then $-t_1-t_2$ is a point of density one of $M$.}\\ 
 For convenience, we may assume $t_1 = t_2 =0$.
 Let $\pi_i(x) := x\cdot \nu_i$ for $x \in \R^2$ and $i=1,2,3$.
 Let $D_\eps:= \pi_1^{-1}(-\eps,\eps)\cap \pi_2^{-1}(-\eps,\eps)$.
 As, in some transformed coordinates, sets of the form $\pi_1^{-1}(A)\cap \pi_2^{-1}(B)$ are product sets, we can compute
 \begin{align*}
   1- \frac{1}{|D_\eps|} |\pi_1^{-1}(K_1) \cap \pi_2^{-1}(K_2) \cap D_\eps| 
  = &\,\frac{1}{|D_\eps|} \left(|D_\eps| - \left|\pi_1^{-1}(K_1) \cap \pi_2^{-1}(K_2) \cap D_\eps  \right|\right) \\
  = & \, \frac{1}{|D_\eps|} \left|\left(\pi_1^{-1}(\stcomp{K_1})\cap D_\eps\right) \cup \left(\pi_2^{-1}(\stcomp{K_2})\cap D_\eps\right)\right|\\
  \lesssim & \frac{1}{\eps^2}\left(\eps |\stcomp{K_1}\cap(-\eps,\eps)| + \eps |\stcomp{K_2}\cap (-\eps,\eps)|  \right)\\
  = & \, \frac{1}{\eps} (|\stcomp{K_1}\cap(-\eps,\eps)|+ |\stcomp{K_2}\cap(-\eps,\eps)|).
 \end{align*}
 If we take the limit $\eps \to 0$ we see that
 \[\lim_{\eps \to 0} 1- \frac{1}{|D_\eps|} |\pi_1^{-1}(K_1) \cap \pi_2^{-1}(K_2) \cap D_\eps| = 0.\]
 
 \begin{figure}
 \centering
 \setbox8=\hbox{
 \begin{tikzpicture}[scale=1.9]
  \node[above] at (90:.87)  {\phantom{$\eps\nu_1$}};
   \clip (-3,1) -- (1.8,1) -- (1.8,-2) -- (-3,-2);
  \draw[loosely dashed] ($(300:-2) + (210:.4)$)-- ($(300:1.7)+ (210:.4)$)node[right] {$l$};
  \fill[color=white]($(-1,0) + (60:-1)$) -- ($(1,0) + (60:-1)$) -- ($(1,0) + (60:1)$) -- ($(-1,0) + (60:1)$);
  
  \begin{scope}
   \clip ($(-1,0) + (60:-1)$) -- ($(1,0) + (60:-1)$) -- ($(1,0) + (60:1)$) -- ($(-1,0) + (60:1)$) -- cycle;
   \draw[densely dashed] ($(300:-1.3) + (210:.4)$)-- ($(300:1.7)+ (210:.4)$);   
  \end{scope}

  \draw ($(-1,0) + (60:-1)$) -- ($(1,0) + (60:-1)$) -- ($(1,0) + (60:1)$) -- ($(-1,0) + (60:1)$) -- cycle;

  \draw[dotted] ($(300:-2) + (210:1)$)-- ($(300:1.7)+ (210:1)$);
  \draw[dotted] ($(300:-1.3) + (210:-1)$)-- ($(300:1.7)+ (210:-1)$);
  \draw[{Latex[length=2mm]}-{Latex[length=2mm]}] ($(210:1)+(300:1.4)$) -- ($(210:-1)+(300:1.4)$);
  \node at (300:1.25) {$2c\eps$};

  \begin{scope}[shift={(-2.4,-.7)}]
   \draw[->] (0,0) -- (90:.4) node[above] {$\nu_1$};
   \draw[->] (0,0) -- (210:.4) node[left] {$\nu_3$};
   \draw[->] (0,0) -- (330:.4) node[right] {$\nu_2$};
  \end{scope}

 \end{tikzpicture}
 }
  \subcaptionbox{\label{fig:D_a}}{\raisebox{\dimexpr\ht8-\height}{
 \begin{tikzpicture}[scale=1.9]
  \draw ($(-1,0) + (60:-1)$) -- ($(1,0) + (60:-1)$) -- ($(1,0) + (60:1)$) -- ($(-1,0) + (60:1)$) -- cycle;
  \node at (-.3,-.3) {$D_\eps$};
  
  \draw[-{Latex[length=2mm]}] (0,0) -- (90:.87) node[above] {$\eps\nu_1$};
  \draw[-{Latex[length=2mm]}] (0,0) -- (330:.87) node[right] {$\eps\nu_2$};
 \end{tikzpicture}
 }
 }
 \subcaptionbox{\label{fig:D_b}}{
 \begin{tikzpicture}[scale=1.9]
  \node[above] at (90:.87)  {\phantom{$\eps\nu_1$}};
   \clip (-3,1) -- (1.8,1) -- (1.8,-2) -- (-3,-2);
  \draw[loosely dashed] ($(300:-2) + (210:.4)$)-- ($(300:1.7)+ (210:.4)$)node[right] {$l$};
  \fill[color=white]($(-1,0) + (60:-1)$) -- ($(1,0) + (60:-1)$) -- ($(1,0) + (60:1)$) -- ($(-1,0) + (60:1)$);
  
  \begin{scope}
   \clip ($(-1,0) + (60:-1)$) -- ($(1,0) + (60:-1)$) -- ($(1,0) + (60:1)$) -- ($(-1,0) + (60:1)$) -- cycle;
   \draw[densely dashed] ($(300:-1.3) + (210:.4)$)-- ($(300:1.7)+ (210:.4)$);   
  \end{scope}

  \draw ($(-1,0) + (60:-1)$) -- ($(1,0) + (60:-1)$) -- ($(1,0) + (60:1)$) -- ($(-1,0) + (60:1)$) -- cycle;

  \draw[dotted] ($(300:-2) + (210:1)$)-- ($(300:1.7)+ (210:1)$);
  \draw[dotted] ($(300:-1.3) + (210:-1)$)-- ($(300:1.7)+ (210:-1)$);
  \draw[{Latex[length=2mm]}-{Latex[length=2mm]}] ($(210:1)+(300:1.4)$) -- ($(210:-1)+(300:1.4)$);
  \node at (300:1.25) {$2c\eps$};

  \begin{scope}[shift={(-2.4,-.7)}]
   \draw[->] (0,0) -- (90:.4) node[above] {$\nu_1$};
   \draw[->] (0,0) -- (210:.4) node[left] {$\nu_3$};
   \draw[->] (0,0) -- (330:.4) node[right] {$\nu_2$};
  \end{scope}

 \end{tikzpicture}
 }
 \caption{a) Sketch of $D_\eps = \pi_1^{-1}(-\eps,\eps) \cap \pi_2^{-1}(-\eps, \eps)$.
 b) A significant part of the line $l:=\{x\cdot \nu_2 =s\}$ for $s\in(-c\eps, c\eps)$ intersects $D_\eps$.}
 \label{fig:D}
\end{figure}
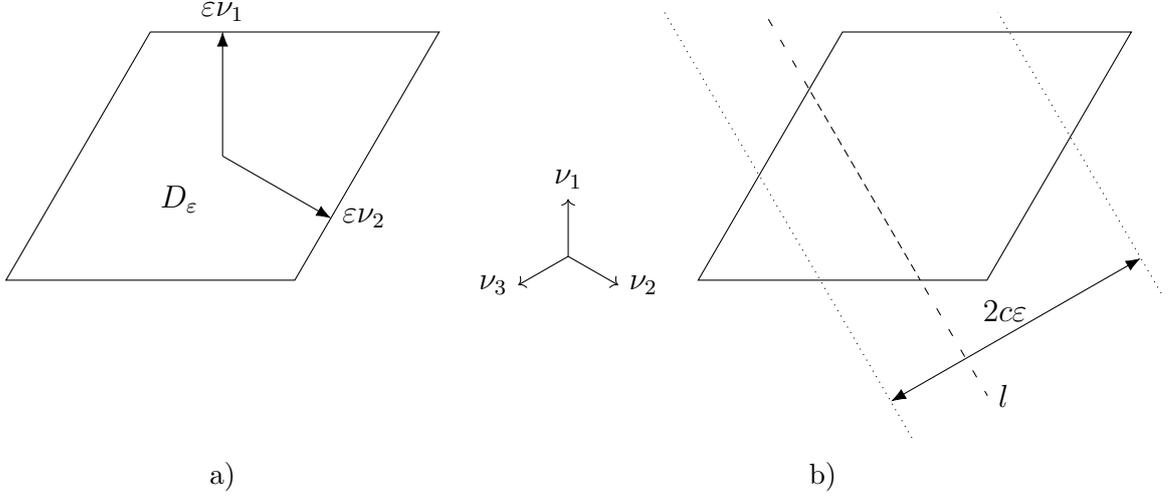

 By scaling arguments there exist $0<c<1$ and $\eta>0$ such that for $s \in (-c\eps,c\eps)$ we have
 \[\int_{\{x\cdot \nu_3 = s\}}\chi_{D_\eps}(x) \intd \Hd^1(x) \geq \eta \eps,\numberthis \label{significant_part_missing}\]
 see Figure \ref{fig:D}.
 Let $S_\eps:= \left\{s\in (-c\eps,c\eps): \int_{\{x\cdot\nu_3 = s\}} \chi_{K_1}(x\cdot \nu_1) \chi_{K_2}(x\cdot \nu_2) \intd \Hd^1(x) = 0 \right\}$, which implies that for $s \in S_\eps$ we also have
 \[\int_{\{x\cdot\nu_3 = s\}} \chi_{K_1\cap(-\eps,\eps)}(x\cdot \nu_1) \chi_{K_2\cap (-\eps,\eps)}(x\cdot \nu_2) \intd \Hd^1(x) = 0.\]
 As for such lines a locally significant part is missing from $\pi_1^{-1}(K_1) \cap \pi_2^{-1}(K_2)$ due to inequality \eqref{significant_part_missing} we get
 \[|\pi_1^{-1}(K_1) \cap \pi_2^{-1}(K_2) \cap D_\eps| \leq |D_\eps| - \eta \eps |S_\eps|.\]
 By algebraic manipulation of this inequality we see
 \begin{align*}
    \frac{|S_\eps|}{2c\eps}
    \leq & \, \frac{1}{2\eta c \eps^2}\left(|D_\eps|-|\pi_1^{-1}(K_1) \cap \pi_2^{-1}(K_2) \cap D_\eps|\right)\\
    \lesssim & \, 1- \frac{1}{|D_\eps|} |\pi_1^{-1}(K_1) \cap \pi_2^{-1}(K_2) \cap D_\eps|.
 \end{align*}
 Since the right-hand side of this inequality vanishes in the limit $\eps \to 0$, we see that $0$ is a point of density one for $M$ by definition of $S_\eps$.
 
 \textit{Step 2: We have $|M| \geq |K_1|+|K_2|$.}\\
 The geometric situation in the following argument can be found in Figure \ref{fig:geometry_surface}.
 Let $\tilde K_i \subset K_i$ for $i=1,2$ be the points of density one contained in the respective sets.
 By Lebesgue point theory we have $|K_i| = |\tilde K_i|$ for $i=1,2$.
 Let $\tilde t_1 := \inf \tilde K_1$ and $\tilde t_2 := \sup \tilde K_2$.
 Since both sets are non-empty and bounded, we have $\tilde t_i \in \R$ for $i=1,2$.
 Let $n\in \N$.
 Let $t_1^{(n)}\in \tilde K_1$ with $0 \leq t_1^{(n)}-\tilde t_1 < \frac{1}{n}$ and let $t_2^{(n)}\in \tilde K_2$ with $0 \leq \tilde t_2- t_2^{(n)} < \frac{1}{n}$.
 Let
 \[M_1^{(n)}:= M \cap (-\infty,-\tilde t_1 - \frac{1}{n} - t_2^{(n)})\]
 and
 \[M_2^{(n)}:= M \cap ( -t_1^{(n)}- \tilde t_2 + \frac{1}{n}, \infty).\]
 Adding the conditions of closeness for $t_i^{(n)}$ we see
 \[ t_1^{(n)}-\tilde t_1 + \tilde t_2- t_2^{(n)} < \frac{2}{n},\]
 which in turn implies
 \[ -\tilde t_1 - t_2^{(n)} -\frac{1}{n}< -t_1^{(n)} - \tilde t_2 + \frac{1}{n}.\]
 Thus $M_1^{(n)}$ and $M_2^{(n)}$ are disjoint and we have
 \begin{align}
  |M| \geq |M_1^{(n)}|+|M_2^{(n)}|.\label{M_lower_est}
 \end{align}
 
 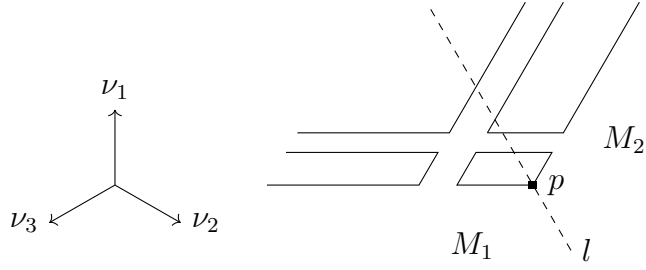
\begin{figure}
 \centering
 \begin{tikzpicture}
  \begin{scope}[shift={(-5,0)}]
   \draw[->] (0,0) -- (90:1) node[above] {$\nu_1$};
   \draw[->] (0,0) -- (210:1) node[left] {$\nu_3$};
   \draw[->] (0,0) -- (330:1) node[right] {$\nu_2$};
  \end{scope}
  
  \draw (-3,0) -- (-1,0) -- ++ (60:.5) -- ++ (-2,0);
  \draw ($(-3,0) + (60:.8)$) -- ++ (2,0) -- ++ (60:2);
  
  \draw (-.5,0) -- (.5,0) -- ++ (60:.5) -- ++ (-1,0) -- cycle;
  \draw ($(-.5,0) + (60:.8)$) -- ++ (1,0) -- ++ (60:2);
  \draw ($(-.5,0) + (60:.8)$) -- ++ (60:2);
  \begin{scope}[shift={(.5,0)}]
    \draw[dashed] (300:1)node[right]{$l$} -- (300:-2.7);
  \end{scope}
  \node at (-.3,-.8) {$M_1$};
  \node at (1.7,.6) {$M_2$};
  
  \node[fill=black,inner sep=1.5pt, label=0:{$p$}] at (.5,0) {};
 \end{tikzpicture}
 \caption{Sketch of $\pi_1^{-1}(K_1) \cap \pi_2^{-1}(K_2)$ with the corner $p := \pi_1^{-1}(\inf K_1) \cap \pi_2^{-1}(\sup K_2)$. Lines parallel to $l:=\{x\cdot \nu_2 = p\cdot \nu_2\}$ intersecting $\pi_1^{-1}(K_1) \cap \pi_2^{-1}(K_2)$ are sorted into $M_1$ if they lie on the left of $l$ or into $M_2$ if they lie on the right.}
 \label{fig:geometry_surface}
\end{figure}
 
 As $t_2^{(n)}$ is a point of density one of $K_1$ and $\tilde K_2$ are points of density one of $K_2$, we know by Step 1 that the set
 \[ -t_2^{(n)} - \tilde K_1 \cap (\tilde t_1 + \frac{1}{n},\infty)\]
 consists of points of density one for $M$.
 We thus know that $|M_1^{(n)}| \geq |\tilde K_1 \cap (\tilde t_1 + \frac{1}{n},\infty)|$.
 Similarly, we obtain $|M_2^{(n)}| \geq |\tilde K_2 \cap (-\infty, \tilde t_2 - \frac{1}{n})|$.
 Combining both inequalities with inequality \eqref{M_lower_est} we see
 \[|M|\geq \left|\tilde K_1 \cap \left(\tilde t_1 + \frac{1}{n},\infty\right)\right| + \left|\tilde K_2 \cap \left(-\infty, \tilde t_2 - \frac{1}{n}\right)\right|. \]
 In the limit $n \to \infty$ we obtain
 \[|M| \geq |\tilde K_1| + |\tilde K_2| = |K_1| + |K_2|.\qedhere\]
\end{proof}

\subsection*{Acknowledgement}
The author thanks his PhD advisor Felix Otto for suggesting the problem and the many helpful discussions.
 
 \emergencystretch=1em
 
 \printbibliography
\end{document}